\documentclass[11pt, reqno]{amsart}
\usepackage{etex}
\usepackage{textcmds} 
\usepackage{amsmath, amssymb, amsfonts, amstext, verbatim, amsthm, mathrsfs}
\usepackage{microtype}
\usepackage[all]{xy}
\usepackage[modulo]{lineno}
\usepackage[usenames,dvipsnames]{xcolor}
\usepackage{aliascnt}
\usepackage{enumitem}
\usepackage{xspace}
\usepackage{amsfonts}
\usepackage[centertags]{amsmath}
\usepackage{rotating}
\usepackage[margin=3.5cm]{geometry}
\usepackage{dsfont}
\usepackage{bm}
\usepackage{color}
\usepackage{subfigure}
\usepackage{amsmath}
\usepackage{array}
\usepackage[all]{xy}
\usepackage{euscript}
\usepackage[T1]{fontenc}
\usepackage{mathbbol}
\usepackage{nccmath}
\usepackage{marginnote}
\usepackage{stmaryrd}
\usepackage{youngtab}
\usepackage{tikz,pgfplots}
\usetikzlibrary{calc}
\usepackage{blkarray}
\usepackage{ulem}
\usepackage[abs]{overpic}

\usepackage{graphics,graphicx}  %% comment for "draft" mode w/o pictures
\let\oldtocsection=\tocsection
\let\oldtocsubsection=\tocsubsection

\renewcommand{\tocsection}[2]{\hspace{0em}\oldtocsection{#1}{#2}}
\renewcommand{\tocsubsection}[2]{\hspace{1em}\oldtocsubsection{#1}{#2}}

\usepackage[backref,colorlinks=true,linkcolor=black,citecolor=blue,urlcolor=blue,citebordercolor={0 0 1},urlbordercolor={0 0 1},linkbordercolor={0 0 1}]{hyperref} %needs to be 

\tikzset{node distance=3cm, auto}

\makeatletter
\def\@secnumfont{\bfseries}
\def\section{\@startsection{section}{1}%
  \z@{.7\linespacing\@plus\linespacing}{.5\linespacing}%
  {\normalfont\Large\bfseries}}
\def\subsection{\@startsection{subsection}{2}%
  \z@{.5\linespacing\@plus.7\linespacing}{-.5em}%
  {\normalfont\large\bfseries}}
  \def\subsubsection{\@startsection{subsubsection}{3}%
  \z@{.5\linespacing\@plus.7\linespacing}{-.5em}%
  {\normalfont\bfseries}}
\makeatother

\newtheorem{thm}{Theorem}[subsection]
\newtheorem{lemma}[thm]{Lemma}
\newtheorem{prop}[thm]{Proposition}
\newtheorem{cor}[thm]{Corollary}

\newtheorem{conjecture}[thm]{Conjecture}

\newtheorem{mainthm}{Theorem}

\newtheorem{definition}[thm]{Definition}

\theoremstyle{remark}
\numberwithin{equation}{subsection} 
 \numberwithin{figure}{section}
\newtheoremstyle{customremark}% <name>
{3pt}% <Space above>
{3pt}% <Space below>
{}% <Body font>
{}% <Indent amount>
{\bfseries}% <Theorem head font>
{.}% <Punctuation after theorem head>
{.5em}% <Space after theorem headi>
{}% <Theorem head spec (can be left empty, meaning `normal')>
\theoremstyle{customremark}
\newtheorem{rmk_no_diamond}[thm]{Remark}
\newenvironment{rmk}{\begin{rmk_no_diamond} } {\hfill$\er$ \end{rmk_no_diamond}}
\newtheorem{example_no_diamond}[thm]{Example}
\newenvironment{example}{\begin{example_no_diamond} } {\hfill$\er$ \end{example_no_diamond}}

\newcommand{\oCP}{{\overline{{\C}P}}\!\,}

\newcommand{\id}{{\rm id}}

\newcommand{\Tt}{\mathcal{T}}

\newenvironment{itemlist}
   { \begin{list} {$\bullet$}
         { \setlength{\topsep}{.5ex}  \setlength{\itemsep}{.5ex} \setlength{\leftmargin}{2.5ex} } }
   { \end{list} }
   
    \newcommand{\vn}{{\vec{n}}}

 \newcommand{\ovx}{{\ov{x}}}
 \newcommand{\ovy}{{\ov{y}}}
  \newcommand{\ovr}{\overrightarrow}

\newcommand{\bE}{{\bf{E}}}
\newcommand{\bB}{{\bf{B}}}

\newcommand{\ov}{\overline}
\newcommand{\al}{{\alpha}}

\newcommand{\la}{{\lambda}}

\newcommand{\er}{{\Diamond}}
\newcommand{\Z}{\mathbb{Z}}

\newcommand{\R}{\mathbb{R}}

\newcommand{\C}{\mathbb{C}}
\newcommand{\CP}{\mathbb{CP}}

\newcommand{\eps}{\varepsilon}

\newcommand{\X}{\mathcal{X}}

\newcommand{\T}{\mathcal{T}}

\newcommand{\acc}{\mathrm{acc}}
\newcommand{\sembeds}{\stackrel{s}{\hookrightarrow}}

\makeatletter
\newcommand{\dashover}[2][\mathop]{#1{\mathpalette\df@over{{\dashfill}{#2}}}}
\newcommand{\fillover}[2][\mathop]{#1{\mathpalette\df@over{{\solidfill}{#2}}}}
\newcommand{\df@over}[2]{\df@@over#1#2}
\newcommand\df@@over[3]{%
  \vbox{
    \offinterlineskip
    \ialign{##\cr
      #2{#1}\cr
      \noalign{\kern1pt}
      $\m@th#1#3$\cr
    }
  }%
}
\newcommand{\dashfill}[1]{%
  \kern-.5pt
  \xleaders\hbox{\kern.5pt\vrule height.4pt width \dash@width{#1}\kern.5pt}\hfill
  \kern-.5pt
}
\newcommand{\dash@width}[1]{%
  \ifx#1\displaystyle
    2pt
  \else
    \ifx#1\textstyle
      1.5pt
    \else
      \ifx#1\scriptstyle
        1.25pt
      \else
        \ifx#1\scriptscriptstyle
          1pt
        \fi
      \fi
    \fi
  \fi
}
\newcommand{\solidfill}[1]{\leaders\hrule\hfill}
\makeatother

\date{\today}
\title{Quadrilateral Mutations and Symplectic Embeddings}
\author{Nicki Magill}
\address{Mathematics Department, UC Berkeley}
\email{nmagill@berkeley.edu}
\thanks{NM thanks the NSF for their support under the agreement DMS-2402169}

\begin{document}
 \begin{abstract}
We study the relationship between almost toric base diagrams, perfect
exceptional classes, and optimal ellipsoid embeddings for
$H_b=\CP^2_1 \#\oCP^2_b$ and $P_b=S^2_1\times S^2_b$.
Starting from a quadrilateral almost toric base diagram with one Delzant corner, we encode the three non-Delzant corners by a recursive triple. We show that every quadrilateral obtained via a well-defined sequence of mutations from the initial diagrams is encoded by a recursive triple in the same way. Moreover, geometric mutation of these diagrams corresponds to algebraic mutation of the associated triples. These algebraic mutations are the recursive operations used to generate the $(p,q)$-perfect classes for $H$. 

We apply this dictionary to realize every $(p,q)$-perfect class for $H$ by an explicit sequence of almost toric mutations for suitable values of $b$. We also prove the analogous realization result for triples of quasi-perfect classes for $P$, showing that these classes are in fact $(p,q)$-perfect. Finally, we apply these results to ellipsoid embedding problems, including visible embeddings, visible obstructions, and ATF-visible staircases.
\end{abstract}
\maketitle
   
    \section{Introduction}

This work investigates the connection between perfect exceptional classes, mutations of almost toric base diagrams (ATBDs), and optimal symplectic embeddings. We focus on the examples of $H := \CP^2\#\oCP^2$ and $P := S^2 \times S^2$, using the notation
$H_b := \CP^2_1\#\oCP^2_b$ for $b \in (0,1)$ and $P_b := S^2_1 \times S^2_b$ for $b \in [1,\infty)$ to specify the symplectic form. 
 
A series of papers~\cite{ICERM, MM, MMW, MPW} demonstrated that the $(p,q)$-perfect classes for $H$ characterize the $b$-values for which $H_b$ has an infinite staircase, a property of ellipsoid embeddings into $H_b$. Further, in~\cite{MMW}, all of the
$(p,q)$-perfect classes for $H$ were constructed via particular recursive sequences applied to specific seed classes. In~\cite{M1}, generalizing work of Casals--Vianna~\cite{CV}
and Cristofaro-Gardiner et al.~\cite{AADT}, we showed how particular sequences of perfect classes corresponded to particular sequences of mutations on ATBDs with one Delzant corner for $H_b$.

In this paper, we generalize the work in~\cite{M1} by systematically studying the mutation graph of a starting ATBD with one Delzant corner for \(H_b\) and \(P_b\); see Figure~\ref{fig:basediagr}. For each mutated quadrilateral, the three
non-Delzant corners are encoded by a triple \(\Tt\), and the nodal rays, edge directions, and affine side lengths are then prescribed by the condition that the quadrilateral is \(\Tt\)-standard. With this encoding, geometric ATF mutations are described by algebraic mutations of triples. The same recursive mutation formulas work for \(H_b\) and \(P_b\), and they agree with the recursive formulas used to generate the
\((p,q)\)-perfect classes for \(H\).

 \begin{mainthm}\label{thm:A}
For \(H_b\) and \(P_b\), the quadrilateral ATBDs obtained by well-defined mutations from the starting base diagrams have a standard form determined by a recursive triple. Under this encoding, geometric ATF mutations are described by algebraic mutations of triples. The algebraic mutation formulas are the same for \(H_b\) and \(P_b\), and agree with the recursive formulas used to generate the \((p,q)\)-perfect classes for \(H\).
\end{mainthm}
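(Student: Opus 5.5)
The argument is an induction on the length of a well-defined mutation sequence, using a precise notion of a $\Tt$-standard quadrilateral. I would first fix the data. A recursive triple is $\Tt=((p_1,q_1),(p_2,q_2),(p_3,q_3))$ of integer pairs satisfying the quadratic (Markov-type) relations inherited from the $(p,q)$-perfect classes for $H$. A quadrilateral ATBD with one Delzant corner is \emph{$\Tt$-standard} if three things hold. First, its three non-Delzant corners are cyclic quotient singularities of type $\frac{1}{p_i^2}(1,p_iq_i-1)$, carrying nodal rays of multiplicity $p_i$. Second, its edge directions are given by explicit integer vectors built from the $(p_i,q_i)$. Third, its affine side lengths are explicit linear functions of $b$, with coefficients determined by $\Tt$. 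These are computed separately for $H_b$ and $P_b$, but the formulas have the same shape.

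The base case is to check that the starting diagrams in Figure~\ref{fig:basediagr} are $\Tt_0$-standard for explicit seed triples. This is a direct computation of edge directions, lengths, and determinants at each corner.

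For the inductive step, assume $\Delta$ is $\Tt$-standard and mutate at the $i$-th non-Delzant corner. Mutation is cutting along the nodal ray and applying the monodromy shear
\[
M_i = \begin{pmatrix} 1-p_iq_iu_iv_i & p_i^2u_i^2\\ -q_i^2v_i^2 & 1+p_iq_iu_iv_i\end{pmatrix}
\]
(suitably conjugated), where $(u_i,v_i)$ is the primitive ray direction. The nodal ray then exits through an opposite edge; the shape (quadrilateral, one Delzant corner) is preserved exactly when the mutation is well-defined. I would compute the new corner at the exit point. Its type is read off from the determinant of the two adjacent new edge vectors, and it yields a new pair $(p_i',q_i')$. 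The key algebraic identity to verify is that this new pair is given by the recursive mutation formula, e.g. $p_i' = k\,p_jp_l - p_i$ with $k$ the appropriate constant, and similarly for $q_i'$, so that the mutated triple $\Tt'$ satisfies the same relations. One must also check that the other two corners keep their types, possibly with relabeling and an overall $SL_2(\Z)$ normalization. Finally, the new edge lengths must equal the standard formulas evaluated at $\Tt'$. For the lengths I would use two facts: affine length along the cut edge is additive, and the nodal ray length is prescribed by the standard form.

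To get uniformity between $H_b$ and $P_b$, I would normalize by an $SL_2(\Z)$ change of basis so that the Delzant corner and one adjacent edge are fixed. The mutation computation then depends only on $\Tt$ and on the two side lengths adjacent to the Delzant corner, which is where $b$ enters. The recursion on $\Tt$ comes out identical in the two cases, and comparing with the recursions of~\cite{MMW} identifies it with the perfect-class generation formulas.

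The main obstacle I expect is the length/well-definedness bookkeeping. I must show the exit edge and the resulting labeling (which corner becomes which) follow a consistent pattern for every triple, so that a single closed formula covers all mutations. I would first handle this by proving sign and inequality lemmas on the triple entries, for instance monotonicity of the slopes $q_i/p_i$. These lemmas determine which edge the nodal ray hits, and hence reduce the inductive step to finitely many cases.
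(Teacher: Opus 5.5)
Your outer structure matches the paper's: induction on the word, a notion of $\Tt$-standard quadrilateral preserved by each move, and lengths checked by additivity along the cut. However, the algebra you plan to run inside it is the wrong algebra, so the inductive step would not close.

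\textbf{The recursion.} Each corner carries a single node of type $\frac{1}{q^2}(1,pq-1)$; three nodes plus one Delzant corner account for $\chi=4$. In normalized coordinates its ray is $(q,-p)$ itself, with shear $M_{p,q}=\begin{pmatrix}1-pq&-q^2\\p^2&1+pq\end{pmatrix}$. There is no separate direction $(u_i,v_i)$ and no multiplicity $p_i$. The computation then gives a linear recursion whose parameter is a determinant of two pairs, not a Markov-type rule. For example, $x\Tt=(\bm{x}_\la,\nu_\la\bm{x}_\mu-\bm{x}_\rho,\bm{x}_\mu)$ with $\nu_\la=p_\rho q_\mu-p_\mu q_\rho$. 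Concretely, $y$ applied to $s^4\Tt_0^H=((11,2,1),(23,4,-1),(29,5,2))$ replaces $(11,2)$ by $2(23,4)-(11,2)=(35,6)$. Here $35+11$ is not an integer multiple of $23\cdot 29$, so no constant $k$ works.

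\textbf{The $t$-coordinate.} For the side lengths you must carry a third coordinate $t$ with its sign. Lengths are $(d_\la-m_\la b)/q_\la$ etc., with $(d;m)$ or $(e,f)$ computed from $(p,q,t)$. In $s\Tt_0^P$ the entries $(5,1,2)$ and $(5,1,-2)$ give $(e,f)=(2,1)$ and $(1,2)$. The identities used in the length checks (Lemmas~\ref{lem:7cond}, \ref{lem:relations}, \ref{lem:identS}) come from the recursive-triple conditions. These require entries in $\X_\bullet$ and $\bm{x}_\mu^TA\bm{x}_\la=4\eps t_\rho$ with $\eps t_\rho=p_\mu q_\la-p_\la q_\mu$, and so on, with a global sign $\eps$. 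A quadratic relation on the pairs alone does not supply them.

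\textbf{The case structure.} A nodal ray in this quadrilateral can exit through either of two edges, giving two different moves at each corner: $y$ or $s$ at $Y$, $x$ or $\ov{s}$ at $X$, $\ov{x}$ or $\ov{y}$ at $V$. So there is no single closed formula.
\begin{itemize}
\item When $\vn_Y$ exits through $OX$, the cut-off piece contains $X$ and $V$, the corners rotate, and the displaced entry becomes $S(p,q,t)=(6p-q,p,-t)$. This $s$-move ties the picture to the $S$-symmetry of \cite{MMW} via $s^3=S$; the $x,y$ recursions alone are not the generating scheme.
\item Checking that $sQ$ is $s\Tt$-standard requires the global monodromy condition $M_{p_\rho,q_\rho}M_{p_\mu,q_\mu}M_{p_\la,q_\la}=\begin{pmatrix}0&1\\-1&-6\end{pmatrix}$, which comes from $c_1^2=8$. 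That condition, not an $SL_2(\mathbb{Z})$ normalization at $O$, is why the formulas coincide for $H_b$ and $P_b$.
\item Deciding the exit edge by inequality lemmas on the triple cannot work. The exit edge depends on $b$: the $y$/$s$ switch is at $b=m_\mu/d_\mu$ (resp.\ $e_\mu/f_\mu$).
\item The slope monotonicity you want is false for triples reached by well-defined mutations. By Example~\ref{ex:unusualTrip}, $x\Tt_0^H$ has $p_\la/q_\la>p_\mu/q_\mu$ and $\ov{y}\ov{x}\Tt_0^H$ has $p_\la<0$.
\end{itemize}
Deciding the exit edge is also unnecessary, since the theorem assumes well-definedness. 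What is needed is a separate verification for $x$, $y$ and $s$, with the barred moves following from $\ov{w}w=\mathrm{id}$.
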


The nodal rays and direction vectors of the ATBD are given in Lemma~\ref{lem:standFrom}. To see how the side lengths are determined via the triple, see Definition~\ref{def:DecQuad}. The triples satisfy many numerical properties we consider in Section~\ref{ss:trip}. The theorem about the mutations agreeing is given in Theorem~\ref{thm:ATFMut}.

 Using work of McDuff and Siegel~\cite{McSi}, when the entries of the combinatorial triple describing an ATBD for $H$ or $P$ are positive, each element of the triples corresponds to a perfect class. We give explicit mutation sequences realizing each perfect class for $H$ in an ATBD:
 
\begin{mainthm} \label{thm:B}
For each perfect class $E$ for $H$, there exists an explicit mutation sequence exhibiting $E$ in an ATBD for a specified interval of $b$-values. One endpoint of this interval is determined from $E$.
\end{mainthm}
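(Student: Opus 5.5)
The plan is to reduce Theorem~\ref{thm:B} to Theorem~\ref{thm:A} together with the classification of $(p,q)$-perfect classes for $H$ from~\cite{MMW}. By~\cite{MMW}, every perfect class $E$ for $H$ lies in one of finitely many families. Each family is generated from specific seed classes by compositions of the recursive mutation operations acting on triples of perfect classes. So I will first fix, for a given $E$, a word in these recursive operations and a seed triple $\Tt_0$ such that $E$ is an entry of the resulting triple $\Tt = \mu_{k}\circ\cdots\circ\mu_1(\Tt_0)$. The first step is to match each seed triple with the triple encoding one of the starting quadrilateral ATBDs in Figure~\ref{fig:basediagr}. Possibly this needs a short preliminary sequence of mutations for seeds that are not the initial diagram itself. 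I would check this directly from Definition~\ref{def:DecQuad} and Lemma~\ref{lem:standFrom}.

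Next I apply Theorem~\ref{thm:ATFMut}. Each algebraic mutation $\mu_i$ of triples is realized by a geometric ATF mutation of the $\Tt$-standard quadrilateral. Inductively, the diagram obtained after $k$ geometric mutations is then the $\Tt$-standard quadrilateral for the final triple. Its non-Delzant corners encode $E$, and by the McDuff--Siegel criterion~\cite{McSi} a non-Delzant corner with positive triple entries exhibits the corresponding perfect class. That produces the explicit mutation sequence.

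The main obstacle is that each step must be a \emph{well-defined} mutation for the chosen $b$. The nodal ray being mutated along must cross the quadrilateral and hit the opposite edge before any other corner, and all affine side lengths in Definition~\ref{def:DecQuad} must stay positive. These side lengths are affine functions of $b$ whose coefficients come from the entries of the triple. So for each step, positivity cuts out an interval of $b$, and I intersect these intervals along the sequence. I would show that the intervals are nested, using the numerical properties of recursive triples from Section~\ref{ss:trip}, in particular monotonicity of the ratios $p/q$ under the recursions. Nesting means the final constraint, coming from the last mutation that creates $E$, is the binding one. That last constraint says a side length vanishes exactly at the $b$-value where the obstruction from $E$ becomes live, namely where $\mu_{E,b}$ attains its center. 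This gives one endpoint of the interval determined by $E$. The other endpoint is inherited from the previous triple in the sequence, or from the seed interval.

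If nesting fails for some family, my fallback is to reorder the word of recursive operations; different words can produce the same class $E$. Among these I would pick one that only moves toward the staircase accumulation point. Then each step shrinks the admissible $b$-interval toward the endpoint fixed by $E$, and I would verify the inequalities separately for each family in the classification.
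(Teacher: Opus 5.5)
Your outline has the same two steps as the paper: take a word from the classification in \cite{MMW}, transport it with Theorem~\ref{thm:ATFMut}, then intersect the half-lines in $b$ cut out by each letter. But both steps are missing the ingredient that makes them work.

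\textbf{Step one: realizing the classification by mutations.} \cite{MMW} does not generate $\T(H)$ by recursive mutations alone. It applies the symmetries $S$ and $R$ to the seeds $\Tt_0^H$ and $\mathcal{B}_{2n}$, and only afterwards the $x,y$-mutations. Neither symmetry is one of the six letters in Theorem~\ref{thm:ATFMut}; $R$ reverses the triple and has determinant $-1$ on $(p,q)$. The paper handles this with the new $s$-mutation and three facts:
\begin{itemize}
\item $s^3=S$;
\item the commutation $s^3w\Tt=ws^3\Tt$ (Lemma~\ref{lem:commuteS});
\item explicit identities (Proposition~\ref{lem:perf4H}): $y\ov{x}^{n+2}s^{3j}\Tt_0^H=S^j(\mathcal{B}_{2n})$, $yx^ny\ov{x}s^{3j+1}\Tt_0^H=S^jR(\mathcal{B}_{2n})$ for $j\geq 1$, and $yx^{n-1}y\ov{x}\,\ov{y}s\Tt_0^H=R(\mathcal{B}_{2n})$ for $n\geq 1$.
\end{itemize}
``Checking directly'' cannot replace this, because the seeds $S^iR^\delta(\mathcal{B}_{2n})$ form infinite families.

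Your matching step also fails outright for $R(\mathcal{B}_0)=((13,2,-5),(34,5,-13),(1,0,-3))$. Since $q_\rho=0$, it is not the triple of a quadrilateral ATBD for any $b$, so the $x,y$-words of \cite{MMW} based at it cannot be followed geometrically. The paper reroutes them through $R'(\mathcal{B}_0)=\ov{x}^2\ov{y}s\Tt_0^H$, translating $uxy^n\mapsto uy\ov{x}^n$ and $ux\mapsto uy$ (Lemma~\ref{lem:R'andR}, Corollary~\ref{cor:rTrip}).

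\textbf{Step two: the $b$-interval.} The ``nesting'' does not come from monotonicity of $p/q$. Whether the mutation at $Y$ is a $y$ or an $s$ is decided by the sign of $\eps(m_\mu-d_\mu b)$ (Lemma~\ref{lem:definedH}~(i)). So what governs the $b$-constraints is the ordering of the ratios $\al=m/d$. The relevant fact is the $\al$-ordering $\eps\al_\la,\eps\al_\rho>\eps\al_\mu$ of positive triples (Corollary~\ref{cor:ordering}). It comes from the identities $m_\la d_\mu-m_\mu d_\la=\eps q_\rho$ and $m_\rho d_\mu-m_\mu d_\rho=\eps p_\la$ (Lemma~\ref{lem:mDrelations}).

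With this ordering, the endpoint determined by $E$ is the constraint $\eps b<\eps m_E/d_E$ (reversed for $s$). It is imposed by the mutation at $Y$ that moves $E$ into the \emph{left} slot. Afterwards $|XV|=\eps(m_\la-d_\la b)/(q_\rho q_\mu)$ vanishes exactly at $b=m_E/d_E$. The mutation that \emph{creates} $E$ imposes a constraint involving the previous middle entry, not $E$. Nothing in the argument concerns where $\mu_{E,b}$ ``attains its center''.

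The letters $x$ and $\ov{x}$ need separate criteria:
\begin{itemize}
\item positive slope of $\vn_X$ when $p_\rho/q_\rho>6$;
\item or the $S^{-1}$-criterion of Lemma~\ref{lem:definedH}~(iii), giving $\eps b>\eps\tfrac13$. This needs an ad hoc computation in Lemma~\ref{lem:bDefH3} when $j=1$, because $S^{-1}$ then lands on a non-positive triple.
\end{itemize}
Some endpoints are not consequences of any ordering argument at all, for example $\tfrac{k-1}{k}<\al_\la$ in Lemma~\ref{lem:bDefH1}.
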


The precise statement is Theorem~\ref{thm:HTrip}, which is proved in Section~\ref{ss:tripPerf}. In giving these explicit mutation sequences, we show that all $(p,q)$-perfect classes for $H$ can be constructed from three fixed base classes, whereas the classification in~\cite{MMW} used a family of starting classes. We further partially address the
conjectures in~\cite{MPW, MMW, REU} regarding the connection between $P$ and $H$:
 
\begin{mainthm} \label{thm:C}
There exists a family of perfect classes for $P$ analogous to the perfect classes for $H$. For each of these classes, there exists an explicit mutation sequence exhibiting $E$ in an ATBD for a specified interval of $b$-values. One endpoint of this interval is determined from $E$.
\end{mainthm}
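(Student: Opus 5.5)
The plan for Theorem~\ref{thm:C} is to run the machinery of Theorem~\ref{thm:A} on the starting quadrilateral ATBD for $P_b$ rather than $H_b$. Theorem~\ref{thm:A} says the mutation formulas for triples are the same for both manifolds. Hence the recursive triples attached to the mutated quadrilaterals for $P_b$ are generated by exactly the algebraic moves that produce the $(p,q)$-perfect classes for $H$; only the seed triple changes. The family of classes for $P$ is then \emph{defined} as the entries of all triples reachable from the $P$-seed by well-defined mutation sequences. These are the natural analogues of the $H$-classes, since they are built by identical recursions. Where possible we will identify them with the quasi-perfect classes already studied for $P$ in \cite{MPW, MMW, REU}.

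The steps, in order, are as follows.
\begin{enumerate}
\item Compute the seed triple for $P_b$ from the starting diagram in Figure~\ref{fig:basediagr}. Check that it satisfies the numerical identities of Section~\ref{ss:trip}, i.e.\ the Markov-type relation and the adjacency and determinant conditions. By induction these identities are preserved under algebraic mutation.
\item For each triple, use the $\Tt$-standard form (Lemma~\ref{lem:standFrom} and Definition~\ref{def:DecQuad}) to write down the affine side lengths as explicit linear functions of $b$. Well-definedness of the mutation sequence means every side length stays positive and each nodal ray can be slid through the opposite edge. This cuts out an interval of $b$, and one endpoint should be where the side length collapsing under the last mutation vanishes. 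That endpoint is a ratio computed from the entries $(p,q)$ of the triple, hence determined by $E$.
\item On that interval all triple entries are positive. The McDuff--Siegel criterion \cite{McSi} then says each non-Delzant corner with positive data gives an exceptional class $E=(d,m;\mathbf{w}(p/q))$ in $P_b$ that is $(p,q)$-perfect. This upgrades the quasi-perfect classes to genuinely perfect ones, which is exactly the content announced in the abstract.
\end{enumerate}

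The main obstacle is step 2: showing that the $b$-interval on which the whole mutation sequence is well defined is nonempty and contains the right endpoint. The side-length inequalities accumulate along the sequence, so we must show they telescope to a single binding constraint. First I would prove, by induction on the mutation word using the recursive formulas, that the constraints are monotone. The plan is to show that each new mutation's constraint implies the previous ones, mirroring the proof of Theorem~\ref{thm:HTrip} for $H$. What might genuinely differ is the $P$-seed, whose edge lengths involve $b\ge 1$ rather than $b<1$. This could flip one inequality, and I would handle it by a separate base-case check for the first one or two mutations.
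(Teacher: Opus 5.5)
Your outline has the right architecture (Theorem~\ref{thm:ATFMut}, then an interval of $b$, then McDuff--Siegel), but it leaves out the two ingredients that carry the proof.

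\textbf{The family and the choice of words.} Defining the family as ``all entries reachable by well-defined mutations'' makes existence and explicitness automatic, and it skips the real content. The family meant here is the concrete set $\mathcal{T}(P)$ of Definition~\ref{def:triplesIntro}: seeds $\Tt_0^P$ and $\mathcal{B}_n$ with $n$ odd, symmetries $S,R$, and all $x,y$-mutations. These were previously known only to be quasi-perfect. You must exhibit a word reaching each of these triples, as Proposition~\ref{lem:perf4P} does, e.g.\ $wy\ov{x}^{n+2}\Tt_0^P=w\mathcal{B}_{2n+1}$ and $wyx^ny\ov{x}\Tt_0^P=wR(\mathcal{B}_{2n+1})$. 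Your ``where possible'' leaves this undone, so step~3 does not yet show that any given class of $\mathcal{T}(P)$ is perfect.

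The choice of word also matters for the endpoint. For an arbitrary well-defined word, the endpoint need not be $e/f$ or $f/e$ of any entry. For example, $\ov{x}Q^0_{P_b}$ is well-defined exactly for $b>1$, and $1=e/f$ for $(3,1,0)$, which is the entry $\ov{x}$ removes. Meanwhile $\ov{x}\Tt_0^P=((1,1,2),(5,1,2),(7,1,4))$ has $(e,f)=(1,0),(2,1),(3,1)$. The paper gets the endpoint $e_\la/f_\la$ by choosing words whose last $y$-step puts the class in the left slot. It then uses $\al$-ordering (Corollary~\ref{cor:alOrdered}) to see that this is the binding constraint. Positivity must likewise be checked for the specific triples (Corollary~\ref{cor:Positive}). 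It does not follow from $b$ lying in the interval, since well-definedness forces $q>0$ but not $p>0$ (Example~\ref{ex:unusualTrip}).

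\textbf{The shift direction for $P$.} More seriously, the telescoping of constraints you plan to borrow from $H$ fails for $P$ along the shift direction, and not just in a base case. For $H$, the shifted families come from $s^{3j}$, and the intervals of Lemma~\ref{lem:sMutDefinedH} nest down to $b=1/3$, which no $m/d$ attains. For $P$, $s^4Q^0_{P_b}$ is well-defined for \emph{no} $b$:
\begin{itemize}
\item By Lemma~\ref{lem:definedP}(i), the first $s$ needs $b>1$, because the middle entry $(3,1,0)$ has $e/f=1$.
\item The fourth $s$ is applied to the sign-alternating triple $s^3\Tt_0^P=((5,1,-2),(17,3,0),(29,5,-2))$ and needs $b<1$.
\end{itemize}
Every $S^j(3,1,0)$ has $t=0$, hence $e/f=1$ exactly, so the intervals collapse rather than nest. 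Without a different word, the shifted classes are not even shown to lie in your reachable family.

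The missing idea is to change the word. The paper uses $r_{3k+3}=(s^2y)^ks^3$, $r_{3k+1}=y(s^2y)^{k-1}s^3$ and $r_{3k+2}=sy(s^2y)^{k-1}s^3$ from \eqref{eq:SPsequence}. These are well-defined for $1<b<\al_{3k+2}$ (Lemma~\ref{lem:alPSordering2}). The paper then proves $wr_{3j}\Tt_0^P=I_t^{j-1}s^{3j}w\Tt_0^P$ (Lemma~\ref{lem:commuteS}). So the realized triples match $S^j\Tt_0^P$, $S^j\mathcal{B}_{2n+1}$ and $S^jR(\mathcal{B}_{2n+1})$ only up to the sign of $t$, i.e.\ up to $(e,f)\leftrightarrow(f,e)$. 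That is why the endpoint in Theorem~\ref{thm:PTrip} is $e_\la/f_\la$ or $f_\la/e_\la$. It is also why a separate check of the first one or two mutations cannot repair the argument.
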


This is proved in Section~\ref{ss:tripPerf} with the precise statement in Theorem~\ref{thm:PTrip}. In upcoming work, McDuff and Siegel prove the perfect classes in Theorem~\ref{thm:C} are all the perfect classes for $P.$ The proofs of Theorems~\ref{thm:B} and~\ref{thm:C} are similar because the mutation structures for $P$ and $H$ are parallel.

In upcoming work, McDuff and Siegel show that the perfect classes for $H$ and $P$ can be realized via sequences of mutations on scattering diagrams. Interestingly, the mutation sequences we construct in Theorems~\ref{thm:B} and~\ref{thm:C} coincide precisely with theirs. From their perspective, there are multiple mutation sequences that obtain the perfect classes. The shortest such paths correspond to the sequences in Theorem~\ref{thm:B} and ~\ref{thm:C} that can be realized for a fixed $b$-value. Additionally, the mutations on the scattering diagrams are not linear on the $(p,q)$-coordinates, so it is not obvious that these sequences should line up. 

One advantage of the ATBD approach is its dependence on $b$. This dependence on $b$ is what connects the mutation structure to the ellipsoid embedding problem. We summarize the application to embeddings in Section~\ref{ss:introEmbed}. As an application, we develop conditions under which a sequence of ATF
mutations for \(H_b\) or \(P_b\) gives an infinite staircase; following McDuff and Siegel~\cite{McSi}, we call this an \textbf{ATF-visible staircase}. Their work showed that $H_{1/3}$ has an ATF-visible staircase. In Theorem~\ref{thm:ATFvis}, we give conditions under which an ATF-visible staircase can be constructed, and apply this to prove:
 
\begin{mainthm} \label{thm:D}
There exist infinitely many $b$-values where the ellipsoid embedding function for $H_b$ admits an increasing ATF-visible staircase.
\end{mainthm}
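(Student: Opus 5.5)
The plan is to deduce Theorem~\ref{thm:D} from the general criterion Theorem~\ref{thm:ATFvis}, together with the realization of perfect classes for $H$ by mutation sequences (Theorem~\ref{thm:B}, i.e. Theorem~\ref{thm:HTrip}). I will first recall the shape of that criterion. Consider an infinite sequence of well-defined mutations of $\Tt$-standard quadrilaterals for $H_b$ at a \emph{fixed} $b$. Suppose the triples $\Tt_k$ along the sequence have positive entries, so that by McDuff--Siegel each entry is a perfect class. Suppose also that the ratios $p_k/q_k$ of the mutated corners converge monotonically to the accumulation point $\acc(b)$. Then the visible embeddings of ellipsoids $E(1,p_k/q_k)$ read off from the ATBDs give infinitely many nonsmooth points of the embedding function, and hence an ATF-visible staircase. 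So the task reduces to producing infinitely many values $b$ together with mutation sequences that stay well defined at that single $b$ and whose ratios increase to $\acc(b)$.

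Step one is to pick a family of mutation sequences from the recursive triple calculus of Theorem~\ref{thm:ATFMut}. The natural candidates are sequences that eventually repeat a single mutation forever, so that the triples $\Tt_k$ obey a linear recursion of the form $x_{k+1} = t\,x_k - x_{k-1}$. Since Theorem~\ref{thm:ATFMut} says geometric and algebraic mutation agree, such a sequence is exactly one of the staircase recursions for $(p,q)$-perfect classes for $H$ from~\cite{MMW}. Each such recursion has a limiting $b$-value $b_\infty$, which is the limit of the $b$-interval endpoints from Theorem~\ref{thm:HTrip}, and it has a limit ratio $\acc(b_\infty)$. To get infinitely many values of $b$, I will take the infinite family of distinct seeds: the starting triples obtained by first applying a varying finite prefix of mutations, for instance the mutations producing the Fibonacci-type or $n$-indexed classes. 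Each prefix gives a distinct $b_\infty$.

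Step two is to check the hypotheses of Theorem~\ref{thm:ATFvis} at $b=b_\infty$. First, every mutation in the sequence must be well defined at $b_\infty$, meaning the nodal ray stays inside the polygon. In the $\Tt$-standard form of Definition~\ref{def:DecQuad}, this reduces to positivity of an affine side length that is linear in $b$. Because the intervals from Theorem~\ref{thm:HTrip} are nested and one endpoint tends to $b_\infty$, $b_\infty$ lies in every interval, or at worst on its closed boundary. Second, the ratios $p_k/q_k$ must be monotone increasing. This should follow from the triple inequalities in Section~\ref{ss:trip}, since the recursion with $t\ge 3$ has a dominant eigenvalue and the sign of $p_kq_{k+1}-p_{k+1}q_k$ is preserved.

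I expect the main obstacle to be the fixed-$b$ requirement. The intervals of Theorem~\ref{thm:HTrip} are determined with one endpoint coming from $E$, so I have to make sure the sequence is realizable at the limit $b_\infty$ itself and not only on shrinking intervals. I will handle this by writing the side lengths explicitly as $\alpha_k - \beta_k b$ and showing that they are positive at $b_\infty$. The recursion makes $\alpha_k/\beta_k$ monotone with limit $b_\infty$, approached from the correct side; choosing the increasing rather than the decreasing staircase direction is what should give this.
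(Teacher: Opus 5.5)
There is a genuine gap. It lies in the criterion you attribute to Theorem~\ref{thm:ATFvis}: as you state it, it is not what that theorem says, and it is false.

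A quadrilateral $OXVY$ yields the visible embedding $(1-\eps)E(|OX|,|OY|)\sembeds H_b$, not an embedding of $E(1,p_k/q_k)$. An embedding is also only an upper bound for $c_{H_b}$. To get nonsmooth points you need lower bounds read off from the same diagrams. These are the visible obstructions of McDuff--Siegel (Lemma~\ref{lem:visObs}), and they require that $s$ or $\ov{s}$ be well-defined on the quadrilateral. On the sequence you propose, $Q_k=y^kw'Q_{H_b}^0$, the next $y$-mutation is well-defined, so $sQ_k$ is not: at a fixed $b$, at most one of $s,y$ is defined at $Y$. The paper says explicitly that these $Q_k$ do not give optimal embeddings. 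Knowing that the entries are perfect classes gives lower bounds $\mu_{\bm{x}_k,H_b}$, but nothing in your plan shows the ATBD embeddings meet them. So ``well-defined sequence with monotone ratios $\Rightarrow$ ATF-visible staircase'' is precisely the missing content.

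By contrast, the fixed-$b$ issue you single out is the easy part. It follows from Theorem~\ref{thm:HTrip} and the strict monotonicity of $m_k/d_k$, which also keeps $b$ strictly inside every interval. ``At worst on its closed boundary'' is not acceptable: at an endpoint $|XV|=0$, the nodal ray hits a vertex, and the mutation is undefined.

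The missing idea is to pass from $Q_k$ to $sxQ_k$.
\begin{itemize}
\item At $b=\lim m_k/d_k$ one has $|XV_k|=\eps(m_{k-1}-d_{k-1}b)/(q_\rho q_k)\to 0$. So $Q_k$ degenerates affinely to a triangle with $\{A/B,B/A\}=\{z_\infty,1/z_\infty\}$, where $z_\infty=\acc_H(b)>3+2\sqrt2$ for $b\neq 1/3$.
\item Together with $\acc_H(\al_k)<p_\rho/q_\rho$ from correct ordering, this rules out $\vn_X$ hitting $OY_k$ or $Y_k$ for large $k$. Hence $xQ_k$ is defined (Lemma~\ref{lem:yTailXVisible}).
\item Lemma~\ref{lem:definedH}(i), applied to $xy^k\Tt=(\bm{x}_{k-1},\bm{x}_{xk},\bm{x}_k)$, reduces well-definedness of $sxQ_k$ to $\eps\al_{xk}<\eps b$.
\item That inequality comes from the chain $p_{xk}/q_{xk}<\acc_H(\al_{xk})<p_k/q_k\le p_i/q_i<\acc_H(\al_i)$ and the monotonicity of $\acc_H$ on the branch fixed by the sign of $t$ (Lemma~\ref{lem:moreAlOrder}). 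Strictness comes from the irrationality of $b$.
\item Irrationality of $b$ also makes the lengths $(d_{k-1}-m_{k-1}b)/q_{k-1}$ pairwise distinct. Corollary~\ref{cor:visStair} then gives infinitely many visible nonsmooth points near $p_{k-1}/q_{k-1}<z_\infty$.
\end{itemize}

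This also corrects your explanation of why ``increasing'' matters. It is not about the side from which $\al_k$ approaches $b$. For decreasing sequences such as $x^ky\ov{x}^2Q_{H_b}^0$, the diagram does not degenerate at $b=\lim m_k/d_k$, and neither $s$ nor $\ov{s}$ can be made well-defined by further $x,y$-mutations. So no visible obstructions arise.
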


These are precisely the \(b\)-values for which \(H_b\) has only increasing staircases, i.e. the infinitely many nonsmooth points accumulate to the left. More information is found in Section~\ref{ss:embed4}. 

The paper is organized as follows. Section~2 develops the dictionary between \(\Tt\)-standard quadrilaterals and recursive triples, proving Theorem~\ref{thm:A}. Section~3 applies this dictionary to perfect classes for \(H\) and \(P\), proving Theorems~\ref{thm:B} and~\ref{thm:C}. Section~4 studies visible embeddings and obstructions, and proves the
ATF-visible staircase result, Theorem~\ref{thm:D}.

\subsection{Computing the ATF mutations} 
We begin by giving the necessary definitions to state Theorem~\ref{thm:A} more precisely. We will only consider cases where the ATBD for $H_b,P_b$ is a quadrilateral with one Delzant corner. To get the starting ATBD, we perform three nodal trades to the moment polytope of $H_b,P_b$. We denote these starting base diagrams as $Q_{P_b}^0$ or $Q_{H_b}^0,$ see Figure~\ref{fig:basediagr}.  In this section, we will describe the recursive formulas for determining possible mutations on $Q_{\bullet,b}^0$. 

Throughout the paper, for $\bullet\in\{H,P\}$, we use the notation
\[Q_{\bullet,b}^0=\begin{cases}
Q_{H_b}^0, \\
Q_{P_b}^0,
\end{cases}
\qquad
Q_{\bullet,b}(\Tt)=
\begin{cases}
Q_{H_b}(\Tt),\\
Q_{P_b}(\Tt).
\end{cases}
\]

\begin{figure}[h!]
    \centering
     \begin{overpic}[scale=1,unit=0.5mm]{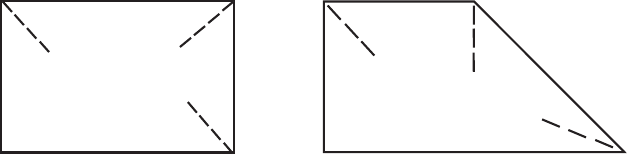}
 	\put (-6,-6) {$O$}
  \put (80,-6) {$X$}
   \put (80,55) {$V$}
    \put (-5,55) {$Y$}
    \put (10,45) {$\vn_Y$}
    \put (60,45) {$\vn_V$}
    \put (60,3) {$\vn_X$}
    \put (-4,25) {$1$}
    \put (40,-7) {$b$}
    %\put (85,25) {$1-b$}
    %\put (25,55) {$b$}

    \put (108,-6) {$O$}
      \put (210,-6) {$X$}
   \put (160,55) {$V$}
    \put (108,55) {$Y$} 
     \put (92,25) {$1-b$}
    \put (150,-7) {$1$}
     \put (130,55) {$b$}
      \put (120,45) {$\vn_Y$}
    \put (163,35) {$\vn_V$}
    \put (180,3) {$\vn_X$}
  \end{overpic}
    \caption{
    On the left is the ATBD $Q_{P_b}^0:=OXVY$. It is given by the moment polygon for $P_b$ with three nodal rays $\vn_Y=(1,-1),\vn_V=(-1,-1),$ and $\vn_X=(-1,1)$. Note that $Q_{P_b}^0=Q_{P_b}\left((1,1),(3,1),(5,1)\right).$ On the right is the ATBD $Q_{H_b}^0:=OXVY$. It is given by the moment polygon for $H_b$ with the three nodal rays $\vn_Y = (1,-1),\vn_V = (0,-1),$ and $\vn_X = (-2,1)$ inserted. Note that $Q_{H_b}^0=Q_{H_b}\left((1,1),(2,1),(4,1)\right).$
    }
    \label{fig:basediagr}
\end{figure}

\subsubsection{Mutation on ATBDs}
We begin by describing mutations on ATBDs. Here, we assume the nodal rays are half-lines starting from a vertex extending into the interior of the polygon. Each nodal ray splits the quadrilateral into two pieces. To perform a {\bf mutation} on an ATBD about a nodal ray, we fix the piece which contains the Delzant corner, and then act on the other piece by the unique $SL_2(\Z)$ transformation which has the nodal ray as an eigenvector and aligns the edges of the vertex the nodal ray emanates from. If we assume that the nodal ray does not intersect two vertices, then the resulting ATBD remains a quadrilateral. The ATBD obtained from a mutation corresponds to the same fibration on either $P,H$.

The side of the quadrilateral that the nodal ray intersects depends on the side lengths. For instance, as seen in Figure~\ref{fig:basediagr}, for $Q_{P_b}^0:=OXVY$, the nodal ray $\vn_Y$ extends to intersect: $OX$ if $b>1,$ the vertex $X$ if $b=1,$ and $XV$ if $b<1.$ The recursive formulas for determining the result of the mutation will depend on which side the nodal ray extends to intersect. As such, we use different notation for each possible side the nodal ray can extend to intersect. 
We denote the various mutations as follows: 
\begin{definition} \label{def:mutationI}
\begin{itemlist}
    \item[{(i)}] The mutations about $\vn_Y$ are either an $s$- or a $y$-mutation. If $\vn_Y$ intersects $XV$, this is a $y$-mutation and if $\vn_Y$ intersects $OX$, this is a $s$-mutation. 
    \item[{(ii)}] The mutations about $\vn_X$ are either an $\ov{s}$- or an $x$-mutation. If $\vn_X$ intersects $VY$, this is an $x$-mutation and if $\vn_X$ intersects $OY$, this is a $\ov{s}$-mutation. 
    \item[{(iii)}] The mutations about $\vn_V$ are either an $\ov{x}$- or a $\ov{y}$-mutation. If $\vn_V$ intersects $OX$, this is a $\ov{x}$-mutation and if $\vn_V$ intersects $OY$, this is a $\ov{y}$-mutation. 
\end{itemlist}
\end{definition}
\begin{rmk} 
    Note that for $w=x,y,s$, we have that $w\ov{w}Q=Q$, which explains the notation for the $\ov{w}$-mutations. The use of $x,y$ is for consistency with the previous literature \cite{MMW,M1,REU}. The notation for $s$ will be explained later with its relation to the shift symmetry acting on perfect classes.\footnote{Rather than $\ov{x}$ the papers \cite{MMW,M1,REU} used $v$, but we change that notation here for the inverse notation.} 
\end{rmk}
These mutations can be composed, which we denote as words, so $xyQ$ will refer to first doing a $y$ mutation to $Q$ followed by an $x$-mutation to $yQ$.  As our notation requires the nodal ray to extend to intersect a particular side of the quadrilateral, not all mutation sequences are well-defined for all quadrilaterals. For example, for $Q_{H_b}^0,$ the nodal ray $\vn_V$ is vertical, so $\vn_V$ will always extend to intersect $OX$ implying that for all $b \in (0,1),$ $\ov{x}Q_{H_b}^0$ is well-defined and $\ov{y}Q_{H_b}^0$ is not well-defined. 

\begin{figure}[ht]
\centering
\begin{tikzpicture}[scale=1.05, line cap=round, line join=round]

\usetikzlibrary{calc}

% Colors
\definecolor{myblue}{RGB}{85,95,255}
\definecolor{mygreen}{RGB}{40,130,40}
\definecolor{mypink}{RGB}{255,40,170}

% Main vertices of the quadrilateral
\coordinate (O) at (0,0);
\coordinate (A) at (0,4.2);
\coordinate (B) at (3.45,2.35);
\coordinate (X) at (4.9,0);

% Points on edges
\coordinate (L1) at ($(O)!0.63!(A)$);    % point on left edge
\coordinate (L2) at ($(O)!0.43!(A)$);    % another point on left edge
\coordinate (D1) at ($(O)!0.32!(X)$);    % point on bottom edge
\coordinate (D2) at ($(O)!0.66!(X)$);    % another point on bottom edge
\coordinate (R1) at ($(B)!0.42!(X)$);    % point on right edge
\coordinate (R2) at ($(B)!0.18!(X)$);    % another point on right 

% Point on the top slanted edge A--B where the pink x-ray ends
\coordinate (T1) at ($(A)!0.72!(B)$);

% Outer quadrilateral
\draw[black, very thick] (O) -- (A) -- (B) -- (X) -- cycle;

% ---------------------------
% Rays from A (top-left vertex)
% exactly two: blue and green
% ---------------------------
\draw[myblue, very thick] (A) -- (D1);     % to bottom edge
\draw[mygreen, very thick] (A) -- (R1);    % to right edge

% ---------------------------
% Rays from B (middle-right vertex)
% exactly two: green dashed and pink dashed
% ---------------------------
\draw[mygreen, very thick, dashed, dash pattern=on 12pt off 10pt] (B) -- (L1); 
\draw[mypink, very thick, dashed, dash pattern=on 9pt off 8pt] (B) -- (D2);

% ---------------------------
% Rays from X (bottom-right vertex)
% exactly two: blue dashed and pink solid
% ---------------------------
\draw[myblue, very thick, dashed, dash pattern=on 16pt off 10pt] (X) -- (L2);
%\draw[mypink, very thick] (X) -- (R2);
\draw[mypink, very thick] (X) -- (T1);

% Labels
\node[myblue, scale=1.5] at (-0.32,3.95) {$s$};
\node[mygreen, scale=1.5] at (0.42,4.45) {$y$};

\node[mygreen, scale=1.35] at (3.20,2.8) {$\ov y$};
\node[mypink, scale=1.35] at (3.88,2.2) {$\ov x$};

\node[mypink, scale=1.5] at (5,0.42) {$x$};
\node[myblue, scale=1.5] at (4.62,-0.35) {$\ov s$};

\end{tikzpicture}
\caption{The ATBD mutations at the vertex $Y$ are called $s$ and $y$. The mutation is called $s$ when the ray intersects $\ovr{OX}$ and $y$ when the ray intersects $\ovr{XV}$. Similarly, the mutations at the vertex $V$ are called $\ov{y}$ and $\ov{x}$, and the mutations at the vertex $X$ are called $x$ and $\ov{s}$. For a fixed $b$-value, at most one of the two mutations from each vertex is well-defined. Mutations shown in the same color are inverses of each other.}
    \label{fig:mutationDefs}
\end{figure}
\subsubsection{Triples from an ATBD}
\label{ss:TriplesATBD} 
We now roughly describe how we assign a combinatorial triple to each ATBD. This generalizes the author's work in \cite{M1}, which did this for a family of ATBDs obtained via  mutation from $Q_{H_b}^0.$

Let $\Tt$ be a \textbf{triple} of the form
\[
\mathcal{T} := (\boldsymbol{x}_\lambda, \boldsymbol{x}_\mu, \boldsymbol{x}_\rho) \in \mathbb{Z}^3 \times \mathbb{Z}^3 \times \mathbb{Z}^3,
\]
where for $\al=\la,\mu,\rho$, $\boldsymbol{x}_\al = (p_\al, q_\al, t_\al)$. In Definition~\ref{def:DecQuad} and Lemma~\ref{lem:standFrom}, given a triple satisfying various properties, we define a quadrilateral $Q_{\bullet,b}(\Tt):=OXVY$ for $\bullet=H,P.$ We summarize this construction. 

The quadrilateral $OXVY$ has a Delzant corner at  $O$ and three non-Delzant corners at $Y$, $V$, and $X$. The polygon without the nodal rays
corresponds to a singular toric variety: each non-Delzant corner is a fixed point
of the toric action, locally modeled by a cyclic quotient singularity of type
$\frac{1}{q^2}(1, pq-1)$. From the triple, $(p_\la,q_\la)$ determines the singularity at $Y,$ $(p_\mu,q_\mu)$ at $V,$ and $(p_\rho,q_\rho)$ at $X.$ 
The nodal rays and direction vectors are then chosen so that the quadrilateral is $\Tt$-standard; see Definition~\ref{eq:tStandard}. For instance, the nodal ray at $Y$ is $(q_\la,-p_\la).$ This forces the nodal rays and direction vectors to be determined by Lemma~\ref{lem:standFrom}.

The side lengths are also encoded in the triple. Each element $(p,q,t)$ of a triple is required to lie in a set
$\mathcal{X} \subset \mathbb{Z}^3$ defined by an integral quadratic condition
(see \eqref{eq:Xdef}). In Section~\ref{ss:IntroperfectClasses}, we see for a $(p,q)$-perfect class for $H,P$ there is a $t$ such that $(p,q,t) \in \X$. From $(p,q,t) \in \mathcal{X}$, one derives
integral coordinates $(d;m)$ for $H$ and $(e,f)$ for $P$ defined by:
\begin{align}\label{eq:dmefIntro}
d &= \tfrac{1}{8}(3(p+q)+t), \quad m = \tfrac{1}{8}(p+q+3t), \notag\\ e &= \tfrac{1}{4}(p+q+t), \quad f = \tfrac{1}{4}(p+q-t).
\end{align}
These derived coordinates determine the affine side lengths of
$Q_{\bullet,b}(\mathcal{T})$ as functions of $b$. For example, in the case of $H$,
\[
|OY| = \frac{d_\lambda - m_\lambda b}{q_\lambda}, \qquad
|XV| = \pm\frac{m_\lambda - d_\lambda b}{q_\rho q_\mu},
\]
and the remaining side lengths are similar expressions
(see Definition~\ref{def:DecQuad} and Lemma~\ref{lem:XVVY}).

 Note, for instance, when $b=m_\la/d_\la,$ the length $|XV|=0.$ The points where $b=m_\la/d_\la$ correspond to bifurcation points in the mutation graph. If $Q_{\bullet,b}(\Tt)$ was obtained via a mutation at $\vn_Y$ from $Q_{\bullet,b}(\Tt'),$ then for $b=m_\la/d_\la,$ the nodal ray at $\vn_Y$ extended to intersect $X.$ For $b$ just below or above this critical value, either the mutation $y$ or $s$ is well-defined. As we see in Section~\ref{ss:IntroperfectClasses}, these coefficients $(d;m)$ (resp. $(e,f)$) correspond to homology classes in $H_2(H;\Z)$ (resp. $H_2(P;\Z)$) determining the corresponding $(p,q)$-perfect classes. The bifurcation points of the mutation graph are detecting this data.

\begin{example} \label{ex:baseDiagrams}

    We can check that the base quadrilaterals seen in Figure~\ref{fig:basediagr} can be described by triples. In particular, we have
    \[Q_{H_b}^0=Q_{H_b}\left((1,1,2),(2,1,-1),(4,1,1)\right)\] 
    and 
     \[ Q_{P_b}^0=Q_{P_b}\left((1,1,2),(3,1,0),(5,1,2)\right).\]
     Further each element in the triple associated to $Q_{H_b}^0$ has nonnegative integral solutions $(d;m)$ to \eqref{eq:dmefIntro}, and each element in the triple associated to $Q_{P_b}^0$ has nonnegative integral solutions $(e,f)$ to \eqref{eq:dmefIntro}.
\end{example}

With this example in mind, we define the base triples:
\begin{align}
    \Tt_0^H&:=\left((1,1,2),(2,1,-1),(4,1,1)\right) \\
    \Tt_0^P&:=\left((1,1,2),(3,1,0),(5,1,2)\right).
\end{align}

\noindent\textbf{Mutations on triples.}
On the algebraic side, we define six corresponding mutations of triples. The \(x\)- and \(y\)-mutations were introduced in~\cite{MMW}; they replace one entry of the triple using a linear recursion determined by the other two entries. The \(s\)-mutation, introduced here, cyclically shifts the triple and applies the linear transformation
\[
        S(p,q,t)=(6p-q,p,-t)
\]
to the displaced entry. The $\ov{x},\ov{y},\ov{s}$-mutations are the inverses of these three mutations. The explicit formulas for all six mutations are given in Definition~\ref{def:mutTrip}. In Section~\ref{ss:nodalRays}, we see that these algebraic mutations correspond precisely to the mutations on the ATBD. 

We require that our triples satisfy certain compatibility conditions (called a \textbf{recursive triple}, Definition~\ref{def:genTrip}) which ensure that, after mutation, each entry of the triple remains in the
appropriate set \(\mathcal X_H\) or \(\mathcal X_P\); see
Proposition~\ref{prop:genTrip}. Both base triples $\mathcal{T}^H_0$ and $\mathcal{T}^P_0$ are recursive triples, and Proposition~\ref{prop:genTrip} shows that all mutations of a recursive triple remain recursive. The fact that the triples are recursive also implies that the corresponding quadrilaterals $Q_{\bullet,b}(\Tt)$ are $\T$-standard, see Definition~\ref{eq:tStandard}.

We can now state the theorem on the recursive formulas for the ATBD, which is proved in Section~\ref{ss:ATFandTrip} by Proposition~\ref{prop:ATFMut}.

\begin{thm} \label{thm:ATFMut}
    Let $w$ be a word of $x,y,s,\ov{x},\ov{y},\ov{s}.$ Assume that there is a $b$-value such that $wQ_{H_b}^0$ is well-defined. Then,
    \[ wQ_{H_b}^0=Q_{H_b}(w\T_0^H).\]
    Assume that there is a $b$-value such that $wQ_{P_b}^0$ is well-defined. Then,
    \[ wQ_{P_b}^0=Q_{P_b}(w\T_0^P).\]
\end{thm}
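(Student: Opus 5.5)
Induction on the length of the word $w$. The base case $w=\emptyset$ is Example~\ref{ex:baseDiagrams}: $Q_{H_b}^0=Q_{H_b}(\T_0^H)$ and $Q_{P_b}^0=Q_{P_b}(\T_0^P)$. For the inductive step, write $w=\sigma w'$ with $\sigma\in\{x,y,s,\ov{x},\ov{y},\ov{s}\}$. If $wQ_{\bullet,b}^0$ is well-defined for some $b$, then $w'Q_{\bullet,b}^0$ is well-defined for that $b$. By induction, $w'Q_{\bullet,b}^0=Q_{\bullet,b}(\Tt)$ with $\Tt=w'\T_0^\bullet$, and $\Tt$ is a recursive triple by Proposition~\ref{prop:genTrip}. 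So the whole statement reduces to a one-step claim: for any recursive triple $\Tt$ and any $b$ at which $\sigma Q_{\bullet,b}(\Tt)$ is well-defined,
\[
\sigma Q_{\bullet,b}(\Tt)=Q_{\bullet,b}(\sigma\Tt).
\]
Since $\ov{w}$ is the inverse of $w$ on both sides (the remark after Definition~\ref{def:mutationI} for ATBDs, and Definition~\ref{def:mutTrip} for triples), it should be enough to verify $\sigma=x,y,s$. The barred cases then follow by applying the unbarred identity to the triple $\ov{\sigma}\Tt$. This needs the side condition defining $\ov{\sigma}$ on $Q(\Tt)$ to match the side condition defining $\sigma$ on the mutated quadrilateral, which I would check directly from Figure~\ref{fig:mutationDefs}.

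For the one-step claim I would use the $\Tt$-standard form of Lemma~\ref{lem:standFrom}, which fixes $Q_{\bullet,b}(\Tt)$ up to the Delzant corner at $O$. Take a $y$-mutation. The nodal ray at $Y$ is $\vn_Y=(q_\la,-p_\la)$ and it meets $XV$. The piece containing $O$ is fixed. The triangle cut off near $V$ is acted on by the $SL_2(\Z)$ shear $M$ that has $\vn_Y$ as eigenvector and aligns the two edges at $Y$. I would write $M$ explicitly in terms of $(p_\la,q_\la)$ and apply it to the edge directions and the nodal ray at $V$. I then check that the new polygon is $\Tt'$-standard for $\Tt'=y\Tt$. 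This means checking three things: the new corner singularities have types $(p,q)$ given by the entries of $\Tt'$, the nodal rays and edge directions agree with the formulas of Lemma~\ref{lem:standFrom} (after relabeling the vertices $O,X,V,Y$ cyclically where needed), and the affine side lengths agree with Definition~\ref{def:DecQuad} and Lemma~\ref{lem:XVVY}. The $x$-mutation is handled the same way with the ray at $X$.

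For the $s$-mutation the ray at $Y$ meets $OX$ instead, so the cut piece contains $X$. After the mutation the roles of the corners rotate, which is why $s$ cyclically shifts the triple and applies $S(p,q,t)=(6p-q,p,-t)$ to the displaced entry. Here I would confirm that $M$, conjugated by the relabeling, acts on the displaced corner's $(p,q)$ by $(p,q)\mapsto(6p-q,p)$. The side lengths are more delicate: $t\mapsto -t$ swaps $m$ and $d$ up to the linear change in \eqref{eq:dmefIntro}, for example turning $d-mb$ into an expression in the new entry. The one sign choice is the $\pm$ in $|XV|$, and it is fixed by which side of $b=m_\la/d_\la$ we are on.

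I expect the main obstacle to be matching the side lengths, not the directions. The directions come down to $2\times2$ integer matrix identities that follow from the recursion formulas and the determinant identities the paper establishes for recursive triples (Section~\ref{ss:trip}). The lengths, by contrast, need the quadratic relation defining $\X$ and the compatibility identities among entries of a recursive triple (Definition~\ref{def:genTrip}), so that the lengths computed geometrically after the shear equal the $(d;m)$ or $(e,f)$ expressions of the new entries. My first approach is to compute one length, say $|OY|$ or the new $|XV|$, directly from the intersection point of $\vn_Y$ with the opposite edge. I would then obtain the remaining lengths from closure of the polygon and the known edge directions. I would do this once with a general $\Tt$, treating $H$ and $P$ in parallel since only the formulas for $(d;m)$ versus $(e,f)$ differ.
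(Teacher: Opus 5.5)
Your proposal follows the paper's proof essentially step for step. The paper also reduces, via Proposition~\ref{prop:ATFMut}, to a single mutation of $Q_{\bullet,b}(\Tt)$ with $\Tt$ obtained from $\Tt_0^\bullet$; it gets the nodal rays and edge directions from shear identities (Lemma~\ref{lem:standard}), gets the side lengths from the intersection point of the nodal ray together with additivity relations such as $|OY_y|=|OY|+|VY|$ checked with the recursive-triple identities (Lemmas~\ref{lem:yLengths}, \ref{lem:xLengths}, \ref{lem:sLengths}), and handles $\ov{x},\ov{y},\ov{s}$ by inverting.

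One correction for the $s$-case. The shear at $Y$ fixes $\vn_Y$, so the substitution $(p,q)\mapsto(6p-q,p)$ is not produced by that shear. It comes from the total monodromy condition (iv) of Definition~\ref{eq:tStandard}, or equivalently from the formula $\vn_X=(p_\rho-6q_\rho,q_\rho)$ of Lemma~\ref{lem:standFrom}; against that formula, $\vn_{X_s}=-\vn_Y=(-q_\la,p_\la)$ checks at once. On the length side, $S$ does not swap $d$ and $m$: what you need is $d'=-d_\la$ and $m'=-m_\la$ for $S\bm{x}_\la$ (Lemma~\ref{lem:symTriple}). Also, the sign in $|XV|$ is the sign-matching/alternating $\eps$ of the triple, which flips under $s$, rather than something chosen from the side of $b$.
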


\begin{rmk} \label{rmk:ATFmut} 
    As noted above, the $x,y$-mutations on triples were defined in \cite{MMW} in the classification of perfect classes on $H$. Further, in \cite{MM,MMW} the $S$-symmetry defined as $S(p,q,t):=(6p-q,p,-t)$ is necessary in constructing all perfect classes for $H$, which is related to the $s$-mutation. In particular, $s^3\Tt=S\Tt.$  In \cite{MMW}, it was shown that all perfect classes for $H$ could be obtained by performing the $x,y$-mutations along with the $S,R$-symmetries to a starting triple. Theorem~\ref{thm:ATFMut} implies that, for quadrilateral ATBDs obtained from the starting diagrams $Q_{H_b}^0$ and $Q_{P_b}^0$ by well-defined mutations, the $x,y,s$-mutations and their inverses determine the corresponding mutation structure. This proves Theorem~\ref{thm:A}.
\end{rmk}

Theorem~\ref{thm:ATFMut} concerns ATF mutations on the starting base diagrams $Q_{H_b}^0$ and $Q_{P_b}^0.$ One question for future investigation is: suppose that $Q \subset \R^2_{\ge 0}$ is the base diagram of an ATF for $H$ (resp. $P$) with a smooth corner at $(0,0).$ Is $Q$ necessarily obtained from $Q_{H_b}^0$ (resp. $Q_{P_b}^0$) by a sequence of mutations? Here, we fix $b$ when performing mutations, so if instead we allow $b$ to vary, which quadrilaterals do we obtain that we cannot obtain by fixing $b$? Additionally, we only consider here cases where the ATBD is a quadrilateral, so to understand the full mutation graph of $Q_{P_b}^0$, one would also have to consider cases where the ATBD is a triangle.

There are the corresponding questions about the triples. We will see that the $(p,q)$-perfect classes for $H$ appear in triples mutated from $\T_0^H$, which describe ATBDs, but what other triples appear? Are these triples geometrically meaningful?

\subsection{Perfect Classes and ATBDs} \label{ss:IntroperfectClasses}
We now give some background on perfect classes for $H$ and $P$, and explain how they can be realized in ATBDs. In particular, the derived coordinates $(d;m)$ and $(e,f)$ from \eqref{eq:dmefIntro} that controlled the side lengths in Section~\ref{ss:TriplesATBD} turn out to correspond to homology classes that obstruct symplectic embeddings. This is explained in more detail in Section~\ref{ss:tripPerf}. 

For $M=H,P,$
 a {\bf quasi-exceptional class} $\bE \in H_2(M \# (N+1) \oCP^2,\Z)$ is a class of self-intersection $-1$ with first Chern number $1$. If $\bE$ is quasi-exceptional and also can be represented by a symplectically embedded $S^2,$ we call $\bE$ an {\bf exceptional class}. Let $\mathcal{E}(M)$ denote the set of all exceptional classes for $M$.

For two coprime integers $p,q$, let $W(p,q)=(m_1,\hdots,m_\ell)$ denote the weight sequence of $p/q.$ This is a sequence determined by the continued fraction of $p/q,$ defined in \cite[Definition 1.2.5]{ball}. We say that $\bE \in H_2(M)$ is a {\bf (p,q)-perfect class} if 
$\tilde{\bE}:=\bE-\sum_{i=1}^\ell m_i e_i \in H_2(\tilde{M})$ is exceptional where
$W(p,q)=(m_1,\hdots,m_\ell)$, $\tilde{M}$ is the $\ell$-fold blow up of $M$, and $H_2(\tilde{M})=H_2(M) \oplus \langle e_1,\hdots,e_\ell \rangle$ where $e_1,\hdots, e_\ell$ are the exceptional divisors.  If $\tilde{\bE}$ has self-intersection $-1$ and first Chern number $1$, then we call $\bE$ a $(p,q)$-quasi-perfect class, i.e. $\tilde{\bE}$ isn't necessarily represented by a symplectic $S^2.$

A $(p,q)$-{\bf unicuspidal symplectic (resp. algebraic) curve} in $M$ is a curve that is a smooth, embedded symplectic (resp. complex) surface in $M$ everywhere except at one singular point. Locally near this point, it is modeled by the cusp $x^p = y^q$. In \cite[Theorem G]{MS1}, McDuff and Siegel demonstrated a correspondence between $(p,q)$-perfect classes for $M=H,P$ and rigid $(p,q)$-unicuspidal symplectic curves in $M.$  

In the special case of $H$, in \cite[Theorem F]{McSi}, McDuff and Siegel proved that the rigid unicuspidal symplectic curves in $H$ are also in correspondence with rigid unicuspidal algebraic curves in $H.$ This was shown by realizing the curves in almost toric fibrations. They conjectured in \cite[Conjecture G]{McSi} that this also holds for other del Pezzo surfaces, including $P.$

As seen in Lemma~\ref{lem:AreClasses}, assuming that $(p,q,t) \in \mathcal{X}$ with $p,q>0$, if the solutions $(d;m)$ to \eqref{eq:dmefIntro} are positive integers, then $(d;m)$ is a $(p,q)$-quasi-perfect class for $H$. Similarly, if the solutions $(e,f)$ to \eqref{eq:dmefIntro} are positive integers, then $(e,f)$ is a $(p,q)$-quasi-perfect class for $P.$ 

Utilizing the correspondence shown in \cite[Theorem D]{McSi} and \cite[Theorem G]{MS1}, when the elements of the combinatorial triple describing an ATBD are positive, we can conclude those describe perfect classes. We obtain the immediate corollary of Theorem~\ref{thm:ATFMut}:
\begin{cor} \label{cor:isPerfect}
    Let $w$ be a word of $x,y,s,\ov{x},\ov{y},\ov{s}$ and \[w\Tt_0^\bullet=:\left((p_\la,q_\la,t_\la),(p_\mu,q_\mu,t_\mu),(p_\rho,q_\rho,t_\rho)\right).\] If there is a $b$-value such that $wQ_{\bullet,b}^0$ is well-defined and each $p,q>0$, then
    there exist $(p_\la,q_\la)-,(p_\mu,q_\mu)-,(p_\rho,q_\rho)-$perfect classes for $\bullet=H$ or $\bullet=P$. 
\end{cor}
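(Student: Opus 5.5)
The argument has three steps: apply Theorem~\ref{thm:ATFMut} to get an ATBD encoded by $w\Tt_0^\bullet$, extract from each entry of the triple an integral quasi-perfect class, and then upgrade quasi-perfect to perfect using the realization of the corresponding classes in an almost toric fibration.

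\emph{Step 1.} Fix $b$ so that $wQ^0_{\bullet,b}$ is well-defined. Theorem~\ref{thm:ATFMut} gives $wQ^0_{\bullet,b}=Q_{\bullet,b}(w\Tt_0^\bullet)$. Since $\Tt^\bullet_0$ is a recursive triple (Example~\ref{ex:baseDiagrams}), Proposition~\ref{prop:genTrip} shows that $w\Tt_0^\bullet$ is recursive. Hence each entry $\boldsymbol{x}_\al=(p_\al,q_\al,t_\al)$ lies in $\mathcal{X}_\bullet$, so the coordinates $(d_\al;m_\al)$ (resp.\ $(e_\al,f_\al)$) from \eqref{eq:dmefIntro} are integers.

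\emph{Step 2.} Lemma~\ref{lem:AreClasses} requires these coordinates to be positive. The hypothesis only gives $p_\al,q_\al>0$, so positivity needs its own argument. I would take it from the well-definedness of the quadrilateral. The affine side lengths of $Q_{\bullet,b}(w\Tt_0^\bullet)$ are positive, and by Definition~\ref{def:DecQuad} and Lemma~\ref{lem:XVVY} they are expressions such as $(d_\la-m_\la b)/q_\la$ and $\pm(m_\la-d_\la b)/(q_\rho q_\mu)$. With $0<b<1$ and $q$'s positive, these force $d_\al>m_\al b\ge 0$ and $m_\al\ge 0$.

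As a cross-check, I would use the quadratic relation defining $\mathcal{X}$. This gives $d^2-m^2=pq-1$ and $3d-m=p+q$, which make $d,m>0$ whenever $p,q>0$ are not both $1$; the degenerate entry $(1,1,\cdot)$ I would check by hand. The analogous relations handle $(e,f)$ for $P$. Lemma~\ref{lem:AreClasses} then shows each entry is a $(p_\al,q_\al)$-quasi-perfect class.

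\emph{Step 3.} To show the classes are perfect, I use the fact that each vertex of the $\Tt$-standard quadrilateral is a cyclic quotient singularity of type $\frac1{q_\al^2}(1,p_\al q_\al-1)$ whose nodal ray is $(q_\al,-p_\al)$. By \cite[Theorem D]{McSi}, such an ATBD vertex produces a rigid $(p_\al,q_\al)$-unicuspidal curve in $H$ or $P$. That curve lies in a homology class, and I need to identify it with the quasi-perfect class $(d_\al;m_\al)$ or $(e_\al,f_\al)$. I would do this by matching the symplectic area of the curve, read off from the affine lengths of the edges adjacent to the vertex, against the formula $d-mb$ (resp.\ the area of $(e,f)$) as functions of $b$. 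Agreement for all $b$ in an interval pins down the class. \cite[Theorem G]{MS1} then converts the rigid unicuspidal curve into a $(p_\al,q_\al)$-perfect class, that is, $\tilde{\bE}$ is exceptional.

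I expect the main obstacle to be this identification of the curve's class with the triple's class. The positivity in Step 2 is routine, but the homology matching requires being precise about which side lengths encode the area of the curve at each vertex.
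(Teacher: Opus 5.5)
Your Step~3 is the paper's proof. Theorem~\ref{thm:ATFMut} makes $wQ^0_{\bullet,b}$ a $w\Tt_0^\bullet$-standard ATBD. \cite[Theorem D]{McSi} then gives a rigid $(p_\al,q_\al)$-unicuspidal curve for each entry with $p_\al,q_\al>0$, and \cite[Theorem G]{MS1} says the class of that curve is $(p_\al,q_\al)$-perfect. The corollary only asserts that \emph{some} $(p_\al,q_\al)$-perfect class exists, so this already finishes the proof. Keep the appeal to Theorem~\ref{thm:ATFMut} and drop everything else: the integrality part of Step~1, all of Step~2, and the homology matching you flag as the main obstacle.

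Those extra steps would only matter for the stronger claim that the specific class $(d_\al;m_\al)$ (resp.\ $(e_\al,f_\al)$) is the perfect one. For that claim, Step~2 as written does not even give quasi-perfectness, for two reasons.
\begin{itemize}
\item The side lengths of $Q_{\bullet,b}(\Tt)$ involve only $(d_\la;m_\la)$ and $(d'_\rho;m'_\rho)$ (or their $e,f$ analogues), never $(d_\mu;m_\mu)$. Also, $|OY|>0$ gives only $d_\la>m_\la b$, not $m_\la\ge 0$.
\item Your cross-check fails for $H$. The relations $3d-m=p+q$ and $d^2-m^2=pq-1$ force $d>0$ when $pq>1$, but not $m\ge 0$. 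For example, $(9,1,-6)\in\X_H$ has $(d;m)=(3;-1)$.
\end{itemize}
For $P$ the cross-check does work, since $2(e+f)=p+q$ and $2ef=pq-1$ force $e,f>0$ when $pq>1$.
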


\begin{example} \label{ex:baseExceptional}
    For the triples $\Tt_0^P$ and $\Tt_0^H$, we have the $(p,q)$-quasi-perfect classes 
    \begin{align*}
      &(1;1;W(1,1)),(1;0;W(2,1)),(2;1;W(4,1)) \quad \text{and}\\  &(1,0;W(1,1)),(1,1;W(3,1)),(2,1;W(5,1)).
    \end{align*}
    In these examples, all of the classes are perfect, which can be seen from Corollary~\ref{cor:isPerfect} and the base diagram $Q_{\bullet,b}^0$. 
    Note that if $p$ is a positive integer, then $W(p,1)=(1^{\times p}).$
\end{example}

The paper \cite{MMW} classifies all perfect classes for $H$ describing them via a set of triples $\T(H)$.
These triples are generated from a collection of seed triples by applying two symmetries
$S,R$— corresponding to a shift and a reflection on triples — together with all possible $x,y$-mutations. The set $\T(P)$ is defined analogously. The precise definition is given in Definition~\ref{def:SRintro}. The shift symmetry satisfies $S = s^3$ (three applications of the $s$-mutation). McDuff, Weiler, and the author showed in \cite{MMW} that the $(p,q)$-perfect classes for $H$
are precisely the elements of the triples in $\T(H).$ The author, Pires and Weiler showed that the set $\T(P)$ consists of $(p,q)$-quasi-perfect classes for $P.$ It is conjectured that $\T(P)$ consists of all the perfect classes for $P.$

In \cite{M1}, we only focused on one family of the perfect classes for $H$, and here, we generalize to all perfect classes. We prove in Section~\ref{ss:tripPerf} the following result: 
\begin{thm} \label{thm:HTrip}
    For each $(p,q)$-perfect class $(d;m)$ in $\mathcal{T}(H)$, there is a word $w$ such that $(p,q)$ is one element of the triple \[w\left((1,1),(2,1),(4,1)\right)=\left((p_\la,q_\la),(p_\mu,q_\mu),(p_\rho,q_\rho)\right).\] Moreover, there is an open interval of \(b\)-values with one endpoint
\(m_\lambda/d_\lambda\) such that \(wQ^0_{H_b}\) is well-defined.
\end{thm}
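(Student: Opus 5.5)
The plan is to use the classification of \cite{MMW}: every triple in $\T(H)$ arises from a finite family of seed triples by applying the shift $S$, the reflection $R$, and sequences of $x,y$-mutations. I will show that each generator of this description can be realized by words in $x,y,s,\ov{x},\ov{y},\ov{s}$ acting on $\T_0^H$, working only with the $(p,q)$-coordinates. Three observations make this possible. The $t$-coordinate is determined up to sign by $(p,q)$ through the quadratic condition defining $\X$. The mutation formulas of Definition~\ref{def:mutTrip} are the same recursions as those in \cite{MMW}. And $s^3\Tt = S\Tt$ by Remark~\ref{rmk:ATFmut}.

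The steps, in order, are as follows.

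\begin{enumerate}
\item \emph{Seeds.} I will show that each seed triple of \cite{MMW} equals $w_0\T_0^H$ for an explicit word $w_0$. The seeds come in a family indexed by an integer $k$, with entries such as $(2k+?,1)$-type classes and their neighbours. I expect them to be obtained from $(1,1),(2,1),(4,1)$ by iterating a fixed short word, for instance powers of $x$ or of $s$ combined with $y$. I will verify this by induction on $k$ using the mutation formulas.
\item \emph{Shift.} The symmetry $S$ is handled directly by $s^3$.
\item \emph{Reflection.} For $R$, I will show that $R$ conjugates $x$ and $y$ (and $s$ to $\ov{s}$, say), and that $R$ applied to the base triple equals $u\T_0^H$ for some short word $u$, possibly after a cyclic relabelling. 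Then $R\,w\T_0^H = w' u\T_0^H$, where $w'$ is $w$ with letters swapped.
\item \emph{Assembly.} Combining the previous steps gives a word $w$ with $(p,q)$ occurring as an entry of $w((1,1),(2,1),(4,1))$.
\end{enumerate}

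The second assertion is geometric. Theorem~\ref{thm:ATFMut} identifies $wQ^0_{H_b}$ with $Q_{H_b}(w\T_0^H)$ whenever it is well-defined, so I must exhibit $b$ for which every letter of $w$ is a legal mutation. Each letter's legality is a sign condition on a side length. By Definition~\ref{def:DecQuad} and Lemma~\ref{lem:XVVY}, the relevant side lengths are linear in $b$ and vanish at ratios $m/d$ of entries of intermediate triples.

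I will arrange $w$, reordering and reducing it if needed, so that the final letter creates the class $E$ in the $\la$-slot. The last bifurcation point is then $b = m_\la/d_\la$, where $|XV| = 0$. Next I will show that the earlier critical values, which are accumulation-type ratios $m/d$ of the intermediate classes, all lie strictly on one side of $m_\la/d_\la$. For this I will use monotonicity properties of the triples from Section~\ref{ss:trip}, for example that the ratios $m/d$ of successive mutations are nested. Then every intermediate condition holds on an open interval $(m_\la/d_\la, m_\la/d_\la+\eps)$ or $(m_\la/d_\la-\eps, m_\la/d_\la)$, and the final $y$- or $s$-choice is dictated by which side we take.

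I expect the main obstacle to be this last interval argument: showing that a single $b$ makes the whole word well-defined simultaneously. The $S$- and $R$-steps can move the relevant $b$-ranges by the corresponding actions on $b$, and nothing a priori prevents the constraints from conflicting. My first attempt is to choose $w$ greedily from the end. Working backward from $E$, I apply $\ov{x},\ov{y},\ov{s}$ to the triple containing $E$, always choosing the inverse mutation that is well-defined at $b$ near $m_\la/d_\la$, and I show that this descent strictly decreases $q$, or a similar complexity, until it reaches $\T_0^H$. Since inverse mutations are legal exactly when the forward ones are, this yields $w$ together with its interval at once. The algebraic description from \cite{MMW} then serves only to guarantee that the descent terminates at the base triple.
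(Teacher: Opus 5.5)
\textbf{The gap is in the second assertion, which carries the content of the theorem.} Your backward greedy descent rests on the claim that, for $b$ near $m_\la/d_\la$, repeatedly applying the legal inverse mutation strictly lowers some complexity and stops exactly at $\Tt_0^H$. You assert this but do not argue it, and it is not automatic.
\begin{itemize}
\item At $V$ one of $\ov{x},\ov{y}$ is essentially always legal; for instance $\ov{x}Q^0_{H_b}$ is defined for every $b$. So the descent has no stopping rule, and nothing forces the legal move to undo the previous step rather than extend the word.
\item The paths that actually work pass through non-positive triples such as $\ov{y}s\Tt_0^H=((0,1,-3),(2,1,-1),(5,1,-2))$. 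They also pass through $s$-blocks whose legal $b$-intervals alternate sides of $1/3$. A single monotone complexity is therefore not evident.
\item You need a starting triple containing $E$ for which $Q_{H_b}(\Tt)$ is well-defined with the side lengths of Definition~\ref{def:DecQuad} and Lemma~\ref{lem:XVVY}. In the paper those formulas are only obtained by induction along a mutation path from $\Tt_0^H$, which is the path you are trying to build.
\end{itemize}
A connectivity statement of this kind is close to the question the paper explicitly leaves open after Theorem~\ref{thm:ATFMut}. Also, the endpoint $m_\la/d_\la$ does not come from a last letter that creates $E$ in the left slot. Only $\ov{y}$ and $\ov{s}$ create a new left entry. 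The threshold $|XV|=0$ is that of a final $y$- or $s$-mutation, which moves the previous middle entry into the left slot.

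\textbf{The algebraic half cannot fill this in, because the words it produces are not geometrically legal.} Your seed and shift steps match the paper: $\mathcal{B}_{2n}=y\ov{x}^{n+2}\Tt_0^H$ and $S=s^3$. Your conjugation claim is also correct: the $(p,q)$-block of $R$ has determinant $-1$ and $RSR=S^{-1}$, so $Rx=yR$ and $Rs=\ov{s}R$ on triples. But $R\Tt_0^H=-s^4\Tt_0^H$ entrywise, which is a global sign rather than a relabelling. This gives, for example, $R(\mathcal{B}_{2n})=-x\ov{y}^{n+2}s^4\Tt_0^H$, and already $\ov{y}s^4\Tt_0^H=((-1,0,3),(11,2,1),(29,5,2))$ has $q_\la=0$. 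So no $Q_{H_b}$ along that path is well-defined. Likewise $R(\mathcal{B}_0)$ contains $(1,0,-3)$, so the classes in its $x,y$-mutations must be read from a different triple.

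The paper handles all of this by fixing explicit legal words in Proposition~\ref{lem:perf4H}: $y\ov{x}^{n+2}s^{3j}$, $yx^ny\ov{x}s^{3j+1}$, $yx^{n-1}y\ov{x}\ov{y}s$, and $R'(\mathcal{B}_0)=\ov{x}^2\ov{y}s\Tt_0^H$ with the re-indexing of Corollary~\ref{cor:rTrip}. It then checks every letter:
\begin{itemize}
\item $y$ and $s$ via $\al$-ordering (Lemma~\ref{lem:mDrelations}, Corollary~\ref{cor:ordering}) together with Lemma~\ref{lem:definedH}(i);
\item $x$ either via $p_\rho/q_\rho>6$ or by comparing the $S^{-1}$-shift of the middle entry with $1/3$ (Lemma~\ref{lem:definedH}(iii), Lemma~\ref{lem:md13});
\item $\ov{x}$ via $p_{\ovx\rho}<6q_{\ovx\rho}$;
\item the $s^k$-blocks via the interlacing in Lemma~\ref{lem:salphaOr} and Lemma~\ref{lem:sMutDefinedH}.
\end{itemize}
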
 

For the corresponding statement about $P$, we prove in Section~\ref{ss:tripPerf} the following result: 
\begin{thm} \label{thm:PTrip}
     For each $(p,q)$-quasi-perfect class $(e,f)$ in $\T(P)$, there is some word $w$ such that $(p,q)$ is one element of the triple \[w\left((1,1),(3,1),(5,1)\right)=\left((p_\la,q_\la),(p_\mu,q_\mu),(p_\rho,q_\rho)\right).\] Moreover, there is an open interval of \(b\)-values with one endpoint
\(e_\lambda/f_\lambda\) or \(f_\lambda/e_\lambda\) such that \(wQ^0_{P_b}\) is well-defined.
\end{thm}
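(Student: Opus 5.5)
The plan is to follow the structure of the proof of Theorem~\ref{thm:HTrip}, transported to $P$. First, I would describe $\T(P)$ as in Definition~\ref{def:SRintro}: it is generated from a family of seed triples by the symmetries $S,R$ together with all $x,y$-mutations. The aim is to show that every triple in $\T(P)$, up to the symmetries, is of the form $w\Tt_0^P$ for an explicit word $w$ in $x,y,s,\ov{x},\ov{y},\ov{s}$. Since $S=s^3$ (Remark~\ref{rmk:ATFmut}), the shift is absorbed into words. The reflection $R$ should correspond to the reflection of the ATBD across the diagonal; this swaps the roles of $e$ and $f$, and it is what produces the alternative endpoint $f_\la/e_\la$ in place of $e_\la/f_\la$.

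The next step is to reach each seed triple of $\T(P)$ from $\Tt_0^P=((1,1,2),(3,1,0),(5,1,2))$. In $\T(P)$ the seeds form a family, presumably indexed by $k$ and obtained by iterating a fixed mutation pattern. I would show by induction on $k$ that the $k$-th seed equals $u^k\Tt_0^P$ for a fixed short word $u$, possibly composed with $s$. This uses the explicit formulas of Definition~\ref{def:mutTrip} and Proposition~\ref{prop:genTrip}, which guarantees the triples stay recursive. The general triple is then $w=v\,u^k$, where $v$ is the word of $x,y$-mutations used in $\T(P)$, conjugated by powers of $s$ to account for the applications of $S$.

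Next comes well-definedness, which I expect to be the main obstacle. I must exhibit a $b$-interval on which every mutation of $w$ applied to $Q_{P_b}^0$ has its nodal ray hitting the prescribed side. By Theorem~\ref{thm:ATFMut}, after each prefix $w'$ we have $w'Q_{P_b}^0=Q_{P_b}(w'\Tt_0^P)$. The side lengths are then explicit affine functions of $b$ given by Definition~\ref{def:DecQuad} and Lemma~\ref{lem:XVVY}, for instance $|XV|\propto \pm(f_\la-e_\la b)$. The side that the next nodal ray hits is governed by the sign of such a length, that is, by whether $b$ lies above or below a ratio $e/f$ or $f/e$ attached to the current triple.

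So I need an interval $I$, with endpoint $e_\la/f_\la$ (or $f_\la/e_\la$) for the final triple, such that all these sign conditions hold simultaneously along the path. The natural strategy is to track, inductively along the word, the ratios $e_\al/f_\al$ of the triple entries. I would show that they are monotone, or nested, so that the critical ratios produced by later mutations lie strictly inside the intervals allowed by the earlier ones. The interval for the final word is then the open interval between the last bifurcation point $e_\la/f_\la$ and the nearest earlier constraint. Nestedness should follow from the known ordering of the accumulation and centre points of the $P$-staircase families, or directly from the recursive inequalities of Section~\ref{ss:trip}.

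Finally, to conclude that the classes are $(p,q)$-perfect and not merely quasi-perfect, I would check that all $p,q>0$ along the word. Corollary~\ref{cor:isPerfect} then applies.
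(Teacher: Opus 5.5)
Your outline (explicit words for each family, then propagate the $b$-constraints along the word via Theorem~\ref{thm:ATFMut} and $\al$-ordering) has the right shape. The gap is the step ``since $S=s^3$, the shift is absorbed into words'': it fails for $P$, and no nesting argument can repair it. On triples, $s^3$ does commute with $x,y$-words (Lemma~\ref{lem:commuteS}). Geometrically, however, $s^{3j}Q^0_{P_b}$ is undefined for every $b$ once $j\ge 2$. The middle entries of $\Tt_0^P$ and of $s^3\Tt_0^P$ are $(3,1,0)$ and $(17,3,0)$, both with $e=f$. Meanwhile the triple has switched from sign-matching to sign-alternating (Proposition~\ref{prop:genTrip}). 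So by Lemma~\ref{lem:definedP}(i) the first $s$ needs $b>1$ and the fourth needs $b<1$. Hence the words $v\,y\ov{x}^{n+2}s^{3j}$ and $v\,yx^ny\ov{x}s^{3j}$ that your plan assigns to the shifted families have empty $b$-intervals.

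The missing idea is to replace $s^{3j}$ by a different word with the same $(p,q)$-output. At the step where $s$ would fail, $b>1$ makes $y$ the well-defined mutation. The paper therefore uses $r_{3j}=(s^2y)^{j-1}s^3$ and shows $r_{3j}\Tt_0^P=I_t^{j-1}S^j\Tt_0^P$: the same $(p,q)$-coordinates with $t$ negated, so what is realized are the conjugate classes $(f,e)$. This is why the theorem only claims that $(p,q)$ occurs. It is also the actual source of the endpoint $f_\la/e_\la$: for $j\ge 1$ the triples $wy\ov{x}^{n+2}r_{3j}\Tt_0^P$ and $wyx^ny\ov{x}r_{3j}\Tt_0^P$ are sign-alternating with all $t<0$. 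So $e_\la/f_\la<1$ and the interval is $(1,f_\la/e_\la)$.

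Your treatment of $R$ is also wrong. Reflecting the ATBD across the diagonal is not a mutation: it exchanges $|OX|$ and $|OY|$, turning $Q^0_{P_b}$ into a rescaled $Q^0_{P_{1/b}}$, so it does not produce a word $w$ at all. Moreover, it comes from swapping the two sphere factors, so on perfect classes it acts by $(e,f;p,q)\mapsto(f,e;p,q)$ --- that is the $I_t$ symmetry. By contrast, $R$ changes $(p,q)$; for example $R(9,1,6)=(19,3,-6)$ for the right entry of $\mathcal{B}_1$.

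In the paper both seed families are reached by honest mutation words, $\mathcal{B}_{2n+1}=y\ov{x}^{n+2}\Tt_0^P$ and $R(\mathcal{B}_{2n+1})=yx^ny\ov{x}\,\Tt_0^P$. The latter is well-defined on $(1,e_\la/f_\la)$, not on an interval ending at $f_\la/e_\la$. You should pin these words down rather than posit a pattern $u^k$. Also, not every step is a comparison of $b$ with a ratio of the current triple. Several $\ov{x}$- and $x$-steps are justified by $b$-independent slope conditions ($p_{\ov{x}\rho}-6q_{\ov{x}\rho}<0$, resp.\ $p_\rho/q_\rho>6$; Lemma~\ref{lem:definedP}(iv),(ii)). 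The $x$-steps after $r_{3j}$ are justified by comparing $b$ with $1$ through $S^{-1}$ of the middle entry (Lemma~\ref{lem:definedP}(iii)).
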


The proof given in \cite{MMW} to show that the classes in $\T(H)$ are perfect used many different methods relying heavily on tools from ellipsoid embeddings. By realizing the classes for $\T(P)$ in the ATBD and applying Corollary~\ref{cor:isPerfect}, we conclude that
\begin{cor}
     Each class contained in a triple in $\T(P)$ is perfect. 
\end{cor}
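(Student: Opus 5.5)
The corollary should follow by combining Theorem~\ref{thm:PTrip} with Corollary~\ref{cor:isPerfect}. Fix a triple in $\T(P)$ and one of its entries, say a $(p,q)$-quasi-perfect class $(e,f)$.

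\emph{Step 1.} Theorem~\ref{thm:PTrip} gives a word $w$ such that $(p,q)$ occurs as one entry of $w\big((1,1),(3,1),(5,1)\big)$. It also gives an open interval of $b$-values on which $wQ^0_{P_b}$ is well-defined. Choose any $b$ in this interval.

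\emph{Step 2.} By Theorem~\ref{thm:ATFMut}, $wQ^0_{P_b}=Q_{P_b}(w\T_0^P)$. Hence the $(p,q)$-components of $w\T_0^P$ are exactly the triple from Theorem~\ref{thm:PTrip}, and they encode the three non-Delzant corners of a genuine ATBD for $P_b$.

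\emph{Step 3.} Check positivity, so that Corollary~\ref{cor:isPerfect} applies. Classes in $\T(P)$ are quasi-perfect with $p,q>0$, so the entry in question is positive. For the other two entries, I would argue that the corners of a well-defined quadrilateral ATBD carry cyclic quotient singularities $\frac{1}{q^2}(1,pq-1)$ with $q\ge 1$. I would also use that recursive triples from $\T_0^P$ (Proposition~\ref{prop:genTrip}) keep $p,q$ of the same sign. If this sign argument turns out to be delicate, only the entry $(p,q)$ itself actually needs to be perfect, and for it positivity is already known. Corollary~\ref{cor:isPerfect} then says there exists a $(p,q)$-perfect class for $P$.

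\emph{Step 4 (main obstacle).} Corollary~\ref{cor:isPerfect} only gives existence of \emph{some} $(p,q)$-perfect class, so I still have to show it is our class $(e,f)$. I would first use \eqref{eq:dmefIntro}: the entry $(p,q,t)$ of $w\T_0^P$ lies in $\mathcal X_P$, and its derived coordinates $(e,f)$ should coincide with the given class. This is because the mutations on triples act on the full $(p,q,t)$-data in the same way as the generation of $\T(P)$, whose defining formulas (Definition~\ref{def:SRintro}, with $S=s^3$) match the triple mutations.

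I would then argue uniqueness. For fixed $(p,q)$, a quasi-perfect class $\bE$ satisfies $c_1(\bE)=p+q$ and $\bE\cdot\bE=pq-1$. Together with the recursive-triple constraint in \eqref{eq:Xdef}, this pins down $t$ and hence $(e,f)$, up to the symmetry $e\leftrightarrow f$, which is harmless. The perfect class produced by Corollary~\ref{cor:isPerfect} (via \cite[Theorem G]{MS1}) is read off from the same derived coordinates, so it is $(e,f)$. Therefore $(e,f)$ is perfect.
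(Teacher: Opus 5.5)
Your argument is essentially the paper's (Theorem~\ref{thm:PTrip} realizes each class in a well-defined ATBD, and Corollary~\ref{cor:isPerfect} then gives perfection), and your Step~4 usefully makes explicit an identification the paper leaves implicit. However, what closes it is the swap $(e,f)\leftrightarrow(f,e)$: these are the only $(p,q)$-quasi-perfect classes for $P$, and exchanging the factors preserves perfectness. It is not an exact match of derived coordinates, which fails for the words $wr_{3j}$ since $wr_{3j}\Tt_0^P=wI_t^{j-1}S^j\Tt_0^P$ by Proposition~\ref{lem:perf4P}. In Step~3, take positivity from Corollary~\ref{cor:Positive} rather than from a sign-preservation property of recursive triples, which is false in general (e.g.\ $\ov{y}\ov{x}\Tt_0^P=((-1,3,6),(1,1,2),(7,1,4))$; cf.\ Example~\ref{ex:unusualTrip}(ii)).
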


The proofs to Theorems~\ref{thm:HTrip} and \ref{thm:PTrip} involve determining what the word $w$ is for each class as seen in Proposition~\ref{lem:perf4H} and \ref{lem:perf4P}. This implies that the perfect classes in $\T(H)$ and $\T(P)$ can all be obtained via mutation on $\T_0^H$ and $\T_0^P$. The more delicate part about the proof is determining that there are $b$-values such that $wQ_{H_b}^0$ and $wQ_{P_b}^0$ are well-defined; this is done in Section~\ref{ss:bValue}. 

To better explain these proofs and the results, we give an example of the result. This example focuses on $H$. The case of $P$ is more delicate and described in Example~\ref{eg:s^kforPandH2}.
\begin{example} \label{eg:s^kforPandH}

    The classes that appear in the triples $s^k\Tt_0^H$ correspond to perfect classes. Further, there is an interval of $b$-values such that $s^kQ_{H_b}^0$ is defined. By Theorem~\ref{thm:ATFMut}, we have that $s^kQ_{H_b}^0=Q_{H_b}(s^k\Tt_0^H)$. Let 
     \[s^k\Tt_0^H=:\left((p_{k-1},q_{k-1},t_{k-1}),(p_{k},q_{k},t_{k}),(p_{k+1},q_{k+1},t_{k+1})\right).\]
     For each $k,$ there is a $(p_k,q_k)$-perfect class for $H$ with solutions $(d_k;m_k)$ to \eqref{eq:dmefIntro}. Let $\al_k:=m_k/d_k.$ By Lemma~\ref{lem:salphaOr}, for all $k,$
    \[ \al_{2k-1} \leq \al_{2k+1} < \tfrac{1}{3} < \al_{2k+2} \leq \al_{2k}\]
    and $\lim_{k \to \infty} \al_k=1/3.$
As we see in Lemma~\ref{lem:sMutDefinedH}, the sequence of mutations $s^kQ_{H_b}^0$ is defined exactly when $(-1)^{k}\al_{k-1}<(-1)^k b<(-1)^k\al_k.$ Hence, as suggested by Theorem~\ref{thm:HTrip}, we see that $\al_k=m_k/d_k$ is appearing as the endpoints of the intervals for which $s^kQ_{H_b}^0$ is defined, whether it is a lower or upper endpoint depends on whether $\al_k<1/3$ or $\al_k>1/3.$ 

The case for $P$ is more delicate. There is no $b$-value such that $s^kQ_{P_b}^0$ is defined for all $k.$ For the case of $b=1,$ the first nodal ray extends to hit a vertex, so we consider neither $s$ nor $y$ well-defined in the case. The classes that appear in the triples $s^k\Tt_0^P$ are perfect classes for $P.$ Later on, we find a different sequence $w_k$ such that the classes in $s^k\Tt_0^P$ appear in $w_k\Tt_0^P$ and $w_kQ_{P_b}^0$ is well-defined. We defer the details to Example~\ref{eg:s^kforPandH2}.

\end{example}

\subsection{Sharp embeddings} \label{ss:introEmbed}

Results of Casals--Vianna \cite{CV} and Cristofaro-Gardiner et al. \cite{AADT} imply that for $M=P_b,H_b$ if $Q:=OXVY$ is obtained from ATF mutations on $Q_{\bullet,b}^0,$ then for $0<\eps<1,$ there is an embedding
\begin{equation} \label{eq:embedATF}
    (1-\eps)E(|OX|,|OY|) \sembeds M 
    \end{equation} 
where $E(z_1,z_2)$ denotes the $4$-dimensional ellipsoid given by \[E(z_1,z_2):=\{ (\zeta_1,\zeta_2) \in \C^2 \ | \ \pi\left(\frac{|\zeta_1|^2}{z_1}+\frac{|\zeta_2|^2}{z_2}\right)<1\}.\]

By considering sequences of mutations for a fixed $b$-value, we can construct sequences of ellipsoid embeddings. This motivates us to consider the ellipsoid embedding function. 

For a closed $4$-dimensional symplectic manifold $(M,\omega),$ define the ellipsoid embedding function to be 
  \[ c_M(z):=\inf\{ \la \ | \ E(1,z) \sembeds \la M\}\]
  where $z \geq 1$ and $\la M:=(M,\la \omega).$ A lower bound for this function comes from the volume obstruction:
  \[ V_M(z):=\sqrt{\frac{z}{Vol(M)}}\]
  giving $c_M(z) \geq V_M(z)$ and if there is equality for some $z$-value, we call this a {\bf full filling}.

  The function $c_M(z)$ is said to have a {\bf (infinite) staircase} if it has infinitely many nonsmooth points. There has been much research in determining for which $M$ does $c_M(z)$ have a staircase \cite{ball,Usher,ICERM,FM}. In fact, classifying the $b$-values such that $c_{H_b}(z)$ has an infinite staircase was recently completed in a series of papers \cite{ICERM,MM,MMW,MPW}. This classification is intricately tied to the classification of $(p,q)$-perfect classes for $H.$ The case for $c_{P_b}(z)$ has not been classified but various values of $b$ have been considered in prior work including \cite{Usher,REU,FM}. 

It is well-known that the function $c_M(z)$ is determined by considering obstructions from exceptional classes and the volume obstruction \cite{MP,Mell,CG}. For each quasi-exceptional class $\bE,$ there is a piecewise linear function $\mu_{\bE,M}(z)$ such that $c_M(z) \geq \mu_{\bE,M}(z)$ and \begin{equation} \label{eq:funSup} c_M(z)=\max\{\sup_{\bE \in \mathcal{E}(M)}\{\mu_{\bE,M}(z)\},V_M(z)\}.\end{equation} 

\begin{example}
    The monotone example $H_{1/3}$ has an infinite staircase by \cite{AADT}. Each nonsmooth point of the staircase is determined by an exceptional class realizing the supremum in \eqref{eq:funSup}.  The exceptional classes realizing this supremum are the ones considered in Example~\ref{eg:s^kforPandH} for $s^k\Tt_0^H.$ Letting
    \[ s^k\Tt_0^H=((p_{k-1},q_{k-1}),(p_k,q_k),(p_{k+1},q_{k+1})),\]
    the function $c_{H_{1/3}}(z)$ has a non-smooth point at each $p_k/q_k.$ These nonsmooth points accumulate at $z_\infty:=\lim p_k/q_k$. Note that as seen in Example~\ref{eg:s^kforPandH}, the value $b=1/3$ is $\lim_{k \to \infty} m_k/d_k$ where $(d_k;m_k)$ is the solution to \eqref{eq:dmefIntro} for each $(p_k,q_k).$
 
\end{example}

\subsubsection*{Side lengths as obstructions and embeddings}
The embedding in \eqref{eq:embedATF} gives an upper bound for the ellipsoid embedding function for an interval of $z$-values. Following \cite{McSi}, we call the embedding constructed from an ATBD in such a way a {\bf visible embedding.}

On the obstruction side, the side lengths of the quadrilateral are directly related
to the obstruction functions from the perfect classes in the triple. Specifically,
for a quadrilateral $Q_{\bullet,b}(\mathcal{T})$ where
$\mathcal{T} = (\boldsymbol{x}_\lambda, \boldsymbol{x}_\mu, \boldsymbol{x}_\rho)$,
the obstruction function $\mu_{\boldsymbol{x}_\lambda, \bullet}(z)$ from the class
$\boldsymbol{x}_\lambda$ satisfies
\begin{equation}\label{eq:obslengths}
\mu_{\boldsymbol{x}_\lambda, \bullet}(z) = \frac{z}{|OY|}
\end{equation}
for $z$ in an interval containing $p_\lambda/q_\lambda$, where $p_\lambda/q_\lambda$
is the slope of the nodal ray $\vec{n}_Y$ and corresponds to the $z$-value of a
nonsmooth point of $\mu_{\boldsymbol{x}_\lambda, \bullet}(z)$. In other words, near $p_\la/q_\la$, the
reciprocal of the side length $|OY|$ is the slope of the obstruction from the
perfect class at the vertex $Y$. Similarly, $|OX|$ is related to the obstruction
from the class of $S\bm{x}_\rho$ (see Remark~\ref{rmk:obsAndDiagrams} for the precise statement).

Recent work of McDuff--Siegel \cite{McSi} used ATBDs to construct \textbf{visible
obstructions}. Under certain conditions on the ATBD --- specifically, that the
mutation $s$ or $\overline{s}$ is well-defined (see Lemma~\ref{lem:visObs}) --- they
showed that the visible embedding in \eqref{eq:embedATF} is optimal, i.e., it computes a point on the ellipsoid embedding function. Thus, when these conditions hold, the ATBD simultaneously provides the embedding and the obstruction, both encoded in the side lengths.

If $M$ has an infinite staircase where infinitely many of the nonsmooth points come from visible embeddings and obstructions, we say that $M$ has an {\bf ATF-visible staircase}. McDuff-Siegel showed that the monotone examples $H_{1/3}$ and $P_{1}$ have ATF-visible staircases.

\subsubsection*{ATF-visible staircases}

Identifying $b$-values with ATF-visible staircases implies that the ATBDs are computing much of the ellipsoid embedding function. While the perfect classes giving the largest obstructions can be realized in the ATBD, how often do the ATBDs also realize the optimal embeddings? 

In \cite{AADT}, the authors considered the case where $M$ is a closed toric symplectic $4$-manifold. In such a setting, $M$ corresponds to some Delzant polygon $\Omega(M).$ We let $\text{Per}(M)$ be the affine perimeter of $\Omega(M)$ and $\text{Vol}(M)$ be the volume of $M$ normalized to be twice the area of $\Omega(M).$ Note that in our specific examples, we have
\begin{align*} \text{Per}(H_b)&=3-b, \quad \text{Vol}(H_b)=1-b^2, \\
\text{Per}(P_b)&=2+2b, \quad \text{Vol}(P_b)=2b.
\end{align*}
  
  The authors proved \cite[Theorem 1.13]{AADT} that if $M$ has an infinite staircase, then the staircase must accumulate to the solution $z_\infty>1$ to the following quadratic equation
  \begin{equation} \label{eq:accPt}
       \frac{1}{z}+z=\frac{Per(M)^2}{Vol(M)}-2
\end{equation}
and at the accumulation point $z_\infty,$ the function must be unobstructed, that is, \[c_M(z_\infty)=V_M(z_\infty).\]

In the paper \cite{M1}, for an ATBD $Q_b$ on $H_b$, we considered the sequence of ATBDs given by $y^kQ_b:=OXV_kY_k$. Note that $X$ stays constant as after each consecutive mutation, the nodal ray at $Y$ extended to intersect $\ovr{XV}.$ For a fixed $b$-value, the length of $\ovr{XV}$ went to zero, so the limiting shape is a triangle. In the limiting shape, the embedding in \eqref{eq:embedATF} corresponds to some $z$-value such that $c_{H_b}(z)=V_{H_b}(z).$ For the cases in \cite{M1}, this limiting ATBD corresponded to the full filling at the accumulation point. 

In fact, we show in Lemma~\ref{lem:fullFillingAccPt}, if there is a $b$-value such that  \(y^kQ_b\) is well-defined for all \(k\), \(|XV_k|\to0\), and the slope of $\ovr{V_kY_k}$ has irrational limit, then the limiting shape corresponds to a full filling at the accumulation point. By \cite{AADT}, this does not imply there is a staircase, but it shows that in these cases the ATBD recovers the accumulation point. 

However, for a fixed $k$, the ATBD $y^kQ_b$
in \cite{M1} did not correspond to an optimal embedding. Since $c_{H_b}(z)$ is strictly above the volume obstruction before the accumulation point, the ATBDs realizing these optimal embeddings must be quadrilaterals, not triangles.

Here we show this is achievable: under a mild ordering condition on the triple (Definition~\ref{def:correctlyOrdered}), for large enough $k$, the mutation $sxy^kQ_b$ is well-defined, and by the work of McDuff–Siegel \cite{McSi}, each such mutation yields an optimal embedding, producing infinitely many nonsmooth points and hence an ATF-visible staircase. Thus, these quadrilateral ATBDs produce optimal embeddings away from the
volume curve, before the accumulation point.

\begin{definition}\label{def:affineTriangularDegeneration}
Let \(Q_k=OX_kV_kY_k\) be a sequence of quadrilateral ATBDs obtained via
mutation from \(Q_{\bullet,b}^0\) for $\bullet=H,P$. We say that \(Q_k\) {\bf degenerates
affinely to a triangle} if the relevant non-axis affine length tends to
zero. More precisely:
\begin{itemize}
    \item if \(Q_k=y^kw'Q_{\bullet,b}^0=OXV_kY_k\), then this means
    \[
        |XV_k|\to 0;
    \]
    \item if \(Q_k=x^kw'Q_{\bullet,b}^0=OX_kV_kY\), then this means
    \[
        |YV_k|\to 0.
    \]
\end{itemize}
\end{definition}

\begin{thm} \label{thm:ATFvis}
     For each $k \geq 0,$ let $w_k=y^kw'$ where $w'$ is some fixed word. Assume $b$ is such that $w_kQ_{H_b}^0$ (resp. $w_kQ_{P_b}^0$) is well-defined for all $k$ and $w_k\Tt_0^H$ (resp. $w_k\Tt_0^P$) is correctly ordered. If the sequence $w_kQ_{H_b}^0$ (resp. $w_kQ_{P_b}^0$) degenerates affinely to a triangle, then $H_b$ (resp. $P_b$) has an ATF-visible staircase.  
\end{thm}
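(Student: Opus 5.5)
The plan is to produce, for all sufficiently large $k$, a well-defined quadrilateral $sxw_kQ_{\bullet,b}^0$ (or its analogue with $\ov{s}$) to which the visible-obstruction criterion of McDuff--Siegel (Lemma~\ref{lem:visObs}) applies. Each such diagram then gives an optimal visible embedding at a distinct $z$-value, hence infinitely many nonsmooth points of $c_{\bullet_b}$ computed by ATBDs. Concretely, write $w_k\Tt_0^\bullet=(\bm{x}_\la^{(k)},\bm{x}_\mu^{(k)},\bm{x}_\rho^{(k)})$.

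By Theorem~\ref{thm:ATFMut}, $w_kQ_{\bullet,b}^0=Q_{\bullet,b}(w_k\Tt_0^\bullet)=OXV_kY_k$. Since each step is a $y$-mutation, $X$ and the entry $\bm{x}_\rho$ are fixed. The side lengths are given by Definition~\ref{def:DecQuad} and Lemma~\ref{lem:XVVY} as explicit linear functions of $b$ in the derived coordinates $(d;m)$ or $(e,f)$. The degeneration hypothesis $|XV_k|\to 0$ then means that $\pm(m_\la^{(k)}-d_\la^{(k)}b)/(q_\rho q_\mu^{(k)})\to 0$.

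The first step is to show that $xw_kQ_{\bullet,b}^0$ is well-defined for large $k$, i.e.\ that $\vn_X$ meets $V_kY_k$ rather than $OY_k$. Since $|XV_k|$ is small while $|OY_k|$ and $|V_kY_k|$ stay bounded below (the triangle limit being nondegenerate), the nodal ray at $X$, whose direction is fixed by Lemma~\ref{lem:standFrom} in terms of $(p_\rho,q_\rho)$, should cross the long side $V_kY_k$ near the limiting triangle. The second step is to show that $s$ is then well-defined on $xw_kQ$, i.e.\ that the new nodal ray at $Y$ meets $OX$. Here the ``correctly ordered'' hypothesis (Definition~\ref{def:correctlyOrdered}) enters: it gives the needed inequalities among $p_\al/q_\al$ and among the ratios $m_\al/d_\al$, which control the sign of the relevant side-length expressions after the algebraic mutations $x$ and then $s$ of Definition~\ref{def:mutTrip}. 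I would compute the triple $sxw_k\Tt_0^\bullet$ explicitly and verify, via Definition~\ref{def:DecQuad}, that the required length is positive for all large $k$. I expect this step to be the main obstacle, since it requires asymptotic sign control of quantities depending on both $k$ and the fixed $b$.

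Once $s$ is well-defined, Lemma~\ref{lem:visObs} shows that the embedding \eqref{eq:embedATF} from the appropriate diagram is optimal at $z_k=p^{(k)}/q^{(k)}$, the slope coming from the relevant entry. It then remains to check that the $z_k$ are pairwise distinct. The entries $\bm{x}_\mu^{(k)}$ satisfy a linear recursion with growing $q_\mu^{(k)}$, so the ratios $p_\mu^{(k)}/q_\mu^{(k)}$ are strictly monotone and converge to the accumulation point $z_\infty$. Indeed, Lemma~\ref{lem:fullFillingAccPt} identifies the limit with $z_\infty$, because the limiting triangle gives a full filling. Therefore $c_{\bullet_b}$ has infinitely many nonsmooth points given by visible embeddings and obstructions, which is an ATF-visible staircase. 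The cases $H$ and $P$ run in parallel, since the mutation formulas are the same.
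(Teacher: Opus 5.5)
Your outline (for large $k$, show $sxy^kw'Q$ is well-defined, apply Lemma~\ref{lem:visObs}, then check distinctness) matches the paper's. However, both substantive steps are left unproved, and your heuristic for the $x$-step points the wrong way.

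\textbf{The $x$-step.} Since $V_k\to X$, the side $V_kY_k$ converges to the side $XY_\infty$ of the limiting triangle, and that side emanates from $X$. A ray from $X$ pointing into that triangle hits the opposite side $OY_\infty$. So the fixed ray $\vn_X=(p_\rho-6q_\rho,q_\rho)$ meets $V_kY_k$ for large $k$ only if it points \emph{outside} the limiting triangle, i.e.\ only if $p_\rho/q_\rho>6-|OX|/|OY_\infty|$. Lower bounds on side lengths say nothing about this. The paper proves the inequality in Lemma~\ref{lem:yTailXVisible} using three ingredients you do not use:
\begin{itemize}
\item the degeneration forces $b=\lim_k\al_k$ (Lemma~\ref{lem:affineDegenerationLimitH});
\item correct ordering gives $\acc_\bullet(\al_k)<p_\rho/q_\rho$ (Lemma~\ref{lem:pqAlOrder2}), hence $z_\infty=\acc_\bullet(b)\le p_\rho/q_\rho$;
\item $\{|OX|/|OY_\infty|,|OY_\infty|/|OX|\}=\{z_\infty,1/z_\infty\}$ with $z_\infty+1/z_\infty>6$ (this needs $b\neq 1/3$, resp.\ $b\neq 1$), so $6-|OX|/|OY_\infty|<z_\infty$.
\end{itemize}
The case where the ray hits the vertex $Y_k$ must also be excluded, by the same limit argument.

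\textbf{The $s$-step.} ``Compute $sxw_k\Tt$ and check a length is positive'' is not yet an argument, and no asymptotic estimate in $k$ is needed. By Lemma~\ref{lem:definedH}(i) (resp.\ Lemma~\ref{lem:definedP}(i)) applied to $xy^k\Tt$, the condition is exactly $\eps\al_{xk}<\eps b$, where $\al_{xk}$ is the ratio for the new middle entry. The missing idea is Lemma~\ref{lem:moreAlOrder}: $\eps\al_{xk}<\eps\al_i$ for all $i\ge k$. This follows from the chain $\acc_\bullet(\al_{xk})<p_k/q_k\le p_i/q_i<\acc_\bullet(\al_i)$ together with monotonicity of $\acc_\bullet$ (Lemmas~\ref{lem:accMonotone} and~\ref{lem:md13}). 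The monotonicity applies because $\eps t>0$ puts all the relevant $\al$'s on one side of $1/3$, resp.\ $1$. Letting $i\to\infty$ gives $\eps\al_{xk}\le\eps b$. Strictness requires $b$ to be irrational, which holds because $m_k,d_k$ satisfy a linear recursion with parameter $>2$.

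\textbf{Distinctness.} Distinct $z$-values at which a visible embedding is optimal do not by themselves give distinct nonsmooth points. Lemma~\ref{lem:visObs} only says $c_{\bullet,b}$ agrees with a line $z/|O\tilde Y_k|$ on some interval, and a priori these could all be the same line. The paper instead shows that the lengths $|O\tilde Y_k|=(d_{k-1}-m_{k-1}b)/q_{k-1}$ are pairwise distinct, again using irrationality of $b$. It then applies Corollary~\ref{cor:visStair}.
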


The triple being correctly ordered is what allows us to determine that a visible mutation sequence is well-defined to prove Theorem~\ref{thm:ATFvis}. A similar result holds for $w_k=x^kw'$. 

We then show that the conditions of this theorem hold for certain cases for $H.$ 
\begin{cor} \label{cor:ATFvis}
    For the $b$ values where $H_b$ has an increasing staircase and not a decreasing staircase, $H_b$ has an increasing ATF-visible staircase. In particular, for infinitely many $b$-values, $H_b$ has an ATF-visible staircase. 
\end{cor}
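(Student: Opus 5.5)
We derive Corollary~\ref{cor:ATFvis} from Theorem~\ref{thm:ATFvis}, taking the words of the form $w_k=y^kw'$ and the $b$-values to come from the classification of increasing staircases for $H_b$ in \cite{MMW,MPW}. In that classification, the $b$-values carrying an increasing staircase are exactly the limits $b_\infty=\lim m_k/d_k$ of sequences of perfect classes. These classes arise as one entry of the triples $y^k\Tt$, where $\Tt\in\T(H)$ is a fixed triple; this is the $x,y$-recursion of \cite{MMW}. By Proposition~\ref{lem:perf4H}, each such $\Tt$ can be written as $w'\Tt_0^H$ for an explicit word $w'$. The entries of $y^kw'\Tt_0^H$ then realize the staircase classes.

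The argument then has three steps.

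\textbf{Step 1: the mutations are well-defined.} We show that $w_kQ_{H_b}^0$ is well-defined for every $k$ at the fixed value $b=b_\infty$. Section~\ref{ss:bValue} shows that $w'Q_{H_b}^0$ is well-defined on an open interval with endpoint $m_\la/d_\la$. After that, each $y$-mutation is well-defined exactly when the nodal ray $\vn_Y$ hits $\ovr{XV}$, which by Lemma~\ref{lem:XVVY} is a sign condition on $m_{\la,k}-d_{\la,k}b$. The plan is to check that, on the increasing-only side, $m_k/d_k$ approaches $b_\infty$ monotonically from the correct side. Then the condition holds for all $k$ simultaneously at $b=b_\infty$, and $b_\infty$ lies in every interval.

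\textbf{Step 2: degeneration to a triangle.} We show that $|XV_k|\to0$. By Definition~\ref{def:DecQuad}, $|XV_k|=\pm(m_{\la,k}-d_{\la,k}b)/(q_{\rho}q_{\mu,k})$, and the $y$-recursion makes the $q$'s grow geometrically. Since $b_\infty$ is irrational and approximated by $m_k/d_k$ at the rate coming from the linear recursion, the numerator stays bounded or decays while the denominator grows, so the length tends to $0$.

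\textbf{Step 3: correct ordering.} We verify that $w_k\Tt_0^H$ is correctly ordered in the sense of Definition~\ref{def:correctlyOrdered}. The plan is to show the ordering is preserved by $y$-mutation, using the numerical properties of recursive triples from Section~\ref{ss:trip}. It then suffices to check it for $w'\Tt_0^H$ and possibly a few initial $k$.

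We expect Step~3, together with the restriction to the increasing-only $b$-values, to be the main obstacle. When a decreasing staircase is also present, the $x$-side data presumably violates the ordering, so the ordering inequality has to be tied exactly to which side the accumulating classes lie on. With all three steps in place, Theorem~\ref{thm:ATFvis} yields an ATF-visible staircase. The staircase is increasing because the visible obstructions come from the $sxy^k$ mutations at the classes $(p_{\la,k},q_{\la,k})$, whose ratios increase to $z_\infty$. Finally, \cite{MMW} gives infinitely many increasing-only $b$-values (for example, the $b$-values from the $y^k$-families built on the $S$-shifted seeds), which proves the last sentence of the corollary.
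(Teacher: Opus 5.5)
Your reduction matches the paper's. You take $b=\lim m_k/d_k$ along a family $y^k\Tt$ with $\Tt=w'\Tt_0^H$ of the form $wS^iR^\delta(\mathcal B_n)$. Well-definedness of $y^kw'Q_{H_b}^0$ comes from Theorem~\ref{thm:HTrip}; your monotone-approach argument just makes explicit why this $b$ lies in all the intervals. You then get $|X_kV_k|\to0$ from the affine length formula, check correct ordering, and apply Theorem~\ref{thm:ATFvis}.

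\textbf{The gap: the rational value $b=1/3$.} This is one of the $b$-values with an increasing but no decreasing staircase, and your argument does not cover it.
\begin{itemize}
\item Its staircase classes are the entries of $s^k\Tt_0^H$, which have $p/q<3+2\sqrt2$. They are not entries of any $y^k\Tt$ family, so your description of the increasing-only $b$-values omits $1/3$.
\item Theorem~\ref{thm:ATFvis} cannot reach $b=1/3$. Its hypotheses force $b$ to be irrational (this is the first step of its proof), and Lemma~\ref{lem:yTailXVisible} explicitly excludes $b=1/3$. Your Step~2 also uses irrationality.
\item The paper treats this case separately by citing \cite{AADT,McSi}.
\item Alternatively, apply Corollary~\ref{cor:visStair} directly with $w_k=s^k$. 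By Lemmas~\ref{lem:salphaOr} and~\ref{lem:sMutDefinedH}, $s^kQ_{H_{1/3}}^0$ is well-defined for every $k$. At $b=1/3$ one has $|OY_k|=(d_k-m_k/3)/q_k=(p_k+q_k)/(3q_k)$. These are pairwise distinct because the ratios $p_k/q_k$ strictly increase.
\end{itemize}

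\textbf{Step~3 is not where the difficulty lies.} Correct ordering is not what separates increasing from decreasing staircases.
\begin{itemize}
\item By Lemma~\ref{lem:correctlyOrdered}, correct ordering is preserved by $x$-mutations as well as $y$-mutations.
\item For the relevant base triples it follows directly from three facts. Lemma~\ref{lem:pOverq6} gives $p/q>3+2\sqrt2$. Corollary~\ref{cor:triples}~(i),(iii),(iv) gives the coordinatewise and ratio orderings, with $\eps t=|t|$. Finally, $|t|\geq3$ holds: it is true for the seeds $\mathcal B_n$, unchanged by $S$ and $R$, and only increased by $x,y$-mutations.
\item The increasing-only hypothesis enters solely through the classification of \cite{MMW,MPW}. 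That classification identifies the irrational increasing-only $b$ with the limits along $y^k$-families.
\item For the $x^k$-families that produce decreasing staircases, the quadrilaterals do not degenerate to a triangle at the limiting $b$. Moreover, neither $s$ nor $\ov{s}$ ever becomes available. This is shown in the example placed just before the proof of Corollary~\ref{cor:ATFvis2}.
\end{itemize}
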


We will prove this in Corollary~\ref{cor:ATFvis2}. We will explain the difference for increasing and decreasing staircases in Example~\ref{ex:visStair}. 

The classification of \(b\)-values for which \(H_b\) has an infinite staircase is delicate and has involved several different methods. It remains unclear whether this classification can be recovered from ATBDs
alone. More generally, analogous classification questions remain open for families such as \(P_b\) and for blowups of \(\CP^2\) at more than two
points. The results of this paper suggest that ATBDs encode much of the relevant information in these embedding problems.

\subsubsection*{Full fillings}
Letting $\Tt=((p_\la,q_\la,t_\la),(p_\mu,q_\mu,t_\mu),(p_\rho,q_\rho,t_\rho)),$ we have that for the case of $Q_{H_b}(\Tt)=OXVY$, 
\[ |XV|=\pm \frac{m_\la-d_\la b}{q_\rho q_\mu}\]
and for the case of $Q_{P_b}(\Tt)=OXVY,$
\[ |XV|=\pm \frac{e_\la-f_\la b}{q_\rho q_\mu}.\]
Hence, setting $b=m_\la/d_\la$ or $b=e_\la/f_\la,$ the affine lengths will be zero and the resulting mutation sequence will result in a triangle. In particular, the fact that $m_\la/d_\la$ is the endpoint of the interval for Theorem~\ref{thm:HTrip} allows us to give the following result. 

\begin{prop}\label{prop:fullFilling}
    Assume there is a $(p,q)$-perfect class for $H$ with solutions $(d;m)$ to \eqref{eq:dmefIntro}. Then, there exists a full filling 
     \[ E\left(1,\frac{pq-1}{q^2}\right) \sembeds \frac{d}{q}H_{m/d}.\] 
\end{prop}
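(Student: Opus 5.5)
The plan is to degenerate the quadrilateral from Theorem~\ref{thm:HTrip} to a triangle at $b=m/d$, and read off the full filling from the visible embedding \eqref{eq:embedATF} together with the volume identity. First, by Theorem~\ref{thm:HTrip}, choose a word $w$ such that $(p,q)$ appears in $w\Tt_0^H$. Then there is an open interval of $b$-values with endpoint $m_\la/d_\la$ on which $wQ_{H_b}^0$ is well-defined. If $(p,q)$ is not the $\la$-entry, I would first apply an extra well-defined mutation, or the cyclic/$S$-symmetry bookkeeping of Remark~\ref{rmk:ATFmut}, to move it into the $\la$-slot. This step needs care, and I would check it against Section~\ref{ss:bValue}. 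After this reduction, the endpoint is exactly $m/d$.

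By Theorem~\ref{thm:ATFMut}, $wQ_{H_b}^0=Q_{H_b}(w\Tt_0^H)=OXVY$, with $|OY|=(d-mb)/q$ and $|XV|=\pm(m-db)/(q_\rho q_\mu)$. As $b\to m/d$ from inside the interval, $|XV|\to 0$. In the limit the base diagram is the triangle $OXY$, with the Delzant corner at $O$ and edges along the axes.

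Mutations preserve affine area, so twice the area of $OXVY$ equals $\mathrm{Vol}(H_b)=1-b^2$ for every $b$. In the limit, $|OX|\cdot|OY|=1-b^2$ at $b=m/d$. The visible embeddings $(1-\eps)E(|OX|,|OY|)\sembeds H_b$ therefore have volume ratio tending to $1$. The ellipsoid embedding capacity is continuous in $b$ and in the ellipsoid parameters. Combining this continuity with the volume lower bound $c\ge V$ upgrades the approximate embeddings to a full filling $E(|OX|,|OY|)\sembeds H_{m/d}$, in the sense that $c_{H_{m/d}}(z)=V_{H_{m/d}}(z)$ at $z=|OX|/|OY|$ (or its reciprocal).

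It remains to identify the parameters. The numerics come from $(d;m)$ being $(p,q)$-perfect. The Chern condition is $3d-m=p+q$. The self-intersection condition is $d^2-m^2-\sum W(p,q)_i^2=-1$, and since $\sum W(p,q)_i^2=pq$, this gives $d^2-m^2=pq-1$. Hence with $\la=d/q$ and $b=m/d$ we get
\[
\la^2(1-b^2)=\frac{d^2-m^2}{q^2}=\frac{pq-1}{q^2},
\]
which is exactly the volume condition for $E(1,(pq-1)/q^2)$ to fill $\tfrac dq H_{m/d}$.

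The main obstacle is to show that the limiting side-length ratio is indeed $(pq-1)/q^2$, up to inversion and rescaling. I would compute $|OX|$ at $b=m/d$ from Definition~\ref{def:DecQuad} and Lemma~\ref{lem:XVVY}, using the identities relating the entries of a recursive triple from Section~\ref{ss:trip}, and compare it with $|OY|=(d^2-m^2)/(dq)=(pq-1)/(dq)$. I expect $|OX|$ to collapse to $1/(dq)\cdot q^2$-type expressions. Once the ratio is verified, the normalization $\la=d/q$ is forced by the volume equation above.
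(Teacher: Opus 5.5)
Your proposal is correct and follows the paper's proof: place the class in the left slot of a triple covered by Theorem~\ref{thm:HTrip} (the paper simply observes that every class in $\T(H)$ is the left entry of some such triple, rather than post-composing an extra mutation), let $b\to m/d$ so that $|XV|\to 0$, and read off the full filling from the limiting triangle $OXY$. The step you call the main obstacle is already settled by your own area identity: at $b=m/d$ one gets $|OX|=(1-b^2)/|OY|=\frac{(pq-1)/d^2}{(pq-1)/(dq)}=q/d$, whereas the paper computes $|OX|=q_\la/d_\la$ directly by expanding $m_\rho'm_\la-d_\rho'd_\la$ and using Lemma~\ref{lem:7cond} to show that this numerator equals $q_\la q_\rho$.
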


Note, in the case where $q=1$ this is clear from the toric picture, see Remark~\ref{rmk:fullFilling}. In general, we expect that for $z < z_\infty$, the functions $c_{H_b}(z)$ can be entirely computed from visible embeddings and obstructions from ATBD. If this is the case, the only times before the accumulation point where we get triangle embeddings correspond to these points where $b=m/d$ for some perfect class. 

Results of Cristofaro-Gardiner, McDuff, and the author in \cite[Corollary 1.2.7]{CGMM} imply that for $H_b$ if there is a full filling $E(1,z)$ into $H_b$ or $P_b$ where $z$ is less than the accumulation point, then $z$ must be rational. Their methods used subleading asymptotics of ECH capacities, which is quite different from our methods. Generalizing their result to this specific case, we conjecture that 

\begin{conjecture}
 Let $z_\infty^b$ denote the accumulation point for $H_b.$ The function
 $c_b(z)$ is unobstructed for $z<z_\infty^b$ if and only if $b=m/d$ for some $(d;m;p,q) \in \T(H).$  
\end{conjecture}

Note that \cite[Lemma 2.2.13]{MM} showed that there is no perfect class $(d;m;p,q)$ with $m/d=1/3$. From \cite{AADT}, the staircase for $H_{1/3}$ does not have any points that lie on the volume before the accumulation point. This is in contrast to $P,$ where all the nonsmooth points of $P_1$ before the accumulation point lie on the volume and there are infinitely many perfect classes $(e,f;p,q)$ with $e/f=1.$

It seems the ATBDs are best at detecting embeddings before the accumulation point, and it remains unclear which sharp embeddings after the accumulation point they can detect. 

\section*{Acknowledgements} I am very grateful to Dusa McDuff for many valuable discussions, for her
interest in this project, and for carefully reading multiple sections of this paper. Her comments and suggestions contributed significantly to the final version of the paper. I would also like to thank Tara Holm, Ana Rita Pires, and Morgan Weiler.

 \section{ATF Mutations from triples}
 The goal of this section is to prove Theorem~\ref{thm:ATFMut}, which shows that mutations of the relevant ATBDs are determined by algebraic mutations of triples. We begin in Section~\ref{ss:nodalRays} by studying how mutations affect the nodal rays and edge directions of quadrilaterals; this motivates the definition of mutations of triples. In Section~\ref{ss:trip}, we introduce the algebraic framework of recursive triples, generalizing the generating triples of~\cite{MMW}. Finally, in Section~\ref{ss:ATFandTrip}, we use this framework to compute the affine side lengths and prove Theorem~\ref{thm:ATFMut}.

\subsection{Mutations of Quadrilaterals}\label{ss:nodalRays}

In this section, we consider the nodal rays and direction vectors of ATBDs obtained via mutation from $Q_{H_b}^0$ and $Q_{P_b}^0$. In Definition~\ref{eq:tStandard}, we give a way to describe an ATBD by a triple, and in Lemma~\ref{lem:standard}, we show mutations of quadrilaterals agree with mutations on the triples. We defer the side lengths to Section~\ref{ss:ATFandTrip} as this requires extra care. For more background on ATBDs, we recommend \cite{Evans,LS,S}. 

We begin with some relevant background. Let \(Q=OXVY\) be a \textbf{decorated quadrilateral}, by which we mean a
quadrilateral with one Delzant corner at \(O\) and three nodal rays based at the remaining vertices \(X,V,Y\) where $Q$ is a base diagram for an ATBD for either $H$ or $P$. For two adjacent vertices $W_1,W_2$ of $Q,$ the slope of the edge between $W_1$ and $W_2$ is always rational. Hence, we let $\ovr{W_1W_2}$ denote the primitive lattice direction from $W_1$ to $W_2.$ We always assume $O$ is the Delzant corner at the origin with $\ovr{OX}=(1,0)$ and $\ovr{OY}=(0,1).$ The decorated quadrilateral $Q$ will have three nodal rays pointing into the interior of $Q$ based at each of the vertices $X,V,Y$ denoted by $\vn_X,\vn_V,\vn_Y$, respectively. Each nodal ray splits $Q$ into two pieces. At each non-Delzant vertex $W$, the mutation fixes the side of $Q$ containing the origin and acts on the other side by the matrix $M_W \in SL_2(\Z)$ defined as the unique shear fixing $\vn_W$ that aligns the two edges meeting at $W.$ 

We only consider mutations when the nodal ray does not intersect another vertex of $Q.$ Under this assumption, if $Q$ is obtained by mutation from $Q_{H_b}^0$ or $Q_{P_b}^0$, then it will always have one Delzant corner and three nodal rays with one node on each ray. By work of Symington \cite{S}, this implies that the mutation matrix $M_W$ is conjugate to $\begin{pmatrix} 1 & 1 \\ 0 & 1 \end{pmatrix}.$ Hence, if the nodal ray at some vertex is given by $(q,-p),$ then the mutation matrix must be of the form
\[M_{p,q}:=\begin{pmatrix} 1-pq & -q^2 \\ p^2 & 1+pq \end{pmatrix} .\]
As $M_{p,q}$ aligns the edges at the two vertices, if one edge at the vertex is given by $\begin{pmatrix} 0 \\ -1 \end{pmatrix},$ then the other edge is forced to be in direction $\begin{pmatrix} q^2 \\ 1-pq \end{pmatrix}.$ For each non-Delzant vertex $W$ of $Q$, there is an integral affine change of coordinates with linear part $B_W\in GL_2(\Z)$ such that
\begin{equation} \label{eq:Mmatrix} B_W\ovr{w_1}=\begin{pmatrix} 0 \\ -1 \end{pmatrix}, \quad B_W\ovr{w_2}=\begin{pmatrix} q^2 \\ 1-pq \end{pmatrix},\quad  B_W\vn_W=\begin{pmatrix} q \\ -p \end{pmatrix} \end{equation}
where $\ovr{w_1},\ovr{w_2}$ are the primitive lattice directions of the edges at $W.$ The corresponding quadrilateral with no nodal rays corresponds to a singular toric variety. Each vertex is a fixed point of the toric action and locally described by a cyclic quotient singularity of the form $\tfrac{1}{q^2}(1,pq-1)$ where $p,q$ are determined as in \eqref{eq:Mmatrix}. 

In the local coordinates after applying $B_W,$ the mutation matrix is of the form $M_{p,q}$ described above. Note that for any other vector $v$, we have $M_{p,q}$ is a shear
 \[ M_{p,q}v=v+\det\begin{pmatrix} v_1 & q \\ v_2 & -p \end{pmatrix}\begin{pmatrix} q \\ -p \end{pmatrix}.\]

The quadrilaterals we consider will always have the nodal ray at $Y$ already 
in the standard form $(q_\lambda, -p_\lambda)$ from \eqref{eq:Mmatrix}, so 
$M_Y = M_{p_\lambda, q_\lambda}$. We use a triple $\mathcal{T} = 
((p_\lambda, q_\lambda), (p_\mu, q_\mu), (p_\rho, q_\rho))$ to record the 
singularity type at each non-Delzant vertex: the vertices $Y$, $V$, $X$ carry 
singularities of type $\tfrac{1}{q_\lambda}(1, p_\lambda q_\lambda - 1)$, 
$\tfrac{1}{q_\mu}(1, p_\mu q_\mu - 1)$, and $\tfrac{1}{q_\rho}(1, p_\rho 
q_\rho - 1)$, respectively. The following definition makes precise the 
conditions under which $\mathcal{T}$ fully determines the nodal rays and edge 
directions of $Q$.

\begin{definition} \label{eq:tStandard}
For $\Tt=((p_\la,q_\la),(p_\mu,q_\mu),(p_\rho,q_\rho)) \in \Z^2 \times \Z^2 \times \Z^2$, we say a decorated quadrilateral $OXVY$ is $\Tt$-{\bf standard} if the following conditions hold: 
\begin{itemlist}
    \item[{i)}] $\vn_Y=(q_\la,-p_\la)$
    \item[{ii)}] $M_{p_\la,q_\la}\vn_V=(q_\mu,-p_\mu)$
    \item[{iii)}] $M_{p_\mu,q_\mu}M_{p_\la,q_\la}\vn_X=(q_\rho,-p_\rho).$
    \item[{iv)}] $M_{p_\rho,q_\rho}M_{p_\mu,q_\mu}M_{p_\la,q_\la}=\begin{pmatrix}
        0 & 1 \\ -1 & -6
    \end{pmatrix}$
\end{itemlist}

\end{definition}

\begin{example}
    We note that $Q_{H_b}^0$ is $\Tt_0^H$-standard and $Q_{P_b}^0$ is $\Tt_0^P$-standard. 
\end{example}

\begin{rmk}
    Assume that $Q=OXVY$ is $\Tt$-standard such that $s^2Q$ is well-defined, writing $s^iQ=OX_iV_iY_i$. The first three conditions of a $\Tt$-standard quadrilateral $Q$ imply that 
    \[\vn_{Y_0}=\begin{pmatrix}q_\la \\ -p_\la\end{pmatrix}, \ \vn_{Y_1}=\begin{pmatrix} q_\mu \\ -p_\mu\end{pmatrix}, \ \text{and} \ \vn_{Y_2}=\begin{pmatrix}q_\rho\\-p_\rho\end{pmatrix}.\] As explained in Remark~\ref{rmk:mono}, the fourth condition is necessary given (i)-(iii) for topological reasons to correspond to an ATBD on $P,H$. Furthermore, it is also a necessary condition for the algebraic mutations we defined to coincide with the geometric mutations as seen in the proof of Lemma~\ref{lem:standard}.

\end{rmk}

The next lemma shows that if $Q$ is a $\Tt$-standard ATBD for $P$ or $H$, then the nodal rays and edge directions of $Q$ are determined. 
\begin{lemma} \label{lem:standFrom}
      Let $\Tt=(\bm{x}_\la,\bm{x}_\mu,\bm{x}_\rho)=((p_\la,q_\la),(p_\mu,q_\mu),(p_\rho,q_\rho)) \in \Z^2 \times \Z^2 \times \Z^2$. Assume that $Q:=OXVY$ is $\Tt$-standard and corresponds to an ATBD on $P$ or $H$, then 
    \begin{align*}
    \notag
\ovr{OY}&=\begin{pmatrix}
0 \\ 1
\end{pmatrix}, \quad \quad \quad
\ovr{OX}=\begin{pmatrix}
1 \\ 0
\end{pmatrix},\\
    \vn_Y&=\begin{pmatrix}
q_\la \\ -p_\la
\end{pmatrix}, \quad \vn_{V}=M_Y^{-1}\begin{pmatrix} q_\mu \\ -p_\mu \end{pmatrix},
\quad \vn_X=\begin{pmatrix}
p_\rho-6q_\rho \\ q_\rho
\end{pmatrix}, \\ 
\ovr{VY}&=\begin{pmatrix}
-q_\la^2 \\ q_\la p_\la-1
\end{pmatrix}, \quad
\ovr{XV}=\begin{pmatrix}
1+p_\rho q_\rho-6q_\rho^2 \\ q_\rho^2
\end{pmatrix}
\end{align*}
Further, we have $M_{p_\mu,q_\mu}M_{p_\la,q_\la}\ovr{XV}=\begin{pmatrix}
    0 \\ -1
\end{pmatrix}.$
\end{lemma}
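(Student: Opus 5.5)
Most of the listed data is immediate. $\ovr{OY}$ and $\ovr{OX}$ are fixed by the normalization of the Delzant corner, and $\vn_Y$ is condition (i). Condition (ii) gives $\vn_V=M_Y^{-1}(q_\mu,-p_\mu)^T$ with $M_Y=M_{p_\la,q_\la}$. The remaining items are $\vn_X$, $\ovr{VY}$, $\ovr{XV}$ and the final identity. For these I would use two facts.

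\textbf{First fact: local structure at each vertex.} At a vertex $W$ with nodal ray $\vn_W$, the mutation matrix $M_W$ fixes $\vn_W$ and carries one incident edge direction to minus the other. This is the edge-alignment property of the shear in \eqref{eq:Mmatrix}: in local coordinates, $M_{p,q}(q^2,1-pq)^T=(0,1)^T$. Equivalently, $M_W\ovr{w_2}=-\ovr{w_1}$, with orientations taken outward from $W$.

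\textbf{Second fact: monodromy.} Going once around all three nodes, the total monodromy must agree with the boundary data. I would read condition (iv) as saying that $M_X M_V M_Y$, expressed in the frame where $M_V$ and $M_X$ have been conjugated by the earlier $M$'s, equals $A:=\begin{pmatrix}0&1\\-1&-6\end{pmatrix}$. Concretely, the conjugated matrices are $M_V=M_Y^{-1}M_{p_\mu,q_\mu}M_Y$ and $M_X=(M_{p_\mu,q_\mu}M_Y)^{-1}M_{p_\rho,q_\rho}(M_{p_\mu,q_\mu}M_Y)$.

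\textbf{Step 1: $\ovr{VY}$.} At $Y$ the edges are $\ovr{YO}=(0,-1)$ and $\ovr{YV}$. Since $\vn_Y=(q_\la,-p_\la)$ is already in standard form, $B_Y$ is the identity up to the sign choice. Equation \eqref{eq:Mmatrix} then gives $\ovr{YV}=(q_\la^2,1-p_\la q_\la)$, hence $\ovr{VY}=(-q_\la^2,p_\la q_\la-1)$. One checks directly that $M_{p_\la,q_\la}$ sends this vector to $\pm(0,1)$.

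\textbf{Step 2: $\vn_X$ and $\ovr{XV}$.} Condition (iii) gives $\vn_X=M_Y^{-1}M_{p_\mu,q_\mu}^{-1}(q_\rho,-p_\rho)^T$. Condition (iv) gives $M_{p_\mu,q_\mu}M_Y=M_{p_\rho,q_\rho}^{-1}A$, so $\vn_X=A^{-1}M_{p_\rho,q_\rho}(q_\rho,-p_\rho)^T=A^{-1}(q_\rho,-p_\rho)^T$, using that $M_{p_\rho,q_\rho}$ fixes its own ray. With $A^{-1}=\begin{pmatrix}-6&-1\\1&0\end{pmatrix}$ this yields $(p_\rho-6q_\rho,\,q_\rho)$, as claimed.

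For the edge $\ovr{XV}$, I would use the final identity as the mechanism. In the frame after applying $M_{p_\mu,q_\mu}M_Y$, vertex $X$ is in standard form: its ray is $(q_\rho,-p_\rho)$, and the edge from $X$ toward $O$ becomes $(0,-1)$ after transport. So $\ovr{XV}$ should satisfy $M_{p_\mu,q_\mu}M_Y\ovr{XV}=(0,-1)^T$, giving $\ovr{XV}=(M_{p_\mu,q_\mu}M_Y)^{-1}(0,-1)^T=A^{-1}M_{p_\rho,q_\rho}(0,-1)^T$. Now $M_{p_\rho,q_\rho}(0,-1)^T=(q_\rho^2,-1-p_\rho q_\rho)$, and applying $A^{-1}$ gives $(-6q_\rho^2+1+p_\rho q_\rho,\;q_\rho^2)$. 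This is exactly the stated $\ovr{XV}$, so the last claim of the lemma comes out of the same computation. As a consistency check, $\ovr{XO}=(-1,0)$ is sent by $M_X$ to the correct edge.

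\textbf{Main obstacle.} The delicate point is justifying, from the ATBD structure alone, that after transporting by $M_{p_\mu,q_\mu}M_Y$ the edge $\ovr{XV}$ lands precisely on $(0,-1)$ rather than on $(q^2,1-pq)$ or its negative. This is a question of orientation and of which edge plays the role of $w_1$ in \eqref{eq:Mmatrix}.

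To settle it, I would walk around the boundary. Cutting along the three rays and applying $M_Y$, then $M_V$, then $M_X$ successively straightens the edges $YV$, $VX$, $XO$ into a single broken line continuing $OY$. The eigen-relations fix each edge up to sign, and convexity of $Q$ with $O$ at the origin in the positive quadrant fixes the signs. Tracking this carefully is where I expect the real work to be. The base examples $Q_{H_b}^0$ and $Q_{P_b}^0$ serve as sign checks: for example, $\ovr{XV}=(1+4-6,1)=(-1,1)$ for $H$, matching Figure~\ref{fig:basediagr}.
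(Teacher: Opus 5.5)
Your proposal is correct. For $\ovr{OY},\ovr{OX},\vn_Y,\vn_V,\vn_X$ and $\ovr{VY}$ it is the paper's argument. The two proofs differ in how $\ovr{XV}$ and the final identity depend on each other.

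The paper reads $\ovr{XV}$ off locally at $X$. Once $\vn_X$ is known, the monodromy at $X$ is the shear fixing $\vn_X$, and it must align $\ovr{OX}$ with $\ovr{XV}$; with your matrix $A$, indeed $A^{-1}M_{p_\rho,q_\rho}A\,\ovr{OX}=\ovr{XV}$. The paper then proves the final identity separately, by one mutation at $Y$.

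You go the other way. You first establish $M_{p_\mu,q_\mu}M_{p_\la,q_\la}\ovr{XV}=(0,-1)^T$, and condition (iv) turns this into $\ovr{XV}=A^{-1}M_{p_\rho,q_\rho}(0,-1)^T$, by the same algebra you used for $\vn_X$. The advantage is that the edge-alignment convention is only ever used at a vertex on the $y$-axis in the normal form \eqref{eq:Mmatrix}, exactly as in your Step 1. So you never have to decide at $X$ whether the shear or its inverse sends which edge where. The paper's route instead keeps $\ovr{XV}$ a purely local fact at $X$ that needs no mutation.

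The step you flag as the main obstacle needs only one mutation and no separate convexity argument:
\begin{itemlist}
\item Mutate $Q$ at $Y$ to get $Q'=OX'V'Y'$. Whether $\vn_Y$ meets $XV$ or $OX$, the vertex $Y'$ lies on the $y$-axis and $\ovr{V'Y'}=M_{p_\la,q_\la}\ovr{XV}$.
\item The nodal ray at $Y'$ is $M_{p_\la,q_\la}\vn_V=(q_\mu,-p_\mu)^T$ by condition (ii).
\item Your Step 1 applied to $Q'$ then gives $M_{p_\mu,q_\mu}\ovr{V'Y'}=(0,-1)^T$, with the sign already determined. This is exactly the paper's proof of the last claim.
\end{itemlist}

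Two small corrections.
\begin{itemlist}
\item In the frame $B_X=M_{p_\mu,q_\mu}M_{p_\la,q_\la}$, the edge of $X$ sent to $(0,-1)^T$ is $\ovr{XV}$. The edge $\ovr{XO}$ is sent to $(q_\rho^2,1-p_\rho q_\rho)^T$; for $\Tt_0^H$, $M_{2,1}M_{1,1}(-1,0)^T=(1,-3)^T$. So the phrase ``the edge from $X$ toward $O$ becomes $(0,-1)$'' should be rephrased: $X$ sits in the normal form \eqref{eq:Mmatrix} with $\ovr{w_1}=\ovr{XV}$.
\item With your conjugated matrices, the product equal to $A$ is $M_YM_VM_X$, not $M_XM_VM_Y$. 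You never use that reading, since Step 2 applies (iv) in its literal form.
\end{itemlist}
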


\begin{proof}
    The formulas for $\ovr{OY}$, $\ovr{OX}$, $\vn_Y$, and $\vn_V$ come directly from the definition. By condition iii) of $Q$ being $\Tt$-standard, we have
    \[ M_{p_\mu,q_\mu}M_{p_\la,q_\la}\vn_X=\begin{pmatrix} q_\rho \\ -p_\rho\end{pmatrix},\]
    and by condition iv), we have $M_{p_\mu,q_\mu}M_{p_\la,q_\la}=M_{p_\rho,q_\rho}^{-1} \begin{pmatrix} 0 & 1 \\ -1 & -6 \end{pmatrix}.$
    Hence, 
    \[ \vn_X=\begin{pmatrix} 0 & 1 \\ -1 & -6 \end{pmatrix}^{-1}M_{p_\rho,q_\rho}\begin{pmatrix} q_\rho \\ -p_\rho\end{pmatrix}=\begin{pmatrix} p_\rho-6q_\rho \\ q_\rho \end{pmatrix} \]
    as desired. 
    For the statement about $\ovr{VY},$ this is immediate as we must have $M_{p_\la,q_\la}\ovr{VY}=-\ovr{OY}.$ The statement about $\ovr{XV}$ follows because the computation of $\vn_X$ determines the monodromy matrix at $\vn_X$, which must align $\ovr{OX}$ and $\ovr{XV}$.
    
    For the last statement, as $Q$ is an ATBD for $H,P,$ we can do a mutation about the nodal ray at $Y.$ Let $Q_y=OX_yV_yY_y$ be the mutated diagram. Regardless of where the nodal ray $\vn_Y$ intersects $Q$, we have that $M_{p_\la,q_\la}\ovr{XV}=\ovr{V_yY_y}.$ By condition ii) of Definition~\ref{eq:tStandard}, we have the nodal ray at $Y_y$ is $(q_\mu,-p_\mu).$ This implies that the matrix $M_{p_\mu,q_\mu}$ must align $M_{p_\la,q_\la}\ovr{XV}=\ovr{V_yY_y}$ with $\ovr{OY}$ as desired. 
  
\end{proof}

\begin{definition} \label{def:QTnoL}
Given a triple $\Tt=((p_\la,q_\la),(p_\mu,q_\mu),(p_\rho,q_\rho))$, we let $Q(\Tt)$ denote the decorated quadrilateral whose nodal rays and primitive edge directions are those described in Lemma~\ref{lem:standFrom}. At this stage, $Q(\Tt)$ records only the direction data; the affine side lengths will be specified later. For this to define a quadrilateral, we require $\ovr{VY}$ and $\ovr{XV}$ to be non-parallel.

\end{definition}

\begin{lemma}\label{lem:nodalPos}
    If $q_\la,q_\mu,q_\rho>0,$ then the nodal rays of $Q(\Tt)$ point into the interior of the quadrilateral. 
\end{lemma}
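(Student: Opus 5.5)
The plan is to reduce the statement to a local check at each of the three non-Delzant vertices. Near a vertex $W$, the interior of $Q(\Tt)$ is the open convex cone spanned by the two primitive edge directions \emph{emanating} from $W$. So it suffices to show that $\vn_W$ is a strictly positive linear combination of these two directions. Orientation-preserving maps in $SL_2(\Z)$ preserve positive combinations, so at each vertex we may move to convenient coordinates before checking.

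\textbf{Vertex $Y$.} The outgoing edges are $-\ovr{OY}=(0,-1)$ and $-\ovr{VY}=(q_\la^2,1-p_\la q_\la)$, and by Lemma~\ref{lem:standFrom} the nodal ray is $\vn_Y=(q_\la,-p_\la)$. Solving
\[
a(0,-1)+c(q_\la^2,1-p_\la q_\la)=(q_\la,-p_\la)
\]
gives $c=1/q_\la$ and $a=1/q_\la$. Both are positive when $q_\la>0$.

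\textbf{Vertex $X$.} The outgoing edges are $-\ovr{OX}=(-1,0)$ and $\ovr{XV}=(1+p_\rho q_\rho-6q_\rho^2,\,q_\rho^2)$, and the nodal ray is $\vn_X=(p_\rho-6q_\rho,q_\rho)$. The same linear solve gives both coefficients equal to $1/q_\rho$, which is positive when $q_\rho>0$.

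\textbf{Vertex $V$.} This is the only step needing care, because $\vn_V$ is given only implicitly as $M_Y^{-1}(q_\mu,-p_\mu)$. I would apply $A:=M_{p_\mu,q_\mu}M_{p_\la,q_\la}\in SL_2(\Z)$.
\begin{itemize}
\item Since $M_{p_\la,q_\la}\ovr{VY}=-\ovr{OY}=(0,-1)$, the outgoing edge $\ovr{VY}$ is sent to $M_{p_\mu,q_\mu}(0,-1)=(q_\mu^2,-1-p_\mu q_\mu)$, using the shear formula for $M_{p,q}$.
\item By the last statement of Lemma~\ref{lem:standFrom}, the other outgoing edge is sent to $A(-\ovr{XV})=(0,1)$.
\item The nodal ray is sent to $A\vn_V=M_{p_\mu,q_\mu}(q_\mu,-p_\mu)=(q_\mu,-p_\mu)$, since $(q_\mu,-p_\mu)$ is the fixed eigenvector of $M_{p_\mu,q_\mu}$.
\end{itemize}
Solving $a(0,1)+c(q_\mu^2,-1-p_\mu q_\mu)=(q_\mu,-p_\mu)$ again gives $a=c=1/q_\mu$, which is positive when $q_\mu>0$.

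Finally, I would note that each pair of edge directions has nonzero determinant, namely $\pm q^2$. So each cone is a genuine sector of angle less than $\pi$, and it is the interior angle of the quadrilateral at that vertex. Here we use that $Q(\Tt)$ is a quadrilateral, with $\ovr{VY}$ and $\ovr{XV}$ non-parallel as required in Definition~\ref{def:QTnoL}. This shows that each nodal ray points strictly into the interior.

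The expected obstacle is the bookkeeping at $V$: choosing the coordinate change $A$ correctly and keeping the signs of the outgoing directions consistent. The computations at $Y$ and $X$ are immediate.
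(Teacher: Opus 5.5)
Your proof is correct and takes essentially the paper's route: a local check at each non-Delzant vertex, with $SL_2(\Z)$ changes of coordinates reducing everything to the standard model at $Y$. The paper transports with $M_{p_\la,q_\la}$ at $V$ and with $M_{p_\mu,q_\mu}M_{p_\la,q_\la}$ at $X$, whereas you compute at $X$ directly and use $M_{p_\mu,q_\mu}M_{p_\la,q_\la}$ at $V$, and all your coefficients $a=c=1/q$ are right.

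One small tightening is needed in your last paragraph. It is the sign of those determinants, not merely their non-vanishing, that identifies the cone of outgoing edges with the interior angle. For the counterclockwise order $O,X,V,Y$ one gets $\det(\ovr{XV},-\ovr{OX})=q_\rho^2$, $\det(\ovr{VY},-\ovr{XV})=q_\mu^2$ and $\det(-\ovr{OY},-\ovr{VY})=q_\la^2$. All three are positive, so every vertex is convex, which is what your cone argument uses.
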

\begin{proof}
    We have that $\vn_Y$ points into the positive quadrant if $q_\la>0.$ To verify that it points into the interior, we also check that $\det(\vn_Y,\ovr{YV})>0$, which follows immediately. Note that the other cases follow from this computation; the statement about the nodal ray at $\vn_V$ follows as the $SL_2(\Z)$ matrix $M_{p_\la,q_\la}$ brings $\vn_V,\ovr{XV}$ to the standard form. Similarly, the statement about $\vn_X$ follows as the $SL_2(\Z)$ matrix $M_{p_\mu,q_\mu}M_{p_\la,q_\la}$ acts similarly near the vertex at $X.$
\end{proof}

Recall from Definition~\ref{def:mutationI}, our notation for the different mutations we can perform on an ATBD. After performing a mutation $w$ to a $\Tt$-standard ATBD $Q,$ we describe in Lemma~\ref{lem:standard} for which triple $\Tt'$, the mutated quadrilateral $wQ$ is $\Tt'$-standard. To do this, we define recursive mutations on triples. These recursive mutations will be considered more in Section~\ref{ss:trip}.
\begin{definition}
\label{def:mutTrip}
 Given a triple $\Tt:=\left((p_\la,q_\la),(p_\mu,q_\mu),(p_\rho,q_\rho)\right):=(\bm{x}_\la,\bm{x}_\mu,\bm{x}_\rho)$ define 
 \[ \nu_\la:=p_\rho q_\mu-q_\rho p_\mu \quad \text{and} \quad \nu_\rho:=p_\mu q_\la-q_\mu p_\la.\] Define the three mutated triples $x\Tt,y\Tt,s\Tt$ as follows: 
  \begin{align*}
    x\Tt &:=\left(\bm{x}_\la,\nu_\la\bm{x}_\mu-\bm{x}_\rho,\bm{x}_\mu 
     \right), \\
    y\Tt &:=\left(\bm{x}_\mu,\nu_\rho \bm{x}_\mu-\bm{x}_\la,\bm{x}_\rho\right), \\
     s\Tt &:=\left(\bm{x}_\mu,\bm{x}_\rho,(6p_\la-q_\la,p_\la)\right).
 \end{align*}
 Further, we define the inverse $\ov{x},\ov{y},\ov{s}$ mutations of $x,y,s$ as: 
\begin{align*}
    \ov{x}\Tt &:=\left(\bm{x}_\la,\bm{x}_\rho,\nu_\la \bm{x}_\rho-\bm{x}_\mu
    \right), \\
    \ov{y}\Tt &:=\left(\nu_\rho \bm{x}_\la-\bm{x}_\mu,\bm{x}_\la,\bm{x}_\rho \right), \\
    \ov{s}\Tt &:=\left((q_\rho,6q_\rho-p_\rho),\bm{x}_\la,\bm{x}_\mu\right). 
\end{align*}
In particular, we have
\[ \ov{x}x\Tt=\Tt, \quad \ov{y}y\Tt=\Tt, \quad \ov{s}s\Tt=\Tt.\]
\end{definition}

\begin{rmk} \label{rmk:mono}
We make a few remarks about condition iv) of being $\Tt$-standard.
\begin{itemlist}
    \item[{i)}] If $Q$ is an ATBD for either $P$ or $H$ and has condition (i)-(iii) of being $\Tt$-standard, then condition iv) follows. To see this, consider performing a nodal trade at each of the four vertices of the moment polytope for either $M=P_b,H_b$. Let $\pi:M \to B$ be the fibration with this base diagram. The preimage of the boundary is a symplectic torus that is Poincare dual to $c_1(M),$ see \cite[Prop 8.2]{S}. Let $\gamma$ denote a counter-clockwise oriented loop in the base near the boundary that crosses each nodal ray, the preimage $\pi^{-1}(\gamma)$ can be viewed as a Lagrangian torus bundle over $\gamma$ or the boundary of the disk normal bundle of $T,$ the smoothed toric boundary. Using this, we can compute the affine monodromy of the torus bundle over $\gamma$ is $\begin{pmatrix} 1 & c_1^2(M) \\ 0&1 \end{pmatrix}$. In the case of $M=P,H,$ we have $c_1^2(M)=8$. Condition iv) of being $\Tt$-standard computes the affine monodromy of the torus bundle given three nodal trades. If we also did a nodal trade at the origin, the mutation matrix at the origin is $\begin{pmatrix} 2 & -1 \\ 1 & 0 \end{pmatrix}.$ As such, we must have that
    \[\begin{pmatrix} 2 & -1 \\ 1 & 0 \end{pmatrix} M_{p_\rho,q_\rho}M_{p_\mu,q_\mu}M_{p_\la,q_\la}=\begin{pmatrix} 1 & 8 \\ 0 & 1 \end{pmatrix}, \]
    which implies condition iv):
    \[M_{p_\rho,q_\rho}M_{p_\mu,q_\mu}M_{p_\la,q_\la}=\begin{pmatrix} 0 & 1 \\ -1 & -6 \end{pmatrix}.\] 
    \item[{ii)}]   
    Let $S=\begin{pmatrix} 6 & -1 \\ 1 & 0 \end{pmatrix}$, so $s\Tt=((p_\mu,q_\mu),(p_\rho,q_\rho),S(p_\la,q_\la)).$ We can recover this $S$ from condition iv) of Definition~\ref{eq:tStandard}. We have the matrices in Definition~\ref{eq:tStandard} are acting on the nodal rays coordinates of the form $(q,-p).$ The matrix $J=\begin{pmatrix} 0 & -1 \\ 1 & 0 \end{pmatrix}$ brings $(q,-p)$ to $(p,q).$ Note that 
    \[ J^{-1}\begin{pmatrix}
        0 & 1 \\ -1 & -6
    \end{pmatrix}J=-S.\] The negative sign is explained as after doing an $s$-mutation the new nodal ray at $X_s$ is in direction $-(q_\la,-p_\la)$. 
\end{itemlist}

\end{rmk}

To prove mutations on triples describe the mutations on quadrilaterals, we note this basic result about shear matrices, which we will regularly use. 
\begin{lemma} \label{lem:conjugation}
    If $M_{p,q}$ is the shear fixing $(q,-p),$ then for any shear matrix $M_{p',q'},$ we have
   \[ M_{p,q}M_{p',q'}M^{-1}_{p,q}=M_{p'',q''}\]
   where $(q'',-p'')=M_{p,q}(q',-p').$
    \end{lemma}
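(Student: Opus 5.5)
The plan is to rewrite every $M_{p,q}$ as a rank-one perturbation of the identity, determined only by the vector $(q,-p)$, and then use the fact that conjugation by a determinant-one matrix moves that vector. The statement then reduces to the invariance of the determinant pairing under $SL_2(\Z)$.

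\textbf{Step 1: a coordinate-free form of $M_{p,q}$.} For a vector $v\in\Z^2$, define the shear
\[T_v(u):=u+\det(u,v)\,v,\]
where $\det(u,v)$ is the determinant of the matrix with columns $u,v$. The shear formula recorded earlier, $M_{p,q}u=u+\det\begin{pmatrix} u_1 & q\\ u_2 & -p\end{pmatrix}(q,-p)^T$, says exactly that
\[M_{p,q}=T_{(q,-p)}.\]
If one prefers not to rely on that formula, it can be checked directly: applying $T_{(q,-p)}$ to $e_1=(1,0)$ and $e_2=(0,1)$ reproduces the two columns of $\begin{pmatrix} 1-pq & -q^2\\ p^2 & 1+pq\end{pmatrix}$. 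In the same way $M_{p',q'}=T_{(q',-p')}$ and $M_{p'',q''}=T_{(q'',-p'')}$.

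\textbf{Step 2: conjugation moves the vector.} Let $A\in SL_2(\Z)$ and $v\in\Z^2$. For any $u$,
\[A\,T_v\,A^{-1}u=A\bigl(A^{-1}u+\det(A^{-1}u,v)\,v\bigr)=u+\det(A^{-1}u,v)\,Av.\]
Since $\det A=1$, we have $\det(A^{-1}u,v)=\det\bigl(A(A^{-1}u),Av\bigr)=\det(u,Av)$. Hence
\[A\,T_v\,A^{-1}=T_{Av}.\]

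\textbf{Step 3: conclude.} Take $A=M_{p,q}$, which lies in $SL_2(\Z)$, and $v=(q',-p')$. Step 2 gives
\[M_{p,q}M_{p',q'}M_{p,q}^{-1}=T_{M_{p,q}(q',-p')}.\]
Because $M_{p,q}$ is integral, the vector $M_{p,q}(q',-p')$ is integral; write it as $(q'',-p'')$ with $p'',q''\in\Z$. By Step 1 the right-hand side is then $M_{p'',q''}$, which is the claim.

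Also note that $T_{-v}=T_v$, so the sign convention for the nodal ray direction does not affect the result. There is no real obstacle here; the only point needing care is that the determinant pairing is preserved because $M_{p,q}$ has determinant one.
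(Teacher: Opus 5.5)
Your proof is correct and is essentially the paper's argument made explicit: the paper only says the identity follows from the definition of the shear matrices. Your rank-one form $M_{p,q}=T_{(q,-p)}$, which is exactly the shear formula displayed before the lemma, together with the $SL_2(\Z)$-invariance $\det(A^{-1}u,v)=\det(u,Av)$, is precisely the verification that remark leaves to the reader.
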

\begin{proof}
    This follows immediately by definition of the shear matrices. 
\end{proof}

We now show the recursive definition of triples in Definition~\ref{def:mutTrip} aligns with the mutation on ATBD. 
\begin{lemma} \label{lem:standard}
Suppose that $Q=OXVY$ is $\Tt$-standard for $\Tt=((p_\la,q_\la),(p_\mu,q_\mu),(p_\rho,q_\rho))$. For $w=y,x,s,\ov{x},\ov{y},\ov{s},$ assuming that $wQ$ is well-defined, we have $wQ$ is $w\Tt$-standard.

\end{lemma}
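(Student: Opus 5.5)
The plan is to check the four conditions of Definition~\ref{eq:tStandard} for $wQ$ directly, one mutation at a time, using Lemma~\ref{lem:standFrom} for the nodal rays and edge directions of $Q$ and Lemma~\ref{lem:conjugation} to rewrite products of shears. Since $\ov{x}x\Tt=\Tt$ (and likewise for $y$ and $s$), and a well-defined geometric $\ov{w}$-mutation undoes the corresponding $w$-mutation, it should suffice to treat $w=y,x,s$. The barred cases then follow by the same computation read backwards, or by repeating it with inverse matrices. Throughout, write $M_\la=M_{p_\la,q_\la}$ and similarly $M_\mu$, $M_\rho$.

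For the $s$-mutation at $Y$, the ray $\vn_Y$ hits $OX$. The piece containing $O$ is fixed, and the piece containing $V$ and $X$ is acted on by $M_\la$. The new quadrilateral $OX_sV_sY_s$ has $Y_s$ equal to the image of $V$, $V_s$ equal to the image of $X$, and the new vertex $X_s$ on the old side $OX$. Its nodal rays are therefore $\vn_{Y_s}=M_\la\vn_V=(q_\mu,-p_\mu)$ by condition ii), and $\vn_{V_s}=M_\la\vn_X$. The new ray at $X_s$ is $-(q_\la,-p_\la)$, because the node's ray reverses direction. Condition ii) for $s\Tt$ asks that $M_\mu M_\la\vn_X=(q_\rho,-p_\rho)$, which is exactly condition iii) for $\Tt$. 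Condition iii) for $s\Tt$ reduces, via iv) and Remark~\ref{rmk:mono}(ii), to the identity $J^{-1}\begin{pmatrix}0&1\\-1&-6\end{pmatrix}J=-S$. For condition iv), Lemma~\ref{lem:conjugation} shows that the new monodromy product is a conjugate of the old one, $M_{S\bm{x}_\la}M_\rho M_\mu = A M_\la A^{-1}\,M_\rho M_\mu$ with $A=\begin{pmatrix}0&1\\-1&-6\end{pmatrix}$. This equals $A(M_\rho M_\mu M_\la)A^{-1}\cdot$(correction), and the correction cancels after using $M_\rho M_\mu M_\la=A$. I expect this to be a short $2\times 2$ verification.

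For the $y$-mutation, $\vn_Y$ hits $XV$ and $V$ is moved by $M_\la$. The new vertex $V_y$ lies on $XV$ and carries the node, with ray direction $-\vn_Y$ expressed in the new frame. The new $Y_y$ is the image of $V$, with ray $M_\la\vn_V=(q_\mu,-p_\mu)$, so the first entry of $y\Tt$ is $\bm{x}_\mu$, as required. The ray at $V_y$ must be brought to standard form by $M_\mu^{-1}$ applied to the old ray direction $(q_\la,-p_\la)$. The shear formula $M_{p,q}v=v+\det(v,(q,-p))(q,-p)$ gives $M_\mu(q_\la,-p_\la)$, up to sign, as $\nu_\rho(q_\mu,-p_\mu)-(q_\la,-p_\la)$. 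This is where $\nu_\rho=p_\mu q_\la-q_\mu p_\la$ enters and the middle entry $\nu_\rho\bm{x}_\mu-\bm{x}_\la$ appears. Condition iii) holds because $X$ and its ray are untouched; one checks that $M_{\nu_\rho\bm{x}_\mu-\bm{x}_\la}M_\mu$ agrees with $M_\mu M_\la$ on $\vn_X$. Condition iv) follows from Lemma~\ref{lem:conjugation}, since $M_{\nu_\rho\bm{x}_\mu-\bm{x}_\la}=M_\mu M_\la M_\mu^{-1}$ and hence $M_\rho M_{y,\mu}M_\mu=M_\rho M_\mu M_\la$. The $x$-mutation at $X$ is handled in the same way, with $\nu_\la$ in place of $\nu_\rho$. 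Here it is convenient to pass to the frame at $X$ given by $M_\mu M_\la$, where the last statement of Lemma~\ref{lem:standFrom} puts $\ovr{XV}$ in standard position.

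The main obstacle I anticipate is keeping signs and orientations straight. This means deciding whether a new ray equals $v$ or $-v$, and whether the new monodromy at a vertex is a conjugate of $M_W$ or of $M_W^{-1}$. These depend on which side of the nodal ray is fixed, and the $x$-mutation is trickier because the moved piece there contains $V$ rather than $Y$. To handle this, I would first check every step on the explicit base diagrams $Q^0_{H_b}$ and $Q^0_{P_b}$, which are $\Tt_0$-standard, in order to fix the sign conventions, and then run the general symbolic computation.
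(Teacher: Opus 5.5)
Your proposal is correct and is essentially the paper's proof. You verify conditions i)--iv) for $y$, $x$, $s$ using Lemma~\ref{lem:standFrom}, the shear formula and Lemma~\ref{lem:conjugation}, and you get the barred mutations from $\ov{w}w=\mathrm{id}$. Your $y$- and $s$-computations match the paper's, with one wording slip: condition ii) for $y\Tt$ is $M_{p_\mu,q_\mu}(-\vn_Y)=(q_{y\mu},-p_{y\mu})$, which is what you actually compute, not a statement about $M_{p_\mu,q_\mu}^{-1}$.

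For $x$, your change to the frame $B=M_{p_\mu,q_\mu}M_{p_\la,q_\la}$ is a slightly cleaner variant of the paper's direct determinant computation. The sign you flag resolves as $M_X=B^{-1}M_{p_\rho,q_\rho}^{-1}B$, with the inverse: $B$ sends $\ovr{XV}$ to $(0,-1)$ and $\ovr{XO}$ to $(q_\rho^2,1-p_\rho q_\rho)$, and it is the piece containing $V$ that moves. Combined with $M_{p_{x\mu},q_{x\mu}}=M_{p_\mu,q_\mu}^{-1}M_{p_\rho,q_\rho}M_{p_\mu,q_\mu}$, condition iii) then reads $M_{p_{x\mu},q_{x\mu}}M_{p_\la,q_\la}M_X\vn_V=M_{p_\mu,q_\mu}^{-1}(q_\mu,-p_\mu)=(q_\mu,-p_\mu)$.
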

\begin{proof}

The claim about the $y$-mutation is the most straightforward, and similar to the other claims, so we omit the details of that case.

We begin by checking that $xQ = OX_xV_xY$ is $x\mathcal{T}$-standard. 
As the left element of $x\mathcal{T}$, the vertex $Y$, and $\ovr{OY}$ all stay fixed, condition i) follows 
immediately. For condition ii), we check that
\begin{equation} \label{eq:mixed} 
    M_{p_\lambda,q_\lambda}\left(-\vec{n}_X\right) = 
    \begin{pmatrix} q_{x\mu} \\ -p_{x\mu} \end{pmatrix}.
\end{equation}
By using condition iii) that $Q$ is $\mathcal{T}$-standard, we have that
\begin{align*} M_{p_\lambda,q_\lambda}\left(-\vec{n}_X\right) &= 
   -M_{p_\mu,q_\mu}^{-1}\begin{pmatrix} q_\rho \\ -p_\rho \end{pmatrix} \\
&=-\bigl(\begin{pmatrix} q_\rho \\ -p_\rho \end{pmatrix} 
   - \det\begin{pmatrix}
       q_\rho & q_\mu \\ -p_\rho & -p_\mu
   \end{pmatrix} \begin{pmatrix} q_\mu \\ -p_\mu \end{pmatrix}  \bigr) \\
&=-\bigl( \begin{pmatrix} q_\rho \\ -p_\rho \end{pmatrix} 
   - \nu_\lambda \begin{pmatrix} q_\mu \\ -p_\mu \end{pmatrix} 
    \bigr) \\
   &=-\begin{pmatrix} -q_{x\mu} \\ p_{x\mu} \end{pmatrix}
\end{align*}
gives \eqref{eq:mixed}. 
For condition iii), the new nodal ray at $X_x$ is given by $M_X\vec{n}_V$ 
where $M_X$ is the shear that fixes $\vec{n}_X$. Hence, we must check that
\[ M_{p_{x\mu},q_{x\mu}}M_{p_\lambda,q_\lambda}M_X\vec{n}_V = 
   \begin{pmatrix} q_\mu \\ -p_\mu \end{pmatrix}.\]
Using condition ii) of $Q$ being $\mathcal{T}$-standard, we have that 
$\vec{n}_V = M_{p_\lambda,q_\lambda}^{-1}\begin{pmatrix} q_\mu \\ -p_\mu 
\end{pmatrix}$, so we obtain that
\[ M_{p_\lambda,q_\lambda}M_X\vec{n}_V = 
   \begin{pmatrix} q_\mu \\ -p_\mu \end{pmatrix} 
   + \det(\vec{n}_V, \vec{n}_X)\, M_{p_\lambda,q_\lambda}\vec{n}_X.\]
Since \(M_{p_\lambda,q_\lambda}\in SL_2(\mathbb Z)\), we have
\[
\det(\vec n_V,\vec n_X)
=
\det\left(
M_{p_\lambda,q_\lambda}\vec n_V,
M_{p_\lambda,q_\lambda}\vec n_X
\right)=\det \begin{pmatrix}
    q_\mu & -q_{x\mu}\\ -p_\mu & p_{x\mu}
\end{pmatrix}=-\nu_\la.
\]
where the second equality uses condition ii) and \eqref{eq:mixed}. 
Applying 
$M_{p_{x\mu},q_{x\mu}}^{-1}$, we get
\begin{align*} 
    M_{p_{x\mu},q_{x\mu}}^{-1}\left(\begin{pmatrix} q_\mu \\ -p_\mu \end{pmatrix} 
    + \nu_\lambda\begin{pmatrix} q_{x\mu} \\ -p_{x\mu} \end{pmatrix}\right) 
    &= \begin{pmatrix} q_\mu \\ -p_\mu \end{pmatrix} 
    + \left(-\det\begin{pmatrix} q_\mu & q_{x\mu} \\ 
    -p_\mu & -p_{x\mu}\end{pmatrix} 
    + \nu_\lambda\right)\begin{pmatrix} q_{x\mu} \\ -p_{x\mu} \end{pmatrix} \\
    &= \begin{pmatrix} q_\mu \\ -p_\mu \end{pmatrix}
\end{align*}
as desired. Condition iv) follows as Lemma~\ref{lem:conjugation} implies 
that $M_{p_{x\mu},q_{x\mu}} = M_{p_\mu,q_\mu}^{-1}M_{p_\rho,q_\rho}
M_{p_\mu,q_\mu}$, so
\[ M_{p_\mu,q_\mu}M_{p_{x\mu},q_{x\mu}}M_{p_\lambda,q_\lambda} = 
   M_{p_\rho,q_\rho}M_{p_\mu,q_\mu}M_{p_\lambda,q_\lambda}.\]

We now verify that $sQ$ is $s\mathcal{T}$-standard. Unlike the other cases, 
here we must use condition iv) to verify the first three conditions. Let 
$sQ = OX_sV_sY_s$; we have that 
$\vec{n}_{X_s} = -\begin{pmatrix} q_\lambda \\ -p_\lambda \end{pmatrix}$, 
$\vec{n}_{Y_s} = M_{p_\lambda,q_\lambda}\vec{n}_V$, and 
$\vec{n}_{V_s} = M_{p_\lambda,q_\lambda}\vec{n}_X$. As $Q$ satisfies 
conditions ii) and iii) of being $\mathcal{T}$-standard, conditions i) and 
ii) hold for $sQ$ being $s\mathcal{T}$-standard. For condition iii), we 
want to check that 
$M_{p_\rho,q_\rho}M_{p_\mu,q_\mu}\begin{pmatrix}-q_\lambda \\ 
p_\lambda\end{pmatrix} = \begin{pmatrix}p_\lambda \\ 
-6p_\lambda + q_\lambda\end{pmatrix}$, but this holds because the 
left-hand side equals
\[ M_{p_\rho,q_\rho}M_{p_\mu,q_\mu}M_{p_\lambda,q_\lambda}
   \begin{pmatrix} -q_\lambda \\ p_\lambda \end{pmatrix} = 
   \begin{pmatrix} 0 & 1 \\ -1 & -6 \end{pmatrix}
   \begin{pmatrix} -q_\lambda \\ p_\lambda \end{pmatrix} = 
   \begin{pmatrix} p_\lambda \\ -6p_\lambda + q_\lambda \end{pmatrix}\]
as desired. Condition iv) follows again by Lemma~\ref{lem:conjugation}. 
For the inverse mutations, the result follows because $\ov{w}wQ = Q$, so 
if $wQ$ is $w\mathcal{T}$-standard then applying $\ov{w}$ recovers $Q$ 
being $\mathcal{T}$-standard.

\end{proof}

\begin{prop} \label{prop:nodalDir}
    Let $w$ be a word in the mutations $x,y,s,\ov{x},\ov{y},\ov{s}$. If $wQ_{\bullet,b}^0$ is well-defined, then the nodal rays and primitive edge directions of $wQ_{\bullet,b}^0$ are given by the data $Q(w\Tt_0^\bullet)$ from Definition~\ref{def:QTnoL}.
\end{prop}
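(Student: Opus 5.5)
The plan is to argue by induction on the length of the word $w$, using Lemma~\ref{lem:standard} to pass the $\Tt$-standard property along the word and Lemma~\ref{lem:standFrom} to convert it into explicit direction data.

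\textbf{Base case.} For the empty word, the example following Definition~\ref{eq:tStandard} records that $Q_{H_b}^0$ is $\Tt_0^H$-standard and $Q_{P_b}^0$ is $\Tt_0^P$-standard. One can check this directly from Figure~\ref{fig:basediagr} by computing the matrices $M_{p,q}$ for the base triples. For $H$, these are $M_{1,1}$, $M_{2,1}$, $M_{4,1}$. One verifies that $M_{1,1}\vn_V=(1,-2)$, that $M_{2,1}M_{1,1}\vn_X=(1,-4)$, and that the product $M_{4,1}M_{2,1}M_{1,1}$ equals $\begin{pmatrix}0&1\\-1&-6\end{pmatrix}$. The computation for $P$ is analogous.

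\textbf{Inductive step.} Write $w=w_1w'$, where $w_1\in\{x,y,s,\ov{x},\ov{y},\ov{s}\}$ is a single letter. If $wQ_{\bullet,b}^0$ is well-defined, then so is $w'Q_{\bullet,b}^0$, since well-definedness of a word means each successive mutation is well-defined. By the inductive hypothesis, $w'Q_{\bullet,b}^0$ is $w'\Tt_0^\bullet$-standard. Lemma~\ref{lem:standard} then gives that $w_1(w'Q_{\bullet,b}^0)=wQ_{\bullet,b}^0$ is $w_1(w'\Tt_0^\bullet)=w\Tt_0^\bullet$-standard. It is cleaner to carry the stronger statement ``$wQ_{\bullet,b}^0$ is $w\Tt_0^\bullet$-standard'' through the induction, and deduce the direction data only at the end.

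\textbf{Conclusion.} Every ATBD obtained by well-defined mutations from $Q_{\bullet,b}^0$ is an ATBD on $H$ or $P$: mutation does not change the underlying fibered manifold, and the Delzant corner at $O$ with $\ovr{OX}=(1,0)$, $\ovr{OY}=(0,1)$ is preserved, since mutation fixes the piece containing $O$. So Lemma~\ref{lem:standFrom} applies to $wQ_{\bullet,b}^0$ with the triple $w\Tt_0^\bullet$. That lemma determines $\vn_Y,\vn_V,\vn_X$, $\ovr{VY}$ and $\ovr{XV}$ from the triple, and these are by definition the data of $Q(w\Tt_0^\bullet)$ in Definition~\ref{def:QTnoL}.

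\textbf{Main obstacle.} The only delicate point is making sure the hypotheses of Lemmas~\ref{lem:standard} and \ref{lem:standFrom} genuinely hold at each stage. Specifically, the intermediate diagram must be a decorated quadrilateral with one Delzant corner at the origin in the standard position, and each mutation must be well-defined, meaning the nodal ray does not hit a vertex. Both follow from the definition of well-definedness of a word and the remarks in Section~\ref{ss:nodalRays}.

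One further check is needed for the inverse letters. Lemma~\ref{lem:standard} treats them via $\ov{w}wQ=Q$, so applying it requires that the inverse geometric mutation of a well-defined diagram is itself the one labeled by the inverse letter. This holds because the side the ray meets is determined by which mutation is performed: Figure~\ref{fig:mutationDefs} shows that a ray which undoes $w$ lands on the side corresponding to $\ov{w}$.
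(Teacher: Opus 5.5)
Your proposal is correct and is essentially the paper's argument. The paper proves Proposition~\ref{prop:nodalDir} in one line by citing Lemma~\ref{lem:standard} and Lemma~\ref{lem:standFrom}, and you have made explicit what that line leaves implicit: the induction on word length, the base case that $Q_{\bullet,b}^0$ is $\Tt_0^\bullet$-standard (your matrix computations are right), and the observation that mutated diagrams remain ATBDs for $H$ or $P$, so that Lemma~\ref{lem:standFrom} applies.
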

\begin{proof}
    This follows immediately from Lemma~\ref{lem:standard} and Lemma~\ref{lem:standFrom}. 
\end{proof}

\subsection{Recursive Triples}\label{ss:trip}

As we aim to understand the mutation structure of $Q_{H_b}^0$ and $Q_{P_b}^0$ for fixed $b$, we must also track how affine side lengths change under mutation. Proposition~\ref{prop:nodalDir} shows that the nodal rays and edge directions are described by algebraic mutations of triples. In this subsection, we study the algebraic properties of these triples. The resulting notion of a recursive triple will later connect the mutation data to perfect classes in Section~\ref{ss:tripPerf} and will allow us to compute the affine side lengths.

We call these recursive triples, which are defined in Definition~\ref{def:genTrip}. For a recursive triple, these $(p,q)$-coordinates will also indirectly determine the side lengths via the $(d;m)$ and $(e,f)$-coordinates defined in \eqref{eq:dmefIntro}. This generalizes work of the author, McDuff, and Weiler in \cite{MMW,MM} about triples of quasi-perfect classes for $H$. We begin by developing the algebraic machinery to give the definition.

Define $\mathcal{X}:=\{ \bm{x} \in \Z^3 \ | \ \bm{x}^TA\bm{x}=8\}$ where \begin{equation} \label{eq:Xdef} A=\begin{pmatrix}
    -1 & 3 & 0 \\ 
    3 & -1 & 0 \\
    0 & 0 & 1
\end{pmatrix}, \quad \bm{x}:=\begin{pmatrix}
    p \\ q \\ t
\end{pmatrix}. \end{equation}

Note that the matrix $A$ is symmetric.
For a vector $(p,q,t) $, define $(d;m)$ and $(e,f)$ as in \eqref{eq:dmefIntro}.
We define the subsets $\X_H,\X_P$ of $\X$ as follows:  
\[\X_H:=\{ (p,q,t) \in \X \ | \ (d;m) \in \Z^2 \}
\]
and 
\[ \X_P:=\{ (p,q,t) \in \X \ | \ (e,f) \in \Z^2 \}.
\]
We often let $\X_\bullet$ refer to either $\X_P$ or $\X_H$. We say that an element $(p,q,t) \in \mathcal{X}_H$ (resp. $\in \mathcal{X}_P$) is {\bf positive} if $d, p,q$ (resp. $e,f,p,q$) are all positive and $m$ is nonnegative. We now extend the Definition~\ref{def:mutTrip} to triples with the $t$-coordinate \[\Tt:=(\bm{x}_\la,\bm{x}_\mu,\bm{x}_\rho):=((p_\la,q_\la,t_\la),(p_\mu,q_\mu,t_\mu),(p_\rho,q_\rho,t_\rho)).\] The $x,y,\ov{x},\ov{y}$-mutations will be the exact same where we extend the recursion given in Definition~\ref{def:mutTrip} to the $t$-coordinate as well. For $s,\ov{s},$ we have
\begin{align*}
    s\Tt&:=(\bm{x}_\mu,\bm{x}_\rho,(6p_\la-q_\la,p_\la,-t_\la))\\
    \ov{s}\Tt&:=((q_\rho,6q_\rho-p_\rho,-t_\rho),\bm{x}_\la,\bm{x}_\mu)
\end{align*}

In the rest of this section, we give conditions on a triple $\Tt=(\bm{x}_\la,\bm{x}_\mu,\bm{x}_\rho) \in \X^3_\bullet$ such that $w\Tt$ remains in $\X_\bullet^3$ for $w=x,y,s,\ov{x},\ov{y},\ov{s}$. The fact that triples obtained via mutation from $\Tt_0^H$ and $\Tt_0^P$ are in $\X_\bullet^3$ is useful for showing the affine length formulas work as stated.

We begin with the $s$-mutation. Note that the $s$-mutation performs a recursion by $6$ to $\bm{x}_\la.$ We define the matrix
\[S:=\begin{pmatrix}
    6 & -1 & 0 \\ 1 & 0 & 0 \\ 0 & 0 & -1
\end{pmatrix},\]
so \[s(\bm{x}_\la,\bm{x}_\mu,\bm{x}_\rho)=(\bm{x}_\mu,\bm{x}_\rho,S\bm{x}_\la).\] In the classification of perfect classes for $H$, we discuss the role of the matrix $S$ in Section~\ref{ss:tripPerf}.
The next lemma proves that if $\Tt \in \X_\bullet^3,$ then $s\Tt \in \X_\bullet^3.$
\begin{lemma} \label{lem:symTriple} \begin{itemlist}
    \item[{(i)}] If $\bm{x}:=(p,q,t) \in\X_H$, then $S\bm{x} \in\X_H.$ Further, if $(d,m)$ satisfy \eqref{eq:dmefIntro} for $\bm{x}$, then $(3p-d,p-m)$ satisfy \eqref{eq:dmefIntro} for $S\bm{x}$.
    \item[{(ii)}] If $\bm{x} \in\X_P$, then $S\bm{x} \in\X_P.$ Further, if $(e,f)$ satisfy \eqref{eq:dmefIntro} for $\bm{x}$, then $(2p-e,2p-f)$ satisfy \eqref{eq:dmefIntro} for $S\bm{x}$. 
\end{itemlist}
\end{lemma}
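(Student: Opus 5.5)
The proof is a direct computation in two parts. First we show that $S$ preserves the quadratic form defining $\X$. Then we check that the claimed coordinates satisfy \eqref{eq:dmefIntro} for $S\bm{x}$; integrality of those coordinates then gives membership in $\X_H$ or $\X_P$.

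\textbf{Step 1: $S$ preserves $\X$.} Write $S\bm{x}=(p',q',t')=(6p-q,p,-t)$. The form is
\[
\bm{x}^TA\bm{x}=-p^2-q^2+6pq+t^2 .
\]
Substituting gives
\[
-(6p-q)^2-p^2+6(6p-q)p+t^2=-36p^2+12pq-q^2-p^2+36p^2-6pq+t^2 .
\]
This simplifies to $-p^2-q^2+6pq+t^2$, so $S^TAS=A$. Hence $S\bm{x}\in\X$ whenever $\bm{x}\in\X$.

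\textbf{Step 2: the $H$ coordinates.} We have $p'+q'=7p-q$ and $t'=-t$. By \eqref{eq:dmefIntro},
\[
d'=\tfrac18\bigl(3(7p-q)-t\bigr)=\tfrac18(21p-3q-t)=\tfrac18(24p-3p-3q-t)=3p-d .
\]
In the same way,
\[
m'=\tfrac18(7p-q-3t)=\tfrac18(8p-(p+q+3t))=p-m .
\]
Since $d,m,p$ are integers, so are $d'$ and $m'$. Therefore $S\bm{x}\in\X_H$, and the claimed solutions $(3p-d,\,p-m)$ hold.

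\textbf{Step 3: the $P$ coordinates.} Here
\[
e'=\tfrac14(7p-q-t)=\tfrac14(8p-(p+q+t))=2p-e,
\]
and
\[
f'=\tfrac14(7p-q+t)=\tfrac14(8p-(p+q-t))=2p-f .
\]
These are integers because $e,f,p$ are, so $S\bm{x}\in\X_P$ with $(e',f')=(2p-e,\,2p-f)$.

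No step is a real obstacle. The only point needing care is writing $7p-q$ as $8p-(p+q)$ (and $21p-3q$ as $24p-3(p+q)$), so that the new coordinates are visibly integral combinations of the old ones. Once that is done, the integrality conditions defining $\X_H$ and $\X_P$ carry over automatically.
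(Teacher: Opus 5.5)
Your proof is correct and follows the same approach as the paper: verify that $S$ preserves the quadratic form $\bm{x}^TA\bm{x}$, then check directly that $(3p-d,\,p-m)$ and $(2p-e,\,2p-f)$ solve \eqref{eq:dmefIntro} for $S\bm{x}$, so integrality carries over. The paper leaves these computations as ``straightforward,'' and you have written them out explicitly and correctly.
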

\begin{proof}
For either statement, we can check that $\bm{x}^TA\bm{x}=(S\bm{x})^TA(S\bm{x})$ implying that $S\bm{x} \in\X$. 
For (i), assuming $\bm{x}\in\X_H$, let $(d,m)$ satisfy \eqref{eq:dmefIntro} for $\bm{x}$.  It is a straightforward computation that $(3p-d,p-m)$ satisfy \eqref{eq:dmefIntro} for $S\bm{x}$. By assumption, $(d,m)$ are integers, so $(3p-d,p-m)$ also are integers, implying $S\bm{x} \in\X_H$. 
The case of (ii) follows similarly. 
\end{proof}

We now establish some criterion which guarantees that $x\Tt \in \X^3$ and $y\Tt \in \X^3$ assuming $\Tt \in \X^3$. This criterion is a generalization of \cite[Section 3.1]{MM} and \cite[Section 2.1]{MMW}.

\begin{definition}
    We say that two integral vectors $\bm{x}_0=(p_0,q_0,t_0),\bm{x}_1=(p_1,q_1,t_1) \in \X$ 
    are $\nu$-compatible for an integer $\nu$ if 
    \[ \bm{x}_1^TA\bm{x}_0=4\nu.\]  
\end{definition}

\begin{lemma}\cite[Lem 3.1.2]{MM}
    Suppose that $\bm{x}_0,\bm{x}_1$ are integral vectors $(p_i,q_i,t_i)$ for $i=0,1$. If $\bm{x}_0,\bm{x}_1 \in \mathcal{X}$ are $\nu$-compatible, 
         then $\bm{x}_2:=\nu \bm{x}_1-\bm{x}_0,\bm{x}_1$ are also $\nu$-compatible and $\bm{x}_2 \in\X.$  
\end{lemma}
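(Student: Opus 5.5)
The argument is a direct bilinear-algebra computation using the symmetric form $\langle \bm{u},\bm{v}\rangle:=\bm{u}^TA\bm{v}$. The hypotheses are $\langle \bm{x}_0,\bm{x}_0\rangle=\langle \bm{x}_1,\bm{x}_1\rangle=8$ and $\langle \bm{x}_1,\bm{x}_0\rangle=4\nu$. Integrality of $\bm{x}_2=\nu\bm{x}_1-\bm{x}_0$ is automatic, since $\nu\in\Z$ and $\bm{x}_0,\bm{x}_1$ are integral vectors. It remains to check two identities.

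\emph{Compatibility.} Using bilinearity and symmetry of $A$,
\[
\langle \bm{x}_2,\bm{x}_1\rangle=\nu\langle\bm{x}_1,\bm{x}_1\rangle-\langle\bm{x}_0,\bm{x}_1\rangle=8\nu-4\nu=4\nu,
\]
so $\bm{x}_1$ and $\bm{x}_2$ are $\nu$-compatible. Strictly, the definition of $\nu$-compatibility requires both vectors to lie in $\X$, so this step should come after, or be paired with, the norm computation below.

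\emph{Norm.} Expanding,
\[
\langle\bm{x}_2,\bm{x}_2\rangle=\nu^2\langle\bm{x}_1,\bm{x}_1\rangle-2\nu\langle\bm{x}_1,\bm{x}_0\rangle+\langle\bm{x}_0,\bm{x}_0\rangle=8\nu^2-8\nu^2+8=8,
\]
so $\bm{x}_2\in\X$.

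There is no real obstacle here. The one point to watch is that the symmetry of $A$ lets us identify $\bm{x}_1^TA\bm{x}_0$ with $\bm{x}_0^TA\bm{x}_1$; that is exactly what makes the cross terms combine to $-2\nu\cdot 4\nu$. Membership in the finer sets $\X_H$ or $\X_P$ is not claimed in this lemma, so nothing more is needed.
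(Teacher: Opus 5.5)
Your proof is correct and is essentially the paper's own argument: both use the symmetry of $A$ to expand $\bm{x}_2^TA\bm{x}_2 = 8\nu^2 - 8\nu^2 + 8 = 8$, and then compute $(\nu\bm{x}_1-\bm{x}_0)^TA\bm{x}_1 = 8\nu - 4\nu = 4\nu$. Your added remarks on the integrality of $\bm{x}_2$ and on ordering the two steps so that $\bm{x}_2\in\X$ is established before compatibility are harmless refinements that the paper leaves implicit.
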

\begin{proof}
    Since $A$ is symmetric, \[\bm{x}_2^TA\bm{x}_2=\nu^2\bm{x}_1^TA\bm{x}_1-2\nu \bm{x}_1^TA\bm{x}_0+\bm{x}_0^TA\bm{x}_0=8\nu^2-8\nu^2+8=8\]
    implying that $\bm{x}_2 \in\X.$ Additionally, we have that
    \[ (\nu \bm{x}_1-\bm{x}_0)^TA\bm{x}_1=8\nu-\bm{x}_0^TA\bm{x}_1=4\nu.\]
\end{proof}

\begin{cor}\cite[Cor 3.1.3]{MM} \label{cor:recurExtend}
    Any two integral vectors $\bm{x}_i=(p_i,q_i,t_i) \in\X_\bullet$, $i=0,1$ that are $\nu$-compatible can be extended to a sequence $\bm{x}_i, i \geq 0,$ using recursion parameter $\nu$, where each successive pair is $\nu$-compatible and $\bm{x}_i \in\X_\bullet$ for all $i \geq 0$. Further, the values $(d_i,m_i)$ that satisfy \eqref{eq:dmefIntro} or $(e_i,f_i)$ that satisfy \eqref{eq:dmefIntro} for each $\bm{x}_i$ satisfy the same recursion. 
\end{cor}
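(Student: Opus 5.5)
The corollary follows by induction on $i$, using the preceding lemma (\cite[Lem 3.1.2]{MM}) for the part about $\X$ and $\nu$-compatibility, and linearity of \eqref{eq:dmefIntro} for integrality. I would define the sequence by the recursion
\[
\bm{x}_{i+1}:=\nu\bm{x}_i-\bm{x}_{i-1}\qquad (i\ge 1).
\]

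First I would show by induction that each consecutive pair $(\bm{x}_{i-1},\bm{x}_i)$ is $\nu$-compatible and that every $\bm{x}_i$ lies in $\X$. The base case is the hypothesis. For the inductive step, apply the lemma to the pair $(\bm{x}_{i-1},\bm{x}_i)$. It gives $\bm{x}_{i+1}\in\X$ and shows that $\bm{x}_{i+1}$ and $\bm{x}_i$ are $\nu$-compatible. Since $A$ is symmetric, $\bm{x}_{i+1}^TA\bm{x}_i=\bm{x}_i^TA\bm{x}_{i+1}$, so the order within the pair does not matter and $(\bm{x}_i,\bm{x}_{i+1})$ is again a $\nu$-compatible pair. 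Integrality of $\bm{x}_{i+1}$ is clear, since $\nu\in\Z$.

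Next I would handle the derived coordinates. The maps $\bm{x}\mapsto(d;m)$ and $\bm{x}\mapsto(e,f)$ in \eqref{eq:dmefIntro} are linear over $\mathbb{Q}$. Hence $(d_{i+1},m_{i+1})=\nu(d_i,m_i)-(d_{i-1},m_{i-1})$, and the same holds for $(e,f)$. This is exactly the claim that these coordinates satisfy the same recursion. Because $\nu$ is an integer and the two previous terms have integral coordinates by induction, the new coordinates are integers too. Therefore $\bm{x}_{i+1}\in\X_H$, respectively $\X_P$, whenever $\bm{x}_{i-1}$ and $\bm{x}_i$ are.

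I do not expect a real obstacle. The only point needing care is the symmetric role of the two vectors in $\nu$-compatibility, which is settled by the symmetry of $A$.
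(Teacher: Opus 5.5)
Your proposal is correct and is the intended argument. The paper gives no proof of its own and simply cites \cite[Cor 3.1.3]{MM} right after the lemma, but what you describe (iterate that lemma, use the symmetry of $A$ to reorder the pair, and use the linearity of \eqref{eq:dmefIntro} with integer $\nu$ to get both the recursion and integrality of $(d;m)$ or $(e,f)$) is exactly how it follows.
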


The notion of recursive triple we define next will show that as we iterate various mutations every point in the triple will continue to be in $\X^3.$ The first two conditions ensure that $\bm{x}_{y\mu},\bm{x}_{x\mu} \in \X.$ The third condition ensures we can iterate the $x,y$-mutations. In other words, it guarantees that $\bm{x}_{xy\mu},\bm{x}_{yx\mu} \in \X$ as well.

  \begin{definition} \label{def:genTrip}
        A {\bf recursive triple} is a triple $(\bm{x}_\la,\bm{x}_\mu,\bm{x}_\rho) \in \mathcal{X}_\bullet \times \mathcal{X}_\bullet \times \mathcal{X}_\bullet$ such that for some $\eps \in \{-1,1\}$ the following conditions hold: 
        \begin{itemlist}
            \item[{(i)}] $\bm{x}_\la,\bm{x}_\mu$ are $\eps t_\rho$-compatible and $\eps t_\rho=p_\mu q_\la-p_\la q_\mu$ 
            \item[{(ii)}] $\bm{x}_\mu,\bm{x}_\rho$ are $\eps t_\la$-compatible and $\eps t_\la=p_\rho q_\mu-p_\mu q_\rho$
            \item[{(iii)}] $\bm{x}_\la,\bm{x}_\rho$ are $(t_\la t_\rho-\eps t_\mu)$-compatible and $t_\la t_\rho-\eps t_\mu=p_\rho q_\la-p_\la q_\rho$
        \end{itemlist}
    If $\eps=1,$ we call the triple {\bf sign-matching}, and if $\eps=-1$, we have the triple {\bf sign-alternating}.

    \end{definition}

     Note that if $\Tt$ is sign-matching, the $x$-mutation (resp. $y$-mutation) is a recursion by $t_\la$ (resp. $t_\rho$). On the other hand, if $\Tt$ is sign-alternating, the $x$-mutation (resp. $y$-mutation) is a recursion by $-t_\la$ (resp. $-t_\rho$). 

\begin{rmk}
    This notion of a recursive triple is a generalization of the notion of generating triple from \cite{MMW}. In \cite{MMW}, the only mutations that were considered were the $x,y$-mutations. Additionally, in \cite{MMW}, we were not considering triples obtained via mutation $\Tt_0^H,$ but a more restrictive subset of triples. See Example~\ref{ex:differentNotation} for some context of why we require a more general notion of triples from the recursion perspective. 
    
    Additionally, not all triples that correspond to ATBD mutated from $Q_{H_b}^0$ and $Q_{P_b}^0$ are generating triples in the sense of \cite{MMW}. For instance, in \cite{MMW}, the authors required
    \[ p_\la/q_\la<p_\mu/q_\mu<p_\rho/q_\rho\]
    and $(p_\bullet,q_\bullet)\geq(0,0)$, but this is not the case for triples that describe ATBD. See Example~\ref{ex:unusualTrip} for such examples.  
\end{rmk}

    \begin{example} \rm \label{ex:differentNotation}

In \cite{MMW}, the notion of generating triple assumed that $t>0$ and used $\eps \in \{\pm 1\}$ to determine the sign of $t$ in finding $(d;m)$ in \eqref{eq:dmefIntro}. Note that in \cite[Lemma 2.2.1]{MM}, it was shown that given a $(p,q,\eps t) \in \X$, there was at most one value of $\eps \in \{\pm 1\}$ such that $(d;m)$ are integers. For the case of $P$, this is not the case as $(e,f)$ is a solution with $\eps=1$ and $(f,e)$ with $\eps=-1$ are solutions to \eqref{eq:dmefIntro}. This is one motivation to incorporate the sign in the $t$-variable. 

We must also incorporate the sign to ensure the recursion in performing $x,y$-mutations extends from the $(p,q)$-coordinates to the $t$-coordinate. If we consider the $(p,q)$-coordinates of the starting triple, but we instead have $t>0$ for each element, we have the triple \[(\bm{x}_\la,\bm{x}_\mu,\bm{x}_\rho):=((1,1,2),(2,1,1),(4,1,1)).\] We have $\bm{x}_\la^T A\bm{x}_\rho=12,$ which does equal $4(p_\rho q_\la-p_\la q_\rho),$ so $\bm{x}_\la,\bm{x}_\rho$ are $(p_\rho q_\la-p_\la q_\rho)$-compatible, but we do not have that \[t_\la t_\rho-t_\mu=p_\rho q_\la-p_\la q_\rho,\] so they are not $(t_\la t_\rho-t_\mu)$-compatible. Note that if we perform \[x((1,1,2),(2,1,1),(4,1,1))=((1,1,2),(0,1,1),(2,1,1))\]
and we can check that for $\bm{x}_{x\mu}^TA\bm{x}_{x\mu} \neq 8$, i.e. $\bm{x}_{x\mu} \not\in \X.$ As such, we do not insist $t$ is positive. The choice is determined by which sign gives integer values to \eqref{eq:dmefIntro}. Then, we have the starting triple
$((1,1,2),(2,1,-1),(4,1,1)),$ and now we do have that 
\[ t_\la t_\rho-t_\mu=p_\rho q_\la-p_\la q_\rho.\] 
Then, we can check that 
\[ x((1,1,2),(2,1,-1),(4,1,1))=((1,1,2),(0,1,-3),(2,1,-1)),\]
and now we do have $\bm{x}_{x\mu} \in \X.$
\end{example}

The main result of this section is: 
    \begin{prop} \label{prop:genTrip} \begin{itemlist}
        \item[{(i)}]  If $\Tt$ is a sign-matching (resp. sign-alternating) recursive triple, then $x\Tt,y\Tt,\ov{x}\Tt,\ov{y}\Tt$ are all sign-matching  (resp. sign-alternating) recursive triples. 
        \item[{(ii)}] If $\Tt$ is a sign-matching (resp. sign-alternating) recursive triple, then $s\Tt,\ov{s}\Tt$ are both sign-alternating (resp. sign-matching) recursive triples. 
    \end{itemlist}
    \end{prop}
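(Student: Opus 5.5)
The plan is to verify the three conditions of Definition~\ref{def:genTrip} directly for each mutated triple, using bilinearity of $\bm{x}^TA\bm{y}$ and of the determinant $\det(\bm{x},\bm{y}):=p_yq_x-p_xq_y$. It suffices to treat $x,y,s$. The inverse mutations then follow by the same computation run backwards; alternatively, one checks that the conditions for $\Tt$ are recovered from those for $w\Tt$, since $\ov{w}w\Tt=\Tt$. Membership in $\X_\bullet^3$ comes from the earlier results. For $x,y$ it is the compatibility lemma and Corollary~\ref{cor:recurExtend}, since the new entry is $\nu\bm{x}_1-\bm{x}_0$ with integer coefficients, so $(d;m)$ or $(e,f)$ stay integral. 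For $s$ it is Lemma~\ref{lem:symTriple}.

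For the $x$-mutation, write $x\Tt=(\bm{x}_\la,\bm{x}_\mu',\bm{x}_\mu)$ with $\bm{x}_\mu'=\eps t_\la\bm{x}_\mu-\bm{x}_\rho$. Condition (ii) of the original triple gives $\nu_\la=\eps t_\la$.
\begin{itemlist}
\item \emph{New (ii).} The pair $\bm{x}_\mu',\bm{x}_\mu$ is $\eps t_\la$-compatible by the compatibility lemma. Also $\det(\bm{x}_\mu',\bm{x}_\mu)=-\det(\bm{x}_\rho,\bm{x}_\mu)=\det(\bm{x}_\mu,\bm{x}_\rho)=\eps t_\la$, and the left entry is unchanged, so (ii) holds.
\item \emph{New (i).} We need $\bm{x}_\la^TA\bm{x}_\mu'=4\eps t'_\rho$ with $t'_\rho=t_\mu$, and the matching determinant. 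Bilinearity gives $\eps t_\la\cdot 4\eps t_\rho-4(t_\la t_\rho-\eps t_\mu)=4\eps t_\mu$. The same linear combination of the determinant identities (i) and (iii) gives $\det(\bm{x}_\la,\bm{x}_\mu')=\eps t_\mu$.
\item \emph{New (iii).} The pair $\bm{x}_\la,\bm{x}_\mu$ must be $(t_\la t'_\mu-\eps t_\mu)$-compatible, where $t'_\mu=\eps t_\la t_\mu-t_\rho$. This number equals $\eps t_\la^2t_\mu-t_\la t_\rho-\eps t_\mu$. But the actual pairing is $4\eps t_\rho$, so we need the scalar identity
\[
\eps t_\rho=\eps t_\la^2 t_\mu - t_\la t_\rho-\eps t_\mu .
\]
\end{itemlist}
The $y$-mutation is symmetric under reversing the triple.

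This scalar identity is the main obstacle: it is a Markov-type relation among $t_\la,t_\mu,t_\rho$ that is not among the stated conditions. I will derive it from the quadratic form. The plan is to compute the Gram matrix $G=(\bm{x}_i^TA\bm{x}_j)$, whose diagonal entries are $8$ and whose off-diagonal entries are $4\eps t_\rho$, $4\eps t_\la$, $4(t_\la t_\rho-\eps t_\mu)$. Since $A$ has determinant $-8$, it follows that $\det G=-8\det(\bm{x}_\la,\bm{x}_\mu,\bm{x}_\rho)^2$. Expanding $\det G$ and comparing with the $3\times3$ determinant should give the cubic relation, perhaps after using the $p,q$ determinant conditions to express the $3\times3$ determinant in terms of the $t$'s. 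If that comparison does not close up, the fallback is to prove the identity inductively along mutations: it holds for $\Tt_0^H$ and $\Tt_0^P$, and it is preserved by each mutation. In that case Proposition~\ref{prop:genTrip} is proved simultaneously with invariance of the identity, and it is only applied to triples reachable from the base triples.

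For the $s$-mutation, $s\Tt=(\bm{x}_\mu,\bm{x}_\rho,S\bm{x}_\la)$, and $S$ flips the sign of $t$; this is why the sign parity $\eps$ switches to $-\eps$. The two facts needed are:
\begin{itemlist}
\item $(S\bm{x})^TA(S\bm{y})=\bm{x}^TA\bm{y}$, which is already used in Lemma~\ref{lem:symTriple};
\item the identity $J\begin{pmatrix}0&1\\-1&-6\end{pmatrix}J^{-1}=-S$ from Remark~\ref{rmk:mono}, which relates $\det(\bm{x},S\bm{y})$ to the pairing of $\bm{x}$ and $\bm{y}$.
\end{itemlist}
Condition (ii) of the new triple is the old (ii) with $\eps t_\la$ rewritten as $(-\eps)(-t_\la)$. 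For the new (i) and (iii), the pairings $\bm{x}_\rho^TA S\bm{x}_\la$ and $\bm{x}_\mu^TAS\bm{x}_\la$ and the corresponding determinants are computed explicitly in coordinates. I expect these to reduce, via the old (iii), (i) and the same cubic relation, to $-\eps(-t_\mu)$ and $(-t_\la)t_\rho\cdot(\ldots)$ respectively. The case $\ov{s}$ is analogous.
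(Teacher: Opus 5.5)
Your checks of conditions (i) and (ii) for $x\Tt$ are correct; they are the same bilinearity and determinant computations the paper carries out for $y\Tt$. The gap is in condition (iii), where you have swapped the roles of the new middle and right $t$-values. Definition~\ref{def:genTrip}(iii) uses the parameter $t_\la t_\rho-\eps t_\mu$, i.e.\ (left)(right) minus $\eps$(middle). For $x\Tt=(\bm{x}_\la,\bm{x}_{x\mu},\bm{x}_\mu)$, the new middle $t$-value is $t_{x\mu}=\eps t_\la t_\mu-t_\rho$ and the new right one is $t_\mu$. So the required parameter is
\[
t_\la t_\mu-\eps(\eps t_\la t_\mu-t_\rho)=\eps t_\rho .
\]
Both the compatibility of $\bm{x}_\la,\bm{x}_\mu$ and the identity $p_\mu q_\la-p_\la q_\mu=\eps t_\rho$ are then exactly the old condition (i). You instead formed $t_\la t_{x\mu}-\eps t_\mu$, and this produced the ``Markov-type'' relation $\eps t_\rho=\eps t_\la^2t_\mu-t_\la t_\rho-\eps t_\mu$. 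That relation is false. For $\Tt_0^H$ (sign-matching, with $(t_\la,t_\mu,t_\rho)=(2,-1,1)$) the left side is $1$ and the right side is $-5$; for $\Tt_0^P$ it reads $2=-4$. Neither the Gram-determinant derivation nor the inductive fallback can succeed, since the induction already fails at its base. As written, the proof stalls on a step that cannot be completed. The same correction disposes of $y\Tt$: its new parameter is $t_\mu t_\rho-\eps(\eps t_\rho t_\mu-t_\la)=\eps t_\la$, which is the old (ii). This is the one-line check in the paper.

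For $s\Tt=(\bm{x}_\mu,\bm{x}_\rho,S\bm{x}_\la)$, with $t$-values $(t_\mu,t_\rho,-t_\la)$ and sign $-\eps$, the same bookkeeping slip appears, and no cubic relation among the $t$'s is needed here either.
\begin{itemize}
\item It is the new condition (i), not the new (ii), that coincides with the old (ii).
\item The pairing $\bm{x}_\rho^TAS\bm{x}_\la$ belongs to the new (ii) and must equal $-4\eps t_\mu$.
\item The new (iii) requires $\bm{x}_\mu^TAS\bm{x}_\la=4(\eps t_\rho-t_\la t_\mu)$.
\end{itemize}
What you need is Lemma~\ref{lem:7cond}: if $\bm{x}_1,\bm{x}_2$ are $(p_2q_1-p_1q_2)$-compatible, then $t_1t_2=p_2(p_1+q_1)+q_2(q_1-7p_1)$. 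A direct expansion gives
\[
\bm{x}^TA\bm{y}-\bm{x}^TAS\bm{y}=2\bigl(p_y(p_x-7q_x)+q_y(p_x+q_x)+t_xt_y\bigr).
\]
Apply Lemma~\ref{lem:7cond} to the pairs $(\bm{x}_\la,\bm{x}_\rho)$ and $(\bm{x}_\la,\bm{x}_\mu)$; this is legitimate by the old (iii) and (i). That yields both pairings. The determinant identities follow from the same lemma together with the old determinant conditions; in your notation, $\det(\bm{x},S\bm{y})=\bm{x}^TA\bm{y}-t_xt_y+3\det(\bm{x},\bm{y})$. Your idea of expressing $\det(\bm{x},S\bm{y})$ through the pairing is therefore in the right spirit. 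Once the indexing of condition (iii) is fixed, your outline goes through and coincides with the paper's direct verification.
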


The following corollary states that the triples that describe mutations of $Q^0_{\bullet,b}$ are always recursive triples. The corollary is immediate as $\Tt_0^H$ and $\Tt_0^P$ are both recursive triples and then applying Proposition~\ref{prop:nodalDir}: 
    \begin{cor}
        All triples obtained by mutation from $\Tt_0^H$ and $\Tt_0^P$ are recursive triples. Consequently, whenever a mutation sequence is geometrically well-defined for $Q_{\bullet,b}^0$, the resulting quadrilateral is $\Tt$-standard for the corresponding recursive triple $\Tt$. 
    \end{cor}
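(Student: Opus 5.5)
The proof is an induction on the length of the word $w$ in the letters $x,y,s,\ov{x},\ov{y},\ov{s}$. It has two parts: first the purely algebraic statement about triples, then the geometric statement about quadrilaterals.

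\emph{Algebraic part.} For the base case I check directly that $\Tt_0^H$ and $\Tt_0^P$ are recursive triples in the sense of Definition~\ref{def:genTrip}. Write the bilinear form as $\bm{x}^TA\bm{y}=-pp'+3(pq'+qp')-qq'+tt'$.

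For $\Tt_0^H=((1,1,2),(2,1,-1),(4,1,1))$:
\begin{itemlist}
\item Each entry satisfies $\bm{x}^TA\bm{x}=8$; for example $-1+6-1+4=8$ for $(1,1,2)$.
\item The coordinates \eqref{eq:dmefIntro} are the integers $(d;m)=(1;1),(1;0),(2;1)$, so each entry lies in $\X_H$.
\item With $\eps=1$:
  \begin{itemlist}
  \item $p_\mu q_\la-p_\la q_\mu=1=t_\rho$;
  \item $p_\rho q_\mu-p_\mu q_\rho=2=t_\la$;
  \item $p_\rho q_\la-p_\la q_\rho=3=t_\la t_\rho-t_\mu$.
  \end{itemlist}
\item The compatibility pairings are $4$, $8$ and $12$, i.e.\ four times $1$, $2$ and $3$ respectively. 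For instance $\bm{x}_\la^TA\bm{x}_\mu=-2+9-1-2=4$.
\end{itemlist}
So $\Tt_0^H$ is a sign-matching recursive triple.

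For $\Tt_0^P=((1,1,2),(3,1,0),(5,1,2))$ the same routine arithmetic applies, using $(e,f)=(1,0),(1,1),(2,1)$ as in Example~\ref{ex:baseExceptional}. One determines which $\eps\in\{\pm1\}$ works; I expect $\eps=-1$, since $p_\mu q_\la-p_\la q_\mu=2=-\eps t_\rho$.

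For the inductive step, if $w\Tt_0^\bullet$ is recursive, then Proposition~\ref{prop:genTrip} shows that applying any single letter again gives a recursive triple; only the sign type may switch. This proves the first sentence of the statement.

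\emph{Geometric part.} Suppose $wQ_{\bullet,b}^0$ is well-defined. By definition every prefix mutation is then well-defined as well. Write $w=w_1\cdots w_n$ and set $Q_j:=w_{j}\cdots w_nQ_{\bullet,b}^0$.
\begin{itemlist}
\item The base quadrilateral $Q_{\bullet,b}^0$ is $\Tt_0^\bullet$-standard (the example following Definition~\ref{eq:tStandard}).
\item Lemma~\ref{lem:standard} applied step by step shows each $Q_j$ is $(w_j\cdots w_n\Tt_0^\bullet)$-standard.
\item Being $\Tt$-standard depends only on the $(p,q)$-coordinates. The $t$-extended mutations restrict on the $(p,q)$-coordinates exactly to those of Definition~\ref{def:mutTrip}: the recursion coefficients $\nu_\la,\nu_\rho$ and the matrix $S$ act the same way on $(p,q)$. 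Hence the triple produced geometrically is the $(p,q)$-part of the recursive triple $w\Tt_0^\bullet$ from the algebraic part.
\end{itemlist}
This is the ``consequently'' clause; equivalently it is Proposition~\ref{prop:nodalDir}.

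\emph{Where the work lies.} No step is genuinely hard once Proposition~\ref{prop:genTrip} and Lemma~\ref{lem:standard} are granted. The only place needing care is the base verification, in particular getting the correct signs of $t$ and $\eps$ for $\Tt_0^P$ so that all three conditions (i)--(iii) hold simultaneously. If $\eps=-1$ turns out to fail for some condition, I would recheck it against the integrality of $(e,f)$.
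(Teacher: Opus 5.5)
Your argument is the paper's: the base triples are recursive, Proposition~\ref{prop:genTrip} propagates recursiveness (possibly changing the sign type) under all six letters, and iterating Lemma~\ref{lem:standard} from the $\Tt_0^\bullet$-standard base diagram is exactly Proposition~\ref{prop:nodalDir}, which gives the second sentence.

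The one slip is in the base case you flagged as needing care: $\Tt_0^P$ is sign-matching, not sign-alternating. Condition (i) reads $\eps t_\rho=p_\mu q_\la-p_\la q_\mu=2$ with $t_\rho=2$, which forces $\eps=1$. With that choice the remaining conditions hold:
\begin{itemlist}
\item $p_\rho q_\mu-p_\mu q_\rho=2=t_\la$;
\item $p_\rho q_\la-p_\la q_\rho=4=t_\la t_\rho-t_\mu$, since $t_\mu=0$;
\item the three pairings are $8,8,16$, i.e.\ four times $2,2,4$.
\end{itemlist}

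Your fallback of rechecking integrality of $(e,f)$ could not settle the sign anyway. For $P$, both $t$ and $-t$ give integral $(e,f)$, with $e$ and $f$ swapped. The signs of $t$ in $\Tt_0^P$ are instead pinned down by requiring $Q_{P_b}(\Tt_0^P)=Q_{P_b}^0$ (flipping them would make $|OY|=b$ and $|OX|=1$). So you simply read off $\eps=1$.
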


    We give some preliminary results before giving the proof of Proposition~\ref{prop:genTrip}. The first result is similar to what was referred to as adjacency in \cite{MMW}. 

\begin{lemma} \label{lem:7cond}
    If $\bm{x}_1=(p_1,q_1,t_1),\bm{x}_2=(p_2,q_2,t_2)$ are $(p_2q_1-p_1q_2)$-compatible, then the following identities hold:
    \begin{itemlist}
        \item[{(i)}] $t_1t_2=p_2(p_1+q_1)+q_2(q_1-7p_1)$
        \item[{(ii)}] If it is also the case that $p_2q_1-p_1q_2=t_1t_2+k,$ then 
        \begin{equation}
            k=6p_1q_2-q_1q_2-p_1p_2. 
        \end{equation} 
    \end{itemlist}
\end{lemma}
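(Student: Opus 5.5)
The plan is a direct computation: expand the compatibility condition using the explicit matrix $A$ from \eqref{eq:Xdef}, then substitute. No earlier lemma is needed beyond the definition of $\nu$-compatibility, and nothing here should be an obstacle.

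For (i), write out the bilinear form for $\bm{x}_1=(p_1,q_1,t_1)$ and $\bm{x}_2=(p_2,q_2,t_2)$:
\[
\bm{x}_2^TA\bm{x}_1=-p_1p_2+3p_1q_2+3q_1p_2-q_1q_2+t_1t_2 .
\]
The hypothesis that $\bm{x}_1,\bm{x}_2$ are $(p_2q_1-p_1q_2)$-compatible says this equals $4(p_2q_1-p_1q_2)$. Solving for $t_1t_2$ gives
\[
t_1t_2=4p_2q_1-4p_1q_2+p_1p_2-3p_1q_2-3q_1p_2+q_1q_2
= p_1p_2+p_2q_1+q_1q_2-7p_1q_2 .
\]
Grouping the right-hand side as $p_2(p_1+q_1)+q_2(q_1-7p_1)$ is exactly identity (i).

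For (ii), start from the extra hypothesis in the form $k=p_2q_1-p_1q_2-t_1t_2$. Substitute the expression for $t_1t_2$ from (i); the $p_2q_1$ terms cancel and we are left with
\[
k=-p_1q_2-p_1p_2-q_1q_2+7p_1q_2=6p_1q_2-q_1q_2-p_1p_2,
\]
which is the claimed formula.

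The only step needing care is the bookkeeping of signs and coefficients in the expansion of $\bm{x}_2^TA\bm{x}_1$. Since $A$ is symmetric, the two off-diagonal entries $3$ contribute $3p_1q_2$ and $3q_1p_2$ separately, and the $(3,3)$ entry contributes $t_1t_2$.
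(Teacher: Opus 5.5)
Your proposal is correct and follows the paper's own proof: expand the compatibility condition $\bm{x}_2^TA\bm{x}_1=4(p_2q_1-p_1q_2)$ to get (i), then substitute (i) into $p_2q_1-p_1q_2=t_1t_2+k$ to get (ii). Your expansion and cancellations check out.
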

\begin{proof}
    The first is an immediate computation simplifying $\bm{x}_1^TA\bm{x}_2=4(p_2q_1-p_1q_2).$ The second follows from substituting the expression for $t_1t_2$ in (i) to $p_2q_1-p_1q_2=t_1t_2+k$ and simplifying.  
\end{proof}

  \begin{lemma} \label{lem:md13}
    For $(p,q,t) \in\X_H$ with $d,m$ satisfying \eqref{eq:dmefIntro} both positive and $p \geq q,$ we have
    \[ \frac{m}{d}>\frac{1}{3} \iff t>0 \quad \text{and} \quad \frac{m}{d}<\frac{1}{3} \iff t<0.\]

    For $(p,q,t) \in \X_P$ with $(e,f)$ satisfying \eqref{eq:dmefIntro} both positive, we have 
   \[ \frac{e}{f}>1 \iff t>0 \quad \text{and} \quad \frac{e}{f}<1 \iff t<0.\]     
\end{lemma}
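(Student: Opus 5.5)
Both equivalences reduce to the sign of a single linear combination of $d$ and $m$ (resp. $e$ and $f$), which we compute directly from \eqref{eq:dmefIntro}. For $H$, we have $8d = 3(p+q)+t$ and $8m = p+q+3t$, so
\[
8(3m-d) = 3(p+q)+9t - 3(p+q) - t = 8t,
\]
that is, $t = 3m-d$. Since $d>0$, we get $m/d > 1/3 \iff 3m > d \iff t>0$, and likewise $m/d<1/3 \iff t<0$. For $P$, we have $4e = p+q+t$ and $4f = p+q-t$, so $t = 2(e-f)$. Since $f>0$, we get $e/f>1 \iff e>f \iff t>0$, and similarly for the reverse inequality.

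The plan is therefore a two-line computation for each case. Invert the defining relations \eqref{eq:dmefIntro} to express $t$ linearly in $(d,m)$ or $(e,f)$. Then use positivity of the denominator ($d>0$, resp. $f>0$) to translate the sign of $t$ into a comparison of the ratio with $1/3$, resp. $1$.

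Neither the quadratic condition $\bm{x}^TA\bm{x}=8$ nor the hypothesis $p\ge q$ seems to be needed for the equivalences themselves. The only role I would expect for $p\geq q$ (together with the quadratic relation) is to guarantee $d>0$ and to rule out borderline cases. That is harmless here, since $d,m>0$ are already assumed. The case $t=0$ corresponds exactly to $m/d=1/3$ (resp. $e=f$), so the strict equivalences are consistent.

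The only potential obstacle is whether the intended statement secretly requires excluding $t=0$ for $H$. For $(p,q,0)\in\X$ we would need $-p^2+6pq-q^2=8$. One could check integrality of $d=3(p+q)/8$ and $m=(p+q)/8$, but this is not needed for the stated equivalences. So I expect no real difficulty.
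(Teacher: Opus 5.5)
Your proof is correct and is essentially the paper's argument. The paper writes $m/d=(p+q+3t)/(3(p+q)+t)$ and cross-multiplies using $d>0$ to reduce to $9t>t$, which is the same linear computation as your identity $t=3m-d$ (and similarly $t=2(e-f)$ for $P$); you are also right that neither $p\ge q$ nor the quadratic condition is used.
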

\begin{proof}
    By \eqref{eq:dmefIntro}, 
    \[ \frac{m}{d}=\frac{p+q+3t}{3(p+q)+t} >\frac{1}{3} \implies 9 t>t. \]

      The statement about $e/f$ is similar as \eqref{eq:dmefIntro} gives that 
    \[ \frac{e}{f}=\frac{p+q+ t}{p+q- t}.\]
\end{proof}

\begin{lemma}\label{lem:recurDif}\cite[Cor 3.15]{MM}
    Let $p_k,q_k$,$k \geq 0$ be increasing sequences that both satisfy the recursion $r_{k+1}=tr_k-r_{k-1}$ where $p_k,q_k$ are positive for $k \geq 0.$ Then the ratios $(p_k/q_k)_{k \geq 1}$ form a monotone sequence that is strictly increasing if $p_1q_0-p_0q_1>0$ and is strictly decreasing if $p_1q_0-p_0q_1<0$. 
\end{lemma}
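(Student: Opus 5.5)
The natural route is to reduce the recursion to a single $2\times 2$ linear system and track the sign of one Wronskian-type determinant. Set $D_k:=p_{k+1}q_k-p_kq_{k+1}$.

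The first step is to show that $D_k$ is constant. Using $p_{k+1}=tp_k-p_{k-1}$ and $q_{k+1}=tq_k-q_{k-1}$, we get
\[
D_k=(tp_k-p_{k-1})q_k-p_k(tq_k-q_{k-1})=p_kq_{k-1}-p_{k-1}q_k=D_{k-1}.
\]
Hence $D_k=D_0=p_1q_0-p_0q_1$ for all $k\ge 0$.

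The second step is to compare consecutive ratios. Since all $q_k>0$,
\[
\frac{p_{k+1}}{q_{k+1}}-\frac{p_k}{q_k}=\frac{D_k}{q_kq_{k+1}}=\frac{D_0}{q_kq_{k+1}}.
\]
So every consecutive difference has the sign of $D_0$. If $D_0>0$, the sequence $p_k/q_k$ is strictly increasing for all $k\ge0$, and in particular for $k\ge1$. If $D_0<0$, it is strictly decreasing.

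I do not expect a real obstacle here. The only points needing care are the index conventions and the hypotheses. The recursion must hold from $k=1$ on, so that $D_k=D_{k-1}$ is valid for all $k\ge1$. Positivity of the $q_k$ is what lets us divide without flipping signs. The hypothesis that the sequences are increasing is not actually needed for monotonicity of the ratios; only positivity of the $q_k$ is used, but it is harmless to keep it. The degenerate case $D_0=0$ gives a constant sequence and is excluded by the statement.
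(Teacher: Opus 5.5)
Your proposal is correct and follows essentially the same route as the paper. The paper likewise shows that $p_{k+1}q_k-p_kq_{k+1}$ is invariant under the recursion and then uses positivity of the entries to read off the sign of consecutive differences. You simply write out the final step, $\frac{p_{k+1}}{q_{k+1}}-\frac{p_k}{q_k}=\frac{D_0}{q_kq_{k+1}}$, explicitly.
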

\begin{proof}
    We have that
    \[ p_{k+1}q_k-p_kq_{k+1}=(tp_k-p_{k-1})q_k-p_k(tq_k-q_{k-1})=p_kq_{k-1}-p_{k-1}q_k\]
    is constant in $k.$ The conclusion follows as long as the entries are positive. 
\end{proof}

We now prove Proposition~\ref{prop:genTrip}.
  \begin{proof}[Proof of Proposition~\ref{prop:genTrip}]
    For both statements, the conditions of the recursive triple ensure that by Lemma~\ref{lem:symTriple} and Corollary~\ref{cor:recurExtend}, we have $w\Tt \in\X_\bullet \times\X_\bullet \times\X_\bullet$ for $\bullet=P,H$ for $w=x,y,s,\ov{x},\ov{y},\ov{s}.$ Hence, it remains to check the other conditions of being a recursive triple.

We prove the result for $y\Tt$ first; the other cases are similar. Let $\eps=1$ if $\Tt$ is sign-matching and $\eps=-1$ if $\Tt$ is sign-alternating. Then
\[
\eps t_\rho=p_\mu q_\lambda-p_\lambda q_\mu,
\qquad
\eps t_\lambda=p_\rho q_\mu-p_\mu q_\rho,
\]
and
\[
t_\lambda t_\rho-\eps t_\mu=p_\rho q_\lambda-p_\lambda q_\rho.
\]
By definition,
\[
y\Tt=(\bm{x}_\mu,\bm{x}_{y\mu},\bm{x}_\rho),
\qquad
\bm{x}_{y\mu}=\eps t_\rho\bm{x}_\mu-\bm{x}_\lambda.
\]

We first check condition (i). By Corollary~\ref{cor:recurExtend}, $\bm{x}_\mu,\bm{x}_{y\mu}$ are $\eps t_\rho$-compatible, since $\bm{x}_\lambda,\bm{x}_\mu$ are $\eps t_\rho$-compatible. Additionally,
\begin{align*}
p_{y\mu}q_\mu-p_\mu q_{y\mu}
&=(\eps t_\rho p_\mu-p_\lambda)q_\mu
-p_\mu(\eps t_\rho q_\mu-q_\lambda)\\
&=p_\mu q_\lambda-p_\lambda q_\mu\\
&=\eps t_\rho.
\end{align*}
Hence condition (i) holds for $y\Tt$.

Next, we check condition (ii). We have
\begin{align*}
\bm{x}_{y\mu}^TA\bm{x}_\rho
&=(\eps t_\rho \bm{x}_\mu-\bm{x}_\lambda)^TA\bm{x}_\rho\\
&=\eps t_\rho \bm{x}_\mu^TA\bm{x}_\rho-\bm{x}_\lambda^TA\bm{x}_\rho\\
&=\eps t_\rho(4\eps t_\lambda)-4(t_\lambda t_\rho-\eps t_\mu)\\
&=4t_\lambda t_\rho-4t_\lambda t_\rho+4\eps t_\mu\\
&=4\eps t_\mu.
\end{align*}
Thus $\bm{x}_{y\mu},\bm{x}_\rho$ are $\eps t_\mu$-compatible. We also need to check the corresponding determinant identity. We compute
\begin{align*}
p_\rho q_{y\mu}-p_{y\mu}q_\rho
&=p_\rho(\eps t_\rho q_\mu-q_\lambda)
-(\eps t_\rho p_\mu-p_\lambda)q_\rho\\
&=\eps t_\rho(p_\rho q_\mu-p_\mu q_\rho)
-(p_\rho q_\lambda-p_\lambda q_\rho)\\
&=\eps t_\rho(\eps t_\lambda)
-(t_\lambda t_\rho-\eps t_\mu)\\
&=t_\lambda t_\rho-t_\lambda t_\rho+\eps t_\mu\\
&=\eps t_\mu.
\end{align*}
So condition (ii) holds.

Finally, we check condition (iii). In the new triple $y\Tt$, the left and right entries are $\bm{x}_\mu$ and $\bm{x}_\rho$. By assumption, $\bm{x}_\mu,\bm{x}_\rho$ are $\eps t_\lambda$-compatible, and
\[
p_\rho q_\mu-p_\mu q_\rho=\eps t_\lambda.
\]
Thus it remains only to verify that
\[
t_\mu t_\rho-\eps t_{y\mu}=\eps t_\lambda.
\]
This follows from the definition of the recursion. 

Therefore condition (iii) holds.

Thus $y\Tt$ is a recursive triple with the same value of $\eps$. The proofs for $x\Tt,\ov{x}\Tt,\ov{y}\Tt$ are similar.

    We now prove the result for $s\Tt$. Let $\eps=1$ if $\Tt$ is sign-matching and $\eps=-1$ if $\Tt$ is sign-alternating. We will show that $s\Tt$ is a recursive triple with sign $-\eps$.

Write $s\Tt=(\bm{x}_\mu,\bm{x}_\rho,\bm{x}_{s\rho}).$
By the definition of the $s$-mutation, we have $t_{s\rho}=-t_\lambda.$

We first check condition (i). We need $\bm{x}_\mu,\bm{x}_\rho$ to be $-\eps t_{s\rho}$-compatible. Since $-\eps t_{s\rho}=\eps t_\lambda,$ 
this follows from the assumption that $\bm{x}_\mu,\bm{x}_\rho$ are $\eps t_\lambda$-compatible. Moreover,
\[
p_\rho q_\mu-p_\mu q_\rho=\eps t_\lambda=-\eps t_{s\rho}.
\]
Hence condition (i) holds.

We now check condition (ii). In $s\Tt$, condition (ii) requires $\bm{x}_\rho,\bm{x}_{s\rho}$ to be $-\eps t_\mu$-compatible. Since $\bm{x}_\lambda,\bm{x}_\rho$ are $(t_\lambda t_\rho-\eps t_\mu)$-compatible, we have
\[
\bm{x}_\rho^TA\bm{x}_\lambda=4(t_\lambda t_\rho-\eps t_\mu).
\]
Thus, to show that
\[
\bm{x}_\rho^TA\bm{x}_{s\rho}=-4\eps t_\mu,
\]
it is enough to show that
\[
\bm{x}_\rho^TA\bm{x}_\lambda-\bm{x}_\rho^TA\bm{x}_{s\rho}=4t_\lambda t_\rho.
\]
Expanding the left-hand side, this is equivalent to
\begin{align*}
2(p_\lambda(p_\rho-7q_\rho)+q_\lambda(p_\rho+q_\rho)+t_\lambda t_\rho)
&=4t_\lambda t_\rho,\\
p_\lambda(p_\rho-7q_\rho)+q_\lambda(p_\rho+q_\rho)
&=t_\lambda t_\rho.
\end{align*}
This holds by Lemma~\ref{lem:7cond}. Therefore $\bm{x}_\rho,\bm{x}_{s\rho}$ are $-\eps t_\mu$-compatible.

We also need to verify the determinant identity
\[
-\eps t_\mu=p_{s\rho}q_\rho-p_\rho q_{s\rho}.
\]
Expanding the right-hand side gives
\[
p_{s\rho}q_\rho-p_\rho q_{s\rho}
=(6p_\lambda-q_\lambda)q_\rho-p_\lambda p_\rho.
\]
By Lemma~\ref{lem:7cond}, applied to $\bm{x}_\lambda$ and $\bm{x}_\rho$, we have
\[
t_\lambda t_\rho
=
p_\rho(p_\lambda+q_\lambda)+q_\rho(q_\lambda-7p_\lambda).
\]
Since
\[
t_\lambda t_\rho-\eps t_\mu=p_\rho q_\lambda-p_\lambda q_\rho,
\]
we obtain
\[
-\eps t_\mu
=
q_\rho(6p_\lambda-q_\lambda)-p_\lambda p_\rho.
\]
Thus
\[
p_{s\rho}q_\rho-p_\rho q_{s\rho}=-\eps t_\mu,
\]
as desired. So condition (ii) holds.

Finally, we check condition (iii). In $s\Tt$, condition (iii) requires $\bm{x}_\mu,\bm{x}_{s\rho}$ to be compatible with
\[
t_\mu t_{s\rho}-(-\eps)t_\rho=t_\mu t_{s\rho}+\eps t_\rho.
\]
Since $t_{s\rho}=-t_\lambda$, this quantity is
\[
-t_\lambda t_\mu+\eps t_\rho.
\]
Thus we want to show that
\[
\bm{x}_\mu^TA\bm{x}_{s\rho}=4(-t_\lambda t_\mu+\eps t_\rho).
\]
As $\bm{x}_\mu^TA\bm{x}_\lambda=4\eps t_\rho$, it is enough to show that
\[
\bm{x}_\mu^TA\bm{x}_\lambda-\bm{x}_\mu^TA\bm{x}_{s\rho}
=
4t_\lambda t_\mu.
\]
Expanding the left-hand side, we have
\[
\bm{x}_\mu^TA\bm{x}_\lambda-\bm{x}_\mu^TA\bm{x}_{s\rho}
=
2(p_\lambda(p_\mu-7q_\mu)+q_\lambda(p_\mu+q_\mu)+t_\lambda t_\mu).
\]
Thus it remains to show that
\[
p_\lambda(p_\mu-7q_\mu)+q_\lambda(p_\mu+q_\mu)=t_\lambda t_\mu,
\]
which holds by Lemma~\ref{lem:7cond}. Hence the compatibility condition in (iii) holds.

It remains to verify the determinant identity for condition (iii), namely
\[
p_{s\rho}q_\mu-p_\mu q_{s\rho}
=
t_\mu t_{s\rho}+\eps t_\rho
=
-t_\lambda t_\mu+\eps t_\rho.
\]
Equivalently, we need to show that
\[
(6p_\lambda-q_\lambda)q_\mu-p_\mu p_\lambda
=
\eps t_\rho-t_\lambda t_\mu.
\]
By Lemma~\ref{lem:7cond}, applied to $\bm{x}_\lambda$ and $\bm{x}_\mu$, we have
\[
p_\lambda(p_\mu-7q_\mu)+q_\lambda(p_\mu+q_\mu)=t_\lambda t_\mu.
\]
Subtracting this identity from
\[
p_\mu q_\lambda-p_\lambda q_\mu=\eps t_\rho
\]
gives
\[
(6p_\lambda-q_\lambda)q_\mu-p_\mu p_\lambda
=
\eps t_\rho-t_\lambda t_\mu,
\]
as desired.

Thus $s\Tt$ satisfies the recursive-triple conditions with sign $-\eps$. Hence $s\Tt$ is sign-alternating if $\Tt$ is sign-matching, and sign-matching if $\Tt$ is sign-alternating. The proof for $\ov{s}\Tt$ is similar.

\end{proof}

To prove Theorem~\ref{thm:ATFMut}, it will be helpful to have the following identities. We have chosen not to include them in the definition for simplicity. 

\begin{lemma} \label{lem:relations} Let $\Tt=(\bm{x}_\la,\bm{x}_\mu,\bm{x}_\rho)$ be a recursive triple. If $\Tt$ is a sign-matching recursive triple, set $\eps=1.$ Otherwise, set $\eps=-1.$ Then, the following identities hold: 
    \begin{itemlist}
      \item[{(i)}] $ \eps t_\mu=p_\la p_\rho+q_\la q_\rho-6 q_\rho p_\la$
            \item[{(ii)}] $ t_\la t_\rho q_\mu=\eps(t_\la q_\la+t_\mu q_\mu +t_\rho q_\rho)$

    \end{itemlist}
\end{lemma}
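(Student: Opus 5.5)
Both identities follow from the determinant relations built into Definition~\ref{def:genTrip}; no new structure is needed. Throughout I use the convention $\det(\bm{u},\bm{v}) := u_p v_q - u_q v_p$ for the $(p,q)$-parts of vectors.

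\emph{Identity (i).} I will apply Lemma~\ref{lem:7cond} to the pair $\bm{x}_1=\bm{x}_\la$, $\bm{x}_2=\bm{x}_\rho$.
\begin{itemlist}
\item[$\cdot$] Condition (iii) of Definition~\ref{def:genTrip} says that $\bm{x}_\la,\bm{x}_\rho$ are $(t_\la t_\rho-\eps t_\mu)$-compatible.
\item[$\cdot$] It also says that $t_\la t_\rho-\eps t_\mu = p_\rho q_\la - p_\la q_\rho$.
\item[$\cdot$] Hence the pair is $(p_\rho q_\la-p_\la q_\rho)$-compatible, which is the hypothesis of Lemma~\ref{lem:7cond}.
\item[$\cdot$] The same equation has the form $p_2q_1-p_1q_2=t_1t_2+k$ with $k=-\eps t_\mu$.
\end{itemlist}
Lemma~\ref{lem:7cond}(ii) then gives
\[-\eps t_\mu = 6p_\la q_\rho - q_\la q_\rho - p_\la p_\rho,\]
which is exactly (i) after multiplying by $-1$. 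This step is immediate.

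\emph{Identity (ii).} I will use the linear dependence that holds for any three vectors $\bm{a},\bm{b},\bm{c}$ in the plane (Cramer's rule, or expanding a $3\times 3$ determinant with a repeated row):
\[\det(\bm{b},\bm{c})\,\bm{a}-\det(\bm{a},\bm{c})\,\bm{b}+\det(\bm{a},\bm{b})\,\bm{c}=0.\]
Take $\bm{a},\bm{b},\bm{c}$ to be the $(p,q)$-parts of $\bm{x}_\la,\bm{x}_\mu,\bm{x}_\rho$. By the three determinant equations in Definition~\ref{def:genTrip},
\begin{itemlist}
\item[$\cdot$] $\det(\bm{x}_\mu,\bm{x}_\rho)=-\eps t_\la$,
\item[$\cdot$] $\det(\bm{x}_\la,\bm{x}_\rho)=-(t_\la t_\rho-\eps t_\mu)$,
\item[$\cdot$] $\det(\bm{x}_\la,\bm{x}_\mu)=-\eps t_\rho$.
\end{itemlist}
Substituting these gives
\[-\eps t_\la\,(p_\la,q_\la)+(t_\la t_\rho-\eps t_\mu)(p_\mu,q_\mu)-\eps t_\rho\,(p_\rho,q_\rho)=0.\]
Reading off the $q$-coordinate and rearranging yields
\[t_\la t_\rho q_\mu=\eps(t_\la q_\la+t_\mu q_\mu+t_\rho q_\rho),\]
which is (ii). The $p$-coordinate gives the analogous identity for $p_\mu$ as a byproduct.

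The only point needing care is the sign bookkeeping in the three determinants, which I have checked above. There is no real obstacle: neither identity uses the quadratic condition defining $\X$ beyond what Lemma~\ref{lem:7cond} already encodes.
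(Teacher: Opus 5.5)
Your proposal is correct and follows the paper's proof: (i) is Lemma~\ref{lem:7cond}(ii) applied to $\bm{x}_\la,\bm{x}_\rho$ with $k=-\eps t_\mu$ (your value of $k$, with the sign included, is the right one). For (ii), the paper multiplies condition (iii) by $q_\mu$ and checks the remainder against the other two determinant identities, which is exactly the $q$-coordinate of your relation $\det(\bm{b},\bm{c})\bm{a}-\det(\bm{a},\bm{c})\bm{b}+\det(\bm{a},\bm{b})\bm{c}=0$; your packaging simply organizes that computation and also yields the analogous $p$-coordinate identity.
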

\begin{proof}
    We have (i) is Lemma~\ref{lem:7cond}~(ii) taking $k=-t_\mu.$

    For (ii), using condition (iii) in the definition of a recursive triple, we have
\[
t_\lambda t_\rho-\eps t_\mu=p_\rho q_\lambda-p_\lambda q_\rho.
\]
Multiplying by $q_\mu$ gives
\[
t_\lambda t_\rho q_\mu
=
p_\rho q_\lambda q_\mu-p_\lambda q_\rho q_\mu+\eps t_\mu q_\mu.
\]
Thus it remains to show that
\[
p_\rho q_\lambda q_\mu-p_\lambda q_\rho q_\mu
=
\eps(t_\lambda q_\lambda+t_\rho q_\rho).
\]
Using the determinant identities from a recursive triple, we compute
\begin{align*}
\eps(t_\lambda q_\lambda+t_\rho q_\rho)
&=(p_\rho q_\mu-p_\mu q_\rho)q_\lambda
+(p_\mu q_\lambda-p_\lambda q_\mu)q_\rho\\
&=p_\rho q_\lambda q_\mu-p_\mu q_\rho q_\lambda
+p_\mu q_\lambda q_\rho-p_\lambda q_\mu q_\rho\\
&=p_\rho q_\lambda q_\mu-p_\lambda q_\rho q_\mu.
\end{align*}
This proves (ii).
 
\end{proof}

Lastly, we give some identities which will be necessary later, but they also show how the $x,y,s$-mutations interact with each other. 
\begin{lemma} \label{lem:identS}
    For a recursive triple $\Tt=(\bm{x}_\la,\bm{x}_\mu,\bm{x}_\rho),$ we have
    \[ \bm{x}_{y\mu}=S\bm{x}_\la+\eps t_\mu \bm{x}_\rho \quad \text{and} \quad 
\bm{x}_{x\mu}=S^{-1}\bm{x}_\rho+\eps t_\mu \bm{x}_\la.\]
It follows that
\[ S\bm{x}_{\ovy\la}=-\bm{x}_{\ovx\rho}.\]
\end{lemma}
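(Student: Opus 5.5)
We prove the two identities coordinatewise, using only the definitions of the mutations and the determinant identities of a recursive triple. Both sides of each identity are linear in the three entries, so it suffices to check the $p$, $q$ and $t$ coordinates separately. The $p,q$ coordinates will follow from the determinant identities (i)--(iii) of Definition~\ref{def:genTrip} together with Lemma~\ref{lem:relations}(i). The $t$ coordinate will follow from condition (iii) of Definition~\ref{def:genTrip}.

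For the first identity, recall $\bm{x}_{y\mu}=\eps t_\rho\bm{x}_\mu-\bm{x}_\la$ and $S\bm{x}_\la=(6p_\la-q_\la,p_\la,-t_\la)$. We must show
\[\eps t_\rho\bm{x}_\mu-\bm{x}_\la=S\bm{x}_\la+\eps t_\mu\bm{x}_\rho.\]
\begin{itemlist}
\item[{(a)}] \emph{The $t$-coordinate.} It reads $\eps t_\rho t_\mu-t_\la=-t_\la+\eps t_\mu t_\rho$, which is trivially true.
\item[{(b)}] \emph{The $q$-coordinate.} We need $\eps t_\rho q_\mu-q_\la=p_\la+\eps t_\mu q_\rho$. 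Substitute $\eps t_\rho=p_\mu q_\la-p_\la q_\mu$ and $\eps t_\mu=p_\la p_\rho+q_\la q_\rho-6q_\rho p_\la$ from Lemma~\ref{lem:relations}(i). This reduces to a polynomial identity in the $p$'s and $q$'s. It is not identically true, so it must be closed using the remaining relation $\eps t_\la=p_\rho q_\mu-p_\mu q_\rho$ together with Lemma~\ref{lem:7cond}(i) applied to the pairs $(\bm{x}_\la,\bm{x}_\mu)$ and $(\bm{x}_\mu,\bm{x}_\rho)$.
\item[{(c)}] \emph{The $p$-coordinate.} It is handled the same way.
\end{itemlist}

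A cleaner route than brute expansion is to use the $2\times2$ matrix framework of Section~\ref{ss:nodalRays}. The $(p,q)$ part of $y\Tt$ and of $s\Tt$ both come from the geometric picture, so we can compare nodal rays. By Lemma~\ref{lem:standard}, $(q_{y\mu},-p_{y\mu})$ is the shear $M_{p_\mu,q_\mu}$ applied to $-(q_\la,-p_\la)$, since the ray at $Y$ after a $y$-mutation is the old ray reversed and transported. Condition iv) of Definition~\ref{eq:tStandard} gives $M_{p_\mu,q_\mu}M_{p_\la,q_\la}=M_{p_\rho,q_\rho}^{-1}\begin{pmatrix}0&1\\-1&-6\end{pmatrix}$, and $M_{p_\la,q_\la}$ fixes $(q_\la,-p_\la)$. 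Hence
\[M_{p_\mu,q_\mu}(q_\la,-p_\la)=M_{p_\rho,q_\rho}^{-1}\bigl(\text{the }(q,-p)\text{ form of }S\bm{x}_\la\bigr),\]
up to the sign conventions of Remark~\ref{rmk:mono}(ii). Then $M^{-1}_{p_\rho,q_\rho}v=v-\det(v,n_\rho)\,n_\rho$, where $n_\rho=(q_\rho,-p_\rho)$. So $M^{-1}_{p_\rho,q_\rho}$ adds a multiple of $\bm{x}_\rho$ to $S\bm{x}_\la$, and that determinant is exactly $\pm\eps t_\mu$ by the determinant identity of condition (ii) in the recursive triple for $s\Tt$, as computed in the proof of Proposition~\ref{prop:genTrip}. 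This yields $\bm{x}_{y\mu}=S\bm{x}_\la+\eps t_\mu\bm{x}_\rho$ in the $(p,q)$ coordinates, independently of geometry, since these are purely algebraic matrix identities valid for any triple satisfying conditions i)--iv).

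The main obstacle is that the algebraic setting only assumes $\Tt$ is recursive, not that it satisfies condition iv). To avoid this, I will fall back on the direct expansion, using the determinant identities and Lemma~\ref{lem:7cond}. The key simplification is that both sides of the identity have the same determinant pairing against $\bm{x}_\mu$ and against $\bm{x}_\rho$. Checking the two scalar equations
\[\det\bigl((p,q)\text{-part},\,(p_\mu,q_\mu)\bigr)\quad\text{and}\quad\det\bigl((p,q)\text{-part},\,(p_\rho,q_\rho)\bigr)\]
suffices whenever $(p_\mu,q_\mu)$ and $(p_\rho,q_\rho)$ are independent, i.e. $\eps t_\la\neq0$. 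The degenerate case will be handled by a separate short check or by continuity of polynomial identities.

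For the second identity, $\bm{x}_{x\mu}=\eps t_\la\bm{x}_\mu-\bm{x}_\rho=S^{-1}\bm{x}_\rho+\eps t_\mu\bm{x}_\la$. The argument is symmetric: apply $\ov{s}$-type reasoning with $S^{-1}(p,q,t)=(q,6q-p,-t)$, or reduce to the first identity applied to a suitably shifted triple (e.g.\ $\ov{s}\Tt$, whose sign is $-\eps$ by Proposition~\ref{prop:genTrip}). Finally, the consequence $S\bm{x}_{\ovy\la}=-\bm{x}_{\ovx\rho}$ needs three inputs. First, $\bm{x}_{\ovy\la}=\eps t_\rho\bm{x}_\la-\bm{x}_\mu$ and $\bm{x}_{\ovx\rho}=\eps t_\la\bm{x}_\rho-\bm{x}_\mu$. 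Second, the first identity gives $\eps t_\rho\bm{x}_\mu-\bm{x}_\la=S\bm{x}_\la+\eps t_\mu\bm{x}_\rho$. Third, by the symmetric identity for the triple $(\bm{x}_\mu,\bm{x}_\la,\cdot)$, $S\bm{x}_\mu$ is expressed similarly. Applying $S$ to $\bm{x}_{\ovy\la}$ and substituting these, everything collapses to $-\eps t_\la\bm{x}_\rho+\bm{x}_\mu$, which is the claimed relation.
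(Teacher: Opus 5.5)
Your core argument is correct and takes a different route from the paper's, though two steps need repair. The paper reduces the $(p,q)$-part of the first identity to the scalar identities $\eps(t_\rho p_\mu-t_\mu p_\rho)=7p_\la-q_\la$ and $\eps(t_\rho q_\mu-t_\mu q_\rho)=p_\la+q_\la$. It then cites \cite[Lemma 4.6]{M1} for $\eps=1$, saying $\eps=-1$ is ``similar'', and treats the second identity the same way. It gets the last statement by applying the first identity to $\ov{y}\Tt=(\bm{x}_{\ovy\la},\bm{x}_\la,\bm{x}_\rho)$: since $y\ov{y}\Tt=\Tt$, this gives $\bm{x}_\mu=S\bm{x}_{\ovy\la}+\eps t_\la\bm{x}_\rho$ at once.

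Your determinant-pairing argument is self-contained and uniform in $\eps$. Write $[a,b]=a_1b_2-a_2b_1$.
\begin{itemlist}
\item Pairing both sides against $(p_\mu,q_\mu)$ reduces to $(6p_\la-q_\la)q_\mu-p_\la p_\mu+t_\la t_\mu=\eps t_\rho$, which is Lemma~\ref{lem:7cond}(i) for $(\bm{x}_\la,\bm{x}_\mu)$.
\item Pairing against $(p_\rho,q_\rho)$ reduces to $(6p_\la-q_\la)q_\rho-p_\la p_\rho=-\eps t_\mu$, which is Lemma~\ref{lem:relations}(i).
\item Your reduction of the second identity to the first, applied to $\ov{s}\Tt=(S^{-1}\bm{x}_\rho,\bm{x}_\la,\bm{x}_\mu)$ with sign $-\eps$, works verbatim: it reads $-\eps t_\mu\bm{x}_\la-S^{-1}\bm{x}_\rho=\bm{x}_\rho-\eps t_\la\bm{x}_\mu$.
\end{itemlist}

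\emph{First repair: the degenerate case.} The pairing argument needs $[(p_\rho,q_\rho),(p_\mu,q_\mu)]=\eps t_\la\neq0$, and the excluded case really occurs, e.g.\ $s\Tt_0^P=((3,1,0),(5,1,2),(5,1,-2))$. ``Continuity of polynomial identities'' is not an argument here. The hypotheses cut out a variety, and nothing shows the locus $t_\la=0$ lies in the closure of $t_\la\neq0$.

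A short fix: if $t_\la=0$, then $(p_\rho,q_\rho)=c(p_\mu,q_\mu)$; note $(p_\mu,q_\mu)\neq0$, since $t_\mu^2\neq8$. Put $Q(p,q)=-p^2+6pq-q^2$. The conditions $\bm{x}_\mu,\bm{x}_\rho\in\X$ and $\bm{x}_\mu^TA\bm{x}_\rho=0$ give $(t_\mu t_\rho)^2=Q(p_\mu,q_\mu)Q(p_\rho,q_\rho)=(8-t_\mu^2)(8-t_\rho^2)$. Hence $t_\mu^2+t_\rho^2=8$, so $t_\rho\neq0$.

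Now pair against $(p_\mu,q_\mu)$ and $(p_\la,q_\la)$ instead; these are independent because their bracket is $\eps t_\rho$. Using $\bm{x}_\la\in\X$ and condition (iii), the second pairing becomes $t_\la^2+t_\mu^2+t_\rho^2-\eps t_\la t_\mu t_\rho=8$. With $t_\la=0$ this is exactly $t_\mu^2+t_\rho^2=8$.

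\emph{Second repair: the last step.} The triple $(\bm{x}_\mu,\bm{x}_\la,\cdot)$ is not a recursive triple. Its condition (i) would require $\bm{x}_\mu,\bm{x}_\la$ to be $(-\eps t_\rho)$-compatible, but they are $\eps t_\rho$-compatible, which forces $t_\rho=0$. So the first identity cannot be applied to it. Use $s\Tt=(\bm{x}_\mu,\bm{x}_\rho,S\bm{x}_\la)$, with sign $-\eps$, instead. Its first identity reads $\eps t_\la\bm{x}_\rho-\bm{x}_\mu=S\bm{x}_\mu-\eps t_\rho S\bm{x}_\la$, i.e.\ $\bm{x}_{\ovx\rho}=-S\bm{x}_{\ovy\la}$ directly. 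Alternatively, simply use the paper's $\ov{y}\Tt$ argument.
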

\begin{proof}
For the $(p,q)$-coordinates, we want to show 
\begin{equation*}
\begin{pmatrix} p_{y\mu} \\ q_{y\mu} \end{pmatrix} 
= S\begin{pmatrix} p_\lambda \\ q_\lambda \end{pmatrix} 
+ \varepsilon t_\mu \begin{pmatrix} p_\rho \\ q_\rho \end{pmatrix}.
\end{equation*}

The right hand side expands as
\begin{equation*}
S\begin{pmatrix} p_\lambda \\ q_\lambda \end{pmatrix} 
+ \varepsilon t_\mu \begin{pmatrix} p_\rho \\ q_\rho \end{pmatrix} 
= \begin{pmatrix} 6p_\lambda - q_\lambda \\ p_\lambda \end{pmatrix} 
+ \varepsilon t_\mu \begin{pmatrix} p_\rho \\ q_\rho \end{pmatrix} 
= \begin{pmatrix} 6p_\lambda - q_\lambda + \varepsilon t_\mu p_\rho \\ 
p_\lambda + \varepsilon t_\mu q_\rho \end{pmatrix}.
\end{equation*}
Using the definition of the $y$-mutation, we want to show
\begin{align*}
\varepsilon t_\rho p_\mu - p_\lambda &= 6p_\lambda - q_\lambda 
    + \varepsilon t_\mu p_\rho \\
\varepsilon t_\rho q_\mu - q_\lambda &= p_\lambda 
    + \varepsilon t_\mu q_\rho,
\end{align*}
or equivalently, 
\begin{align*}
    \eps(t_\rho p_\mu-t_\mu p_\rho)=7p_\la -q_\la \\
    \eps(t_\rho q_\mu-t_\mu q_\rho)=p_\la+q_\la. 
\end{align*}
In the case where $\eps=1,$ this is \cite[Lemma 4.6(i)]{M1}. The proof where $\eps=-1$ follows similarly to \cite[Lemma 4.6(i)]{M1}.  The \(t\)-coordinate also agrees, since
\[
t_{y\mu}=\eps t_\rho t_\mu-t_\lambda,
\]
while the third coordinate of
\[
S\bm{x}_\lambda+\eps t_\mu\bm{x}_\rho
\]
is
\[
-t_\lambda+\eps t_\mu t_\rho.
\]

For the second identity, we can similarly derive that in the case where $\eps=1$, these are directly the identities
\[ p_\rho+q_\rho=\eps(p_\mu t_\la-p_\la t_\mu) \quad \text{and} \quad p_\rho-7q_\rho=\eps(q_\la t_\mu-q_\mu t_\la)\] 
from 
 \cite[Lemma 4.6(ii)]{M1}. The case of $\eps=-1$ follows similarly.

 To see the last statement, we apply the left identity to $\ov{y}\Tt=(\bm{x}_{\ovy\la},\bm{x}_\la,\bm{x}_\rho),$ which gives that 
 \[ \bm{x}_\mu =S\bm{x}_{\ovy\la}+\eps t_\la \bm{x}_\rho,\]
 or equivalently
 \[-\bm{x}_{\ovx\rho}= \bm{x}_\mu-\eps t_\la \bm{x}_\rho=S\bm{x}_{\ovy\la}.\]

\end{proof}
\begin{rmk} \rm
   Note that the relation $S\bm{x}_{\ovy\la}=-\bm{x}_{\ovx\rho}$ is merely an algebraic identity given the definition of the recursive sequences. It is not the case that both $\bm{x}_{\ovy\la}$ and $\bm{x}_{\ovx\rho}$ will have positive $p,q,$ so it is not giving a relationship on quasi-perfect classes.  
\end{rmk}

\subsection{Quadrilaterals and Triples} \label{ss:ATFandTrip}

In Section~\ref{ss:nodalRays}, given a triple of $(p,q)$-coordinates, we defined a family of quadrilaterals $Q(\Tt)$ in Definition~\ref{def:QTnoL} with nodal rays and edge directions and no prescribed affine lengths. We now specify the affine lengths: 
\begin{definition} \label{def:DecQuad}
   Let $\Tt=(\bm{x}_\la,\bm{x}_\mu,\bm{x}_\rho)$ be a recursive $H$-triple where $(d_\bullet,m_\bullet)$ satisfy \eqref{eq:dmefIntro} for $\bullet=\la,\mu,\rho$ and set
   \[ d_\bullet'=d_\bullet-3q_\bullet, \quad m_\bullet'=m_\bullet-q_\bullet. \]
    Define the decorated quadrilateral $Q_{H_b}(\Tt):=OXVY$ such that the nodal rays and direction vectors are given by $Q(\Tt)$ as in Definition~\ref{def:QTnoL}
    and the affine lengths are given by
 
\begin{align}
\label{eq:QTH}
    |OY|&=\frac{d_\la-m_\la b}{q_\la} \quad \text{and} \quad |OX|=\frac{m_\rho'b-d_\rho'}{q_\rho}. 
\end{align} 
Let $\Tt=(\bm{x}_\la,\bm{x}_\mu,\bm{x}_\rho)$ be a recursive $P$-triple where $(e_\bullet,f_\bullet)$ satisfy \eqref{eq:dmefIntro} for $\bullet=\la,\mu,\rho,$ and set
   \[ e_\bullet'=e_\bullet-2q_\bullet, \quad f_\bullet'=f_\bullet-2q_\bullet. \]
  
    Define the decorated quadrilateral $Q_{P_b}(\Tt):=OXVY$ such that the nodal rays and direction vectors are given by $Q(\Tt)$ as in Definition~\ref{def:QTnoL}
    and the affine lengths are given by
\begin{align} \label{eq:QTP}
 |OY|&=\frac{e_\la+f_\la b}{q_\la} \quad \text{and} \quad |OX|=\frac{-f_\rho'b-e_\rho'}{q_\rho}. 
\end{align}
Note in both cases, the other data specifies the affine lengths of $|XV|$ and $|VY|$. 

We say that $Q_{\bullet,b}(\Tt):=OXVY$ is {\bf well-defined} if $\vn_Y,\vn_V,\vn_X$ all point to the interior of $Q_{\bullet,b}(\Tt)$, the affine lengths $|OY|,|OX|$ are positive, and $V$ is in the interior of the first quadrant.
\end{definition}

\begin{example}
    Note that $Q_{H_b}(\Tt_0^H)=Q_{H_b}^0$ and $Q_{P_b}(\Tt_0^P)=Q_{P_b}^0$, the starting quadrilaterals pictured in Figure~\ref{fig:basediagr}. 
\end{example}

\begin{lemma} \label{lem:XVVY}
    For $Q_{H_b}(\Tt_0^H),$ the remaining affine lengths are given by
    \[ |VY|=\eps(\frac{m_\rho'-d_\rho'b}{q_\la q_\mu}) \quad \text{and} \quad 
     |XV|=\eps(\frac{m_\la-d_\la b}{q_\rho q_\mu})\]
     for $\eps=1.$
    For $Q_{P_b}(\Tt_0^P),$ the remaining affine lengths are given by 
    \[ |VY|=\eps(\frac{e_\rho'-f_\rho'b}{q_\la q_\mu}), \quad \text{and} \quad
     |XV|=\eps(\frac{e_\la-f_\la b}{q_\rho q_\mu})\]
     for $\eps=1.$
\end{lemma}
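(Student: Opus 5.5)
The plan is a direct verification: compute every quantity in the statement from the base triples, build the quadrilateral from Definition~\ref{def:DecQuad} and Lemma~\ref{lem:standFrom}, and read off $|VY|$ and $|XV|$ as the lengths needed to close it up. Since $Q_{\bullet,b}(\Tt)$ specifies only $|OY|$, $|OX|$ and the primitive directions, the vertices $Y=|OY|(0,1)$ and $X=|OX|(1,0)$ are fixed. The vertex $V$ is then the intersection of the line through $Y$ in direction $\ovr{YV}=-\ovr{VY}$ with the line through $X$ in direction $\ovr{XV}$. The affine lengths are the lattice multiples $V-Y=|VY|\,\ovr{YV}$ and $V-X=|XV|\,\ovr{XV}$. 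Finally, check that these agree with the stated formulas with $\eps=1$.

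\emph{Case $H$.} For $\Tt_0^H=((1,1,2),(2,1,-1),(4,1,1))$, \eqref{eq:dmefIntro} gives
\[
(d_\la;m_\la)=(1;1),\qquad (d_\mu;m_\mu)=(1;0),\qquad (d_\rho;m_\rho)=(2;1).
\]
Hence $d_\rho'=-1$ and $m_\rho'=0$, and \eqref{eq:QTH} gives $|OY|=1-b$ and $|OX|=1$. Lemma~\ref{lem:standFrom} gives
\[
\ovr{VY}=(-q_\la^2,\,q_\la p_\la-1)=(-1,0),\qquad \ovr{XV}=(1+p_\rho q_\rho-6q_\rho^2,\,q_\rho^2)=(-1,1).
\]
Intersecting the horizontal line $y=1-b$ through $Y$ with the line through $X=(1,0)$ in direction $(-1,1)$ gives $V=(b,1-b)$. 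Therefore $|VY|=b$ and $|XV|=1-b$. The stated formulas give
\[
|VY|=\frac{m_\rho'-d_\rho'b}{q_\la q_\mu}=b,\qquad |XV|=\frac{m_\la-d_\la b}{q_\rho q_\mu}=1-b,
\]
which match. This also matches Figure~\ref{fig:basediagr}.

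\emph{Case $P$.} For $\Tt_0^P=((1,1,2),(3,1,0),(5,1,2))$, \eqref{eq:dmefIntro} gives $(e_\la,f_\la)=(1,0)$ and $(e_\rho,f_\rho)=(2,1)$. Hence $e_\rho'=0$ and $f_\rho'=-1$, and \eqref{eq:QTP} gives $|OY|=1$ and $|OX|=b$. Lemma~\ref{lem:standFrom} gives $\ovr{VY}=(-1,0)$ and $\ovr{XV}=(0,1)$. The quadrilateral is therefore the rectangle with $V=(b,1)$, so $|VY|=b$ and $|XV|=1$. The stated formulas give
\[
|VY|=\frac{e_\rho'-f_\rho'b}{q_\la q_\mu}=b,\qquad |XV|=\frac{e_\la-f_\la b}{q_\rho q_\mu}=1,
\]
which again match.

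There is no real obstacle here. The only care needed is to use the sign conventions on $t$ from the base triples, not $|t|$, when solving \eqref{eq:dmefIntro}, and to use the primed quantities $d'_\rho, m'_\rho, e'_\rho, f'_\rho$. The lemma will then serve as the base case of an induction over mutations, in which the general $\pm$ sign is presumably $\eps$ and tracks whether the triple is sign-matching or sign-alternating.
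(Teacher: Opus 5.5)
Your proposal is correct and is essentially the paper's own argument: the paper also imposes the closure relation $(|OX|,0)+|XV|\,\overrightarrow{XV}+|VY|\,\overrightarrow{VY}=(0,|OY|)$, substitutes the base-triple data, and solves for $|XV|$ and $|VY|$. Intersecting the two lines through $Y$ and $X$, as you do, is the same computation. Your numerics check out: $V=(b,1-b)$ for $H$, and $V=(b,1)$ for $P$.
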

\begin{proof}
    We use the condition that $Q_{\bullet,b}(\Tt_0^\bullet)$ for $\bullet=P,H$ is a quadrilateral implying that
    \[ (|OX|,0)+|XV|\ovr{XV}+|VY|\ovr{VY}=(0,|OY|).\]
     The formulas can be easily checked from this condition by substituting in the values of the triples and solving for $|XV|$ and $|VY|.$
\end{proof}

\begin{rmk} \rm 
    \begin{itemlist}
        \item[{(i)}] As seen in Lemma~\ref{lem:nodalPos}, for $Q_{P_b}(\Tt)$ and $Q_{H_b}(\Tt)$ to be well-defined, we must always have $q_\bullet>0$. On the other hand, $p_\bullet$ does not necessarily need to be positive. See Example~\ref{ex:unusualTrip}. Lemma~\ref{lem:notParallel} shows that $q_\mu = 0$ if and only if $\ovr{VY}$ and $\ovr{XV}$ are parallel. 
      
        \item[{(ii)}] If $w$ is a word in $x,y,s,\ov{x},\ov{y},\ov{s},$ then let $\eps=1$ if $w\Tt_0^\bullet$ is sign-matching and $\eps=-1$ otherwise. 
        In the proof of Theorem~\ref{thm:ATFMut}, we see that the formulas in Lemma~\ref{lem:XVVY} for the affine lengths of $|XV|$ and $|VY|$ hold for $wQ_{\bullet,b}(\Tt_0^\bullet)$ assuming it is well-defined. 
        
        \item[{(iii)}] For $\bullet=H_b,P_b$, Remark~\ref{rmk:obsAndDiagrams} explains how the affine lengths of $Q_{\bullet,b}(\Tt)$ relate to the obstruction functions of exceptional classes for the ellipsoid embedding function. 
       
    \end{itemlist}
\end{rmk}

\begin{example} \label{ex:unusualTrip}
Here, we give some examples to demonstrate why we relax some of the notions about triples seen in \cite{MMW}.  
\begin{itemlist}
    \item[{(i)}] There are examples where $w\Tt_0^H$ does not have $p_\la/q_\la \leq p_\mu/q_\mu \leq p_\rho/q_\rho$ and $wQ_{H_b}^0=Q_{H_b}(w\Tt_0^H)$ is well-defined. 
    We have that 
    \[x\Tt_0^H=\left((1,1,2),(0,1,-3),(2,1,-1)\right).\] As desired, this triple has $1=p_\la/q_\la > p_\mu/q_\mu=0$, and it can be checked that for $1/2<b<1$ the mutation $xQ_{H_b}^0$ is well-defined and $Q_{H_b}(x\Tt_0^H)=xQ_{H_b}^0.$
As $p_\la/q_\la>p_\mu/q_\mu,$ in taking a $y$-mutation
 the recursion parameter $\nu_\rho=p_\mu q_\la-p_\la q_\mu=-1$ is negative. Note that the triple $yx\Tt_0^H=((0,1,-3),(-1,-2,1),(2,1,-1))$, but there is no $b$-value such that $yx\Tt_0^H$ is well-defined.
    \item[{(ii)}] There are examples where $w\Tt_0^H$ has some $p<0$ and $wQ_{H_b}^0=Q_{H_b}(w\Tt_0^H)$ is well-defined.
    We can compute that: 
    \[ \ov{y}\ov{x}\Tt_0^H=\left((-1,2,5),(1,1,2),(6,1,3)\right).\]
    Further, $\ov{y}\ov{x}Q_{H_b}^0$ is well-defined when $0<b<1/2.$
   The negativity of $p_\la$ implies that $\ovr{YV}$ and the nodal ray $\vn_Y$ both have positive slope. 
\end{itemlist}
\end{example}

% \begin{figure}[h!]
%     \centering
%      \includegraphics[scale=0.05]{ex2.jpeg}
%   \includegraphics[scale=0.1]{ex1.jpeg}
%     \caption{On the left is the ATBD $Q_{H_b}(x\Tt_0^H)$ from Example (i) of \ref{ex:unusualTrip}. Note, we have that $p_\la/q_\la>p_\mu/q_\mu.$ In this case, if we did a $y$-mutation, the recursion parameter would be negative. 
%     On the right is the ATBD, $Q_{H_b}(\ov{y}\ov{x}\Tt_0^H)$ where we have $p_\la<0$. Note that the negativity of $p_\la$ causes $\vn_Y$ to have positive slope. }
%     \label{fig:weirdEx}
% \end{figure}

The proof of Theorem~\ref{thm:ATFMut} follows from the following proposition. The proof of the proposition relies on the lemmas following the proposition with the necessary computations. 

\begin{prop} \label{prop:ATFMut}
    Assume $\Tt$ is a recursive $H$- or $P$-triple obtained via mutation from $\Tt_0^H$ or $\Tt_0^P$ and $Q_{\bullet,b}(\Tt)$ is well-defined for $\bullet=H,P$. If $wQ_{\bullet,b}(\Tt)$ is well-defined for $w=x,y,s,\ov{x},\ov{y},\ov{s}$, then
     $wQ_{\bullet,b}(\Tt)=Q_{\bullet,b}(w\Tt)$. 
\end{prop}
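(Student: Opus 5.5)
Since Lemma~\ref{lem:standard} and Proposition~\ref{prop:nodalDir} already show that $wQ_{\bullet,b}(\Tt)$ is $w\Tt$-standard, the nodal rays and primitive edge directions of $wQ_{\bullet,b}(\Tt)$ agree with those of $Q(w\Tt)$. It remains to check the affine lengths. A quadrilateral with fixed edge directions and a Delzant corner at the origin is determined by $|OX|$ and $|OY|$. So the plan is to show that the two lengths in \eqref{eq:QTH} (resp.\ \eqref{eq:QTP}), computed for $w\Tt$, agree with the geometric lengths $|OX_w|,|OY_w|$ of $wQ_{\bullet,b}(\Tt)$. Along the way I will prove, by induction on the length of the mutation word from $\Tt_0^\bullet$, that the closing formulas of Lemma~\ref{lem:XVVY} hold with the sign $\eps$ of $\Tt$. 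The base case is Lemma~\ref{lem:XVVY} itself. For the inductive step, I first check that the closing relation
\[(|OX|,0)+|XV|\ovr{XV}+|VY|\ovr{VY}=(0,|OY|)\]
is satisfied by these formulas for every recursive triple. This is a linear identity in $b$. Its coefficients reduce, via \eqref{eq:dmefIntro}, to the recursive-triple identities in Definition~\ref{def:genTrip} and Lemma~\ref{lem:relations}~(i),(ii).

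Next I go through the mutations case by case, using how each one moves the vertices. For a $y$-mutation, $X$ and $O$ are fixed and $Y_y=Y$, so $|OY_y|=|OY|$ and $|OX_y|=|OX|$. I then need
\[\frac{d_{\mu}-m_{\mu}b}{q_{\mu}}=\frac{d_\la-m_\la b}{q_\la}\]
(the left entry of $y\Tt$ is $\bm{x}_\mu$). I expect the geometric side to be the sum of $|OY|$ and a contribution coming from the new vertex. Concretely, the nodal ray $\vn_Y$ hits $XV$ at a point $P$ with $|XP|=|XV|$, and the mutated edge from $P$ to $Y_y$ has length determined by $|XV|$, $|VY|$ and the shear $M_Y$. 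I would compute the new side lengths by intersecting lines: $|XP|$ is obtained by solving $Y+\tau\vn_Y\in XV$. The resulting expression should reduce, using the $x_{y\mu}$ recursion and Corollary~\ref{cor:recurExtend} (which says $(d,m)$ obey the same linear recursion), to the formula for $w\Tt$.

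The $x$-mutation is treated symmetrically, with $Y$ fixed and $|OX|$ modified. For the $s$-mutation, the piece containing $X$ and $V$ is sheared so that $Y_s$ lies on the $y$-axis and the old ray $\vn_Y$ becomes the new bottom edge direction. In this case $|OY_s|$ equals $|OX|$-type data transported by $M_Y$. The required identities then become Lemma~\ref{lem:symTriple}, with $(d,m)\mapsto(3p-d,p-m)$ accounting for the shift $d'=d-3q$ and $m'=m-q$, together with Lemma~\ref{lem:identS}. The inverse mutations follow from $\ov{w}w=\id$, exactly as in Lemma~\ref{lem:standard}. The change of sign $\eps\mapsto-\eps$ under $s$ given by Proposition~\ref{prop:genTrip}(ii) should match the sign flip in the $|XV|,|VY|$ formulas.

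I expect the main obstacle to be the length bookkeeping for the $s$-mutation, and for the $y$-versus-$s$ dichotomy in general. Which edge the ray hits changes which lengths are affected, and the formulas are only valid with the correct sign $\eps$. I would first settle the $P$ and $H$ cases simultaneously by writing all lengths in the linear coordinates $(p,q,t)$, using \eqref{eq:dmefIntro}. The $H$ and $P$ identities then become a single identity in $(p,q,t)$ per case, which can be checked against Lemma~\ref{lem:7cond}.
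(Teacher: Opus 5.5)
\textbf{The $y$-mutation bookkeeping is wrong, and so is the identity you say you need.} Your framework matches the paper's: direction data from Lemma~\ref{lem:standard}, reduction to matching lengths, inverse mutations via $\ov{w}w=\id$. But the length bookkeeping for the $y$-mutation, which is the real content of the proof, is incorrect.

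In a $y$-mutation the vertex $Y$ is \emph{not} fixed. The ray $\vn_Y$ meets $XV$ at an interior point $P$, so $|XP|<|XV|$; $|XP|=|XV|$ would mean the ray hits $V$. The piece $YVP$ is then sheared by $M_Y$, which aligns $\ovr{YV}$ with $\ovr{OY}$. So $Y$ stops being a vertex, the image of $V$ becomes the new vertex $Y_y$ on the $y$-axis, $P$ becomes $V_y$, and $X_y=X$:
\[|OY_y|=|OY|+|VY|,\qquad |OX_y|=|OX|,\qquad |X_yV_y|=|XP|,\qquad |V_yY_y|=|XV|-|XP|.\]
This is the mirror image of your (correct) description of the $x$-mutation. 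Your later remark that the new side is $|OY|$ plus a contribution was the right instinct; the contribution is exactly $|VY|$. The identity you wrote, $\frac{d_\mu-m_\mu b}{q_\mu}=\frac{d_\la-m_\la b}{q_\la}$, is false. For $\ov{x}^2\Tt_0^H=((1,1,2),(6,1,3),(8,1,5))$, where $y$ is defined for $1/2<b<2/3$, it reads $3-2b=1-b$, and the discrepancy is exactly $|VY|=2-b$.

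\textbf{What must actually be proved.} The required identity is
\[\frac{d_\mu-m_\mu b}{q_\mu}=\frac{d_\la-m_\la b}{q_\la}+\eps\frac{m_\rho'-d_\rho' b}{q_\la q_\mu},\]
that is, $\eps m_\rho'=d_\mu q_\la-d_\la q_\mu$ and $\eps d_\rho'=m_\mu q_\la-m_\la q_\mu$, together with their $P$-analogues. Via \eqref{eq:dmefIntro} these reduce to two facts:
\begin{itemize}
\item condition (i) of Definition~\ref{def:genTrip}, namely $\eps t_\rho=p_\mu q_\la-p_\la q_\mu$;
\item $\eps(p_\rho-7q_\rho)=t_\mu q_\la-t_\la q_\mu$, which comes from Lemma~\ref{lem:identS}.
\end{itemize}
Note that $\bm{x}_{y\mu}$ does not appear in $|OX_y|$ or $|OY_y|$ at all. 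So the $\bm{x}_{y\mu}$-recursion and Corollary~\ref{cor:recurExtend} are not the relevant input.

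\textbf{The $s$-case has the same problem.} The bottom edge stays horizontal, and $\vn_Y$ does not become an edge direction. Instead, the point $X_s$ where $\vn_Y$ meets $OX$ becomes a new vertex with nodal ray $-\vn_Y$. Hence $|OX_s|=|OY|\,q_\la/p_\la$, and this is where Lemma~\ref{lem:symTriple} enters, via $q_{s\rho}=p_\la$, $m'_{s\rho}=-m_\la$, $d'_{s\rho}=-d_\la$. Meanwhile $|OY_s|=|OY|+|VY|$ is the same identity as in the $y$-case, not ``$|OX|$-type data''.

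\textbf{With these corrections your plan goes through.} Matching only $|OX|$ and $|OY|$, and recovering $|XV|$ and $|VY|$ from the closing relation, is a legitimate economy compared with the paper. The paper instead checks all four side lengths for each mutation by induction, for instance computing $|X_yV_y|$ by intersecting $\vn_Y$ with the line through $X$ and $V$. However, you must still prove the closing relation for general recursive triples, which you currently only assert.
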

\begin{proof}
    We first give the references to later in the paper for the statement about the $x,y,s$-mutations. The statement about the nodal rays and direction vectors follow from Proposition~\ref{prop:nodalDir}.

    For the affine lengths, the formulas depend on whether we have a $P$- or $H$-triple. The proofs can be found in Lemma~\ref{lem:xLengths}, Lemma~\ref{lem:yLengths} and Lemma~\ref{lem:sLengths}.

    For the statement about $\ov{x}Q(\Tt),\ov{y}Q(\Tt),$ and $\ov{s}Q(\Tt)$, we utilize the proofs of the inverse mutations. This is because we have that 
    $w\ov{w}Q(\Tt)=Q(\Tt)$. Hence, if
    \begin{align*} wQ(\Tt')=Q(\Tt) &\implies \ov{w}Q(\Tt)=Q(\Tt').
    \end{align*}
    Because $wQ(\ov{w}\Tt)=Q(\Tt),$ the result holds. 
\end{proof}

We now go through the lemmas used in the above proof. To check the claims about the side lengths, we first give the identity. Note the geometric interpretation of this identity is given in \cite[Remark 6.4]{M1}:

\begin{lemma} \label{lem:complicated} For a recursive triple $\Tt=(\bm{x}_\la,\bm{x}_\mu,\bm{x}_\rho)$ where $\eps=1$ if $\Tt$ is sign-matching and $\eps=-1$ otherwise, we have
\begin{itemlist}
    \item[{(i)}] $\eps t_\la \begin{pmatrix}
               1+p_\mu q_\mu-6q_\mu^2 \\ q_\mu^2\end{pmatrix}=q_{x\mu}\begin{pmatrix}
               p_\mu-6q_\mu \\ q_\mu \end{pmatrix}
               +q_\mu \vn_X$
    \item[{(ii)}] $-\eps t_\rho \begin{pmatrix} -q_\mu^2 \\ p_\mu q_\mu -1 \end{pmatrix}=q_{y\mu} \begin{pmatrix}
        q_\mu \\ - p_\mu
    \end{pmatrix}+q_\mu \begin{pmatrix}
        q_\la \\ -p_\la
    \end{pmatrix}$
\end{itemlist}
\end{lemma}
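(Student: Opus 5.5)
Both identities are linear in the data of the triple, so I plan to verify them componentwise. I will substitute the defining recursions for $q_{x\mu}$ and $q_{y\mu}$ and the formula for $\vn_X$, and then use only the determinant identities from Definition~\ref{def:genTrip}. The two ingredients are these. From Definition~\ref{def:mutTrip}, together with condition (ii) (resp. (i)) of a recursive triple,
\[ q_{x\mu}=\nu_\la q_\mu-q_\rho=\eps t_\la q_\mu-q_\rho, \qquad q_{y\mu}=\nu_\rho q_\mu-q_\la=\eps t_\rho q_\mu-q_\la. \]
From Lemma~\ref{lem:standFrom} we have $\vn_X=(p_\rho-6q_\rho,\,q_\rho)$. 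No compatibility (quadratic) conditions should be needed.

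For (i), I expand the right-hand side as $(\eps t_\la q_\mu-q_\rho)(p_\mu-6q_\mu,\,q_\mu)+q_\mu(p_\rho-6q_\rho,\,q_\rho)$. In the second coordinate, the terms $-q_\rho q_\mu$ and $+q_\mu q_\rho$ cancel, leaving $\eps t_\la q_\mu^2$, as required. In the first coordinate, the $\pm 6q_\mu q_\rho$ terms cancel, leaving $\eps t_\la(p_\mu q_\mu-6q_\mu^2)+(p_\rho q_\mu-p_\mu q_\rho)$. By condition (ii) of Definition~\ref{def:genTrip}, $p_\rho q_\mu-p_\mu q_\rho=\eps t_\la$. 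The first coordinate is therefore $\eps t_\la(1+p_\mu q_\mu-6q_\mu^2)$, which matches.

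For (ii), I expand the right-hand side as $(\eps t_\rho q_\mu-q_\la)(q_\mu,-p_\mu)+q_\mu(q_\la,-p_\la)$. The first coordinate is $\eps t_\rho q_\mu^2$, after the $q_\la q_\mu$ terms cancel. The second coordinate is $-\eps t_\rho p_\mu q_\mu+(p_\mu q_\la-p_\la q_\mu)$. By condition (i) of Definition~\ref{def:genTrip}, $p_\mu q_\la-p_\la q_\mu=\eps t_\rho$, so the second coordinate equals $-\eps t_\rho(p_\mu q_\mu-1)$. This is exactly the left-hand side $-\eps t_\rho\,(-q_\mu^2,\,p_\mu q_\mu-1)$.

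There is no real obstacle here. The only points needing care are the signs: using the correct determinant identity for each of $\nu_\la$ and $\nu_\rho$, and the sign convention for $\vn_X$ taken from Lemma~\ref{lem:standFrom} rather than from the $(q,-p)$ normalization used at $Y$. The argument works uniformly for $\eps=\pm1$, so no case split between sign-matching and sign-alternating triples is required.
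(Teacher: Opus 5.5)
Your proof is correct, and it takes a different route from the paper's. I checked all four coordinates. After you substitute $q_{x\mu}=\eps t_\la q_\mu-q_\rho$, $q_{y\mu}=\eps t_\rho q_\mu-q_\la$ and $\vn_X=(p_\rho-6q_\rho,\,q_\rho)$, each coordinate reduces to one of the two determinant identities $p_\rho q_\mu-p_\mu q_\rho=\eps t_\la$ or $p_\mu q_\la-p_\la q_\mu=\eps t_\rho$.

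The paper carries out no computation at this point. It cites \cite[Lemma 4.6(vi,vii)]{M1} for the sign-matching case and asserts that the sign-alternating case is ``similar.'' Your argument is therefore self-contained, and it makes explicit two facts the citation leaves implicit:
\begin{itemlist}
\item The identities hold uniformly in $\eps$, so the sign-alternating case needs no separate treatment. This matters because later arguments (Lemma~\ref{lem:notParallel} and the side-length lemmas) rely on exactly that case.
\item Only the determinant halves of conditions (i) and (ii) of Definition~\ref{def:genTrip} are used. Neither the compatibility (quadratic) conditions, nor condition (iii), nor membership in $\X_\bullet$ enters.
\end{itemlist}

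What the citation buys is brevity and a link to the geometric interpretation developed in \cite{M1}. It adds no mathematical content beyond your two-line check.

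A minor wording point: the identities are polynomial, not ``linear,'' in the triple data. This does not affect the argument.
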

\begin{proof}
    For the case where $\eps=1,$ this is shown in \cite[Lemma 4.6(vi,vii)]{M1}. The case where $\eps=-1$ is similar. 
\end{proof}

One consequence of Lemma~\ref{lem:complicated} is the following result:
\begin{lemma}\label{lem:notParallel}
Let $\Tt$ be a recursive triple. Then
\[
\det(\ovr{XV},\ovr{VY})=q_\mu^2.
\]
In particular, $\ovr{XV}$ and $\ovr{VY}$ are parallel if and only if $q_\mu=0$.
\end{lemma}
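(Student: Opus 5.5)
The plan is to compute $\det(\ovr{XV},\ovr{VY})$ directly from the explicit formulas of Lemma~\ref{lem:standFrom}:
\[\ovr{XV}=\begin{pmatrix}1+p_\rho q_\rho-6q_\rho^2\\ q_\rho^2\end{pmatrix},\qquad \ovr{VY}=\begin{pmatrix}-q_\la^2\\ q_\la p_\la-1\end{pmatrix}.\]
Expanded naively, this gives a quartic expression in $p_\la,q_\la,p_\rho,q_\rho$ that is not obviously $q_\mu^2$. So instead of expanding, I will transport both vectors into the local frame at $V$ by a single matrix in $SL_2(\Z)$, which preserves determinants.

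The natural choice of matrix is $M_{p_\la,q_\la}$. By the proof of Lemma~\ref{lem:standFrom}, $M_{p_\la,q_\la}\ovr{VY}=-\ovr{OY}=(0,-1)^T$. Also by the last statement of Lemma~\ref{lem:standFrom},
\[M_{p_\la,q_\la}\ovr{XV}=M_{p_\mu,q_\mu}^{-1}(0,-1)^T.\]
Using the shear formula $M_{p,q}^{-1}v=v-\det(v,(q,-p)^T)\,(q,-p)^T$ with $v=(0,-1)^T$, we get $\det(v,(q_\mu,-p_\mu)^T)=q_\mu$. Hence
\[M_{p_\la,q_\la}\ovr{XV}=(0,-1)^T-q_\mu(q_\mu,-p_\mu)^T=(-q_\mu^2,\;p_\mu q_\mu-1)^T.\]
This matches the direction vector appearing in Lemma~\ref{lem:complicated}(ii), which serves as a consistency check.

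The determinant then follows:
\[\det(\ovr{XV},\ovr{VY})=\det\begin{pmatrix}-q_\mu^2&0\\ p_\mu q_\mu-1&-1\end{pmatrix}=q_\mu^2.\]
The ``in particular'' clause is then immediate, since two nonzero vectors are parallel exactly when their determinant vanishes.

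The main point needing care is justifying the identity $M_{p_\mu,q_\mu}M_{p_\la,q_\la}\ovr{XV}=(0,-1)^T$ for a general recursive triple, rather than only for the diagrams appearing in Lemma~\ref{lem:standFrom}. There it was derived geometrically, assuming $Q$ is an ATBD. For a purely algebraic recursive triple, I would instead verify it from condition iv) of Definition~\ref{eq:tStandard} (the monodromy relation $M_\rho M_\mu M_\la=\begin{pmatrix}0&1\\-1&-6\end{pmatrix}$) together with $\ovr{XV}$ being fixed by the shear at $X$. Concretely, write $\ovr{XV}=M_X\,\ovr{OX}$-type alignment as in Lemma~\ref{lem:standFrom}, and compute
\[M_\mu M_\la\ovr{XV}=M_\rho^{-1}\begin{pmatrix}0&1\\-1&-6\end{pmatrix}\ovr{XV}.\]
Checking that this equals $(0,-1)^T$ is a two-line shear computation with $(q_\rho,-p_\rho)$.

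If that algebraic route became messy, the fallback is Lemma~\ref{lem:complicated}(i),(ii). These express $\eps t_\la\ovr{XV}$ and $-\eps t_\rho\ovr{VY}$ in terms of $\vn_X$, $(q_\la,-p_\la)$ and vectors at $V$, and taking determinants would again yield $q_\mu^2$ after using the recursive-triple identities. That approach, however, requires dividing by $t_\la t_\rho$, so the shear approach above is preferable.
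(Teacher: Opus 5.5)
Your computation is correct once the monodromy relation is available, and it takes a different route from the paper.

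\textbf{Comparison with the paper.} The paper works in the frame at $X$. It uses the shear $M_X$ fixing $\vn_X$, which sends $\ovr{XV}$ to $\ovr{OX}=(1,0)^T$. It then imports from \cite[Lemma 6.5]{M1} the identity $M_X\ovr{VY}=(1+p_\mu q_\mu-6q_\mu^2,\,q_\mu^2)^T$; this is proved there for $\eps=1$, and the case $\eps=-1$ is asserted ``similarly'' via Lemma~\ref{lem:complicated}.

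You work in the frame at $Y$ instead.
\begin{itemize}
\item $M_{p_\la,q_\la}\ovr{VY}=(0,-1)^T$ is immediate.
\item $M_{p_\la,q_\la}\ovr{XV}=(-q_\mu^2,\,p_\mu q_\mu-1)^T$ follows from $M_{p_\mu,q_\mu}M_{p_\la,q_\la}\ovr{XV}=(0,-1)^T$.
\item You derive that identity correctly from $M_{p_\rho,q_\rho}M_{p_\mu,q_\mu}M_{p_\la,q_\la}=C:=\begin{pmatrix}0&1\\-1&-6\end{pmatrix}$, since $C\ovr{XV}=(q_\rho^2,\,-1-p_\rho q_\rho)^T=M_{p_\rho,q_\rho}(0,-1)^T$.
\end{itemize}
Your version is uniform in $\eps$ and gives an algebraic proof of the last claim of Lemma~\ref{lem:standFrom}. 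The paper's version uses only identities stated for recursive triples and never needs the monodromy relation.

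\textbf{The input you still owe.} Condition iv) belongs to Definition~\ref{eq:tStandard}, not to Definition~\ref{def:genTrip}. So for an arbitrary recursive triple, ``verify it from condition iv)'' rests on an unproved input. It is easy to supply.
\begin{itemize}
\item For triples obtained from $\Tt_0^H$ or $\Tt_0^P$ by mutation (the only ones the paper uses), iv) can be checked directly on the base triples. It is then preserved by each algebraic mutation, by the conjugation argument in the proof of Lemma~\ref{lem:standard}.
\item For a general recursive triple, write $v_\bullet=(q_\bullet,-p_\bullet)^T$ and apply the product to $v_\la$. The determinant identities of Definition~\ref{def:genTrip} give
\[
M_{p_\rho,q_\rho}M_{p_\mu,q_\mu}M_{p_\la,q_\la}v_\la=v_\la-\eps t_\rho v_\mu+\eps t_\mu v_\rho .
\]
Equality with $Cv_\la$ is exactly the $(p,q)$-part of $\bm{x}_{y\mu}=S\bm{x}_\la+\eps t_\mu\bm{x}_\rho$ from Lemma~\ref{lem:identS}.
\item The analogous check on a second vector independent of $v_\la$ reduces in the same way to Lemma~\ref{lem:identS}: to its second identity for $\vn_X$, or to its first identity applied to $y\Tt$ for $v_\mu$.
\end{itemize}
With this added, your proof is complete.
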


\begin{proof}
We compute $\det\begin{pmatrix}
    \ovr{XV} & \ovr{VY}
\end{pmatrix}.$
        Taking the matrix 
        \[M_X:=\begin{pmatrix}
        1-p_\rho q_\rho+6q_\rho^2 & (p_\rho-6q_\rho)^2 \\ -q_\rho^2 & 1+p_\rho q_\rho-6q_\rho^2 \end{pmatrix},\]
        it can be immediately checked that $\det(M_X)=1$ and $M_X(\ovr{XV})=\ovr{OX}.$ By \cite[Lemma 6.5]{M1} for the case where $\eps=1$, we have that $M_X(\ovr{VY})=\begin{pmatrix} 1+p_\mu q_\mu-6q_\mu^2 \\ q_\mu^2 \end{pmatrix}.$ A similar argument works for the case where $\eps=-1$ using the same proof with the identities in Lemma~\ref{lem:complicated}.  It follows that
    \[ \det\begin{pmatrix}
    \ovr{XV} & \ovr{VY}
\end{pmatrix}=\det\begin{pmatrix} M_X\ovr{XV} & M_X\ovr{VY} \end{pmatrix}=q_\mu^2.\]
Thus the determinant vanishes if and only if $q_\mu=0$, which proves the claim.
\end{proof}

We consider the cases for the affine lengths for $yQ,xQ,sQ$ separately, but the proofs are very similar. We describe in detail a few different cases, but for the remaining cases, we refer the reader to a similar argument. 

\begin{lemma}\label{lem:yLengths}
Assume that $\mathcal{T}$ is a recursive $H$- or $P$-triple obtained via mutation from $\mathcal{T}_0^H$ or $\mathcal{T}_0^P$, and that the affine lengths of $Q_{\bullet,b}(\mathcal{T})$ are given by the formulas in Definition~\ref{def:DecQuad} and Lemma~\ref{lem:XVVY}. Assuming the nodal ray $\vn_Y$ extends to intersect $\ovr{XV},$ the  geometric mutation $yQ_{\bullet,b}(\mathcal{T}) := OX_yV_yY_y$ has affine lengths if $\bullet=H$:
\[
|OY_y| = \frac{d_\mu - m_\mu b}{q_\mu}, \qquad |OX_y| = \frac{m'_\rho b - d'_\rho}{q_\rho},
\]
\[
|X_yV_y| = \varepsilon\left(\frac{m_\mu - d_\mu b}{q_\rho q_{y\mu}}\right), \qquad |V_yY_y| = \varepsilon\left(\frac{m'_\rho - d'_\rho b}{q_\mu q_{y\mu}}\right)
\]
and if $\bullet=P:$
\[
|OY_y| = \frac{e_\mu + f_\mu b}{q_\mu}, \qquad |OX_y| = \frac{-f'_\rho b - e'_\rho}{q_\rho},
\]
\[
|X_yV_y| = \varepsilon\left(\frac{e_\mu - f_\mu b}{q_\rho q_{y\mu}}\right), \qquad |V_yY_y| = \varepsilon\left(\frac{e'_\rho - f'_\rho b}{q_\mu q_{y\mu}}\right)
\]
In particular, $|OX_y|$ and $|OY_y|$ agree with Definition~\ref{def:DecQuad} applied to the triple $y\mathcal{T}$, and $|X_yV_y|$ and $|V_yY_y|$ agree with the formulas of Lemma~\ref{lem:XVVY} applied to $y\mathcal{T}$.
\end{lemma}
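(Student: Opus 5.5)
We compute the geometric mutation directly and then match it to the formulas for $y\Tt=(\bm{x}_\mu,\bm{x}_{y\mu},\bm{x}_\rho)$. Since $\vn_Y$ meets $\ovr{XV}$ at an interior point $Z$, the mutation fixes the piece $OXZY$ and applies $M_Y=M_{p_\la,q_\la}$ to the piece $ZVY$. It fixes $O$, $X$, $\ovr{OX}$ and $\ovr{OY}$, and its edges are as follows.
\begin{itemlist}
\item The new vertex $X_y$ equals $X$, so $|OX_y|=|OX|=\frac{m'_\rho b-d'_\rho}{q_\rho}$ (resp.\ $\frac{-f'_\rho b-e'_\rho}{q_\rho}$), already the formula of Definition~\ref{def:DecQuad} for $y\Tt$, whose right entry is still $\bm{x}_\rho$.
\item The new edge $Y_yO$ is the union of $YO$ and $M_Y(YV)$, which lines up because $M_Y\ovr{VY}=-\ovr{OY}$. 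So $|OY_y|=|OY|+|VY|$.
\item The edge $X_yV_y$ is $XZ$, so $|X_yV_y|=|XZ|$.
\item The edge $V_yY_y$ is $M_Y(ZV)$, so $|V_yY_y|=|ZV|=|XV|-|XZ|$.
\end{itemlist}
Thus everything reduces to computing $|XZ|$ and checking two identities.

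First we check $|OY_y|$. Using Lemma~\ref{lem:XVVY} for $\Tt$ we get
\[|OY|+|VY|=\frac{d_\la-m_\la b}{q_\la}+\eps\frac{m'_\rho-d'_\rho b}{q_\la q_\mu},\]
and we must show this equals $\frac{d_\mu-m_\mu b}{q_\mu}$. This is linear in $b$, so it splits into two coefficient identities:
\[q_\mu d_\la+\eps m'_\rho=q_\la d_\mu, \qquad q_\mu m_\la+\eps d'_\rho=q_\la m_\mu.\]
I would rewrite these via \eqref{eq:dmefIntro} in terms of $p,q,t$ and prove them using the recursive-triple relations, namely the determinant identities in Definition~\ref{def:genTrip}, Lemma~\ref{lem:relations}(ii) $t_\la t_\rho q_\mu=\eps(t_\la q_\la+t_\mu q_\mu+t_\rho q_\rho)$, and Lemma~\ref{lem:identS}. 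The $P$ case is identical with $(e,f)$ and $e'=e-2q$, $f'=f-2q$.

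Next we compute $|XZ|$. We locate $Z$ by solving
\[(0,|OY|)+s\,\vn_Y=(|OX|,0)+r\,\ovr{XV},\]
so that $|XZ|=r$. It is cleaner to use Lemma~\ref{lem:complicated}(ii), which writes $-\eps t_\rho\ovr{VY}$ as $q_{y\mu}(q_\mu,-p_\mu)+q_\mu\vn_Y$. Together with the closing relation
\[(|OX|,0)+|XV|\ovr{XV}+|VY|\ovr{VY}=(0,|OY|),\]
this expresses $Z$ by decomposing along $\vn_Y$ and $\ovr{XV}$. I expect this to give
\[|ZV|=\frac{q_\mu}{q_{y\mu}}\,|VY|\cdot(\text{unimodular factor}),\]
which yields $|V_yY_y|=\eps\frac{m'_\rho-d'_\rho b}{q_\mu q_{y\mu}}$ directly from the $|VY|$ formula. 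Then
\[|X_yV_y|=|XV|-|ZV|=\eps\frac{m_\la-d_\la b}{q_\rho q_\mu}-\eps\frac{m'_\rho-d'_\rho b}{q_\mu q_{y\mu}},\]
and matching this with $\eps\frac{m_\mu-d_\mu b}{q_\rho q_{y\mu}}$ again reduces to coefficient identities, equivalent to the ones above after using $q_{y\mu}=\eps t_\rho q_\mu-q_\la$.

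A simpler alternative for the last two lengths is to invoke Lemma~\ref{lem:standard}: $yQ$ is $y\Tt$-standard, so by Lemma~\ref{lem:standFrom} its edge directions are those of $Q(y\Tt)$. Once $|OX_y|$ and $|OY_y|$ are known, the closing relation for $yQ$ determines $|X_yV_y|$ and $|V_yY_y|$ uniquely, because the two edge directions are not parallel by Lemma~\ref{lem:notParallel}, as $q_\mu\neq 0$. The proof of Lemma~\ref{lem:XVVY} uses only the closing relation and recursive-triple algebra, so its argument applies verbatim to $y\Tt$, and this alternative avoids computing $Z$ at all. The sign $\eps$ is preserved, since $y\Tt$ is recursive with the same sign by Proposition~\ref{prop:genTrip}.

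The main obstacle is verifying the linear identities relating $(d,m)$ across the triple, such as $q_\mu d_\la+\eps m'_\rho=q_\la d_\mu$, for both signs of $\eps$. I would first try expanding them with \eqref{eq:dmefIntro} and reducing to the $(p,q)$-identities of Lemma~\ref{lem:identS} and its proof, which come from \cite[Lemma 4.6]{M1}, plus Lemma~\ref{lem:relations}.
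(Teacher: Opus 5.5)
Your decomposition of $yQ$ ($|OX_y|=|OX|$, $|OY_y|=|OY|+|VY|$, $|X_yV_y|=|XZ|$, $|V_yY_y|=|XV|-|XZ|$) and your two coefficient identities for $|OY_y|$ match the paper. The gap is in the step where $q_{y\mu}$ is supposed to appear.

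First, Lemma~\ref{lem:complicated}(ii) does not decompose $\ovr{VY}=(-q_\la^2,q_\la p_\la-1)$. Its left side is $-\eps t_\rho(-q_\mu^2,p_\mu q_\mu-1)=-\eps t_\rho M_Y\ovr{XV}$, so it concerns the direction of the new edge $V_yY_y$. Second, your expected ratio is wrong. With $|VY|=\eps\frac{m'_\rho-d'_\rho b}{q_\la q_\mu}$, the factor $\frac{q_\mu}{q_{y\mu}}$ gives $\eps\frac{m'_\rho-d'_\rho b}{q_\la q_{y\mu}}$, and no unimodular factor fixes this when $q_\la\neq q_\mu$. The missing ingredient is
\[
\det(\vn_Y,\ovr{XV})=p_\la(1+p_\rho q_\rho-6q_\rho^2)+q_\la q_\rho^2=q_{y\mu}.
\]
The paper gets this from Lemma~\ref{lem:relations}(i), which rewrites the left side as $p_\la+\eps t_\mu q_\rho$, together with the expansion $p_\la+q_\la=\eps(t_\rho q_\mu-q_\rho t_\mu)$ from Lemma~\ref{lem:identS}.

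With this identity your triangle idea works, and is a little more economical than the paper. The vector $Y-Z=|ZV|\ovr{XV}+|VY|\ovr{VY}$ is parallel to $\vn_Y$, and $\det(\ovr{VY},\vn_Y)=q_\la$. Hence $|ZV|=\frac{q_\la}{q_{y\mu}}|VY|$, which gives $|V_yY_y|$ with no further $(d;m)$-identity. The paper instead computes $|X_yV_y|$ from the intersection point and checks an extra pair of identities.

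The remaining length $|X_yV_y|=|XV|-|ZV|$ needs
\[
m_\la q_{y\mu}-m'_\rho q_\rho=m_\mu q_\mu, \qquad d_\la q_{y\mu}-d'_\rho q_\rho=d_\mu q_\mu,
\]
and their $(e,f)$ analogues. These do not follow from your $|OY_y|$ identities by substituting $q_{y\mu}=\eps t_\rho q_\mu-q_\la$. After that substitution you are left with a genuinely new relation involving $t_\rho$ and $\bm{x}_\rho$. The paper handles it with Lemma~\ref{lem:relations}(ii) and the expansions of Lemma~\ref{lem:identS}.

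Your ``simpler alternative'' does not avoid this work. Lemma~\ref{lem:XVVY} is proved only for $\Tt_0^H$ and $\Tt_0^P$, by substituting numbers. Its validity for mutated triples is exactly what this lemma, together with Lemmas~\ref{lem:xLengths} and~\ref{lem:sLengths}, supplies by induction, so applying it to $y\Tt$ is circular. Using the closing relation of $yQ$ instead means solving a $2\times2$ system for a general recursive triple and checking four new scalar identities. Also, the nondegeneracy needed there is $q_{y\mu}\neq0$, which holds because $\vn_Y$ meets $\ovr{XV}$ transversally, not $q_\mu\neq0$.
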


\begin{proof}
Note, Lemma~\ref{lem:standard} and Lemma~\ref{lem:standFrom} state that in performing the geometric mutation $y$ to $Q_{\bullet,b}(\Tt)$, the nodal rays and direction vectors of $yQ_{\bullet,b}(\Tt)$ agree with the nodal rays and direction vectors of $Q_{\bullet,b}(y\Tt).$ 

Further, as $\Tt$ is obtained via mutation from $\Tt_0^H$ or $\Tt_0^P$, we use a proof by induction and assume that the formulas for $|VY|$ and $|XV|$ given in Lemma~\ref{lem:XVVY} hold for $Q_{\bullet,b}(\Tt).$ 

Hence, we must verify that 
\begin{align*} 
    |OY_y|&=|OY|+|VY| \\
    |OX_y|&=|OX| \\ 
    |V_yY_y|&=|XV|-|X_yV_y| \\ 
    |X_yV_y|&=\frac{|OY|q_\la-|OX|p_\la}{p_\la(1+p_\rho q_\rho-6q_\rho^2)+q_\la q_\rho ^2}
\end{align*}
Note that the last equality holds as setting \[c:=\frac{|OY|q_\la-|OX|p_\la}{p_\la(1+p_\rho q_\rho-6q_\rho^2)+q_\la q_\rho^2},\] we have that $c$ solves the equation 
    \[ |OY|(0,1)+k\vn_Y=|OX|(1,0)+c\ovr{XV}\]
    for some real number $k.$
    So $c$ is the affine length of $X_yV_y$ after performing $yQ_{\bullet,b}(\Tt).$
    
    We include the details for the case where $\Tt$ is a $P$-triple. The case where $\Tt$ is an $H$-triple is very similar. Note that the proof where $\Tt$ is a sign-matching $H$-triple is found in \cite[Lemma 6.2]{M1}, and the case where $\Tt$ is a sign-alternating $H$-triple follows closely to the proof we include in detail here.
   
    Set $\eps=1$ if $\Tt$ is a sign-matching recursive triple, and  otherwise, set $\eps=-1.$ Note that by Proposition~\ref{prop:genTrip}, $y\Tt$ will preserve the property of either being sign-matching or sign-alternating, so $\Tt$ and $y\Tt$ satisfy Lemma~\ref{lem:relations} and Lemma~\ref{lem:identS} for the same value of $\eps.$
    
     For the first equality, we must verify that
    \[ \frac{e_\la+f_\la b}{q_\la}+\eps(\frac{e_\rho'-f_\rho'b}{q_\la q_\mu})=\frac{e_\mu+f_\mu b}{q_\mu}.\]
    This will hold if 
    \begin{align*}
        \eps e_\rho'&=e_\mu q_\la-e_\la q_\mu \\
        \eps f_\rho'&=f_\la q_\mu-f_\mu q_\la.
    \end{align*}
    We can write $e_\bullet',f_\bullet',e_\bullet,f_\bullet$ in terms of $p_\bullet,q_\bullet,t_\bullet$ using \eqref{eq:dmefIntro}. Then, we find the equalities hold by Lemma~\ref{lem:identS} and as $\Tt$ is a recursive triple $p_\mu q_\la-p_\la q_\mu=\eps t_\la.$ Hence, the first equality in the lemma holds. 
    
    The second equality is immediate. For the third equality, we must verify that 
    \[ \eps (\frac{e_\rho'-f_\rho'b}{q_\mu q_{y\mu}})=\eps (\frac{e_\la-f_\la b}{q_\rho q_\mu})-\eps (\frac{e_\mu-f_\mu b}{q_\rho q_{y\mu}}).\]
    This will hold if
    \begin{align*}
        e_\rho'q_\rho=e_\la q_{y\mu}- e_\mu q_\mu \\
         f_\rho' q_\rho= f_\la q_{y\mu}- f_\mu q_\mu.
    \end{align*}
    Again, by writing in $p,q,t$-coordinates, 
    these are equivalent. We will verify the first one and check that 
    \begin{align*}
        (p_\rho-7q_\rho+t_\rho)q_\rho=(p_\la+q_\la+ t_\la)(\eps t_\rho q_\mu-q_\la)-(p_\mu+q_\mu+ t_\mu)q_\mu.
    \end{align*}
    By Lemma~\ref{lem:relations}~(ii),
    $t_\rho q_\rho=\eps t_\la t_\rho q_\mu-t_\la q_\la-t_\mu q_\mu.$
    It remains to check that
   \begin{align*}
        (p_\rho-7q_\rho)q_\rho=(p_\la+q_\la)(\eps t_\rho q_\mu-q_\la)-(p_\mu+q_\mu)q_\mu.
    \end{align*}
    This holds by expanding the relations in Lemma~\ref{lem:identS} to get that 
    \begin{align*}
        p_\mu+q_\mu&=\eps(p_\la t_\rho+q_\rho t_\la) \\
        p_\la+q_\la&=\eps(t_\rho q_\mu-q_\rho t_\mu) \\
        p_\rho-7q_\rho&=\eps(q_\la t_\mu-q_\mu t_\la)
    \end{align*}
    and simplifying.

    For the last equality, we can verify that the denominator is equal to $q_{y\mu}$ as 
    \[ p_\la(1+p_\rho q_\rho-6q_\rho^2)+q_\la q_\rho^2=p_\la+\eps t_\mu q_\rho=\eps t_\rho q_\mu-q_\la=q_{y\mu}\]
    by Lemma~\ref{lem:relations}~(i) and the expression for $p_\la+q_\la$ above. 
    Hence, we want to check that 
    \[ \eps \frac{e_\mu-f_\mu b}{q_\rho}=e_\la+f_\la b+\frac{p_\la(f_\rho'b+e_\rho')}{q_\rho},\]
    which follows if we verify that
    \begin{align*}
     \eps e_\mu= q_\rho e_\la+p_\la e_\rho' \\
     \eps f_\mu=-q_\rho f_\la -p_\la f_\rho'
    \end{align*}
    In terms of $(p,q,t)$-coordinates, we must verify that 
    \[ \eps (p_\mu+q_\mu+ t_\mu)=(p_\la+q_\la+ t_\la)q_\rho+p_\la(p_\rho-7q_\rho+t_\rho).\]
    This holds because by Lemma~\ref{lem:identS},
    we have that 
    \[ \eps(p_\mu+q_\mu)= t_\la q_\rho+ t_\rho p_\la\] and by Lemma~\ref{lem:relations}~(i), we have that 
    \[ \eps t_\mu=(p_\la+q_\la)q_\rho+p_\la(p_\rho-7q_\rho).\]
\end{proof}

\begin{lemma}\label{lem:xLengths}
Assume that $\mathcal{T}$ is a recursive $H$- or $P$-triple obtained via mutation from $\mathcal{T}_0^H$ or $\mathcal{T}_0^P$, and that the affine lengths of $Q_{\bullet,b}(\mathcal{T})$ are given by the formulas in Definition~\ref{def:DecQuad} and Lemma~\ref{lem:XVVY}. Then the geometric mutation $xQ_{\bullet,b}(\mathcal{T}) := OX_xV_xY_x$ has affine lengths if $\bullet=H$:
\[
|OY_x| = \frac{d_\lambda - m_\lambda b}{q_\lambda}, \qquad |OX_x| = \frac{m'_\mu b - d'_\mu}{q_\mu},
\]
\[
|V_xY_x| = \varepsilon\left(\frac{m'_\mu - d'_\mu b}{q_\lambda q_{x\mu}}\right), \qquad |X_xV_x| = \varepsilon\left(\frac{m_\lambda - d_\lambda b}{q_\mu q_{x\mu}}\right),
\]
and if $\bullet=P$:
\[
|OY_x| = \frac{e_\lambda + f_\lambda b}{q_\lambda}, \qquad |OX_x| = \frac{-f'_\mu b - e'_\mu}{q_\mu},
\]
\[
|V_xY_x| = \varepsilon\left(\frac{e'_\mu - f'_\mu b}{q_\lambda q_{x\mu}}\right), \qquad |X_xV_x| = \varepsilon\left(\frac{e_\lambda - f_\lambda b}{q_\mu q_{x\mu}}\right).
\]
In particular, $|OX_x|$ and $|OY_x|$ agree with Definition~\ref{def:DecQuad} applied to the triple $x\mathcal{T}$, and $|V_xY_x|$ and $|X_xV_x|$ agree with the formulas of Lemma~\ref{lem:XVVY} applied to $x\mathcal{T}$. 

\end{lemma}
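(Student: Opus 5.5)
The proof will mirror Lemma~\ref{lem:yLengths}, with the roles of $Y$ and $X$ exchanged. By Lemma~\ref{lem:standard} and Lemma~\ref{lem:standFrom}, the nodal rays and edge directions of $xQ_{\bullet,b}(\Tt)$ agree with those of $Q_{\bullet,b}(x\Tt)$. We argue by induction along the mutation word from $\Tt_0^\bullet$, using Lemma~\ref{lem:XVVY} as the base case. So we may assume the formulas for $|XV|$ and $|VY|$ hold for $Q_{\bullet,b}(\Tt)$, and only the four new lengths need checking.

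In an $x$-mutation, the ray $\vn_X$ meets $\ovr{VY}$ at a point we call $V_x$. The piece containing $O$ and $Y$ is fixed, and the piece containing $V$ is sheared by $M_X$ onto the bottom edge. This gives the geometric identities
\begin{align*}
|OY_x|&=|OY|, \qquad |OX_x|=|OX|+|XV|,\\
|V_xY_x|&=|VY|-|X_xV_x|, \qquad |X_xV_x|=c,
\end{align*}
where $c$ solves $|OX|(1,0)+k\vn_X=|OY|(0,1)+c\,\ovr{YV}$ for some real $k$. Since $M_X$ preserves affine length along $\ovr{XV}$, the segment $XV$ lands on the bottom edge, which accounts for $|OX_x|$. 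Solving for $c$ by Cramer's rule, with $\vn_X=(p_\rho-6q_\rho,q_\rho)$ and $\ovr{YV}=(q_\la^2,1-p_\la q_\la)$, gives a denominator $\det(\vn_X,\ovr{YV})$. We show this equals $\pm q_{x\mu}=\pm(\nu_\la q_\mu-q_\rho)$ by combining Lemma~\ref{lem:relations}~(i) with the second identity of Lemma~\ref{lem:identS}, $\bm{x}_{x\mu}=S^{-1}\bm{x}_\rho+\eps t_\mu\bm{x}_\la$. This is the analogue of the step $p_\la(1+p_\rho q_\rho-6q_\rho^2)+q_\la q_\rho^2=q_{y\mu}$ in the $y$-case.

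Each identity is linear in $b$, so it splits into two coefficient identities. After substituting \eqref{eq:dmefIntro}, these become identities in the $(p,q,t)$-coordinates. For $P$, the identity $|OX|+|XV|=|OX_x|$ reduces to
\[
\eps e_\mu' = e_\la q_\rho - e_\rho' q_\la \cdot(\text{sign}),
\]
and the analogous relation for $f$. We verify these using $\eps t_\la=p_\rho q_\mu-p_\mu q_\rho$, Lemma~\ref{lem:relations}~(ii) to eliminate the $t$-terms, and the expanded forms of Lemma~\ref{lem:identS}, namely $p_\rho+q_\rho=\eps(p_\mu t_\la-p_\la t_\mu)$ and $p_\rho-7q_\rho=\eps(q_\la t_\mu-q_\mu t_\la)$, for the $(p,q)$-parts. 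The $|V_xY_x|$ identity is handled the same way, clearing the denominators $q_\la q_\mu q_{x\mu}$. The $c$ identity uses Lemma~\ref{lem:relations}~(i) to express $\eps t_\mu$. The $H$ case is identical, with $(d,m,d',m')$ in place of $(e,f,e',f')$; the sign-matching $H$ case is already in \cite{M1}. Proposition~\ref{prop:genTrip} guarantees that $x\Tt$ has the same $\eps$, so the $\eps$ in the conclusion is consistent.

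The main obstacle will be the bookkeeping of signs and the shifted coordinates $e'=e-2q$ and $d'=d-3q$. In particular, the intersection parameter $c$ and the determinant must come out as exactly $\eps\,q_{x\mu}$ with the correct sign, rather than its negative. I would first test every identity on $\Tt_0^H$ and $\Tt_0^P$, where $x\Tt_0^H=((1,1,2),(0,1,-3),(2,1,-1))$, to fix the signs. Only then would I carry out the general algebra using Lemma~\ref{lem:complicated}~(i), which expresses $\eps t_\la\ovr{VY}$-type vectors through $q_{x\mu}$ and $\vn_X$ and is tailored to exactly this computation.
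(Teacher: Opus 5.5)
Your overall route is the paper's: express the four lengths of $xQ_{\bullet,b}(\Tt)$ through those of $Q_{\bullet,b}(\Tt)$, split each identity into its constant and $b$-coefficients, and check these with the recursive-triple identities. Your denominator step is also correct. Combining $\bm{x}_{x\mu}=S^{-1}\bm{x}_\rho+\eps t_\mu\bm{x}_\la$ with Lemma~\ref{lem:relations}~(i) gives exactly $(p_\rho-6q_\rho)(p_\la q_\la-1)+q_\rho q_\la^2=q_{x\mu}$, so $\det(\vn_X,\ovr{YV})=-q_{x\mu}$. However, there is a genuine error: you have assigned the intersection parameter $c$ to the wrong new edge.

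\textbf{The swapped edge.} Your $c$ is measured from $Y$ along $\ovr{YV}$, so $Y+c\,\ovr{YV}$ is the point where $\vn_X$ meets $VY$, which is your $V_x$. Since $Y_x=Y$, this gives $c=|V_xY_x|$, not $|X_xV_x|$. The edge $X_xV_x$ is the $M_X$-image of the remaining segment from $V_x$ to $V$, so $|X_xV_x|=|VY|-c$. In the $y$-case, $c$ is measured from the fixed vertex $X=X_y$ and gives the edge adjacent to it, $X_yV_y$. The correct analogue here is the edge adjacent to $Y=Y_x$. The paper's list is therefore
\[
|V_xY_x|=\frac{|OY|(p_\rho-6q_\rho)+|OX|q_\rho}{(p_\rho-6q_\rho)(p_\la q_\la-1)+q_\rho q_\la^2},\qquad |X_xV_x|=|VY|-|V_xY_x|.
\]

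\textbf{A concrete failure.} With your assignment, the identity $|X_xV_x|=c$ that you propose to verify is false. Take $\Tt_0^H$ with $\tfrac12<b<1$, so that $x\Tt_0^H=((1,1,2),(0,1,-3),(2,1,-1))$. The ray from $X=(1,0)$ in direction $(-2,1)$ meets $VY$ at $(2b-1,1-b)$. Hence $c=2b-1$, which equals $\eps(m'_\mu-d'_\mu b)/(q_\la q_{x\mu})$. But the target is $|X_xV_x|=\eps(m_\la-d_\la b)/(q_\mu q_{x\mu})=1-b$. Your planned sanity check on $\Tt_0^H$ would catch this, and once the two identities are swapped the argument goes through.

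\textbf{The $|OX_x|$ reduction.} Your reduction of $|OX|+|XV|=|OX_x|$ is also wrong as written, since $q_\la$ does not enter it. For $P$, clearing the factor $q_\rho q_\mu$ gives
\[
\eps e_\la=q_\mu e'_\rho-q_\rho e'_\mu,\qquad \eps f_\la=q_\rho f'_\mu-q_\mu f'_\rho .
\]
Both follow from $\eps t_\la=p_\rho q_\mu-p_\mu q_\rho$ together with $p_\la+q_\la=\eps(t_\rho q_\mu-q_\rho t_\mu)$.
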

\begin{proof}
    Similarly to the proof of Lemma~\ref{lem:yLengths} this involves checking that the following equalities hold: 
    \begin{align*} 
    |OY_x|&=|OY| \\
    |OX_x|&=|OX|+|XV| \\ 
    |V_xY_x|&=\frac{|OY|(p_\rho-6q_\rho)+|OX|q_\rho}{(p_\rho-6q_\rho)(p_\la q_\la-1)+q_\rho q_\la^2} \\ 
    |X_xV_x|&=|VY|-|V_xY_x|. 
\end{align*}
Checking the equalities is similar to Lemma~\ref{lem:yLengths} so we leave out the details. 
\end{proof}

\begin{lemma}\label{lem:sLengths}
Assume that $\mathcal{T}$ is a recursive $H$- or $P$-triple obtained via mutation from $\mathcal{T}_0^H$ or $\mathcal{T}_0^P$, and that the affine lengths of $Q_{\bullet,b}(\mathcal{T})$ are given by the formulas in Definition~\ref{def:DecQuad} and Lemma~\ref{lem:XVVY}. Then the geometric mutation $sQ_{\bullet,b}(\mathcal{T}) := OX_sV_sY_s$ has affine lengths if $\bullet=H$:
\[
|OY_s| = \frac{d_\mu - m_\mu b}{q_\mu}, \qquad |OX_s| = \frac{d_\lambda - m_\lambda b}{p_\lambda},
\]
\[
|V_sY_s| = \varepsilon\left(\frac{m_\lambda - d_\lambda b}{q_\mu q_\rho}\right), \qquad |X_sV_s| = -\varepsilon\left(\frac{m_\mu - d_\mu b}{p_\lambda q_\rho}\right),
\]
and if $\bullet=P$:
\[
|OY_s| = \frac{e_\mu + f_\mu b}{q_\mu}, \qquad |OX_s| = \frac{e_\lambda + f_\lambda b}{p_\lambda},
\]
\[
|V_sY_s| = \varepsilon\left(\frac{e_\lambda - f_\lambda b}{q_\mu q_\rho}\right), \qquad |X_sV_s| = -\varepsilon\left(\frac{e_\mu - f_\mu b}{p_\lambda q_\rho}\right).
\]
In particular, $|OX_s|$ and $|OY_s|$ agree with Definition~\ref{def:DecQuad} applied to the triple $s\mathcal{T}$, and $|V_sY_s|$ and $|X_sV_s|$ agree with the formulas of Lemma~\ref{lem:XVVY} applied to $s\mathcal{T}$.

\end{lemma}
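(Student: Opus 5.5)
The argument follows the template of Lemma~\ref{lem:yLengths}. By Lemma~\ref{lem:standard} and Lemma~\ref{lem:standFrom}, the nodal rays and edge directions of $sQ_{\bullet,b}(\Tt)$ agree with those of $Q_{\bullet,b}(s\Tt)$. We argue by induction along the mutation word, so we may assume the formulas of Definition~\ref{def:DecQuad} and Lemma~\ref{lem:XVVY} hold for $Q_{\bullet,b}(\Tt)$ with the sign $\eps$ of $\Tt$. Since $s\Tt$ has sign $-\eps$ by Proposition~\ref{prop:genTrip}(ii), the $-\eps$ in the formula for $|X_sV_s|$ is exactly the sign predicted by Lemma~\ref{lem:XVVY} for $s\Tt$.

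First describe the geometry of the $s$-mutation. The ray $\vn_Y=(q_\la,-p_\la)$ meets $OX$ at the point $Z=(|OY|q_\la/p_\la,0)$. The piece $YVXZ$ is moved by $M_Y=M_{p_\la,q_\la}$, which fixes $Y$ and the ray direction. The new quadrilateral therefore has:
\begin{itemize}
\item new vertex $Y_s=Y+|VY|\,M_Y\ovr{VY}$, where $M_Y\ovr{VY}=-\ovr{OY}$, so $|OY_s|=|OY|+|VY|$;
\item $|V_sY_s|=|XV|$, because $M_Y$ carries the edge $XV$ to $V_sY_s$;
\item new vertex $X_s$ equal to the image of $Z$, whose $Y$-fixed image lands on the horizontal axis, so $|OX_s|=|OZ|$;
\item $|X_sV_s|=|XZ|$, measured along the image of $\ovr{XO}$.
\end{itemize}
Thus we need four identities:
\[
|OY_s|=|OY|+|VY|,\quad |OX_s|=\tfrac{q_\la}{p_\la}|OY|\cdot(\text{affine correction}),\quad |V_sY_s|=|XV|,\quad |X_sV_s|=|OZ|-|OX|.
\]
The affine length of $OZ$ must be computed in $sQ$ coordinates, where $\ovr{OX_s}$ is primitive. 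This is the step I expect to need care. Concretely, the horizontal Euclidean length $|OY|q_\la/p_\la$ should be the affine length, giving
\[
|OX_s|=\frac{d_\la-m_\la b}{p_\la}\quad\text{for } H,
\]
which matches Definition~\ref{def:DecQuad} for $s\Tt$. That definition uses $x_{s\rho}=S\bm{x}_\la$, and by Lemma~\ref{lem:symTriple}, $d'_{s\rho}=(3p_\la-d_\la)-3p_\la=-d_\la$, $m'_{s\rho}=-m_\la$, and $q_{s\rho}=p_\la$. Hence $(m'_{s\rho}b-d'_{s\rho})/q_{s\rho}=(d_\la-m_\la b)/p_\la$. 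Similarly, for $P$ one gets $e'_{s\rho}=-e_\la$, $f'_{s\rho}=-f_\la$. So $|OX_s|$ is automatic once $Z$ is located.

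Next, $|OY_s|=|OY|+|VY|$ reduces to the identity already proved inside Lemma~\ref{lem:yLengths}. The equality $|OY_y|=|OY|+|VY|$ there used only the triple data, and its target $(d_\mu-m_\mu b)/q_\mu$ is the same. So it can be quoted directly.

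The identity $|V_sY_s|=|XV|$ is immediate from the inductive formula for $|XV|$.

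The remaining identity is $|X_sV_s|=|OZ|-|OX|$, namely
\[
-\eps\frac{m_\mu-d_\mu b}{p_\la q_\rho}=\frac{d_\la-m_\la b}{p_\la}-\frac{m'_\rho b-d'_\rho}{q_\rho}.
\]
Clearing denominators, this amounts to comparing the coefficients of $1$ and $b$:
\[
-\eps m_\mu=q_\rho d_\la+p_\la d'_\rho,\qquad \eps d_\mu=q_\rho m_\la+p_\la m'_\rho,
\]
together with the analogous relations for $P$. These have exactly the shape of the last step of Lemma~\ref{lem:yLengths}. I would convert them to $(p,q,t)$-coordinates via \eqref{eq:dmefIntro} and verify them using Lemma~\ref{lem:relations}(i) for the $t$-part and the expansion $\eps(p_\mu+q_\mu)=t_\la q_\rho+t_\rho p_\la$ from Lemma~\ref{lem:identS}. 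The main obstacle is bookkeeping the signs, because the $t$-coefficients enter $d$ and $m$ with different weights (and $e,f$ with opposite signs). If the straightforward check fails, I would instead verify the closure condition $(|OX_s|,0)+|X_sV_s|\ovr{X_sV_s}+|V_sY_s|\ovr{V_sY_s}=(0,|OY_s|)$ for $Q(s\Tt)$, as in the proof of Lemma~\ref{lem:XVVY}. That condition determines the last two lengths uniquely by Lemma~\ref{lem:notParallel}, since $q_\rho\neq0$.
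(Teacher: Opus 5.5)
\textbf{Where the proposal fails: the sign of the $|X_sV_s|$ identity.}

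Your decomposition matches the paper's. The paper also reduces the lemma to $|OY_s|=|OY|+|VY|$, $|OX_s|=|OY|q_\lambda/p_\lambda$, $|V_sY_s|=|XV|$, plus one relation for $|X_sV_s|$. It handles the first three essentially as you do, with Lemma~\ref{lem:symTriple} giving $q_{s\rho}=p_\lambda$, $m'_{s\rho}=-m_\lambda$, $d'_{s\rho}=-d_\lambda$.

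Your fourth relation, however, has the wrong sign. When $s$ is well-defined, $X_s=Z$ lies strictly inside $OX$. So your own bullet $|X_sV_s|=|XZ|$ gives $|X_sV_s|=|OX|-|OX_s|$. Your version, $|OZ|-|OX|$, is negative.

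Already for $\mathcal{T}=\mathcal{T}_0^H$, where $|OZ|=1-b$ and $|OX|=1$, your displayed equation reads $b=-b$. As a result, the constant-term identity $-\varepsilon m_\mu=q_\rho d_\lambda+p_\lambda d'_\rho$ that you plan to verify is false. Take $\mathcal{T}=s\mathcal{T}_0^H=((2,1,-1),(4,1,1),(5,1,-2))$ with $\varepsilon=-1$, for which $s^2Q^0_{H_b}$ is defined when $0<b<1/2$: the left side is $1$ and the right side is $-1$.

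Your $b$-coefficient identity also does not follow from your displayed equation. That equation actually gives $\varepsilon d_\mu=-(q_\rho m_\lambda+p_\lambda m'_\rho)$. The version you wrote happens to be the true one only because a second sign slip cancels the first. There is a similar slip earlier: you should have $Y_s=Y+|VY|\,M_Y\overrightarrow{YV}$, not $M_Y\overrightarrow{VY}$, although your conclusion $|OY_s|=|OY|+|VY|$ is correct.

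\textbf{The fix.} With $|X_sV_s|=|OX|-|OX_s|$, the required coefficient identities are
\[
\varepsilon m_\mu=q_\rho d_\lambda+p_\lambda d'_\rho,\qquad \varepsilon d_\mu=q_\rho m_\lambda+p_\lambda m'_\rho ,
\]
and for $P$ they are $\varepsilon e_\mu=q_\rho e_\lambda+p_\lambda e'_\rho$ and $\varepsilon f_\mu=-q_\rho f_\lambda-p_\lambda f'_\rho$. These are exactly the identities in the last step of Lemma~\ref{lem:yLengths}. You can therefore quote them, just as you quoted the $|OY_s|$ identity.

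Your planned verification also goes through once the sign is corrected:
\[
8\varepsilon m_\mu=\varepsilon(p_\mu+q_\mu)+3\varepsilon t_\mu=(t_\lambda q_\rho+t_\rho p_\lambda)+3\bigl(q_\rho(p_\lambda+q_\lambda)+p_\lambda(p_\rho-7q_\rho)\bigr)=8(q_\rho d_\lambda+p_\lambda d'_\rho),
\]
and the $d_\mu$ identity works the same way. The paper checks the same two identities differently, via the determinant relations of the recursive triple $s\mathcal{T}$: condition (ii) for $s\mathcal{T}$ and Lemma~\ref{lem:identS} applied to $s\mathcal{T}$. That route is equivalent to yours.
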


\begin{proof}
    Let $S(\bm{x}_\la):=(p_s,q_s,-t_\la).$
    We assume that $\Tt$ is an $H$-triple. 
Set $\eps=1$ if $\Tt$ is a sign-matching recursive triple, and  otherwise, set $\eps=-1.$ Note that by Proposition~\ref{prop:genTrip}, $s\Tt$ satisfies Lemma~\ref{lem:relations} and Lemma~\ref{lem:identS} for $-\eps.$

As before, as $\Tt$ is obtained via mutation from $\Tt_0^H$ or $\Tt_0^P$, we use a proof by induction and assume that the formulas for $|VY|$ and $|XV|$ given in Lemma~\ref{lem:XVVY} hold for $Q_{\bullet,b}(\Tt).$ 

Hence, we must verify that 
\begin{align*} 
      |OY_s|&=|OY|+|VY| \\
    |OX_s|&=|OY|\frac{q_\la}{p_\la} \\
     |V_sY_s|&=|XV| \\
     |X_sV_s|&=|OX|-|OX_s|
\end{align*}
    
    We first check $|OY_s|=|OY|+|VY|:$
    \[\frac{d_\mu-m_\mu b}{q_\mu}=|OY|+|VY|=\frac{d_\la-m_\la b}{q_\la}+\eps\left(\frac{m_\rho'-d_\rho' b}{q_\la q_\mu}\right).\] This equality holds for all $b$-values if 
    both 
     \begin{align} \label{eq:sATF1}
        \eps m_\rho'&=d_\mu q_\la-d_\la q_\mu  \notag \\
        \eps d_\rho'&=m_\mu q_\la-m_\la q_\mu
    \end{align}
    We have
    \begin{align*} 8(d_\mu q_\la -d_\la q_\mu)&= 3\begin{vmatrix}  p_\mu & p_\la \\ q_\mu & q_\la \end{vmatrix}+\begin{vmatrix} 
 t_\mu &  t_\la
\\ q_\mu & q_\la
    \end{vmatrix} \\
    8(m_\mu q_\la -m_\la q_\mu)&= \begin{vmatrix}  p_\mu & p_\la \\ q_\mu & q_\la \end{vmatrix}+3\begin{vmatrix} 
 t_\mu & t_\la
\\ q_\mu & q_\la
    \end{vmatrix}
    \end{align*}
    As $8m_\rho'=p_\rho-7q_\rho+3 t_\rho$ and 
    $8d_\rho'=3p_\rho-21q_\rho+ t_\rho$, 
    then \eqref{eq:sATF1} will hold if 
    \[ \eps t_\rho=\begin{vmatrix}  p_\mu & p_\la \\ q_\mu & q_\la \end{vmatrix} \quad \text{and} \quad 
    \eps(p_\rho-7q_\rho)=\begin{vmatrix} 
t_\mu & t_\la
\\ q_\mu & q_\la
    \end{vmatrix}. 
    \]
    The first holds as $\Tt$ is a recursive triple and the second holds by expanding Lemma~\ref{lem:identS}.

    For the second equality, we check that:
    \[ \frac{m_s'b-d_s'}{q_s}=\frac{d_\la-m_\la b}{q_\la}\cdot \frac{q_\la}{p_\la},\] which holds as $q_s=p_\la,$ $m_s'=-m_\la,$ and $d_s'=-d_\la$ by Lemma~\ref{lem:symTriple}.

    For the third equality, we verify $|V_sY_s|=|XV|$:
    \[-\eps \left(\frac{m_s'-d_s'b}{q_\mu q_\rho}\right)=\eps \left(\frac{m_\la-d_\la b}{q_\mu q_\rho}\right),\]
    which holds as $m_s'=-m_\la, d_s'=-d_\la$. 
    For the last equality, we verify $|OX|-|OX_s|=|X_sV_s|:$
    \[ \frac{m_\rho'b-d_\rho'}{q_\rho}-\frac{m_s'b-d_s'}{q_s}=-\eps\left(\frac{m_\mu-d_\mu b}{q_\rho q_s}\right).\] This follows if:
    \begin{align} \label{eq:sATF2}
        -\eps m_\mu=d_s' q_\rho-d_\rho' q_s\notag \\
        -\eps d_\mu=m_s' q_\rho-m_\rho 'q_s.
    \end{align}
   These hold because 
    \[-\eps t_\mu=\begin{vmatrix}p_s & p_\rho \\ q_s & q_\rho \end{vmatrix} \quad \text{and} \quad
    -\eps(p_\mu+q_\mu)=\begin{vmatrix}
             t_s &  t_\rho \\ q_s & q_\rho
        \end{vmatrix}.
    \]
    where the first follows as $s\Tt$ is a recursive triple and the second holds by Lemma~\ref{lem:identS} for the triple $s\Tt.$
   
    The case where $\Tt$ is a $P$-triple follows similarly. 
\end{proof}

\section{Triples and Perfect Classes}\label{ss:tripPerf}

In this section, we prove Theorems \ref{thm:HTrip} and \ref{thm:PTrip}. The proofs have two distinct steps. First, for each perfect class for $H$ (resp. quasi-perfect class for $P$), we find an explicit word $w$ in $x,y,s,\ov{x},\ov{y},\ov{s}$ such that the class appears as an element of the triple $w\Tt_0^H$ (resp. $w\Tt_0^P$). These words are given in Proposition~\ref{lem:perf4H} and \ref{lem:perf4P}. Second, we verify that for each such word $w$, there is an interval of $b$-values
for which the corresponding mutation sequence $wQ_{H_b}^0$ (resp. $wQ_{P_b}^0$)
is geometrically well-defined on the ATBD. Together, these two steps complete both theorems, whose proofs appear at the beginning of Section~\ref{ss:bValue}. 

Section \ref{ss:perfectBack} provides the necessary background on the classification of perfect and quasi-perfect classes for $H$ and $P$. In Section \ref{ss:findPerfect}, we give the explicit mutation sequences on $\Tt_0^H$ and $\Tt_0^P$ that recover all perfect classes for $H$ and the corresponding quasi-perfect classes for $P.$ In Section~\ref{ss:bValue}, we show each of these sequences is realized on the ATF level for an appropriate interval of $b$-values.

\subsection{Classification of Perfect Classes} \label{ss:perfectBack}
Here, we review the classification of perfect classes for $H$ completed in \cite{MMW}. This classification is given in terms of the recursive triples we considered in Section~\ref{ss:trip}. In \cite{MMW}, the perfect classes were constructed recursively via mutations on triples and symmetries $S,R.$ In \cite{MPW}, a corresponding family of quasi-perfect classes for $P$ was defined; we show in Corollary~\ref{cor:isPerfect} that these are in fact perfect, though whether they constitute all perfect classes for $P$ remains open.

Quasi-perfect classes for $H,P$ correspond precisely to elements of $\X_H$ and $\X_P$ as we now describe.  Let $(d;m)$ denote the homology class $dL-mE_1 \in H_2(H)$ where $L$ is the homology class of the line and $E_1$ is the homology class of the exceptional divisor. Let $(e,f)$ denote the homology class $eS_1+fS_2 \in H_2(P)$. Let $(d;m;p,q)$ represent the homology class of a $k$-fold blow up of $H$ where $(p,q)$ stands for the integral weight sequence $W(p,q)$, and likewise, for  $(e,f;p,q)$ and $P.$ 

\begin{lemma} \label{lem:AreClasses}
    Assume that $(p,q,t) \in \X_H$ is a positive class. Then, $(d;m;p,q)$ is a quasi-perfect class for $H.$ Similarly, if $(p,q,t) \in \X_P$ is a positive class, then $(e,f;p,q)$ is a quasi-perfect class for $P.$ 
\end{lemma}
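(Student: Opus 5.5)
Since quasi-perfect means that $\tilde{\bE}=\bE-\sum m_ie_i$ has self-intersection $-1$ and first Chern number $1$, the plan is to compute both numbers directly from $(p,q,t)$. We use two standard facts about the weight sequence $W(p,q)=(m_1,\dots,m_\ell)$ of $p/q$ (see \cite{ball}):
\[
\sum_i m_i^2 = pq \qquad\text{and}\qquad \sum_i m_i = p+q-1 .
\]
Hence for $\tilde{\bE}$ we have
\[
\tilde{\bE}\cdot\tilde{\bE}=\bE\cdot\bE - pq, \qquad c_1(\tilde{\bE}) = c_1(\bE)-(p+q-1).
\]
So the two conditions become $\bE\cdot\bE = pq-1$ and $c_1(\bE)=p+q$. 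Positivity of $p,q$ guarantees that the weight sequence makes sense. Positivity of $d,e,f$ and $m\ge 0$ guarantees that the classes are genuine nonnegative combinations, as required by the conventions of \cite{MMW,MPW}.

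For $H$, with $\bE=dL-mE_1$, we have $\bE\cdot\bE=d^2-m^2$ and $c_1(\bE)=3d-m$. Substituting \eqref{eq:dmefIntro} gives
\[
3d-m=\tfrac18\bigl(9(p+q)+3t-(p+q)-3t\bigr)=p+q .
\]
For the self-intersection,
\[
d^2-m^2=(d-m)(d+m)=\tfrac{1}{64}\bigl(2(p+q)-2t\bigr)\bigl(4(p+q)+4t\bigr)=\tfrac18\bigl((p+q)^2-t^2\bigr).
\]
Now membership in $\X$ says $\bm{x}^TA\bm{x}=-p^2-q^2+6pq+t^2=8$. Therefore $t^2=8+p^2+q^2-6pq$, and
\[
(p+q)^2-t^2=8pq-8, \qquad\text{so}\qquad d^2-m^2=pq-1,
\]
as needed. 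Integrality of $(d;m)$ is exactly the condition defining $\X_H$.

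For $P$, with $\bE=eS_1+fS_2$, we have $\bE\cdot\bE=2ef$ and $c_1(\bE)=2e+2f$. From \eqref{eq:dmefIntro}, $2e+2f=p+q$. Also
\[
2ef=\tfrac{2}{16}\bigl((p+q)^2-t^2\bigr)=\tfrac18(8pq-8)=pq-1,
\]
using the same identity from $\X$.

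The only step that is not pure algebra is the pair of weight-sequence identities $\sum m_i^2=pq$ and $\sum m_i=p+q-1$. I will cite these from \cite{ball}, or prove them by induction along the Euclidean algorithm: if $p>q$, then $W(p,q)=(q,W(p-q,q))$, and both identities propagate. This is the main point to get right. Everything else is a direct substitution, so I expect no real obstacle beyond correctly recording these conventions.
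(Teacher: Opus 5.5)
Your argument is the paper's argument with the algebra written out: the paper also reduces, via the two weight-sequence identities, to $3d-m=p+q$ and $d^2-m^2=pq-1$ (resp.\ $2(e+f)=p+q$ and $2ef=pq-1$). Your derivation of these from $\bm{x}^TA\bm{x}=8$ is correct. There is, however, a real gap, which the paper's proof shares. It sits in your sentence that positivity of $p,q$ makes the weight sequence make sense. Both the definition of $W(p,q)$ in the introduction and the identity $\sum_i m_i=p+q-1$ require $\gcd(p,q)=1$. Your Euclidean induction terminates at $W(g,g)=(g)$ with $g=\gcd(p,q)$, so what it actually proves is $\sum_i m_i=p+q-g$.

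Coprimality does not follow from positivity in $\X_\bullet$. For instance, $(7,49,20)\in\X_P$ is positive with $(e,f)=(19,9)$, and $(584,73,301)\in\X_H$ is positive with $(d;m)=(284;195)$. Both satisfy your two numerical conditions. With the integral weight sequences $(7^{\times 7})$ and $(73^{\times 8})$, however, one gets $c_1(\tilde{\bE})=7$, resp.\ $73$, so neither is quasi-perfect. Hence the lemma needs the extra hypothesis $\gcd(p,q)=1$, and with it your proof is complete. This hypothesis does hold for every entry of a triple $w\Tt_0^\bullet$. Each mutation replaces an entry's vector $(q,-p)$ by $\pm$ an $SL_2(\Z)$-image of another entry's vector, e.g.\ $(q_{y\mu},-p_{y\mu})=-M_{p_\mu,q_\mu}(q_\la,-p_\la)$. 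So the paper's applications of the lemma are unaffected.
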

\begin{proof}
    We must check that the classes are quasi-perfect, i.e. they have self-intersection $-1$ and first Chern number $1$. This computation is done in \cite[Section 2.2]{MM}. It follows from the fact that the weight sequence of $W(p,q)=(a_1,\hdots,a_n)$ has the property that $\sum a_i^2=p q$ and $\sum a_i=p+q-1.$ Hence, for $H,$ the condition to be a quasi-perfect class is equivalent to 
    \[ 3d-m=p+q \quad \text{and} \quad d^2-m^2=pq-1,\]
    and for $P,$ the conditions are
    \[ 2(e+f)=p+q \quad \text{and} \quad 2ef=pq-1.\]
\end{proof}

The perfect classes for $H$ were classified in \cite{MMW} using two additional symmetries, denoted $S,R$, acting on triples beyond the $x,y$-mutations, together with a family of seed triples $\mathcal{B}_n.$ These symmetries are defined as follows: 

\begin{definition} \label{def:SRintro}
    Given a triple \[\Tt:=(\bm{x}_\la,\bm{x}_\mu,\bm{x}_\rho):=\left((p_\la,q_\la,t_\la),(p_\mu,q_\mu,t_\mu),(p_\rho,q_\rho,t_\rho) \right) \in \mathcal{X}^3,\] define the action of $S=\begin{pmatrix}
        6 & -1 & 0 \\ 1 & 0 & 0 \\ 0 & 0 & -1
    \end{pmatrix}$ and $R=\begin{pmatrix}
        6 & -35 & 0\\ 1 & -6 &0 \\ 0& 0 & -1
    \end{pmatrix}$ on $\Tt$ as: 
    \[ S\Tt:=\left(S\bm{x}_\la,S\bm{x}_\mu,S\bm{x}_\rho\right)\] and
     \[ R\Tt:=\left(R\bm{x}_\rho,R\bm{x}_\mu,R\bm{x}_\la\right).\]
\end{definition}

The seed triples $\mathcal{B}_n$ we perform the symmetries and $x,y$-mutations on are defined as 
\[ \mathcal{B}_n:=\left((n+6,1,n+3),(n^2+11n+29,n+4,n^2+8n+13),(n+8,1,n+5)\right) \]
where $n$ is a nonnegative integer. 

McDuff, Weiler, and the author showed in \cite{MMW} that the $(p,q)$-perfect classes for $H$ are precisely the elements of the triples in $\Tt(H)$ defined below. The corresponding family of triples for $P$ is denoted $\Tt(P).$

\begin{definition}  \label{def:triplesIntro}
For $\bullet\in\{H,P\}$, define $\Tt(\bullet)$ to be the set of triples obtained as follows:
\begin{itemlist}
\item[{(i)}] $S^i\Tt_0^\bullet$ for all $i\geq 0$;

\item[{(ii)}] $S^iR^\delta(\mathcal B_n)$ for $\delta\in\{0,1\}$ and $i,n\geq 0$, where $n$ is even if $\bullet=H$ and $n$ is odd if $\bullet=P$;

\item[{(iii)}] all possible $x,y$-mutations of the triples in (ii).

\end{itemlist}

\end{definition}

\begin{rmk}\label{rmk:Ordering}
    We make some remarks about the triples in Definition~\ref{def:triplesIntro} to acquaint the reader with some of the properties shown in \cite{MMW}. These properties are used in Section~\ref{ss:bValue}.

    Given a triple $\Tt=(\bm{x}_\la,\bm{x}_\mu,\bm{x}_\rho) \in \T(H),$ the triple satisfies
    \[ \frac{p_\la}{q_\la} \leq \frac{p_\mu}{q_\mu} \leq \frac{p_\rho}{q_\rho}.\]
    For the triples $S^i\Tt_0^H=(\bm{x}_{\la,i},\bm{x}_{\mu,i},\bm{x}_{\rho,i})$, the ratios $p_{\bullet,i}/q_{\bullet,i}$ are strictly increasing in $i$ with limit given by $3+2\sqrt{2}.$

    The basic triples $\mathcal{B}_{2n}$ have $p_\la/q_\la=2n+6$ and $p_\rho/q_\rho=2n+8$ for $n \geq 0$. In applying $R(\mathcal{B}_{2n})$, the classes with ratios $2n+6$ will all lie in between $6 \leq p/q \leq 7$ (except in the case where $n=0$). Applying iterations of the shift in these cases will decrease the ratios of $p/q$ with limit $3+2\sqrt{2}.$
    
    To determine where the ratios of the $x,y$-mutations lie, in \cite{MMW}, it was shown that given $\Tt=(\bm{x}_\la,\bm{x}_\mu,\bm{x}_\rho),$ which is described by the triples in (ii) or (iii) of $\T(H),$ the ordering is as follows
    \[ \frac{p_\la}{q_\la} \leq \frac{p_{x\mu}}{q_{x\mu}} \leq \frac{p_\mu}{q_\mu} \leq \frac{p_{y\mu}}{q_{y\mu}} \leq \frac{p_\rho}{q_\rho}.\]
    For instance, the $x,y$-mutations of $B_{2n}$ will all have $p/q$ between $2n+6$ and $2n+8$ and will be ordered in this interweaving way. 
   
\end{rmk}

The proof in \cite{MMW} that the classes in $\T(H)$ are perfect, and not merely quasi-perfect, used results from ATF mutations in \cite{M1} combined with methods from symplectic embeddings. Precisely, in \cite{M1}, we showed that for the triple $w\mathcal{B}_{2n}$ where $w$ is a word in $x,y$, we showed that there is a $b$-value such that $wy\ov{x}^{n+2}Q_{H_b}^0=Q_{H_b}(w\mathcal{B}_{2n})$ is well-defined. The ATBD from $wy\ov{x}^{n+2}Q_{H_b}^0$ gave results about ellipsoid embeddings into $H_b$. These embedding results were used to show the classes $w\mathcal{B}_{2n}$ are perfect. 

McDuff and Siegel gave a more direct proof that the classes were perfect in \cite{McSi}. In particular, they showed that if $Q$ is an ATBD for $H$ (resp. $P$) that is $\Tt$-standard for $\Tt=((p_\la,q_\la),(p_\mu,q_\mu),(p_\rho,q_\rho))$ where for $\bullet=\la,\mu,\rho,$ $p_\bullet,q_\bullet>0$, then there is an index zero $(p_\bullet,q_\bullet)-$unicuspidal symplectic curve in $H$ (resp. $P$). Each such unicuspidal curve corresponds to a perfect class. Hence, the result of \cite{M1} that $wy\ov{x}^{n+2}Q_{H_b}^0$ is an ATBD for $H$ implies that the classes $w\mathcal{B}_n$ are perfect. The explicit mutation sequence shows that these perfect classes can be realized in the ATBD for a fixed $b$-value.

Here, we aim to show that all of the classes in $\Tt(H)$ and $\Tt(P)$ can be realized in ATBDs by mutating $Q_{H_b}^0$ or $Q_{P_b}^0$ for a fixed $b$-value. We give an example here, which contrasts some of the differences for $P$ and $H$.

\begin{example} \label{eg:s^kforPandH2}

    In Example~\ref{eg:s^kforPandH}, we discussed that the triples $s^k\Tt_0^H$ include all the triples in $S^i\Tt_0^\bullet.$ We then explained how the $b$-values such that $s^kQ_{H_b}^0$ is well-defined are related to $m_k/d_k$ where $(d_k;m_k)$ are the solutions to \eqref{eq:dmefIntro} coming from the classes $(p_k,q_k)$ appearing in the triples $s^k\Tt_0^H.$ We alluded that the case of $P$ is more delicate. We describe the details here.

    If we let 
 \[s^k\Tt_0^P=:\left((p_{k-1},q_{k-1},t_{k-1}),(p_{k},q_{k},t_{k}),(p_{k+1},q_{k+1},t_{k+1})\right),\] we also have that $s^{3k}\Tt_0^P=S^k\Tt_0^P.$ 
 Unlike in the $H$-case, the ratios $p_k/q_k$ are not pairwise distinct.

For example, consider $s\Tt_0^P=\left((3,1,0),(5,1,2),(5,1,-2)\right).$ Here, we have that $p_1/q_1=p_2/q_2,$ but note that $-t_1=t_2.$ For the solutions $(e,f)$ to \eqref{eq:dmefIntro}, we get $(e_1,f_1)=(2,1)$ and $(e_2,f_2)=(1,2).$ For $P,$ there are two $(5,1)$-perfect classes. Note that $(3,1,0)$ only appears in the middle entry of $s^k\Tt_0^P$ for $k=0$, and there is only one $(3,1)$-perfect class for $P.$ As $t=0,$ we have $e=f$.

 In general, $p_i/q_i$ where $i \equiv 1,2 \mod{3}$ will appear in the middle entry of $s^k\Tt_0^P$ for two different values of $k$, say $k_1,k_2$ where $t_{k_1}=2$ and $t_{k_2}=-2.$ This will correspond to two different $(p_i,q_i)$-quasi-perfect classes with coordinates $(e_{k_1},f_{k_1})$ and $(f_{k_1},e_{k_1})$. 

 Unlike mutations on $Q_{H_b}^0,$ there is no $b$-value such that $s^4Q_{P_b}^0$ is well-defined. For the case of $b=1,$ the first nodal ray extends to hit a vertex, so we consider neither $s$ nor $y$ well-defined in the case. While the sequence $s^k\Tt_0^P$ provides the desired numerics of the classes, it does not give us the result we want on the ATF side. 

 We define a modified sequence of mutation words ${r_i}$ as follows. Set $r_0=\id$, let $r_i=s^i$ for $i=1,2,3$, and for $k\geq 1$ define
\begin{equation} \label{eq:SPsequence}
r_{3k+1}:=y(s^2y)^{k-1}s^3,\qquad
r_{3k+2}:=sy(s^2y)^{k-1}s^3,\qquad
r_{3k+3}:=(s^2y)^ks^3.
\end{equation}
Define the sequence $\bm{x}_i$ so that $\bm{x}_i$ is the new element appearing in the triple $r_i\Tt_0^P$. The position of $\bm{x}_i$ in the triple depends on $i$: an $s$-mutation shifts the entries to the left, while a $y$-mutation introduces the new element in the middle.

For $i=0,1,2,3$, we have
\[r_{i}\Tt_0^P:=(\bm{x}_{i-1},\bm{x}_i,\bm{x}_{i+1}).\]
We set \(\bm{x}_{-1}\) to be the left entry of \(\Tt_0^P\), so that
\(\Tt_0^P=(\bm{x}_{-1},\bm{x}_0,\bm{x}_1)\).
If $i \geq 4,$ we have that
\begin{align*}
    r_{3k+1}\Tt_0^P&=(\bm{x}_{3k},\bm{x}_{3k+2},\bm{x}_{3k+1}) \\
    r_{3k+2}\Tt_0^P&=(\bm{x}_{3k+2},\bm{x}_{3k+1},\bm{x}_{3k+3}) \\
    r_{3k+3}\Tt_0^P&=
    (\bm{x}_{3k+1},\bm{x}_{3k+3},\bm{x}_{3k+4}).  
\end{align*}

It is easy to check that the $(p,q)$-coordinates of $r_i\Tt_0^P$ are the same as the $(p,q)$-coordinates for $s^i\Tt_0^P,$ but the $t$-coordinates are only the same up to a sign. 

It is the case that there are $b$-values such that $r_k Q_{P_b}^0$ is defined for all $k.$ Letting $\al_k:=e_k/f_k$, by Lemma~\ref{lem:alPSordering}, for $i \geq 1,$
\[ \al_2<\al_{3i+1}<\al_{3i+4}<\al_0=\al_{3i}=1<\al_{3i+5}<\al_{3i+2}<\al_1\]
where $1/\al_{3i+1}=\al_{3i+2}.$ In Lemma~\ref{lem:alPSordering2}, we see that $r_{3i+1}Q_{P_b}^0,r_{3i+2}Q_{P_b}^0,r_{3i+3}Q_{P_b}^0$ are well-defined if $1<b<\al_{3i+2}.$ 

   While the cases of $P$ and $H$ are similar here, we note that the ATFs reflect the extra symmetry for the exceptional classes for $P$, which leads to some differences. In either case, the ratios of either $(d_k;m_k)$ or $(e_k,f_k)$ in the relevant triples are determining when the mutations are defined.
\end{example}

We end with one lemma from \cite{MM,MPW} about the bounds on $p/q$ for the classes $(p,q,t) \in \mathcal{T}(H),\mathcal{T}(P)$:
\begin{lemma}\cite[Lemma 2.1.5]{MM},\cite[Theorem 3.1.3]{MPW} \label{lem:pOverq6}
    Let $w$ be a word in $x,y.$ Let $(p,q,t)$ be an entry of the triple $\Tt$. The following hold:  
    \begin{itemlist}
        \item[{(i)}] if $\Tt=S^i\Tt_0^\bullet$ for $i \geq 0$, then $1\leq p/q<3+2\sqrt{2}$;
        \item[{(ii)}] if $\Tt=w\mathcal{B}_n$ for $n \geq 0$, then $6 \leq p/q$;
        \item[{(iii)}] if $\Tt=wR\mathcal{B}_n$ for $n \geq 0$, then $6 < p/q \leq 7$, except when $n=0,$ where $(p_\rho,q_\rho)$ may equal $(1,0)$;
        \item[{(iv)}] if $\Tt=wS^iR\mathcal{B}_n$ for $n \geq 0$ and $i \geq 1,$ then $3+2\sqrt{2} \leq p/q < 6$;
        \item[{(v)}] if $\Tt=wS^i\mathcal{B}_n$ for $n \geq 0$ and $i \geq 1$, then $3+2\sqrt{2} \leq p/q < 6.$
    \end{itemlist}
\end{lemma}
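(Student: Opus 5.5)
The plan is to track everything through the ratio $z=p/q$. The symmetries $S$ and $R$ act on $(p,q)$ by the linear fractional maps
\[
S(z)=6-\tfrac1z,\qquad R(z)=\frac{6z-35}{z-6}.
\]
The $x,y$-mutations never produce a ratio outside the range spanned by the two outer entries of the triple being mutated. So I would first prove an interleaving statement for $x,y$-mutations. After that, each item reduces to locating the ratios of the outer entries of a single seed triple, namely $S^i\Tt_0^\bullet$, $\mathcal B_n$, $R\mathcal B_n$, $S^iR\mathcal B_n$ or $S^i\mathcal B_n$.

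\emph{Step 1 (item (i)).} Since $S(z)-z=-(z^2-6z+1)/z$, the map $S$ is increasing on $z>0$, with fixed points $3\pm2\sqrt2$, and $S(z)>z$ on $(1,3+2\sqrt2)$. It therefore maps $[1,3+2\sqrt2)$ into itself. The entries of $\Tt_0^H$ have ratios $1,2,4$ and those of $\Tt_0^P$ have ratios $1,3,5$, all in $[1,3+2\sqrt2)$. The entries of $S^i\Tt_0^\bullet$ are obtained by applying $S^i$ entrywise, which gives (i). The iterates increase to $3+2\sqrt2$, matching Remark~\ref{rmk:Ordering}.

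\emph{Step 2 (interleaving).} Let $\Tt$ be a triple with $q_\bullet>0$ and $p_\la/q_\la\le p_\mu/q_\mu\le p_\rho/q_\rho$, with nonnegative recursion parameters $\nu_\la,\nu_\rho$ (equivalently $\eps t_\la,\eps t_\rho\ge 0$). I claim that $x\Tt$ and $y\Tt$ satisfy the same hypotheses, and that the new entry lies between the old neighbours.
\begin{itemlist}
\item For $y$, the recursive-triple identities from the proof of Proposition~\ref{prop:genTrip} give
\[
p_{y\mu}q_\mu-p_\mu q_{y\mu}=\eps t_\rho,\qquad p_\rho q_{y\mu}-p_{y\mu}q_\rho=\eps t_\mu.
\]
\item Once $q_{y\mu}>0$ and these determinants have the right sign, the inequalities $p_\mu/q_\mu\le p_{y\mu}/q_{y\mu}\le p_\rho/q_\rho$ follow.
\item The case of $x$ is symmetric.
\end{itemlist}
By induction on the length of the word, every entry of $w\Tt$ then lies in $[p_\la/q_\la,\,p_\rho/q_\rho]$ for the seed. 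Lemma~\ref{lem:recurDif} provides the monotonicity along each iterated $x$- or $y$-direction.

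\emph{Step 3 (items (ii)--(v)).}
\begin{itemlist}
\item For $\mathcal B_n$ the ratios are $n+6$, $n+7+\tfrac1{n+4}$ and $n+8$, all at least $6$. Step 2 then gives (ii).
\item $R$ is an involution fixing $5$ and $7$, and it reverses order on $z>6$. It sends $z=n+6\ge 7$ to $6+\tfrac1{n}\in(6,7]$, and it sends $z=6$ (the case $n=0$) to $\infty$, i.e.\ to $(1,0)$. The outer entries of $R\mathcal B_n$ therefore lie in $(6,7]$, apart from that exception, and Step 2 gives (iii).
\item $S$ maps $[3+2\sqrt2,\infty)$ increasingly into $[3+2\sqrt2,6)$ and preserves this interval. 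Hence the outer ratios of $S^iR\mathcal B_n$ and of $S^i\mathcal B_n$ lie there for $i\ge1$, and Step 2 gives (iv) and (v).
\end{itemlist}
The arguments for $P$ (odd $n$) are identical, since only the $(p,q)$-coordinates enter.

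\emph{Main obstacle.} The hard part is Step 2: checking that along every $x,y$-word from these seeds we keep $q>0$ and $\eps t_\la,\eps t_\rho,\eps t_\mu\ge0$. Otherwise the determinant identities do not imply the inequalities; Example~\ref{ex:unusualTrip} shows that this genuinely fails for general recursive triples. I would first try to prove that the recursion parameters are at least $2$ on the seeds (for instance $t=n+3,n+5$ for $\mathcal B_n$), and that they remain so under mutation. The mutated parameter $\eps t_{y\mu}=\eps t_\rho\cdot \eps t_\mu-\eps t_\la$ grows like a $\nu$-recursion with $\nu\ge2$, which would also give positivity of $q$ via Lemma~\ref{lem:recurDif}. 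If this becomes messy, I would fall back on the corresponding positivity statements proved in \cite{MM,MMW,MPW}.
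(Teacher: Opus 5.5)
The paper gives no proof of this lemma; it is quoted from \cite{MM} and \cite{MPW}, so I am judging your argument on its own. Your fractional-linear computations are correct: $S(z)=6-1/z$, $R(z)=(6z-35)/(z-6)$, the ratios of $\mathcal B_n$, and $R\mathcal B_n=((6n+13,n+2),(6n^2+31n+34,n^2+5n+5),(6n+1,n))$. Item (i) is fine and the overall strategy is sound. Two steps, however, do not go through.

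\textbf{Step 2 is not a closed induction, and your proposed repair does not close it.} For a $y$-mutation, interleaving needs $q_{y\mu}=\eps t_\rho q_\mu-q_\la>0$ and $\eps t_\mu\ge 0$. To iterate you also need $\eps t_{y\mu}=(\eps t_\rho)(\eps t_\mu)-\eps t_\la>0$. None of this follows from all recursion parameters being $\ge 2$; for instance $2\cdot 2-10<0$. Lemma~\ref{lem:recurDif} assumes positivity rather than proving it, so it cannot help here.

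The missing invariant is domination by the middle entry, as in Corollary~\ref{cor:triples}(i),(ii) and Lemma~\ref{lem:correctlyOrdered}. In the coordinates $(p,q,\eps t)$, require $0<\bm{x}_\la,\bm{x}_\rho\le\bm{x}_\mu$ coordinatewise and $\eps t_\bullet\ge 2$. Then
\[
\bm{x}_{y\mu}-\bm{x}_\mu=(\eps t_\rho-1)\bm{x}_\mu-\bm{x}_\la\ge(\eps t_\rho-2)\bm{x}_\mu\ge 0,
\]
and symmetrically for $x$. This gives positivity, the correct determinant signs, and preservation of the invariant all at once. The seed $\mathcal B_n$ satisfies it, since $q=(1,n+4,1)$ and $\eps t=(n+3,n^2+8n+13,n+5)$.

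You do not need to check the invariant separately on $S^i\mathcal B_n$ and $S^iR\mathcal B_n$. We have $Sx=xS$ and $Sy=yS$ by Lemma~\ref{lem:commuteS}. We also have $Rx=yR$ and $Ry=xR$, because $R$ reverses the triple and has determinant $-1$ on $(p,q)$. Hence every entry in (iii)--(v) is the image under $R$, $S^i$ or $S^iR$ of an entry of some $w'\mathcal B_n$, and such an entry has ratio in $[n+6,n+8]$.

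\textbf{The entry $(1,0,-3)$ of $R\mathcal B_0=((13,2,-5),(34,5,-13),(1,0,-3))$ is not handled.} Step 2 requires $q>0$. With an outer ratio equal to $\infty$, interleaving gives no upper bound. So the claim $p/q\le 7$ for the other entries of $wR\mathcal B_0$ is unproved, although it is true. One way to prove it: $p-7q$ equals $-1,-1$ on the first two entries and obeys $r_{k+1}=3r_k-r_{k-1}$ along $y^kR\mathcal B_0$, so it stays negative. Any $x$-mutation removes $(1,0)$, after which both outer ratios lie in $(6,7)$.

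In (iv) the problem is more serious. You assert that the outer ratios of $S^iR\mathcal B_n$ lie in $[3+2\sqrt2,6)$, but $S(1,0,-3)=(6,1,3)$, so
\[
SR\mathcal B_0=\bigl((76,13,5),(199,34,13),(6,1,3)\bigr).
\]
This is $yy\ov{x}s^4\Tt_0^H$ in Proposition~\ref{lem:perf4H}, and the entry $(6,1,3)$ survives every $y^k$. So for $n=0$, $i=1$ the correct conclusion is $p/q\le 6$, with equality at that entry. The strict inequality in (iv) needs an exception there, parallel to the one in (iii). Your argument should have detected this rather than ``proving'' $p/q<6$.
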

\subsection{Perfect classes constructed via mutations on triples} \label{ss:findPerfect}
Here, we complete the first step of the proofs of Theorems~\ref{thm:HTrip} and \ref{thm:PTrip} where we construct explicit words $w$ such that the classes in $\T(H)$ and $\T(P)$ are an element of $w\Tt_0^H$ or $w\Tt_0^P$. These words are given in Proposition~\ref{lem:perf4H} for $\Tt(H)$ and Proposition~\ref{lem:perf4P} for $\Tt(P).$ As we show in Example~\ref{eg:s^kforPandH2}, these words are not unique. In Section~\ref{ss:bValue}, we show the words given are well-defined on the ATBD.

We first must establish some additional notation. As previously noted, for $P$ if $(p,q,t)$ corresponds to the quasi-perfect class $(e,f)$, then $(p,q,-t)$ also gives a quasi-perfect class with coordinates $(f,e).$ See Example~\ref{eg:s^kforPandH2}. While there is this symmetry, for a $P$-triple $\Tt=(\bm{x}_\la,\bm{x}_\mu,\bm{x}_\rho)$, the quadrilateral $Q_{P_b}(\Tt)$ defined in \eqref{eq:QTP} is not invariant under this symmetry. For mutations on $Q_{P_b}^0$, to be well-defined it is essential that we have the right sign on $t.$  In particular, for the triples in $\Tt(P)$ that appear after applying iterations of $S,$ from the ATBD perspective, we want $t<0.$ Let $I_t$ denote the matrix \[I_t:=\begin{pmatrix} 1&0&0 \\ 0&1&0\\ 0&0&-1 \end{pmatrix}.\] 
Hence, $I_t(p,q,t)=(p,q,-t)$ so if $(e,f)$ is the solution to \eqref{eq:dmefIntro}, then $(f,e)$ is the solution for $I_t(p,q,t).$

 We now state a preliminary lemma which will help us with the classes obtained by iterating $S$ before giving the general sequences. Let $r_{i}$ be the sequence defined in \eqref{eq:SPsequence}. As seen in Example~\ref{eg:s^kforPandH2}, this is the sequence that will realize the classes $s^i\Tt_0^P$ in the ATBD.

\begin{lemma} \label{lem:commuteS}
    Let $w$ denote a word in $x,y,\ov{x},\ov{y},s$.
    For a recursive triple $\Tt$, we have the equality \[s^3w\Tt=ws^3\Tt.\]  
    Additionally, we have that for $j\geq 1$,
    \[wr_{3j}\Tt_0^P=I_t^{j-1}s^{3j}w\Tt_0^P.\] In other words, the $(p,q)$-coordinates of $wr_{3j}\Tt_0^P$ and $s^{3j}w\Tt_0^P$ agree and the $t$-coordinates agree up to sign. 
\end{lemma}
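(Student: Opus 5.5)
The plan has two parts. First we prove that $s^3$ commutes with each single mutation. Then we compare the word $r_{3j}$ with $s^{3j}$ entrywise.

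For the first part, recall from Remark~\ref{rmk:ATFmut} that $s^3\Tt=S\Tt$, meaning $S$ is applied to every entry. Indeed $s^3(\bm{x}_\la,\bm{x}_\mu,\bm{x}_\rho)=(S\bm{x}_\la,S\bm{x}_\mu,S\bm{x}_\rho)$ directly from the definition of $s$. So it suffices to show that $wS\Tt=Sw\Tt$ for each letter $w\in\{x,y,\ov{x},\ov{y},s\}$; the general case then follows by induction on the length of $w$.

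For $w=s$, this is immediate: $sS\Tt=(S\bm{x}_\mu,S\bm{x}_\rho,S^2\bm{x}_\la)=S(s\Tt)$.

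For $w=x,y,\ov{x},\ov{y}$, the new entry is a linear combination $\nu\bm{x}_\alpha-\bm{x}_\beta$, where $\nu$ is a $2\times2$ determinant in the $(p,q)$-coordinates. Since $S$ is linear, it is enough to show that each recursion parameter is unchanged when every entry is replaced by its image under $S$. The $(p,q)$-block of $S$ is $\begin{pmatrix}6&-1\\1&0\end{pmatrix}$, which has determinant $1$. Hence $\det(S\bm{x}_\alpha,S\bm{x}_\beta)_{(p,q)}=\det(\bm{x}_\alpha,\bm{x}_\beta)_{(p,q)}$, so $\nu_\la$ and $\nu_\rho$ are preserved. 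This gives $s^3w\Tt=ws^3\Tt$, and it requires no recursive-triple hypothesis beyond the triple being defined.

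For the second part, we first establish the case $w=\id$, namely $r_{3j}\Tt_0^P=I_t^{j-1}s^{3j}\Tt_0^P$, by induction on $j$. The case $j=1$ holds because $r_3=s^3$. For the inductive step, $r_{3j+3}=(s^2y)r_{3j}$, so we must compare $s^2y$ with $s^3$ on the triple $r_{3j}\Tt_0^P$.

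The key identity is Lemma~\ref{lem:identS}: $\bm{x}_{y\mu}=S\bm{x}_\la+\eps t_\mu\bm{x}_\rho$. Computing with $\Tt=(\bm{x}_\la,\bm{x}_\mu,\bm{x}_\rho)$, we get $s^2y\Tt=(\bm{x}_\rho,S\bm{x}_\mu,S\bm{x}_{y\mu})$, while $s^3\Tt=(S\bm{x}_\la,S\bm{x}_\mu,S\bm{x}_\rho)$. These do not agree in general, so a direct comparison fails. Instead we must use the specific triples $r_{3j}\Tt_0^P$, which by Example~\ref{eg:s^kforPandH2} have the form $(\bm{x}_{3j-2},\bm{x}_{3j},\bm{x}_{3j+1})$. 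We then check from the explicit recursion that the entries agree with those of $s^{3j+3}\Tt_0^P$ up to the sign of $t$. This amounts to verifying identities of the form $I_t\bm{x}=S\bm{x}'$ among consecutive entries, using the fact that for these $P$-triples the $t$-values are $0,\pm2$ and the $s$-shift flips the sign. Recording which sign flips occur gives the factor $I_t^{j-1}$.

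Finally, for general $w$ we use the first part to write $s^{3j}w\Tt_0^P=ws^{3j}\Tt_0^P$. We then need $wI_t^{j-1}\Tt'=I_t^{j-1}w\Tt'$. Conjugation by $I_t$ leaves the $(p,q)$-coordinates, and hence all recursion parameters $\nu$, unchanged, and it commutes with $S$. So each letter commutes with $I_t$, and this final step is formal.

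The main obstacle is the base case $w=\id$ of the second part: pinning down exactly where the signs of $t$ flip along $r_{3j}$ versus $s^{3j}$. I would first compute $r_4,r_5,r_6$ explicitly on $\Tt_0^P$ to find the pattern, and then induct using Lemma~\ref{lem:identS} together with Lemma~\ref{lem:symTriple}(ii).
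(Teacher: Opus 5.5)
You have a genuine gap, in the case $w=\id$ of the second claim, which is the real content of the lemma. Your first part is correct, and tidier than the paper's version: the paper checks $w=x$ on a sign-matching triple and calls the rest similar, whereas you note that every recursion parameter is a $(p,q)$-determinant and so is invariant under the unimodular $(p,q)$-block of $S$. Your final step, commuting $I_t$ past each letter, is also correct, and it makes explicit something the paper only asserts.

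In the $w=\id$ case, the sentences ``we then check from the explicit recursion that the entries agree\dots'' and ``recording which sign flips occur gives the factor $I_t^{j-1}$'' say what must be shown rather than showing it, and you flag this yourself as unresolved. Concretely, you need $s^2y\,\Tt(j)=I_tS\,\Tt(j)$ for $\Tt(j):=r_{3j}\Tt_0^P$; the comparison is with $I_ts^3$, not $s^3$. This requires an identity between the entries of $\Tt(j)$ that you never state.

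The paper supplies this identity as an inductive invariant: $t_\mu=0$, $t_\rho=-2$, $2\bm{x}_\mu-\bm{x}_\la=I_t\bm{x}_\rho$, and $S\bm{x}_\la=I_t\bm{x}_\rho$. Using Lemma~\ref{lem:identS}, as you intended, the invariant shrinks to $t_\mu=0$ and $\bm{x}_\rho=I_tS\bm{x}_\la$. Then $\bm{x}_{y\mu}=S\bm{x}_\la$, so
\[
s^2y\,\Tt(j)=(\bm{x}_\rho,\,S\bm{x}_\mu,\,S^2\bm{x}_\la)=(I_tS\bm{x}_\la,\,I_tS\bm{x}_\mu,\,I_tS\bm{x}_\rho)=I_tS\,\Tt(j).
\]
This invariant is visibly preserved by $I_tS$, and it holds for $\Tt(1)=((5,1,-2),(17,3,0),(29,5,-2))$.

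There is an even simpler route. What stalled you was the claim that ``a direct comparison fails'' and one ``must use the specific triples $r_{3j}\Tt_0^P$.'' You have already shown that $s^2y$ commutes with $S$ and with $I_t$ on arbitrary triples. So the whole induction reduces to the single computation
\[
s^2y\,\Tt_0^P=((5,1,2),(17,3,0),(29,5,2))=I_tS\,\Tt_0^P.
\]
This gives $(s^2y)^k\Tt_0^P=I_t^kS^k\Tt_0^P$, and hence $r_{3j}\Tt_0^P=(s^2y)^{j-1}s^3\Tt_0^P=I_t^{j-1}S^j\Tt_0^P$.
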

\begin{proof}
    Let $\Tt=(\bm{x}_\la,\bm{x}_\mu,\bm{x}_\rho).$ We first show both equalities assuming $\Tt$ is sign-matching and $w=x.$
    We have that $x\Tt=(\bm{x}_\la,t_\la \bm{x}_\mu-\bm{x}_\rho,\bm{x}_\mu),$ so 
    \[ s^3x\Tt=S(\bm{x}_\la,t_\la \bm{x}_\mu-\bm{x}_\rho,\bm{x}_\mu).\]
    Alternatively, if we first applied $s^3,$ we obtain
    \[ xs^3\Tt=xS\Tt=(S(\bm{x}_\la),t_\la S(\bm{x}_\mu)-S(\bm{x}_\rho),S\bm{x}_\mu).\]
    
    A similar argument holds for the case where $\Tt$ is sign-alternating and for the mutations given by $y,\ov{x},\ov{y},s.$ 

    For the second statement, we first show that the claim holds when $w$ is the empty word.  By direct computation
    \[ r_3\Tt_0^P=s^3\Tt_{0}^P=\left( (5,1,-2),(17,3,0),(29,5,-2) \right).\] 
    Let $\Tt(j)$ denote $r_{3j}\Tt_0^P=((p_\la,q_\la,t_\la),(p_\mu,q_\mu,t_\mu),(p_\rho,q_\rho,t_\rho)).$  It suffices to show that $\Tt(j+1)=I_tS\Tt(j)$. We claim this follows if for all $j>1,$ $\Tt(j)$ satisfies
    \begin{itemlist}
        \item[{(i)}] $t_\rho=-2,t_\mu=0,$
        \item[{(ii)}] $|t_\rho|\bm{x}_\mu-\bm{x}_\la=(p_\rho,q_\rho,-t_\rho),$
        \item[{(iii)}] $S(p_\la,q_\la,t_\la)=(p_\rho,q_\rho,-t_\rho)$ 
    \end{itemlist}
    To see that this is sufficient, note that for all $j>1,$ as $\Tt(j+1)=s^2y\Tt(j),$ the triple $\Tt(j)$ is sign-alternating. Hence, property (ii)  implies that 
 \[ y\Tt(j)=(\bm{x}_\mu,I_t\bm{x}_\rho,\bm{x}_\rho).\]
 Using the fact that $t_\mu=0$ and property (iii), we get that 
 \[\Tt(j+1)=s^2y\Tt(j)=(\bm{x}_\rho,S(\bm{x}_\mu),S(I_t\bm{x}_\rho))=(I_tS\bm{x}_\la,I_tS\bm{x}_\mu,I_tS\bm{x}_\rho)\]
as desired. 

It remains to show that $\Tt(j)$ satisfies properties (i),(ii), and (iii), which we prove by induction. The base case when $j=2$ can be checked directly. Now assuming the properties hold for $\Tt(j)$, we have that $\Tt(j+1)=s^2y\Tt(j)=(\bm{x}_\rho,S(\bm{x}_\mu),S(I_t\bm{x}_\rho)).$ As $t_\mu=0,$ the $t$-value of $S(\bm{x}_\mu)$ remains zero. As $SI_t$ act trivially on $t$, we have the $t$-value of $S(I_t\bm{x}_\rho)$ remains $-2$ concluding that property (i) holds. To verify property (ii), as $2\bm{x}_\mu-\bm{x}_\la=(p_\rho,q_\rho,-t_\rho)$ applying $S$ to both sides we get
    $2S\bm{x}_\mu-S\bm{x}_\la=SI_t\bm{x}_\rho.$ Using property (iii), we have 
    $2S\bm{x}_\mu-I_t\bm{x}_\rho=SI_t\bm{x}_\rho$. 
    Multiplying both sides by $I_t$ and using that $t_\mu=0,$ we get
    \[ 2S\bm{x}_\mu-\bm{x}_\rho=S\bm{x}_\rho,\]
    which is precisely (ii) for the triple $\Tt(j+1).$
    
     Hence, the statement holds when $w$ is empty. If $w$ is a word of $x,y,s,\ov{x}, \ov{y}$ the statement follows as $ s^{3j}w\Tt_0^P=ws^{3j}\Tt_0^P$. 
   
\end{proof}
We now give the mutation sequences on $\Tt_0^H$ to obtain the perfect classes for $H$ described in Definition~\ref{def:triplesIntro}. This will be the first part of the proof of Theorem~\ref{thm:HTrip}. Note that the triple
\[R(\mathcal{B}_0) =((13,2,-5),(34,5,-13),(1,0,-3)),\]
so has $q_\rho=0$ implying that $Q_{H_b}(R(\mathcal{B}_0))$ is not defined for any $b$-value, which explains why in the proposition the classes obtained via mutation for $R(\mathcal{B}_0)$ are handled differently. See Corollary~\ref{cor:rTrip} for the precise sequences for $R(\mathcal{B}_0).$

\begin{prop}
    \label{lem:perf4H}
    Every perfect class for $H$ appears in some triple obtained from doing some combination of 
    $x,y,s,\ov{x},\ov{y}$ mutations on 
    $\Tt_{0}^H$. In particular, if $w$ is a word in $x,y$, we have
     \begin{align*}
        ws^{3j}\Tt_{0}^H&=wS^j(\Tt_{0}^H),\\
    wy\ov{x}^{n+2}s^{3j}\Tt_{0}^H&=wS^j(\mathcal{B}_{2n}), \quad \text{for $n \geq 0, j \geq 0$} \\
        wyx^ny\ov{x}s^{3j+1}\Tt_{0}^H&= wS^jR(\mathcal{B}_{2n}), \quad \text{for $n \geq 0, j \geq 1$}\\
         wyx^{n-1}y\ov{x}\ov{y}s\Tt_{0}^H&=wR(\mathcal{B}_{2n}), \quad \text{for $n \geq 1$}. 
    \end{align*}
    Lastly, the perfect classes obtained from \(wR(\mathcal B_0)\) are realized by the mutation sequences described in Corollary~\ref{cor:rTrip}.
\end{prop}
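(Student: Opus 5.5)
The plan is to reduce everything to finitely many explicit triple identities, using two commutation facts: that $s^3$ acts as the symmetry $S$, and that $s^3$ commutes with every word in $x,y,\ov{x},\ov{y},s$ (Lemma~\ref{lem:commuteS}). Since every perfect class for $H$ lies in a triple of $\T(H)$ (Definition~\ref{def:triplesIntro}), it suffices to produce each triple type (i), (ii), (iii) by a word applied to $\Tt_0^H$. Throughout, $w$ is a word in $x,y$; applying $w$ to both sides of an identity of triples preserves it, so it is enough to prove each displayed identity with $w$ empty.

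The first identity comes straight from Remark~\ref{rmk:ATFmut} and Lemma~\ref{lem:commuteS}: $s^3\Tt=S\Tt$ for any recursive triple, because three shifts apply $S$ to each entry in turn. Iterating gives $s^{3j}\Tt_0^H=S^j\Tt_0^H$. For the second identity, I would first verify $y\ov{x}^{n+2}\Tt_0^H=\mathcal{B}_{2n}$ by induction on $n$. The $\ov{x}$-mutation keeps $\bm{x}_\la=(1,1,2)$ and runs a linear recursion on the right entries with parameter $\nu_\la=\pm t_\la=\pm 2$ (sign-matching base). This gives closed forms, $\ov{x}^k\Tt_0^H=((1,1,2),(2k,1,2k-3),(2k+2,1,2k-1))$, linear in $k$, which I would check at $k=0,1$ and then propagate. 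One $y$-mutation then replaces $\bm{x}_\la$ by $\nu_\rho\bm{x}_\mu-\bm{x}_\la$, where $\nu_\rho=p_\mu q_\la-p_\la q_\mu$. Here one must verify that this yields $(2n^2+22n+\ldots)$-type middle entries matching $\mathcal{B}_{2n}$ after the leftward relabeling, which is a polynomial identity in $n$. (This was essentially already observed in \cite{M1}.) Then $y\ov{x}^{n+2}s^{3j}\Tt_0^H=S^jy\ov{x}^{n+2}\Tt_0^H=S^j\mathcal{B}_{2n}$ by commutation.

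For the $R$-families, I would use that $R$ is a reflection related to $S$. Concretely, $R=S^{-1}\circ(\text{an involution})$ up to reordering, and the reversal of entries suggests composing with $s$. I would compute $\Tt_1:=y\ov{x}s\Tt_0^H$ and check directly that $yx^ny\ov{x}s\Tt_0^H$ equals $R\mathcal{B}_{2n}$ up to $S$. More precisely, $yx^n\Tt_1$ should equal $S^{-1}$ applied to the $j=1$ target; the $x^n$ recursion is again linear in $n$, so induction plus a polynomial check suffices. Since $S^jR\mathcal{B}_{2n}=s^{3(j-1)}(S R\mathcal{B}_{2n})$, the case $j\ge1$ follows from $j=1$ by commutation. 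Similarly, for $j=0$ and $n\ge1$, I would compute $y\ov{x}\ov{y}s\Tt_0^H$ once, then run $x^{n-1}$ and a final $y$, and compare with $R\mathcal{B}_{2n}$ for each $n$ by induction.

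The main obstacle is bookkeeping signs: whether each intermediate triple is sign-matching or sign-alternating, since $s$ flips this (Proposition~\ref{prop:genTrip}) and hence the sign of the recursion parameter. I also need the $t$-coordinates, not just $(p,q)$, to match those of $\mathcal{B}_{2n}$ and $R\mathcal{B}_{2n}$ exactly. I would handle this by tracking $\eps$ explicitly through each short word and checking the closed forms, including $t$, at small $n$ before inducting. The $R\mathcal{B}_0$ case is deferred to Corollary~\ref{cor:rTrip}, since $q_\rho=0$ there.
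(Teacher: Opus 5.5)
Your strategy matches the paper's. You reduce to $w$ empty, use $s^3\Tt=S\Tt$ and Lemma~\ref{lem:commuteS} to move $s^{3j}$ past $x,y$-words, and verify the remaining base identities by running the linear recursions explicitly.

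\textbf{One computational slip must be fixed.} Your closed form for $\ov{x}^k\Tt_0^H$ is shifted by one index: at $k=1$ it returns $\Tt_0^H$, and at $k=0$ it returns $x\Tt_0^H$. The correct formula is
\[
\ov{x}^k\Tt_0^H=\bigl((1,1,2),(2k+2,1,2k-1),(2k+4,1,2k+1)\bigr).
\]
With your version, the final $y$-mutation would produce $\mathcal{B}_{2n-2}$ instead of $\mathcal{B}_{2n}$. With the correct form, that $y$ has parameter $\nu_\rho=2n+5$, and the new middle entry is $(4n^2+22n+29,2n+4,4n^2+16n+13)$, with leading term $4n^2$ rather than $2n^2$. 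Your proposed sanity check at $k=0$ would catch this.

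\textbf{Your route for the $R$-families differs slightly from the paper's but is equivalent.} You verify $yx^ny\ov{x}s\Tt_0^H=R(\mathcal{B}_{2n})$ at the $s^1$ level and then lift by $S^j$. The paper instead computes $yx^ny\ov{x}s^4\Tt_0^H=SR(\mathcal{B}_{2n})$ directly.

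Your identity does hold, including $t$-coordinates:
\begin{itemize}
\item $y\ov{x}s\Tt_0^H=((5,1,-2),(13,2,-5),(1,0,-3))$;
\item $x^n$ is then a recursion by $2$, with middle entry $(13+12n,2+2n,-(5+2n))$;
\item the final $y$ has parameter $2n+3$.
\end{itemize}
By Lemma~\ref{lem:commuteS} the two versions are interchangeable.

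The paper restricts the $s^{3j+1}$ word to $j\ge1$, and uses the separate word $yx^{n-1}y\ov{x}\ov{y}s$ for $j=0$, for geometric reasons: $y\ov{x}s\Tt_0^H$ contains $(1,0,-3)$ with $q=0$. This plays no role in the algebraic identity. Once you have the $j=0$ identity, the fourth displayed identity reduces to a single check:
\[
y\ov{x}\ov{y}s\Tt_0^H=xy\ov{x}s\Tt_0^H=((5,1,-2),(25,4,-7),(13,2,-5)).
\]
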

\begin{proof}
    Throughout this proof, we apply Lemma~\ref{lem:commuteS} implicitly using that $s^3$ commutes with any word $w$ in $x,y,\ov{x},\ov{y}$. For the first equality, note that by definition $s^3\Tt=S\Tt$ for any triple $\Tt$, so $ws^{3j}\Tt_0^H=wS^j\Tt_0^H$ as desired.

For the second equality, since the left element of $\ov{x}^k\Tt_0^H$ stays constant at $(1,1,2)$ each $\ov{x}$-mutation performs a recursion by $2$, giving 

 \[\ov{x}^{k}\T_0^H=\left((1,1,2),(2k+2,1,2k-1),(2k+4,1,2k+1)\right).\] 
 For $k \geq 2,$ we compute that $y\ov{x}^k\Tt_0^H=\mathcal{B}_{2k-4}.$

Hence using the commutativity of $s^{3j}$, \[y\ov{x}^k s^{3j} \Tt_0^H = S^j(\mathcal{B}_{2k-4}),\] and relabeling $k = n+2$ gives the second equality.

For the third equality, the strategy is to show that $yx^ny\ov{x}s^4\Tt_0^H$ and $SR(\mathcal{B}_{2n})$ agree by computing both sides in $(p,q,t)$-coordinates. Beginning with the left hand side,  we have 
\[ s^4\Tt_{0}^H=((11,2,1),(23,4,-1),(29,5,2)),\]
and applying $y\ov{x}$ gives
\[ y\ov{x}s^{4}\Tt_{0}^H=((29,5,2),(76,13,5),(6,1,3)). 
\]
Since each $x$-mutation performs a recursion of $2,$ we obtain $x^ny\ov{x}s^{4}\Tt_{0}^H$ is equal to
\begin{align*} \left((29,5,2),(76+70n,
13+12n,5+2n),(76+70(n-1),13+12(n-1),3+2n)\right).
\end{align*}
In performing the remaining $x,y$-mutations, the $t$-coordinate stays positive, so we just write the $(p,q)$-coords for brevity now. 
A further $y$-mutation gives:
\begin{align*} &yx^ny\ov{x}s^{4}\Tt_{0}^H= \\ &\left( (76+70n,13+12n),(140n^2+362n+199,24n^2+62n+34), (6+70n,1+12n)
\right).
\end{align*}
For the right hand side, note that $R$ reverses the order of the triple so $SR(\mathcal{B}_{2n})$ is equal to 
\begin{align*}
&SR\left(\left((2n+6,1),(4n^2+22n+29,2n+4),(2n+8,1)\right)\right) \\
&=S((12n+13,2n+2),(24n^2+62n+34,4n^2+10n+5),(12n+1,2n)) \\
&=
\left( (76+70n,13+12n),(140n^2+362n+199,24n^2+62n+34), (6+70n,1+12n)
\right)
\end{align*} 
matching the left hand side as desired. 
The proof of $yx^{n-1}y\ov{x}\ov{y}s\Tt_0^H=R(\mathcal{B}_{2n})$ is similar where we check both sides agree by performing the mutations. For the proof of the statement about $wR(\mathcal{B}_0)$, see Corollary~\ref{cor:rTrip} and Lemma~\ref{lem:R'andR}. This needs to be considered differently because $q_\rho=0$ in $R(\mathcal{B}_0).$

\end{proof}

We now give the details for the $x,y$-mutations of $R(\mathcal{B}_0).$ Letting
$R(\mathcal{B}_0):=(\bm{x}_0,\bm{x}_1,\bm{x}_2)$, the right element $\bm{x}_2$ of $R(\mathcal{B}_0)$ is $(1,0,-3),$ which does not correspond to a perfect class. Instead, we define \begin{align*} R'(\mathcal{B}_0):=((0,1,-3),\bm{x}_0,\bm{x}_1)=\left((0,1,-3),(13,2,-5),(34,5,-13)\right).\end{align*}

Below we verify that $R'(\mathcal{B}_0)=\ov{x}^2\ov{y}s\Tt_0^H$, and in Lemma~\ref{lem:bDefH5}, we show there is a $b$-value such that $\ov{x}^2\ov{y}sQ_{H_b}^0$ is defined. Further, the elements of $wR(\mathcal{B}_0)$ can be obtained via mutation from $R'(\mathcal{B}_0).$ 

\begin{lemma} \label{lem:R'andR}
Let $y^nR(\mathcal{B}_0):=\left((p_\la,q_\la,t_\la),(p_\mu,q_\mu,t_\mu),(1,0,-3)\right)$. Then, 
\[ \ov{x}^nR'(\mathcal{B}_0)=\left((0,1,-3),(p_\la,q_\la,t_\la),(p_\mu,q_\mu,t_\mu)\right).\]
Further
\[ y\ov{x}^nR'(\mathcal{B}_0)=xy^nR(\mathcal{B}_0).\]
Finally, the triple $R'(\mathcal{B}_0)$ is equal to $\ov{x}^2\ov{y}s\Tt_0^H.$ 

\end{lemma}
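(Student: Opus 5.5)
We prove the three claims in reverse order of difficulty. We start with the last one, then the first (by induction on $n$), and then deduce the second from the first.

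For the identity $R'(\mathcal{B}_0)=\ov{x}^2\ov{y}s\Tt_0^H$, we compute directly. We have $s\Tt_0^H=((2,1,-1),(4,1,1),S(1,1,2))=((2,1,-1),(4,1,1),(5,1,-2))$, which is sign-alternating by Proposition~\ref{prop:genTrip}. Applying $\ov{y}$ replaces the left entry by $\nu_\rho\bm{x}_\la-\bm{x}_\mu$, where $\nu_\rho=p_\mu q_\la-p_\la q_\mu$. Two applications of $\ov{x}$ then each replace the right entry by $\nu_\la\bm{x}_\rho-\bm{x}_\mu$, using the current $\nu_\la$. We check that the result is $((0,1,-3),(13,2,-5),(34,5,-13))$. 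This is routine arithmetic, using the explicit formulas of Definition~\ref{def:mutTrip} extended to the $t$-coordinate.

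For the first claim, we induct on $n$; the case $n=0$ is the definition of $R'(\mathcal{B}_0)$. The key observation is that $y^n R(\mathcal{B}_0)=(\bm{x}_\la,\bm{x}_\mu,\bm{x}_\rho)$ with $\bm{x}_\rho=(1,0,-3)$ fixed, and that
\[y^{n+1}R(\mathcal{B}_0)=(\bm{x}_\mu,\nu\bm{x}_\mu-\bm{x}_\la,\bm{x}_\rho).\]
Here the recursion parameter is $\nu=\eps t_\rho=p_\mu q_\la-p_\la q_\mu$, and it is constant, since the $y$-mutation preserves $\bm{x}_\rho$ and the sign type. Similarly, by the induction hypothesis $\ov{x}^nR'(\mathcal{B}_0)=((0,1,-3),\bm{x}_\la,\bm{x}_\mu)$. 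The next $\ov{x}$ gives $((0,1,-3),\bm{x}_\mu,\nu'\bm{x}_\mu-\bm{x}_\la)$, where
\[\nu'=p_\mu q_\la-q_\mu p_\la,\]
which is exactly $\nu$. So both sequences obey the same two-term recursion with the same parameter, and the induction closes. One point needs care: the $t$-part of the recursion uses $\nu\bm{x}_\mu-\bm{x}_\la$ in both cases, and this holds automatically because the mutation is linear in all three coordinates once the parameter is fixed.

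For the second claim, we compute both sides from the first claim. On the left, $y$ applied to $((0,1,-3),\bm{x}_\la,\bm{x}_\mu)$ gives
\[(\bm{x}_\la,\ \nu_\rho\bm{x}_\la-(0,1,-3),\ \bm{x}_\mu),\qquad \nu_\rho=p_\la\cdot 1-0\cdot q_\la=p_\la.\]
On the right, $x$ applied to $(\bm{x}_\la,\bm{x}_\mu,(1,0,-3))$ gives
\[(\bm{x}_\la,\ \nu_\la\bm{x}_\mu-(1,0,-3),\ \bm{x}_\mu),\qquad \nu_\la=1\cdot q_\mu-0\cdot p_\mu=q_\mu.\]
It therefore remains to prove the middle identity
\[p_\la\bm{x}_\la-(0,1,-3)=q_\mu\bm{x}_\mu-(1,0,-3).\]
This is the main obstacle, since it is not formal. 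We would prove it by induction along the $y$-recursion. The difference $D_n:=p_\la\bm{x}_\la-q_\mu\bm{x}_\mu+(1,-1,0)$ should vanish for $n=0$, which is checked directly from $R(\mathcal{B}_0)$ using $\bm{x}_\la=(13,2,-5)$ and $\bm{x}_\mu=(34,5,-13)$. For general $n$, we would instead use Lemma~\ref{lem:identS} applied to $y^nR(\mathcal{B}_0)$, which expresses $\bm{x}_{x\mu}=S^{-1}\bm{x}_\rho+\eps t_\mu\bm{x}_\la$ with $S^{-1}(1,0,-3)=(0,-1,3)$. We would compare this with the analogous expression for $\bm{x}_{y\mu}$ computed from the four-entry configuration $((0,1,-3),\bm{x}_\la,\bm{x}_\mu,(1,0,-3))$. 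Equivalently, we can note that $\ov{x}^nR'(\mathcal{B}_0)$ and $y^nR(\mathcal{B}_0)$ are both recursive triples sharing $\bm{x}_\la,\bm{x}_\mu$, so Lemma~\ref{lem:identS} applied to each gives two expressions for the same new vector, both equal to $S\bm{x}_\la+\eps t_\mu(\cdot)$ up to the fixed outer entry. Matching these, using $S(0,1,-3)=(-1,0,3)=-(1,0,-3)$, should yield the identity; the sign bookkeeping in $\eps$ is where the work lies.
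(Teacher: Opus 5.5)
Your first and third claims are fine and match the paper. The computation $s\Tt_0^H=((2,1,-1),(4,1,1),(5,1,-2))$, followed by $\ov{y}$ with parameter $2$ and two $\ov{x}$'s with parameter $3$, does give $((0,1,-3),(13,2,-5),(34,5,-13))$. Your induction for the first claim also closes, because the $y$-parameter of $y^nR(\mathcal{B}_0)$ and the $\ov{x}$-parameter of $\ov{x}^nR'(\mathcal{B}_0)$ are the same determinant $p_\mu q_\la-q_\mu p_\la$. Your reduction of the second claim to $p_\la\bm{x}_\la-q_\mu\bm{x}_\mu=(-1,1,0)$, and the check at $n=0$, are correct.

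\textbf{The gap is the general-$n$ case of this identity.}
\begin{itemize}
\item With coefficients $p_\la,q_\mu$ the identity is quadratic in the entries, so it does not propagate along the linear $y$-recursion.
\item Your fallback is circular. Lemma~\ref{lem:identS} gives the new middle entry of $xy^nR(\mathcal{B}_0)$ as $(0,-1,3)-t_\mu\bm{x}_\la$, and that of $y\ov{x}^nR'(\mathcal{B}_0)$ as $(-1,0,3)-t_\la\bm{x}_\mu$. These are not both of the form $S\bm{x}_\la+\eps t_\mu(\cdot)$.
\item Equating these two expressions is precisely $t_\mu\bm{x}_\la-t_\la\bm{x}_\mu=(1,-1,0)$. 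That is equivalent to the statement you are trying to prove, so ``matching these'' establishes nothing.
\end{itemize}

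\textbf{The missing idea is to trade the recursion parameters for $t$-values.} Both triples are sign-alternating recursive triples. For $R(\mathcal{B}_0)$, check this directly against Definition~\ref{def:genTrip}; for $R'(\mathcal{B}_0)$, use your third claim and Proposition~\ref{prop:genTrip}. Then condition (ii) for $y^nR(\mathcal{B}_0)$ gives $q_\mu=-t_\la$, and condition (i) for $\ov{x}^nR'(\mathcal{B}_0)$ gives $p_\la=-t_\mu$. The target becomes $t_\mu\bm{x}_\la-t_\la\bm{x}_\mu=(1,-1,0)$:
\begin{itemize}
\item its $t$-component is trivially $0$;
\item its $p$- and $q$-components, $t_\mu p_\la-t_\la p_\mu$ and $t_\la q_\mu-t_\mu q_\la$, are $2\times 2$ determinants of consecutive terms of one linear recursion, so they are independent of $n$ exactly as in Lemma~\ref{lem:recurDif};
\item your $n=0$ check then finishes the proof.
\end{itemize}
This is the paper's argument.

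If you prefer to use Lemma~\ref{lem:identS}, compare it with the defining formula of the $x$-mutation rather than with a second application of the lemma. For $y^nR(\mathcal{B}_0)$ one gets
\[-t_\la\bm{x}_\mu-(1,0,-3)=\bm{x}_{x\mu}=S^{-1}(1,0,-3)-t_\mu\bm{x}_\la=(0,-1,3)-t_\mu\bm{x}_\la,\]
which becomes your identity after substituting $p_\la=-t_\mu$ and $q_\mu=-t_\la$.
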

\begin{proof}
    The last statement is a simple computation, and note that it implies that $R'(\mathcal{B}_0)$ is a sign-alternating recursive triple by Proposition~\ref{prop:genTrip}. 
    The first statement is clear by induction as it holds for the base case. For $n \geq 1$, the statement holds as each time we perform a recursion by $3$ to the same elements. 

    For the second statement, as the triples are sign-alternating, we have 
    \[ xy^nR(\mathcal{B}_0)=((p_\la,q_\la,t_\la),(-t_\la p_\mu-1,-t_\la q_\mu,-t_\la t_\mu+3),(p_\mu,q_\mu,t_\mu))\]
    and
    \[y\ov{x}^nR'(\mathcal{B}_0)=((p_\la,q_\la,t_\la),(-t_\mu p_\la,-t_\mu q_\la-1,-t_\mu t_\la+3),(p_\mu,q_\mu,t_\mu)).\]
    It remains to check that
    \[ (-t_\la p_\mu-1,-t_\la q_\mu,-t_\la t_\mu+3)=(-t_\mu p_\la,-t_\mu q_\la-1,-t_\mu t_\la+3),\]
    or equivalently that
    \begin{align*}
        1&=t_\mu p_\la-t_\la p_\mu \\
        1&=t_\la q_\mu-t_\mu q_\la.  
    \end{align*}
    For $n=0,$ this is easily checked, and the cases of $n>0$ follow as terms on the right will remain the same under the recursion (see for example Lemma~\ref{lem:recurDif}). 
\end{proof}

\begin{cor} \label{cor:rTrip}
    The perfect classes obtained from $w'R(\mathcal{B}_0)$ where $w'$ is a word of $x,y$ are contained in the triples
    \[w\ov{x}^2\ov{y}s\Tt_0^H\] where the word $w$ satisfies: 
    if $w'=uxy^n,$ then $w=uy\ov{x}^n$ and if $w'=ux,$ then $w=uy$ where $u$ is a word in $x,y.$
\end{cor}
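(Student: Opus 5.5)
The plan is to reduce the corollary to Lemma~\ref{lem:R'andR}, which already identifies $R'(\mathcal{B}_0)=\ov{x}^2\ov{y}s\Tt_0^H$. The task is to show that every triple $w'R(\mathcal{B}_0)$, for $w'$ a word in $x,y$, has all of its perfect-class entries (that is, every entry other than the spurious $(1,0,-3)$) appearing in some triple $w\ov{x}^2\ov{y}s\Tt_0^H$, with $w$ as prescribed. The basic observation is that $(1,0,-3)$ sits in the right slot of $y^nR(\mathcal{B}_0)$ for all $n\ge 0$, because $y$-mutations never alter the right entry. So the right entry stays $(1,0,-3)$ until the first $x$-mutation, and that $x$-mutation replaces it with a genuine class.

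First handle the words of the form $y^n$. By the first statement of Lemma~\ref{lem:R'andR}, the triple $\ov{x}^nR'(\mathcal{B}_0)$ contains the left and middle entries of $y^nR(\mathcal{B}_0)$. These are exactly its perfect classes. Next treat words beginning with $x$ (reading right to left, so $w'=u x y^n$ with $n\ge 0$). The second statement of Lemma~\ref{lem:R'andR} gives
\[xy^nR(\mathcal{B}_0)=y\ov{x}^nR'(\mathcal{B}_0)=y\ov{x}^n\ov{x}^2\ov{y}s\Tt_0^H.\]
Applying $u$ to both sides then yields $w'R(\mathcal{B}_0)=uy\ov{x}^n\ov{x}^2\ov{y}s\Tt_0^H$, so in this case the triples agree exactly, not just entrywise. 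The case $w'=ux$ is the instance $n=0$, giving $w=uy$.

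Every word in $x,y$ is either a pure power $y^n$ or has the form $uxy^n$. Once an $x$ occurs, the right entry $(1,0,-3)$ has been removed, so the identity above covers all later mutations. Combining these cases proves the corollary.

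The only subtle point is bookkeeping: matching the statement's convention $w=uy\ov{x}^n$ with the composite $uy\ov{x}^{n}\ov{x}^2\ov{y}s$. This requires reading the displayed $w$ as being followed by the fixed suffix $\ov{x}^2\ov{y}s$. One must also handle the pure $y^n$ case, which the stated rule omits; there we use $w=\ov{x}^n$, via the first part of Lemma~\ref{lem:R'andR}.
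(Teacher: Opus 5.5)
Your proposal is correct and is essentially the paper's argument. The paper gives no separate proof: the corollary is an immediate consequence of Lemma~\ref{lem:R'andR}, obtained by combining $xy^nR(\mathcal{B}_0)=y\ov{x}^nR'(\mathcal{B}_0)$ with $R'(\mathcal{B}_0)=\ov{x}^2\ov{y}s\Tt_0^H$ and then applying $u$ to both sides, which is exactly what you do.

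Your separate treatment of $w'=y^n$ is fine but not needed. The two genuine entries of $y^nR(\mathcal{B}_0)$ reappear as the outer entries of $xy^nR(\mathcal{B}_0)$, which the stated rule already covers with $w=y\ov{x}^n$.
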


We now give the mutation sequences on $\Tt_0^P$ to obtain the quasi-perfect classes for $P$ described in Definition~\ref{def:triplesIntro} where here $r_k$ is defined in \eqref{eq:SPsequence}.

\begin{prop} \label{lem:perf4P} The quasi-perfect classes from $P$ in $\T(P)$ appear in some triple obtained from doing some combinations of $x,y,s,\ov{x}$ mutations on 
$\Tt_{0}^P.$   In particular, if $w$ is a word in $x,y$, we have
 \begin{align*}
        wy\ov{x}^{n+2}\Tt_{0}^P&=w\mathcal{B}_{2n+1}, \quad \text{for $n \geq 0$} \\
        wyx^ny\ov{x}\Tt_{0}^P&=wR(\mathcal{B}_{2n+1}), \quad \text{for $n \geq 0$} 
    \end{align*}
    and for $j \geq 1,$ we have 
       \begin{align*}
wr_{3j}\Tt_{0}^P&=wI_t^{j-1}S^j\Tt^P_{0} \\
        wy\ov{x}^{n+2}r_{3j}\Tt_{0}^P&=wI_t^{j-1}S^j(\mathcal{B}_{2n+1}), \quad \text{for $n \geq 0$} \\
        wyx^ny\ov{x}r_{3j}\Tt_{0}^P&=wI_t^{j-1}S^jR(\mathcal{B}_{2n+1}), \quad \text{for $n \geq 0$}.
    \end{align*}
\end{prop}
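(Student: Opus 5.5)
The plan is to mirror the proof of Proposition~\ref{lem:perf4H}. I would reduce everything to a few explicit computations on base triples, and then propagate them using that the $x,y$-mutations are determined by the triple. Since $w$ is a word in $x,y$, it suffices to prove each identity with $w$ empty. Indeed, if $u\Tt_0^P=\Tt'$ as triples in $(p,q,t)$-coordinates, then $wu\Tt_0^P=w\Tt'$ automatically.

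For the first two identities (no shifts), the left entry of $\ov{x}^k\Tt_0^P$ stays $(1,1,2)$. The sign of the triple is preserved by Proposition~\ref{prop:genTrip}, and $\Tt_0^P$ is sign-matching, so each $\ov{x}$ is a recursion by $t_\la=2$. By induction this gives
\[\ov{x}^k\Tt_0^P=\left((1,1,2),(2k+3,1,2k),(2k+5,1,2k+2)\right).\]
Applying $y$ (recursion by $t_\rho=2k+2$) gives the middle entry $(2k+2)(2k+3,1,2k)-(1,1,2)$. Setting $k=n+2$, I would check directly that the result equals $\mathcal{B}_{2n+1}$, whose entries are $(2n+7,1,2n+4)$, $((2n+1)^2+11(2n+1)+29,\,2n+5,\dots)$ and $(2n+9,1,2n+6)$. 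For $R(\mathcal{B}_{2n+1})$ I would compute $y\ov{x}\Tt_0^P$ explicitly. Then I would observe that $x^n$ acts on it by a linear recursion with fixed parameter $t_\la$, since the left entry is fixed, which gives a closed form in $n$. A final $y$-mutation should then be compared entrywise with $R(\mathcal{B}_{2n+1})$. Recall that $R$ reverses the order of the triple and applies the matrix $R$. This is a finite polynomial-in-$n$ identity check, exactly as in the third equality of Proposition~\ref{lem:perf4H}, and I need to track the $t$-signs to make sure they match and not just $(p,q)$.

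For the identities with $r_{3j}$, I would invoke Lemma~\ref{lem:commuteS}. It gives $u\,r_{3j}\Tt_0^P=I_t^{j-1}s^{3j}u\Tt_0^P$ for $u$ a word in $x,y,\ov{x},\ov{y}$. Also $s^{3j}=S^j$ acts entrywise, as in Remark~\ref{rmk:ATFmut}. Taking $u=\id$, $u=y\ov{x}^{n+2}$ and $u=yx^ny\ov{x}$, and using the shift-free identities just proved, yields
\[r_{3j}\Tt_0^P=I_t^{j-1}S^j\Tt_0^P,\qquad y\ov{x}^{n+2}r_{3j}\Tt_0^P=I_t^{j-1}S^j\mathcal{B}_{2n+1},\]
together with the analogous statement for $R$. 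Then I prepend $w$. Here one subtlety is that $w$ applied after $I_t^{j-1}$ must commute with $I_t$. I would check that $I_t$ negates all $t$'s, turning sign-matching into sign-alternating, so the recursion parameters $\eps t$ are unchanged. Hence $x$ and $y$ commute with $I_t$, and $wI_t^{j-1}S^j(\cdot)=I_t^{j-1}S^jw(\cdot)$ agrees with the stated form.

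Finally, the claim that these exhaust the classes in $\T(P)$ follows by comparing with Definition~\ref{def:triplesIntro}. For odd $n$ the seeds are $S^iR^\delta\mathcal{B}_n$ and their $x,y$-mutations, together with $S^i\Tt_0^P$, and the $I_t$ factor only swaps $(e,f)\leftrightarrow(f,e)$, giving the other quasi-perfect class with the same $(p,q)$. I expect the main obstacle to be the $R(\mathcal{B}_{2n+1})$ computation, because the $t$-signs must come out right. I would first compute the case $n=0$ by hand to fix the signs, and then do the general closed form.
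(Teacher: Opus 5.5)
Your route is the paper's: verify the two shift-free identities by direct computation along the lines of Proposition~\ref{lem:perf4H}, then transfer them to $r_{3j}$ via Lemma~\ref{lem:commuteS}. Your remark that $x,y$ commute with $I_t$ (the parameters $\nu_\la,\nu_\rho$ depend only on $(p,q)$) fills in a step the paper leaves implicit. The computation giving $\mathcal{B}_{2n+1}$ goes through exactly as you describe, including the $t$-coordinates.

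The one step that will not work as you intend is the $t$-sign check for the $R$-identity, which you single out as the main obstacle. It cannot succeed, because the identity as stated is false in the $t$-coordinate. The triple $yx^ny\ov{x}\Tt_0^P$ is reached from the sign-matching triple $\Tt_0^P$ without any $s$, and your closed form shows all of its $t$-coordinates are positive. On the other hand, $R$ negates $t$, and $\mathcal{B}_{2n+1}$ has positive $t$'s. Already for $n=0$,
\[
yy\ov{x}\Tt_0^P=\big((19,3,6),(71,11,22),(7,1,4)\big),\qquad R(\mathcal{B}_1)=\big((19,3,-6),(71,11,-22),(7,1,-4)\big).
\]
In general your computation gives $yx^ny\ov{x}\Tt_0^P=I_tR(\mathcal{B}_{2n+1})$. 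Lemma~\ref{lem:commuteS} together with $SI_t=I_tS$ then gives $wyx^ny\ov{x}r_{3j}\Tt_0^P=wI_t^{\,j}S^jR(\mathcal{B}_{2n+1})$, which has one more power of $I_t$ than stated.

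This corrected version is consistent with the rest of the paper. Lemma~\ref{lem:bDefP3} uses that $wyx^ny\ov{x}\Tt_0^P$ is sign-matching with $e_\la/f_\la>1$, and Corollary~\ref{cor:signOfT} says the $r_{3j}$-triples have $t<0$. The paper's one-line proof, like the $H$ computation it refers to, really only checks $(p,q)$-coordinates. So you should prove the $R$-identities on $(p,q)$-coordinates (equivalently, with the extra $I_t$), which is all that Theorem~\ref{thm:PTrip} needs. At the level of classes, the extra $I_t$ just replaces $(e,f)$ by $(f,e)$, as you already note in your final paragraph.
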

\begin{proof}
We begin with the statement where we applied no shift. Verifying that $y\ov{x}^{n+2}\T_0^P=\mathcal{B}_{2n+1}$ and $yx^ny\ov{x}\T_0^P=R(\mathcal{B}_{2n+1})$ for $n \geq 0$ is similar to the computations in Proposition~\ref{lem:perf4H}. Now, for the cases where $j \geq 1$, these follow from the cases with no shift and Lemma~\ref{lem:commuteS}, which implies that $wr_{3j}\Tt_0^P=I_t^{j-1}ws^{3j}\Tt_0^P$. 

\end{proof}

Our goal is to show that the mutation sequences in Proposition~\ref{lem:perf4H} and Proposition~\ref{lem:perf4P} are also defined on the ATF level. To do this, we require certain numerical conditions about these triples. We finish this section by giving these results. 

\begin{definition}
    Here, we let $w,u,v$ denote (possibly empty) words in $x,y,\ov{x},\ov{y},s,\ov{s}$. Define the set
    \[ \text{suffix}(w):=\{v \ | \ uv=w\}.\]
    
    We now restrict $w$ to be a (possibly empty) word in $x,y$, then define the sets \[\mathcal{W}_H=\text{suffix}\{ wy\ov{x}^{n+2}s^{3j},wyx^ny\ov{x}s^{3j+1},wyx^{n}y\ov{x}\ov{y}s,wy\ov{x}^{n+2}\ov{y}s\}\]
   
    and 
    \[ \mathcal{W}_P=\text{suffix}\{r_{3j},wy\ov{x}^{n+2}r_{3j},wyx^ny\ov{x}r_{3j}\}.\]
    Additionally, we define the corresponding set of triples
    \[ \mathcal{T}_H=\{ w\T_0^H \ | \ w \in \mathcal{W}_H\}\]
    and
    \[ \mathcal{T}_P=\{ w\T_0^P \ | \ w \in \mathcal{W}_P\}.\]

\end{definition}

\begin{cor}\label{cor:Positive}
Each triple in $\mathcal{T}_H$ and $\mathcal{T}_P$ is positive. 
\end{cor}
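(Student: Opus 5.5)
Every triple in $\mathcal{T}_H$ and $\mathcal{T}_P$ arises from $\Tt_0^H$ or $\Tt_0^P$ by a suffix of one of the listed words, so I plan to split each word into a short \emph{prefix} part, checked by direct computation, and a long \emph{tail}, handled by appealing to \cite{MMW,MPW}. Positivity means $p,q,d>0$ and $m\ge 0$ for $H$, and $p,q,e,f>0$ for $P$. By Lemma~\ref{lem:AreClasses} and \eqref{eq:dmefIntro}, $(p,q,t)\in\X$ with $p,q>0$ gives $3d-m=p+q$ and $d^2-m^2=pq-1$, and similarly for $P$. So positivity reduces to two checks: $p,q>0$, and the sign or size condition on $t$. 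Concretely, for $H$ I need $|t|<3(p+q)$ together with $p+q+3t\ge 0$; for $P$ I need $|t|<p+q$.

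\textbf{Step 1 (tails).} Suffixes of the form $wy\ov{x}^{n+2}s^{3j}$ and $wyx^ny\ov{x}s^{3j+1}$ produce, by Proposition~\ref{lem:perf4H}, exactly the triples $wS^j\mathcal{B}_{2n}$ and $wS^jR\mathcal{B}_{2n}$. These lie in $\T(H)$, whose entries are perfect classes by \cite{MMW}, hence positive. The same applies on the $P$ side to $wI_t^{j-1}S^j(\cdot)$ via Proposition~\ref{lem:perf4P} and \cite{MPW}. The factor $I_t$ only swaps $e$ and $f$, so it preserves positivity. The word $wyx^ny\ov{x}\ov{y}s$ gives $wR(\mathcal{B}_{2n})$ for $n\ge1$, which is again covered. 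For $wy\ov{x}^{n+2}\ov{y}s$, Lemma~\ref{lem:R'andR} and Corollary~\ref{cor:rTrip} show that the entries are entries of $w'R(\mathcal{B}_0)$ except for $(1,0,-3)$, which is removed, together with $(0,1,-3)$. That last class has $p=0$, and I would check it against the intended convention (nonnegativity of $p$); this is a point to watch.

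\textbf{Step 2 (prefixes).} The remaining suffixes are the intermediate words: $s^i$ for $i\le 3j$; $\ov{x}^k s^{3j}$ for $k\le n+2$; $\ov{x}s^{3j+1}$ and $y\ov{x}s^{3j+1}$; $x^k y\ov{x}s^{3j+1}$; $\ov{y}s$, $\ov{x}\ov{y}s$, and so on; and the $r_i$ for $P$. For $s^i\Tt_0^\bullet$ the entries are the $S^i\Tt_0^\bullet$ classes of Definition~\ref{def:triplesIntro}(i), possibly with $t$ negated, and these are perfect. For $\ov{x}^k s^{3j}\Tt_0^H$, Lemma~\ref{lem:commuteS} reduces to $S^j\ov{x}^k\Tt_0^H$, where $\ov{x}^k\Tt_0^H$ is explicit: its entries are $(1,1,2)$, $(2k+2,1,2k-1)$ and $(2k+4,1,2k+1)$, each clearly positive. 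Positivity is preserved by $S$ (Lemma~\ref{lem:symTriple}). To see this I would check $3p-d>0$ and $p-m\ge0$ using $d<3p$ and $m\le p$, both of which follow from $p\ge q$ for these entries. The one-step cases after $s^{3j+1}$ are finitely many explicit linear families in $j$. I would handle them the same way: commute $S^j$ out, verify the base triple by hand (for example $y\ov{x}s^4\Tt_0^H$ is displayed in the proof of Proposition~\ref{lem:perf4H}), and apply Lemma~\ref{lem:symTriple}. For $x^k$ following these, the recursion parameter is $2$. As the proof of Proposition~\ref{lem:perf4H} shows, $p,q,t$ then grow linearly with $t>0$, so positivity is immediate.

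The main obstacle is Step 2's claim that $S$ preserves positivity, i.e.\ that $3p-d>0$ and $p\ge m$, uniformly in all the families. It holds when $p/q\ge1$, and every class involved satisfies this by Lemma~\ref{lem:pOverq6} and the explicit formulas. I would prove it once as a small auxiliary inequality. I would then handle the degenerate entries with $p=0$ or $q=0$ separately.
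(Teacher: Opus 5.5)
Your strategy is the one the paper's one-line proof (``follows from the computations in Propositions~\ref{lem:perf4H} and~\ref{lem:perf4P}'') implicitly uses. Your auxiliary inequality is correct: from $t^2=p^2-6pq+q^2+8$ one gets $8(p-m)=7p-q-3t\ge 0$ and $8(3p-d)=21p-3q-t>0$ when $p\ge q\ge 1$, so $S$ preserves positivity there (Lemma~\ref{lem:symTriple}).

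The genuine gap is the case you deferred, and it is not a matter of convention. The statement fails there, so no separate treatment can close it. Since $\mathcal{W}_H$ contains every suffix of $wy\ov{x}^{n+2}\ov{y}s$, it contains $\ov{x}^k\ov{y}s$ for $0\le k\le n+2$, and
\[
\ov{y}s\Tt_0^H=\bigl((0,1,-3),(2,1,-1),(5,1,-2)\bigr),\qquad
\ov{x}\ov{y}s\Tt_0^H=\bigl((0,1,-3),(5,1,-2),(13,2,-5)\bigr).
\]
In general the left entry of $\ov{x}^k\ov{y}s\Tt_0^H$ stays $(0,1,-3)$; for example $\ov{x}^2\ov{y}s\Tt_0^H=R'(\mathcal{B}_0)$. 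By \eqref{eq:dmefIntro} this entry has $(d;m)=(0;-1)$. So it violates $d>0$ and $m\ge 0$, not only $p>0$, and your own criterion $p+q+3t\ge 0$ already fails ($1-9<0$). Relaxing to $p\ge 0$ therefore cannot rescue it. The paper's proof overlooks this as well: the proof of Lemma~\ref{lem:bDefH5} states explicitly that $\ov{x}^{n}\ov{y}s\Tt_0^H$ is not a positive triple. Note also that $(0,1,-3)$ occurs only in these intermediate triples. The subsequent $y$-mutation removes it, so the triples for the full words $wy\ov{x}^{n+2}\ov{y}s$ are fine, contrary to the phrasing of your Step~1.

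There is a second, convention-dependent failure on the $P$ side. Your criterion $|t|<p+q$ detects it, but your inference ``perfect, hence positive'' skips it. $\Tt_0^P$ (the empty suffix) and $\ov{x}^k\Tt_0^P$ (suffixes of $wy\ov{x}^{n+2}r_0$) contain $(1,1,2)$. This entry has $(e,f)=(1,0)$, so it is not positive under the stated requirement $e,f>0$. Similarly, if $j=0$ is allowed in $wyx^ny\ov{x}s^{3j+1}$, then $\ov{x}s\Tt_0^H$ and $y\ov{x}s\Tt_0^H$ contain $(1,0,-3)$, which has $q=0$.

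What your argument can actually establish is a restricted statement:
\begin{itemize}
\item positivity of every triple in $\mathcal{T}_H$ except the intermediate triples $\ov{x}^k\ov{y}s\Tt_0^H$, taking $j\ge 1$ in the $s^{3j+1}$ family;
\item positivity of every triple in $\mathcal{T}_P$, once $f=0$ is permitted (as $m=0$ is for $H$) or once $\ov{x}^k\Tt_0^P$ is excluded.
\end{itemize}
For those triples your plan goes through. The excluded triples must be handled by direct computation wherever Corollary~\ref{cor:alOrdered} is invoked, which is what Lemma~\ref{lem:bDefH5} does.
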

\begin{proof}
    This follows from the computations in Proposition~\ref{lem:perf4H} and Proposition~\ref{lem:perf4P}.
\end{proof}

\begin{cor} \label{cor:triples}
    The triples \begin{align*} wy\ov{x}^{n+2}r_{3j}&\Tt_0^P, \quad wyx^ny\ov{x}r_{3j}\Tt_0^P,\\
wy\ov{x}^{n+2}s^{3j}&\Tt_0^H,\quad wyx^ny\ov{x}s^{3j+1}\Tt_0^H, \quad wyx^{n-1}y\ov{x}\ov{y}s\Tt_0^H
    \end{align*}
    where $w$ is a (possibly empty) word in $x,y$ denoted $(\bm{x}_\la,\bm{x}_\mu,\bm{x}_\rho)$ satisfy the following properties: 
    \begin{itemlist}
        \item[{(i)}] For $\bullet=\la,\rho$, 
$(0,0,0)<(p_\bullet,q_\bullet,|t_\bullet|)<(p_\mu,q_\mu,|t_\mu|).$
        \item[{(ii)}] $|t_\la|,|t_\mu|,|t_\rho|\geq 2$
        \item[{(iii)}] $p_\la/q_\la < p_\mu/q_\mu <p_\rho/q_\rho$
        \item[{(iv)}] All $t$-values are positive if the triple is sign-matching and all $t$-values are negative if the triple is sign-alternating.
    \end{itemlist}
\end{cor}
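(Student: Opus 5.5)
The plan is to prove the four properties first for the ``seed'' triples and then show that each property is preserved by $x$- and $y$-mutations. The induction is over the length of the word $w$ in $x,y$, and it is legitimate because Proposition~\ref{prop:genTrip} says $x,y$ preserve recursive triples and their sign type. By Propositions~\ref{lem:perf4H} and \ref{lem:perf4P}, the seeds are the triples
\[
S^j(\mathcal{B}_{2n}),\quad S^jR(\mathcal{B}_{2n}),\quad R(\mathcal{B}_{2n})\ (n\ge1),\quad I_t^{j-1}S^j(\mathcal{B}_{2n+1}),\quad I_t^{j-1}S^jR(\mathcal{B}_{2n+1}),
\]
together with $\mathcal{B}_{2n+1}$ and $R(\mathcal{B}_{2n+1})$.

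For the base cases I would compute directly. Write $\mathcal{B}_n=((n+6,1,n+3),(n^2+11n+29,n+4,n^2+8n+13),(n+8,1,n+5))$. For this triple, (i)--(iv) are immediate: all $t$-values are positive and at least $3$; the middle entry dominates coordinatewise; and the ratios satisfy $n+6<\tfrac{n^2+11n+29}{n+4}<n+8$. The triple is sign-matching since $p_\mu q_\la-p_\la q_\mu=n+5=t_\rho$.

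For $R(\mathcal{B}_n)$ I would use the explicit form computed in the proof of Proposition~\ref{lem:perf4H}. $R$ reverses the order and acts linearly, and $R$ negates $t$, so $R(\mathcal{B}_n)$ is sign-alternating with all $t<0$. The excluded case $n=0$ has $q_\rho=0$, and the odd $n$ used for $P$ are fine. Applying $S$ preserves the coordinatewise dominance and the ratio order, since $S$ acts on ratios by the increasing map $z\mapsto 6-1/z$ for $z>0$. $S$ also flips the signs of the $t$-values while $s^3$ flips the sign type (Proposition~\ref{prop:genTrip}(ii) applied three times), so (iv) is preserved. The $I_t^{j-1}$ factor does not change $|t|$, but (iv) requires checking that the sign bookkeeping still matches the sign type. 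I would verify this from the explicit list $r_{3j}\Tt_0^P$ in Lemma~\ref{lem:commuteS}, where $t_\mu=0$ and $t_\rho=-2$ are used.

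For the inductive step, take $\Tt$ satisfying (i)--(iv). Let $\eps$ be its sign, and set $\nu=\eps t_\rho=|t_\rho|\ge2$. Then
\[
y\Tt=(\bm{x}_\mu,\ \nu\bm{x}_\mu-\bm{x}_\la,\ \bm{x}_\rho).
\]
Because $\nu\ge2$ and $\bm{x}_\la<\bm{x}_\mu$ coordinatewise, the new middle satisfies $\nu\bm{x}_\mu-\bm{x}_\la>\bm{x}_\mu$ in $p$ and $q$. The same holds in $|t|$, because $t$-values share a sign by (iv) and obey the same recursion; this also keeps the new $t$ of the correct sign with $|t|\ge2$. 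The new middle dominates $\bm{x}_\rho$ as well: the $p$ and $q$ coordinates grow under $y$, and it dominates $\bm{x}_\mu$, which dominated $\bm{x}_\rho$. This gives (i), (ii) and (iv). For (iii), the determinant $p_{y\mu}q_\mu-p_\mu q_{y\mu}=p_\mu q_\la-p_\la q_\mu>0$ gives $p_\mu/q_\mu<p_{y\mu}/q_{y\mu}$. The computation in the proof of Proposition~\ref{prop:genTrip} gives $p_\rho q_{y\mu}-p_{y\mu}q_\rho=\eps t_\mu=|t_\mu|>0$. Together these yield the strict ratio order. The $x$-mutation is symmetric, using $\nu_\la=|t_\la|$.

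The main obstacle I expect is the base-case sign bookkeeping for the $P$ triples twisted by $I_t^{j-1}$, and for $S^jR(\mathcal{B}_{2n})$ with small $n,j$. There the coordinatewise inequality in (i) and the bound $|t|\ge2$ must be checked by hand for the first few cases; $R(\mathcal{B}_0)$ is excluded and the $n=0$ boundary is delicate. For these I would fall back on the explicit closed forms given in the proofs of Propositions~\ref{lem:perf4H} and \ref{lem:perf4P}.
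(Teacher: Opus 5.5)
Your proposal is correct and follows the paper's proof. The base cases are read off from the explicit triples in Propositions~\ref{lem:perf4H} and~\ref{lem:perf4P}, and induction over $x,y$-mutations shows that the new middle entry dominates the other two coordinatewise and sits between them in ratio, by the determinant identities in the proof of Proposition~\ref{prop:genTrip}.

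One small correction. Monotonicity of $z\mapsto 6-1/z$ on the entries' ratios gives only the ratio order (iii) for $S$-images, not the coordinatewise dominance in (i). For (i), apply the same map to the difference vectors $\bm{x}_\mu-\bm{x}_\bullet$: their ratios $\Delta p/\Delta q$ start well above $3-2\sqrt{2}>1/6$ and stay above it under iteration, so $6\Delta p-\Delta q>0$ persists for every power of $S$. Alternatively, check the closed forms directly, as you already suggest.
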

\begin{proof}
    The statement where $w$ is empty and $j=0$ follows from the computations in Proposition~\ref{lem:perf4H}~and~\ref{lem:perf4P}. For the statement where $w$ is empty and $j \neq 0,$ the statement holds by the equalities in Proposition~\ref{lem:perf4H}~and~\ref{lem:perf4P} considering how $S,R,I_t$ changes the sign of $t$ and act on the $p,q$ coordinates. 

    The claim now follows by induction for non-empty words $w.$ Assuming that the statement holds for a triple $\Tt,$ we show it holds for $x\Tt$. Assume that $\Tt$ is sign-matching. Then, we have $t_\la=p_\rho q_\mu-p_\mu q_\rho>0.$
    We have $x\Tt=(\bm{x}_\la,\bm{x}_{x\mu},\bm{x}_\mu)$ where $\bm{x}_{x\mu}=t_\la\bm{x}_{\mu}-\bm{x}_\rho.$ Hence as the claim holds for $\Tt$ we conclude that, \[(0,0,0)<(p_\bullet,q_\bullet,t_\bullet)<\bm{x}_{x\mu}=(t_\la p_{\mu}-p_\rho,t_\la q_\mu-q_\rho,t_\la t_\mu-t_\rho)\] for $\bullet=\mu,\la$. Now, by Lemma~\ref{lem:recurDif}, the recursion implies that $p_{x\mu}/q_{x\mu}<p_\mu/q_\mu<p_\rho/q_\rho.$ To see that $p_\la/q_\la<p_{x\mu}/q_{x\mu},$ note that this holds if $p_{x\mu}q_\la-p_\la q_{x\mu}=t_\mu>0$, which was assumed. A similar proof holds for a $y$-mutation and the case where $\Tt$ is sign-alternating. 
\end{proof}

\begin{cor} \label{cor:signOfT}
    Let $w \in \mathcal{W}_H$ where $w=w's^j$ with $w'$ nonempty and let $w\Tt_0^H=(\bm{x}_\la,\bm{x}_\mu,\bm{x}_\rho).$ Then $t_\la,t_\mu,t_\rho>0$ if $j$ is even and $t_\la,t_\mu,t_\rho<0$ if $j$ is odd. Let $w \in \mathcal{W}_P$ where $w=w'r_{3j}$ with $w'$ nonempty and let $w\Tt_0^P=(\bm{x}_\la,\bm{x}_\mu,\bm{x}_\rho).$ Then $t_\la,t_\mu,t_\rho<0$ if $j \geq 1$ and $t_\la,t_\mu,t_\rho>0$ if $j=0.$
\end{cor}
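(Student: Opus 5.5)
The plan is to reduce to Corollary~\ref{cor:triples}~(iv) together with a sign count along the word. By Proposition~\ref{prop:genTrip}, the sign type of a triple (sign-matching or sign-alternating) is preserved by $x,y,\ov{x},\ov{y}$ and flipped by each $s$ or $\ov{s}$. The base triples $\Tt_0^H$ and $\Tt_0^P$ are sign-matching; for $\Tt_0^H$ this is checked directly from Example~\ref{ex:differentNotation}, where $t_\la t_\rho - t_\mu = p_\rho q_\la - p_\la q_\rho$ holds with $\eps=1$. Hence $w's^j\Tt_0^H$ is sign-matching exactly when $j$ is even.

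For $H$, I would write $w = w's^j \in \mathcal{W}_H$ with $w'$ nonempty. Up to taking suffixes, such a word has one of the forms $uy\ov{x}^{n+2}s^{3j'}$, $uyx^ny\ov{x}s^{3j'+1}$, $uyx^{n}y\ov{x}\ov{y}s$ or $uy\ov{x}^{n+2}\ov{y}s$. For the full words listed in Corollary~\ref{cor:triples}, part (iv) of that corollary gives that all $t$-values have a common sign: positive if the triple is sign-matching, negative if it is sign-alternating. Combined with the parity count above, this yields the claim for those words.

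The remaining cases are the proper suffixes. These are words $v\,s^j$ where $v$ is a proper suffix of $y\ov{x}^{n+2}$, $yx^ny\ov{x}$, $yx^ny\ov{x}\ov{y}$ or $y\ov{x}^{n+2}\ov{y}$, such as $\ov{x}^k s^{3j'}$, $\ov{x}s^{3j'+1}$ and $x^ny\ov{x}s^{3j'+1}$. I would handle them by direct computation, using that $s^3$ acts as $S$ and commutes with $x,y,\ov{x},\ov{y}$ by Lemma~\ref{lem:commuteS}. Since $S$ negates $t$, it suffices to treat $j'=0$, or the single base triple $s^4\Tt_0^H$.

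For example, $\ov{x}^k\Tt_0^H = ((1,1,2),(2k+2,1,2k-1),(2k+4,1,2k+1))$ has all $t>0$ for $k\ge1$. Similarly, $\ov{x}s^4\Tt_0^H$ and $x^ny\ov{x}s^4\Tt_0^H$ can be read off from the explicit formulas in the proof of Proposition~\ref{lem:perf4H}, and their $t$-values are all positive. Applying one further $\ov{s}$ (that is, passing to $s^{3j'+1}$ with $j'$ reduced by one) negates $t$ consistently with the parity.

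The words ending in $\ov{y}s$ need separate treatment. Here one checks directly that $s\Tt_0^H$, $\ov{y}s\Tt_0^H$, $\ov{x}\ov{y}s\Tt_0^H$ and $\ov{x}^2\ov{y}s\Tt_0^H = R'(\mathcal{B}_0)$ all have $t<0$; for $R'(\mathcal{B}_0)$ the $t$-values are $-3,-5,-13$. Further $\ov{x}$ or $x$ steps then keep the signs by an induction as in Corollary~\ref{cor:triples}: the recursion $\bm{x}_{x\mu} = \eps t_\la \bm{x}_\mu - \bm{x}_\rho$ has $|t_{x\mu}| > |t_\mu|$ with the same sign once $|t|\ge 2$.

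For $P$ the argument is the same. By Lemma~\ref{lem:commuteS}, $w'r_{3j}\Tt_0^P = I_t^{j-1}s^{3j}w'\Tt_0^P$. Each $s^3$ contributes a factor $S$, which negates $t$, and each $I_t$ negates $t$ again. The net effect is $(-1)^{j}(-1)^{j-1} = -1$ for every $j\ge1$, while for $j=0$ the triple $w'\Tt_0^P$ is positive and sign-matching by Corollary~\ref{cor:triples}.

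I expect the main obstacle to be the bookkeeping for proper suffixes that are not covered by Corollary~\ref{cor:triples}. Some early intermediate triples, such as $s\Tt_0^H$ itself, contain small or mixed $t$-values, so I would check these few base cases by hand before running the induction.
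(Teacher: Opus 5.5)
Your proposal is correct and takes essentially the paper's route. The base triples come from the explicit computations behind Propositions~\ref{lem:perf4H} and~\ref{lem:perf4P}, with $s^3=S$ and Lemma~\ref{lem:commuteS} absorbing the shifts and $I_t$ factors, and the $x,y$-tails follow from Corollary~\ref{cor:triples}~(iv) together with the sign-type parity from Proposition~\ref{prop:genTrip}. Two small slips: $s\Tt_0^H=((2,1,-1),(4,1,1),(5,1,-2))$ does not have all $t<0$ (harmless, since $w'$ must be nonempty), and in the $\ov{y}s$ branch your claim that $|t|$ grows once $|t|\ge 2$ also needs the entry being scaled to dominate in $|t|$ the entry being subtracted, as in the induction of Corollary~\ref{cor:triples}.
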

\begin{proof}
    This follows from the computation of each of the triples in the proof of Proposition~\ref{lem:perf4H} and Proposition~\ref{lem:perf4P}. Then once we take combinations of $x,y$-mutations this follows by Corollary~\ref{cor:triples}~(iv). 
\end{proof}

\subsection{There is a \texorpdfstring{$b$}{b}-value}\label{ss:bValue}

In this section, we show that for the words $w \in \mathcal{W}_H,\mathcal{W}_P$, there is an interval of $b$-values such that $wQ_{H_b}^0$ (resp. $wQ_{P_b}^0$) is well-defined. The proof of this is given in a case-by-case setting. Before doing each case separately, we give the proofs of both Theorem~\ref{thm:HTrip} and Theorem~\ref{thm:PTrip} deferring the more technical lemmas after the proofs. 

\begin{proof}[Proof of Theorem \ref{thm:HTrip}]
In Proposition~\ref{lem:perf4H}, we completed the first statement of the theorem by directly giving the words $w$. For each word $w$ given in Proposition~\ref{lem:perf4H}, we then show in Lemma~\ref{lem:sMutDefinedH}, Lemma~\ref{lem:bDefH1}, Lemma~\ref{lem:bDefH2}, Lemma~\ref{lem:bDefH3}, Lemma~\ref{lem:bDefH4}, and Lemma~\ref{lem:bDefH5} that there is an interval of $b$-values such that $wQ_{H_b}^0$ is well-defined with one endpoint given by $m/d.$
\end{proof}

\begin{proof}[Proof of Theorem \ref{thm:PTrip}]
In Proposition~\ref{lem:perf4P}, we completed the first statement of the theorem by directly giving the words $w.$ For each word $w$ given in Proposition~\ref{lem:perf4P}, we then show in Lemma~\ref{lem:alPSordering2}, Lemma~\ref{lem:bDefP1}, Lemma~\ref{lem:poly1Q}, Lemma~\ref{lem:bDefP3}, and Lemma~\ref{lem:bDefP4} that there is an interval of $b$-values such that $wQ_{P_b}^0$ is well-defined with one endpoint given by either $e/f$ or $f/e.$
\end{proof}

We first give numerical conditions on the triple to check when $wQ_{H_b}(\Tt)$ is well-defined for a particular $b$-value. 
\begin{lemma}\label{lem:definedH}
    Let $\Tt=(\bm{x}_\la,\bm{x}_\mu,\bm{x}_\rho)\in \X_H^3$ be a recursive triple where $\eps=1$ if $\Tt$ is sign-matching and otherwise, $\eps=-1$. Assume $Q(\Tt)$ is well-defined for some $b$-value. Then
    \begin{itemlist}
        \item[{(i)}] 
        If \(q_{y\mu}>0\) and $d_\mu>0$, then \(sQ(\Tt)\) is well-defined precisely when
\[
        \eps\frac{m_\mu}{d_\mu}<\eps b,
\]
and \(yQ(\Tt)\) is well-defined precisely when
\[
        \eps\frac{m_\mu}{d_\mu}>\eps b.
\]

        \item[{(ii)}] $xQ(\Tt)$ is well-defined if $p_\rho/q_\rho>6$. 
        \item[{(iii)}] Let $S^{-1}(\bm{x}_\mu):=\bm{x}=(p,q,-t_\mu) \in\X_H$ with solutions $(d;m)$ to \eqref{eq:dmefIntro}. Assume that $d>0,m\geq 0, \eps t_\mu>0,$ and $q_{x\mu}>0$. Under these assumptions, $xQ(\Tt)$ is well-defined if
        \[ \eps b>\eps 1/3.\] 
  \item[{(iv)}] $\ovx Q(\Tt)$ is well-defined if $q_{\ovy\la}<0$, or equivalently $p_{\ov{x}\rho}-6q_{\ov{x}\rho}<0.$
        \item[{(v)}] $\ovy Q(\Tt)$ is well-defined if $q_{\ovx\rho}<0$, or equivalently $p_{\ovy\la}>0$
    \end{itemlist}
\end{lemma}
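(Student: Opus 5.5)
The plan is to reduce each item to a sign computation on affine lengths of the mutated quadrilateral, using Proposition~\ref{prop:ATFMut} together with the length formulas of Lemmas~\ref{lem:yLengths}, \ref{lem:xLengths}, \ref{lem:sLengths} and the direction data of Lemma~\ref{lem:standFrom}. A mutation at a vertex is well-defined exactly when the nodal ray meets the interior of a prescribed side. Equivalently, all four affine side lengths of the resulting candidate quadrilateral $Q(w\Tt)$ are positive, and its nodal rays point inward; by Lemma~\ref{lem:nodalPos} the latter reduces to positivity of the new $q$-values. So in each case I will identify which single length changes sign at the bifurcation and check that the others stay positive because $Q(\Tt)$ is well-defined.

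For (i), the ray $\vn_Y$ meets $OX$ or $XV$, and the deciding quantity is the length $|X_yV_y|$ computed in Lemma~\ref{lem:yLengths}:
\[
|X_yV_y|=\eps\frac{m_\mu-d_\mu b}{q_\rho q_{y\mu}} .
\]
Since $q_\rho>0$ (because $Q(\Tt)$ is well-defined) and $q_{y\mu}>0$ by hypothesis, this length is positive iff $\eps m_\mu/d_\mu>\eps b$, using $d_\mu>0$. That gives the $y$ case. Correspondingly, Lemma~\ref{lem:sLengths} gives
\[
|X_sV_s|=-\eps\frac{m_\mu-d_\mu b}{p_\la q_\rho},
\]
which has the opposite sign. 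I still need $p_\la>0$; it should follow from the intersection point lying on $OX$, since $|OX_s|=|OY|q_\la/p_\la$ must be positive. The remaining lengths $|OY_y|=|OY|+|VY|$, $|OX_y|=|OX|$, $|V_sY_s|=|XV|$ and so on are sums or copies of positive lengths of $Q(\Tt)$.

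For (ii), $\vn_X=(p_\rho-6q_\rho,q_\rho)$ by Lemma~\ref{lem:standFrom}. If $p_\rho>6q_\rho$, this ray points into the open first quadrant. It therefore cannot exit through $OY$ (the half-plane $x\ge0$ excluded at $x=0$ until it crosses $VY$), so it must hit $VY$, which is the $x$-mutation. For (iii), the $x$-versus-$\ov s$ dichotomy is governed by the sign of $|V_xY_x|$ from Lemma~\ref{lem:xLengths}, namely $\eps(m'_\mu-d'_\mu b)/(q_\la q_{x\mu})$. Using Lemma~\ref{lem:symTriple}, $S^{-1}\bm{x}_\mu$ has coordinates $(d,m)$ with $d'_\mu=-d$ and $m'_\mu=-m$ (this is the analogue of the computation $m_s'=-m_\la$). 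The condition then becomes $\eps(db-m)>0$. I then plan to show $m/d<1/3$ or $>1/3$ according to the sign of $-t_\mu$ via Lemma~\ref{lem:md13}, so that $\eps b>\eps/3$ suffices; this sign bookkeeping, including checking the hypothesis $p\ge q$ needed for Lemma~\ref{lem:md13}, is the step I expect to be most delicate.

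For (iv) and (v), I will use the inverse relations: $\ov{x}Q$ is well-defined iff $\vn_V$ meets $OX$, which is determined by the slope of $\vn_V$. By Lemma~\ref{lem:standFrom} applied to the inverse-mutated triple, the new ray at $X$ of $\ov xQ$ is $(p_{\ov x\rho}-6q_{\ov x\rho},q_{\ov x\rho})$, up to the orientation reversal, and a negative first coordinate means the original $\vn_V$ points downward-left into $OX$. The equivalence with $q_{\ovy\la}<0$ is the identity $S\bm{x}_{\ovy\la}=-\bm{x}_{\ovx\rho}$ from Lemma~\ref{lem:identS}, whose second coordinate reads $p_{\ovy\la}=-q_{\ovx\rho}$ and whose first reads $6p_{\ovy\la}-q_{\ovy\la}=-p_{\ovx\rho}$. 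Together these give $q_{\ovy\la}=p_{\ovx\rho}-6q_{\ovx\rho}$. Item (v) is the symmetric statement at $Y$ for $\ov y$, using the same identity.
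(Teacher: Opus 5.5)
\textbf{Where the proposal agrees with the paper.} Items (ii) and (iii) follow the paper. In (ii) you use the positive slope of $\vn_X$. In (iii) you use the sign of $|V_xY_x|$, $m'_\mu=-m$, $d'_\mu=-d$, and Lemma~\ref{lem:md13}; that lemma's proof uses only $d>0$, so the $p\ge q$ hypothesis you worry about is not needed. In (i), your deciding quantity $|X_yV_y|$ is exactly the paper's $c$.

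\textbf{The gap in (iv)--(v).} The argument there is wrong as written, for three reasons.
\begin{itemize}
\item You read $\vn_V$ off Lemma~\ref{lem:standFrom} applied to $\ov{x}Q$, which presupposes that $\ov{x}Q$ is well-defined. Use instead $\vn_V=M_{p_\la,q_\la}^{-1}(q_\mu,-p_\mu)=(q_\mu-\nu_\rho q_\la,\ \nu_\rho p_\la-p_\mu)=(-q_{\ovy\la},p_{\ovy\la})$. This holds for $Q(\Tt)$ itself, and Lemma~\ref{lem:identS} rewrites it as $-(p_{\ovx\rho}-6q_{\ovx\rho},q_{\ovx\rho})$.
\item Your sign is reversed. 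If $q_{\ovy\la}<0$, the first coordinate of $\vn_V$ is positive, so the ray points to the \emph{right}, not down-left.
\item ``Down-left, hence into $OX$'' is not a valid inference in any case. In $Q^0_{P_b}$ one has $\vn_V=(-1,-1)$, which meets $OX$ for $b>1$ and $OY$ for $b<1$.
\end{itemize}
The missing idea is the exit-side argument. A ray strictly inside the cone at $V$ must leave the convex quadrilateral through $OX$ or $OY$. Since $V$ lies in the open first quadrant, a ray with positive $x$-component can never reach $OY\subset\{x=0\}$, so it exits through $OX$. A ray with positive $y$-component, which is the case $p_{\ovy\la}=-q_{\ovx\rho}>0$ of (v), can never reach $OX\subset\{y=0\}$.

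\textbf{The same issue in (i).} The exit-side argument is also what (i) needs, and your replacement for it, ``well-defined iff the four candidate lengths are positive,'' is not justified.
\begin{itemize}
\item Lemmas~\ref{lem:yLengths} and~\ref{lem:sLengths} are derived assuming the corresponding mutation is defined. The converse direction is exactly what is at stake here.
\item $|V_yY_y|=|XV|-|X_yV_y|$ is a difference, not a sum or a copy, so its positivity is not automatic.
\item Deducing $p_\la>0$ from ``the intersection point lies on $OX$'' is circular when that is what you are trying to prove.
\end{itemize}
The paper's route is the direct one. The ray $\vn_Y$ exits through the open segment $OX$, the open segment $XV$, or the vertex $X$. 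The parameter $c$ at which its line meets line $XV$ equals $\eps(m_\mu-d_\mu b)/(q_\rho q_{y\mu})$. The denominator there is $q_{y\mu}=\det(\vn_Y,\ovr{XV})$, as computed in the proof of Lemma~\ref{lem:yLengths}. With $q_\rho,q_{y\mu},d_\mu>0$, the sign of $\eps(m_\mu-d_\mu b)$ alone decides between $y$ and $s$. You never need to control the remaining side lengths or the sign of $p_\la$.
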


\begin{proof}
    For (i), let $X_{cr}$ denote the intersection of $\vn_Y$ and the line through $X$ and $V$. In the proof of Lemma~\ref{lem:yLengths}, we show that the solution $c$ to the following equation
    \[ |OY|(0,1)+k\vn_Y=|OX|(1,0)+c\ovr{XV}\]
    is 
    \[c=\frac{|OY|q_\la-|OX|p_\la}{p_\la(1+p_\rho q_\rho-6q_\rho^2)+q_\la q_\rho^2}=\eps\left(\frac{m_\mu-d_\mu b}{q_\rho q_{y\mu}}\right).\] Hence, if $c>0$, then $yQ(\Tt)$ is well-defined and if $c<0,$ then $sQ(\Tt)$ is well-defined. 
Note that for $Q(\Tt)$ to be well-defined, then $q_\rho$ must be positive as if not $V$ would have negative $y$-coordinate. The result follows. 

    For (ii), if $p_\rho/q_\rho>6,$ then the nodal ray $\vn_X$ has positive slope, so $xQ(\Tt)$ will be well-defined for any $b$ such that $Q(\Tt)$ is well-defined. 
    
    For (iii), similarly to the argument in (i), we have $xQ(\Tt)$ is well-defined from Lemma~\ref{lem:xLengths}, we have that if \[\frac{|OY|(p_\rho-6q_\rho)+|OX|q_\rho}{(p_\rho-6q_\rho)(p_\la q_\la-1)+q_\rho q_\la^2}=\eps\left(\frac{m_\mu'-d_\mu'b}{q_\la q_{x\mu}}\right)>0\]
    then $xQ(\Tt)$ is well-defined and if it's negative, then $\ov{s}Q(\Tt)$ is well-defined. As we are assuming that $q_\la,q_{x\mu}>0$, the mutation $x$ will be well-defined if $\eps (m_\mu'-d_\mu'b)>0$.  For $\bm{x}=(p,q,-t_\mu)$ and $S\bm{x}=(p_\mu,q_\mu,t_\mu)$, we have that $m_\mu'=-m$ and $d_\mu'=-d$ where $(d,m)$ satisfy \eqref{eq:dmefIntro} for $\bm{x}$. We get that
    \[ \eps(-m+db)=\eps (m_\mu'-d_\mu'b)>0\] implying that $x$ will be well-defined if $\eps b>\eps m/d$. As $\eps t_\mu>0$, by Lemma~\ref{lem:md13}, we have that
    \[ \eps 1/3>\eps m/d.\]
    The statement then follows as we get the chain of inequalities $\eps b>\eps 1/3>\eps m/d.$

    For (iv) and (v), we are considering which side of the quadrilateral $\vn_V$ will extend to intersect. By definition, $\vn_V=\begin{pmatrix} -q_{\ovy\la} \\ p_{\ovy\la} \end{pmatrix}=-\begin{pmatrix} p_{\ovx\rho}-6q_{\ovx\rho} \\ q_{\ovx\rho} \end{pmatrix}$. First, note that it is not the case that $q_{\ovy\la}<0$ and $q_{\ovx\rho}<0$ because then $\vn_V$ would not point into the interior of $Q(\Tt)$, so $Q(\Tt)$ would not be well-defined. 

    If $q_{\ovy\la}<0$ (resp. $q_{\ovx\rho}<0$) then $\vn_V$ will extend to intersect $\ov{OX}$  (resp. $\ov{OY}$) due to the positivity of the first (resp. second) coordinate of $\vn_V$, so $\ov{x}Q$ (resp. $\ov{y}Q$) is well-defined.

\end{proof} 

We give the analogous statement for when $wQ_{P_b}(\Tt)$ is well-defined. 
\begin{lemma} \label{lem:definedP}
    Let $\Tt=(\bm{x}_\la,\bm{x}_\mu,\bm{x}_\rho) \in \X_P^3$ be a recursive triple where $\eps=1$ if $\Tt$ is sign-matching and otherwise, $\eps=-1$. Assume $Q(\Tt)$ is well-defined for some $b$-value. Then
    \begin{itemlist}
        \item[{(i)}]  If \(q_{y\mu}>0\) and $f_\mu>0$, then \(sQ(\Tt)\) is well-defined precisely when
\[
        \eps\frac{e_\mu}{f_\mu}<\eps b,
\]
and \(yQ(\Tt)\) is well-defined precisely when
\[
        \eps\frac{e_\mu}{f_\mu}>\eps b.
\]
If \(q_{y\mu}<0\), the two inequalities are reversed.

        \item[{(ii)}] $xQ(\Tt)$ is well-defined if $p_\rho/q_\rho>6$
        \item[{(iii)}] Let $S^{-1}(\bm{x}_\mu):=\bm{x}=(p,q,-t_\mu) \in\X_P$ with solutions $(e,f)$ to \eqref{eq:dmefIntro}. If $(e,f)$ are both positive and $\eps t_\mu>0$, then $xQ(\Tt)$ is well-defined if
        \[ \eps b>\eps.\]
  \item[{(iv)}] $\ovx Q(\Tt)$ is well-defined if $q_{\ovy\la}<0$, or equivalently 
  $p_{\ov{x}\rho}-6q_{\ov{x}\rho}<0.$
        
        \item[{(v)}] $\ovy Q(\Tt)$ is well-defined if $q_{\ovx\rho}<0$, or equivalently $p_{\ovy\la}>0$

    \end{itemlist}
\end{lemma}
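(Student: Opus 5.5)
The proof will follow the proof of Lemma~\ref{lem:definedH} item by item. The only changes are that the $H$-affine lengths of Definition~\ref{def:DecQuad} and Lemmas~\ref{lem:yLengths}, \ref{lem:xLengths} are replaced by their $P$-versions, and Lemma~\ref{lem:symTriple}(ii) and Lemma~\ref{lem:md13} (the $e/f$ part) are used in place of their $H$-parts. Throughout, $Q(\Tt)$ is assumed well-defined, so the nodal rays point inward and $q_\la,q_\rho>0$; the latter holds since otherwise $V$ leaves the first quadrant.

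For (i), let $X_{cr}$ be the intersection of $\vn_Y$ with the line $XV$. The proof of Lemma~\ref{lem:yLengths}, carried out there for $P$-triples, solves
\[
|OY|(0,1)+k\vn_Y=|OX|(1,0)+c\,\ovr{XV}
\]
and gives
\[
c=\eps\left(\frac{e_\mu-f_\mu b}{q_\rho q_{y\mu}}\right).
\]
That proof also identifies the denominator appearing in $c$ with $q_{y\mu}$. If $c>0$, the ray $\vn_Y$ meets $\ovr{XV}$ and $y$ is well-defined; if $c<0$, it meets $\ovr{OX}$ and $s$ is well-defined. Since $q_\rho>0$ and $f_\mu>0$, the sign of $c$ is the sign of $\eps(e_\mu/f_\mu-b)/q_{y\mu}$. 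This gives the stated inequalities when $q_{y\mu}>0$ and the reversed inequalities when $q_{y\mu}<0$.

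Parts (ii), (iv) and (v) do not involve side lengths, so the $H$-arguments transfer verbatim. For (ii), if $p_\rho/q_\rho>6$ then $\vn_X=(p_\rho-6q_\rho,q_\rho)$ has positive slope and must hit $\ovr{VY}$. For (iv) and (v), Lemma~\ref{lem:standFrom} together with Lemma~\ref{lem:identS} gives
\[
\vn_V=(-q_{\ovy\la},p_{\ovy\la})=-(p_{\ovx\rho}-6q_{\ovx\rho},q_{\ovx\rho}).
\]
Both $q_{\ovy\la}<0$ and $q_{\ovx\rho}<0$ cannot hold, since then $\vn_V$ would not point inward. The sign of the first (respectively second) coordinate of $\vn_V$ then decides whether $\vn_V$ meets $OX$ or $OY$.

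For (iii), the proof of Lemma~\ref{lem:xLengths} shows that the signed position of $\vn_X$ along $VY$ is
\[
\eps\,\frac{e_\mu'-f_\mu' b}{q_\la q_{x\mu}}.
\]
The mutation $x$ is well-defined when this is positive. Here I also need $q_{x\mu}>0$. The statement omits this hypothesis, so I would either add it as in the $H$-case or derive it from $\eps t_\mu>0$ and the recursion. By Lemma~\ref{lem:symTriple}(ii) applied to $\bm{x}=S^{-1}\bm{x}_\mu$, we get $e_\mu'=e_\mu-2q_\mu=e_\mu-2p=-e$ and likewise $f_\mu'=-f$; here $q_\mu=p$ because $S$ maps $(p,q)$ to $(6p-q,p)$. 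The condition therefore becomes $\eps(fb-e)>0$, that is, $\eps b>\eps e/f$ because $f>0$. The vector $\bm{x}$ has $t$-coordinate $-t_\mu$ with $\eps t_\mu>0$. Lemma~\ref{lem:md13} then gives $e/f<1$ when $\eps=1$ and $e/f>1$ when $\eps=-1$, so $\eps e/f<\eps$. Combining, $\eps b>\eps$ implies $\eps b>\eps e/f$, as required.

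The main obstacle is bookkeeping rather than ideas. One part is getting the sign of the denominators right, $q_{y\mu}$ in (i) and $q_{x\mu}$ in (iii); the $P$-case of (i) genuinely allows $q_{y\mu}<0$, which is why the reversal clause appears. The other is making sure that the $P$-length formulas, which carry $+f b$ in $|OY|$ and $-f'b$ in $|OX|$, produce the correct orientation of the inequality in (iii) once Lemma~\ref{lem:symTriple}(ii) is substituted.
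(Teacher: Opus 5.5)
Your plan is the same as the paper's; its proof only says the argument is similar to Lemma~\ref{lem:definedH}. Parts (ii), (iv), (v) and the $q_{y\mu}>0$ half of (i) go through as you describe. The gap is the reversal clause of (i).

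\textbf{The step that fails.} You infer ``if $c>0$ the ray $\vn_Y$ meets $XV$, if $c<0$ it meets $OX$.'' This is valid only when the denominator of $c$ is positive.
\begin{itemize}
\item Applying $M_{p_\mu,q_\mu}M_{p_\la,q_\la}$ sends $\ovr{XV}$ to $(0,-1)$ and $\vn_Y$ to $(-q_{y\mu},p_{y\mu})$. Hence $q_{y\mu}=\det(\vn_Y,\ovr{XV})$.
\item $Y$ lies strictly on the interior side of the supporting line $XV$, and $\det(\ovr{XV},Y+k\vn_Y-X)=\det(\ovr{XV},Y-X)-k\,q_{y\mu}$.
\item So when $q_{y\mu}<0$ the forward ray never reaches the line $XV$. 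Then $s$ is forced, and $c>0$ only records an intersection behind $Y$.
\end{itemize}

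\textbf{The correct criterion.} What decides the side is the numerator
$N=|OY|q_\la-|OX|p_\la=\det(X-Y,\vn_Y)=c\,q_{y\mu}=\eps(e_\mu-f_\mu b)/q_\rho$.
The mutation $y$ is well-defined iff $N>0$, and $s$ iff $N<0$, for either sign of $q_{y\mu}$. So the non-reversed inequalities always hold, and the reversal clause is false, not just unproved.

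\textbf{Counterexample.} Take $\Tt=x\Tt_0^P=((1,1,2),(1,1,-2),(3,1,0))$.
\begin{itemize}
\item It is sign-matching with $e_\mu=0$, $f_\mu=1$ and $q_{y\mu}=-1$.
\item $Q_{P_b}(\Tt)=xQ_{P_b}^0$ is well-defined for $b>1$, with vertices $(0,0),(b+1,0),(b-1,1),(0,1)$ and $\vn_Y=(1,-1)$.
\item The ray from $Y$ hits $OX$ at $(1,0)$, so $s$ is well-defined for every $b>1$.
\item The reversed clause instead predicts $s$ iff $0>b$. Here $c=b>0$, but the two lines meet at $k=1-b<0$.
\end{itemize}
You should prove the corrected statement instead: the same inequalities regardless of the sign of $q_{y\mu}$, with $y$ impossible when $q_{y\mu}<0$.

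\textbf{Part (iii).} The same idea settles the point you left open: no hypothesis $q_{x\mu}>0$ is needed.
\begin{itemize}
\item Applying $M_{p_\la,q_\la}$ gives $q_{x\mu}=\det(\vn_X,\ovr{VY})$, which is exactly the denominator of your parameter.
\item The numerator $N'=|OY|(p_\rho-6q_\rho)+|OX|q_\rho=\det(\vn_X,Y-X)$ is positive exactly when $\vn_X$ exits through $VY$.
\item The identities behind Lemma~\ref{lem:xLengths} give $N'=\eps(e_\mu'-f_\mu'b)/q_\la=\eps(fb-e)/q_\la$.
\end{itemize}
With $q_\la>0$, your chain $\eps b>\eps>\eps e/f$ then yields (iii) exactly as stated, without deciding the sign of $q_{x\mu}$.
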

\begin{proof}
    The proof is similar to Lemma~\ref{lem:definedH}. 
\end{proof}

From Lemma~\ref{lem:definedH}~(i) and Lemma~\ref{lem:definedP}~(i), in checking whether an $s$ or a $y$-mutation is well-defined, we compare the ratio $e/f$ and $m/d$ to $b$. For a class $(d;m;p,q)$ or $(e,f;p,q),$ we will let $\al:=m/d$ or $\al:=e/f$ where the context will be clear whether we are talking about a $P$-class or a $H$-class. This motivates the following definition. 
\begin{definition}
For a recursive triple $\Tt:=(\bm{x}_\la,\bm{x}_\mu,\bm{x}_\rho)$ where $\eps=1$ if $\Tt$ is sign-matching and otherwise, $\eps=-1$, we will say that $\Tt$ is {\bf $\al$-ordered} if the following condition holds:
\[\eps\al_\rho,\eps\al_\la>\eps\al_\mu.\]

\end{definition} 

\begin{example}
	The triple $\Tt_0^H$ and $\Tt_0^P$ are $\al$-ordered. Note, in \cite{MMW}, in their definition of a recursive triple, they required the triple to be $\al$-ordered. Here, we do not make this requirement as it is not the case that all possible mutations of $\Tt_0^H$ and $\Tt_0^P$ are $\al$-ordered.

\end{example}

We now give some relevant lemmas to show that all triples in $\mathcal{W}_H,\mathcal{W}_P$ are $\al$-ordered.

\begin{lemma} \label{lem:mDrelations} 
    Let $\Tt=(\bm{x}_\la,\bm{x}_\mu,\bm{x}_\rho)$ be a recursive $H$-triple. If $\Tt$ is sign-matching, let $\eps=1$ and otherwise let $\eps=-1.$ The following identities hold:
    \begin{align*}
        m_\rho d_\mu-m_\mu d_\rho&=\eps p_\la \\
        m_\la d_\mu-m_\mu d_\la&=\eps q_\rho \\
        m_\rho d_\la-m_\la d_\rho&=\eps p_{\ov y\la}=-\eps q_{\ovx\rho} 
    \end{align*}
\end{lemma}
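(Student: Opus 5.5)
We first convert each $2\times2$ minor of $(d,m)$-coordinates into minors of $(p,q,t)$-coordinates using \eqref{eq:dmefIntro}. Write $d=\tfrac18(3s+t)$ and $m=\tfrac18(s+3t)$ with $s=p+q$. For indices $\alpha,\beta$ we have the bilinear expansion
\[
64(m_\alpha d_\beta-m_\beta d_\alpha)=(s_\alpha+3t_\alpha)(3s_\beta+t_\beta)-(s_\beta+3t_\beta)(3s_\alpha+t_\alpha)=8(t_\alpha s_\beta-t_\beta s_\alpha).
\]
The symmetric terms cancel, so $m_\alpha d_\beta-m_\beta d_\alpha=\tfrac18\bigl(t_\alpha(p_\beta+q_\beta)-t_\beta(p_\alpha+q_\alpha)\bigr)$. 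Each of the three identities therefore reduces to a linear identity in the entries of the triple.

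For the first identity ($\alpha=\rho,\beta=\mu$) we need $t_\rho(p_\mu+q_\mu)-t_\mu(p_\rho+q_\rho)=8\eps p_\la$. We take the two coordinate identities from the proof of Lemma~\ref{lem:identS}:
\[
\eps(t_\rho p_\mu-t_\mu p_\rho)=7p_\la-q_\la, \qquad \eps(t_\rho q_\mu-t_\mu q_\rho)=p_\la+q_\la.
\]
Adding them gives exactly $8p_\la$, and multiplying by $\eps$ (using $\eps^2=1$) finishes this case.

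The second identity ($\alpha=\la,\beta=\mu$) requires $t_\la(p_\mu+q_\mu)-t_\mu(p_\la+q_\la)=8\eps q_\rho$. Here we use the second pair of identities from Lemma~\ref{lem:identS}, namely $p_\rho+q_\rho=\eps(p_\mu t_\la-p_\la t_\mu)$ and $p_\rho-7q_\rho=\eps(q_\la t_\mu-q_\mu t_\la)$. Subtracting the second from the first gives $8q_\rho=\eps\bigl(t_\la(p_\mu+q_\mu)-t_\mu(p_\la+q_\la)\bigr)$, which is the claim.

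The third identity is where we expect the main work. The target $\eps p_{\ovy\la}$ is not an entry of $\Tt$, so we first make it explicit. By Definition~\ref{def:mutTrip} and Definition~\ref{def:genTrip}, $\bm x_{\ovy\la}=\eps t_\rho\bm x_\la-\bm x_\mu$, so $p_{\ovy\la}=\eps t_\rho p_\la-p_\mu$. Similarly $\bm x_{\ovx\rho}=\eps t_\la\bm x_\rho-\bm x_\mu$. Lemma~\ref{lem:identS} gives $S\bm x_{\ovy\la}=-\bm x_{\ovx\rho}$, whose second coordinate reads $p_{\ovy\la}=-q_{\ovx\rho}$; this is the second equality. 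For the first equality we must show
\[
t_\rho(p_\la+q_\la)-t_\la(p_\rho+q_\rho)=8\eps(\eps t_\rho p_\la-p_\mu)=8t_\rho p_\la-8\eps p_\mu.
\]
The plan is to eliminate $t_\la(p_\rho+q_\rho)$ using the relation $p_\rho+q_\rho=\eps(p_\mu t_\la-p_\la t_\mu)$ from Lemma~\ref{lem:identS}. This produces $t_\la^2$ and $t_\la t_\mu$ terms. We would then remove these using the quadratic relation $t_\la^2=8-(-p_\la^2+6p_\la q_\la-q_\la^2)$ from $\bm x_\la\in\X$, together with condition (iii) of Definition~\ref{def:genTrip}, $t_\la t_\rho-\eps t_\mu=p_\rho q_\la-p_\la q_\rho$, and the Lemma~\ref{lem:7cond} expressions for products of $t$'s.

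If that bookkeeping becomes unwieldy, a cleaner alternative is available. Apply the first identity to the recursive triple $\ovy\Tt=(\bm x_{\ovy\la},\bm x_\la,\bm x_\rho)$, which has the same $\eps$ by Proposition~\ref{prop:genTrip}. This gives $m_\rho d_\la-m_\la d_\rho=\eps p_{\ovy\la}$ directly, so no further computation is needed. We prefer this route.
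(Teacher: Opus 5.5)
Your proposal is correct and follows essentially the same route as the paper. You reduce each $(d;m)$-minor to $\tfrac18\bigl(t_\alpha(p_\beta+q_\beta)-t_\beta(p_\alpha+q_\alpha)\bigr)$ and get the first two identities by adding or subtracting the coordinate identities from Lemma~\ref{lem:identS}. You then get the third by applying the first identity to the recursive triple $\ov{y}\Tt$, which has the same $\eps$ by Proposition~\ref{prop:genTrip}; the paper also allows the variant of applying the second identity to $\ov{x}\Tt$ to get the $-\eps q_{\ov{x}\rho}$ form, where you instead read off $p_{\ov{y}\la}=-q_{\ov{x}\rho}$ from $S\bm{x}_{\ov{y}\la}=-\bm{x}_{\ov{x}\rho}$.
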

\begin{proof}
  For the first identity, converting the $(d;m)$-coordinates to the $(p,q,t)$-coordinates, we have
    \[ \begin{vmatrix} m_\rho & m_\mu \\  d_\rho & d_\mu \end{vmatrix}=1/8((p_\mu+q_\mu)t_\rho-(p_\rho+q_\rho)t_\mu) ,\]
    so we want to show that $(t_\rho q_\mu-q_\rho t_\mu)+(p_\mu t_\rho-p_\rho t_\mu)=8p_\la$. This follows from the identities in Lemma~\ref{lem:identS}. A similar argument applies to the second identity.    The third is implied by the first two by applying an $\ov{x}$ or $\ov{y}$ mutation to the triple $\Tt.$

\end{proof}

\begin{lemma} \label{lem:eFrelations}
       Let $\Tt=(\bm{x}_\la,\bm{x}_\mu,\bm{x}_\rho)$ be a recursive $P$-triple. If $\Tt$ is sign-matching, let $\eps=1$ and otherwise let $\eps=-1.$ The following identities hold:
    \begin{align*}
        e_\rho f_\mu-e_\mu f_\rho&=\eps p_\la \\
        e_\la f_\mu-e_\mu f_\la&=\eps q_\rho \\
        e_\rho f_\la-e_\la f_\rho&=\eps p_{\ov y\la}=-\eps q_{\ovx\rho} 
    \end{align*}
\end{lemma}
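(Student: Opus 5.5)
The plan is to mirror the proof of Lemma~\ref{lem:mDrelations}. Substituting \eqref{eq:dmefIntro} for $P$, namely $4e=p+q+t$ and $4f=p+q-t$, turns every $2\times 2$ determinant in $(e,f)$ into a combination of determinants in $(p,q,t)$. The first step is the reduction. Write $s_\bullet:=p_\bullet+q_\bullet$. Then
\[
16(e_\rho f_\mu-e_\mu f_\rho)=(s_\rho+t_\rho)(s_\mu-t_\mu)-(s_\mu+t_\mu)(s_\rho-t_\rho)=2\bigl(s_\mu t_\rho-s_\rho t_\mu\bigr).
\]
The quadratic terms $s_\rho s_\mu$ and $t_\rho t_\mu$ cancel. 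So the first identity is equivalent to
\[
(p_\mu t_\rho-p_\rho t_\mu)+(q_\mu t_\rho-q_\rho t_\mu)=8\eps p_\la .
\]
The same expansion shows that the second identity is equivalent to $s_\mu t_\la-s_\la t_\mu=8\eps q_\rho$.

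The second step checks these linear identities using Lemma~\ref{lem:identS}, which holds for all recursive triples regardless of whether they lie in $\X_H$ or $\X_P$. In the proof of that lemma, the $(p,q)$-coordinates of $\bm{x}_{y\mu}=S\bm{x}_\la+\eps t_\mu\bm{x}_\rho$ are shown to be equivalent to
\[
\eps(t_\rho p_\mu-t_\mu p_\rho)=7p_\la-q_\la \quad \text{and} \quad \eps(t_\rho q_\mu-t_\mu q_\rho)=p_\la+q_\la .
\]
Adding these gives $8p_\la$, which is the first identity. For the second identity, I would use the companion relations from $\bm{x}_{x\mu}=S^{-1}\bm{x}_\rho+\eps t_\mu\bm{x}_\la$:
\[
p_\rho+q_\rho=\eps(p_\mu t_\la-p_\la t_\mu) \quad \text{and} \quad p_\rho-7q_\rho=\eps(q_\la t_\mu-q_\mu t_\la).
\]
Subtracting the second from the first gives $\eps(s_\mu t_\la-s_\la t_\mu)=8q_\rho$, as needed. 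The only care required is bookkeeping of $\eps$: we have $\eps^2=1$, and these relations are stated for both signs.

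The third step derives the last identity from the first two, as in Lemma~\ref{lem:mDrelations}. Apply the first identity to the recursive triple $\ov{y}\Tt=(\bm{x}_{\ovy\la},\bm{x}_\la,\bm{x}_\rho)$. By Proposition~\ref{prop:genTrip} it has the same $\eps$. This yields $e_\rho f_\la-e_\la f_\rho=\eps p_{\ovy\la}$. The equality $p_{\ovy\la}=-q_{\ovx\rho}$ follows by comparing second coordinates in $S\bm{x}_{\ovy\la}=-\bm{x}_{\ovx\rho}$ from Lemma~\ref{lem:identS}, since $S(p,q,t)=(6p-q,p,-t)$.

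I expect the main obstacle to be confirming that the relations cited from \cite[Lemma 4.6]{M1} really hold in the sign-alternating case and with $P$-type triples. If there is any doubt, I would re-derive them directly. To do so, expand the $(p,q)$-coordinates of Lemma~\ref{lem:identS} using the determinant conditions of Definition~\ref{def:genTrip} and Lemma~\ref{lem:relations}~(i). Only these linear and bilinear relations in $(p,q,t)$ are used; membership in $\X_P$ matters only so that $(e,f)$ are defined.
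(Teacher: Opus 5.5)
Your proposal is correct and follows essentially the same route as the paper, which proves this lemma by mirroring Lemma~\ref{lem:mDrelations}: it converts the $(e,f)$-determinants to $(p,q,t)$-coordinates, reduces to the linear relations from Lemma~\ref{lem:identS}, and obtains the third identity by applying the first two to a $\ov{y}$- or $\ov{x}$-mutated triple. Your expansions and the $\eps$ bookkeeping check out, and they also hold on examples such as $\Tt_0^P$ and $s\Tt_0^P$.
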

\begin{proof}
    This proof is similar to Lemma~\ref{lem:mDrelations}. 
\end{proof}

\begin{cor} \label{cor:ordering}
    Assume that $\Tt=(\bm{x}_\la,\bm{x}_\mu,\bm{x}_\rho)$ is a positive $P$- or $H$-triple. Then, $\Tt$ is $\al$-ordered. 
    Further, if $(p_{\ovx\rho},q_{\ovx\rho})$ are positive, then
    \[ \eps \al_\la >\eps \al_\rho\] and if  $(p_{\ovy\la},q_{\ovy\la})$ are positive, then
    \[ \eps \al_\rho >\eps \al_\la\]
    There is equality exactly when $p_{\ovy\la}=q_{\ovx\rho}=0.$
\end{cor}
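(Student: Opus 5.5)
The corollary follows by dividing the determinant identities of Lemma~\ref{lem:mDrelations} (for $H$) and Lemma~\ref{lem:eFrelations} (for $P$) by products of the positive denominators. Positivity of the triple means, for $H$, that $d_\bullet>0$ and $p_\bullet,q_\bullet>0$ (with $m_\bullet\ge 0$), and for $P$ that $e_\bullet,f_\bullet,p_\bullet,q_\bullet>0$. So every $\al_\bullet$ is a well-defined ratio with positive denominator, and comparing two ratios is equivalent to the sign of a $2\times 2$ determinant.

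For the $H$ case, I would divide the first identity of Lemma~\ref{lem:mDrelations} by $d_\rho d_\mu>0$:
\[
\al_\rho-\al_\mu=\frac{m_\rho d_\mu-m_\mu d_\rho}{d_\rho d_\mu}=\frac{\eps p_\la}{d_\rho d_\mu}.
\]
Multiplying by $\eps$ gives $\eps\al_\rho-\eps\al_\mu=p_\la/(d_\rho d_\mu)>0$, since $p_\la>0$. Similarly, dividing the second identity by $d_\la d_\mu$ gives $\eps(\al_\la-\al_\mu)=q_\rho/(d_\la d_\mu)>0$. Together these are exactly $\eps\al_\rho,\eps\al_\la>\eps\al_\mu$, so $\Tt$ is $\al$-ordered. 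For $P$ the argument is identical, using $f_\bullet$ in place of $d_\bullet$ and $e_\bullet$ in place of $m_\bullet$, with the identities of Lemma~\ref{lem:eFrelations}.

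For the second part, I would divide the third identity by $d_\rho d_\la$:
\[
\eps(\al_\rho-\al_\la)=\frac{p_{\ovy\la}}{d_\rho d_\la}=\frac{-q_{\ovx\rho}}{d_\rho d_\la}.
\]
If $(p_{\ovx\rho},q_{\ovx\rho})$ is positive, the right-hand side is negative, so $\eps\al_\la>\eps\al_\rho$. If $(p_{\ovy\la},q_{\ovy\la})$ is positive, it is positive, so $\eps\al_\rho>\eps\al_\la$. Equality $\al_\rho=\al_\la$ holds exactly when the numerator vanishes, that is, when $p_{\ovy\la}=-q_{\ovx\rho}=0$.

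There is no real obstacle beyond the lemmas already proved. The one point to check is that positivity gives strictly positive denominators $d_\bullet$ (respectively $f_\bullet$). This is part of the definition of a positive element, which requires $d>0$ for $H$ and $e,f>0$ for $P$.
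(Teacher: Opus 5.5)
Your proof is correct and is the paper's argument: the paper just says the corollary follows immediately from Lemma~\ref{lem:mDrelations} (for $H$) and Lemma~\ref{lem:eFrelations} (for $P$). You have spelled out the step it leaves implicit, namely dividing each determinant identity by the positive product of denominators ($d_\bullet d_\star$, respectively $f_\bullet f_\star$), which positivity of the triple guarantees, and using $p_{\ovy\la}=-q_{\ovx\rho}$ to handle the equality case.
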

\begin{proof}
    This follows immediately from Lemma~\ref{lem:mDrelations} if $\Tt$ is a $H$-triple and from Lemma~\ref{lem:eFrelations} if $\Tt$ is a $P$-triple. 
\end{proof}

\begin{cor} \label{cor:alOrdered} 
For each $w \in \mathcal{W}_H$, the triple $w\Tt_0^H$ is $\al$-ordered. For each $w \in \mathcal{W}_P$, the triple $w\Tt_0^P$ is $\al$-ordered. 
\end{cor}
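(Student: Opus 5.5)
The plan is to reduce Corollary~\ref{cor:alOrdered} to Corollary~\ref{cor:ordering}, which says that every positive $P$- or $H$-triple is $\al$-ordered. By Corollary~\ref{cor:Positive}, each triple in $\mathcal{T}_H$ and $\mathcal{T}_P$ is positive. Since $\mathcal{T}_H=\{w\Tt_0^H \mid w\in\mathcal{W}_H\}$ and $\mathcal{T}_P=\{w\Tt_0^P\mid w\in \mathcal{W}_P\}$, the statement follows immediately once the hypotheses of Corollary~\ref{cor:ordering} are checked. So the proof is essentially a two-line citation, and the work lies in confirming those hypotheses.

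It is worth recording why positivity forces $\al$-ordering. Take the $H$-case; the $P$-case is identical, using Lemma~\ref{lem:eFrelations} in place of Lemma~\ref{lem:mDrelations}. From Lemma~\ref{lem:mDrelations},
\[
\eps(m_\rho d_\mu - m_\mu d_\rho) = p_\la > 0, \qquad \eps(m_\la d_\mu - m_\mu d_\la)=q_\rho>0 .
\]
Dividing by the positive quantities $d_\mu d_\rho$ and $d_\mu d_\la$ gives $\eps\al_\rho>\eps\al_\mu$ and $\eps\al_\la>\eps\al_\mu$, which is exactly the definition of $\al$-ordered. For $P$, divide instead by $f_\mu f_\rho$ and $f_\mu f_\la$, which are positive.

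The only real point to check is that ``positive triple'' in Corollary~\ref{cor:Positive} means that every entry is positive in the sense defined after \eqref{eq:Xdef}. For $H$ this requires $d,p,q>0$ and $m\ge 0$; for $P$ it requires $e,f,p,q>0$. This is what supplies the positive denominators and the positive $p_\la,q_\rho$ used above.

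The main potential obstacle is the suffix closure. The sets $\mathcal{W}_H$ and $\mathcal{W}_P$ contain all suffixes of the listed words, so intermediate triples such as $s^k\Tt_0^H$, $\ov{x}^k s^{3j}\Tt_0^H$, $\ov{y}s\Tt_0^H$, or $r_i\Tt_0^P$ must also be positive. I would rely on Corollary~\ref{cor:Positive} as stated for these. If a reader doubts it, I would verify positivity directly on a case-by-case basis:
\begin{itemize}
\item the families $\ov{x}^k\Tt_0^H$ and its $P$-analogue have explicit closed forms;
\item the triples $s^k\Tt_0^H$ and $r_i\Tt_0^P$ are covered by the positivity of the $S$-orbits together with Lemma~\ref{lem:symTriple}, which transforms $(d;m)$ to $(3p-d,p-m)$;
\item the finitely many remaining short suffixes can be computed by hand.
\end{itemize}
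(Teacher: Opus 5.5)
Your reduction is exactly the paper's proof: it cites Corollary~\ref{cor:Positive} and then Corollary~\ref{cor:ordering}. Your explanation, via Lemma~\ref{lem:mDrelations} and Lemma~\ref{lem:eFrelations}, of why a positive triple is $\al$-ordered is correct. The gap is the suffix issue you raised and then set aside. Not every triple in $\mathcal{T}_H$ is positive, and for some of them the conclusion is false.

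Here is a concrete counterexample. $s\Tt_0^H=((2,1,-1),(4,1,1),(5,1,-2))$ is sign-alternating with $\nu_\rho=2$, so $\ov{y}s\Tt_0^H=((0,1,-3),(2,1,-1),(5,1,-2))$.
\begin{itemize}
\item The entry $(0,1,-3)$ has $(d;m)=(0;-1)$. It is not positive, and $\al_\la$ is undefined.
\item Since $p_\la=0$, Lemma~\ref{lem:mDrelations} forces $m_\rho d_\mu-m_\mu d_\rho=0$. Concretely, the other two entries have $(d;m)=(1;0)$ and $(2;0)$, so $\al_\mu=\al_\rho=0$ and the strict inequality $\eps\al_\rho>\eps\al_\mu$ fails.
\item The same happens for every $\ov{x}^k\ov{y}s\Tt_0^H$. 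The $\ov{x}$-mutation fixes the left entry $(0,1,-3)$, and the other entries are $(d_k;0)$ with $d_k=1,2,5,13,\dots$, as recorded in the proof of Lemma~\ref{lem:bDefH5}.
\item These triples lie in $\mathcal{T}_H$. Indeed, $\ov{y}s$ and $\ov{x}\ov{y}s$ are suffixes of $wyx^ny\ov{x}\ov{y}s$, and $\ov{x}^k\ov{y}s$ with $k\le n+2$ is a suffix of $wy\ov{x}^{n+2}\ov{y}s$.
\end{itemize}
So your fallback of checking $\ov{y}s\Tt_0^H$ by hand fails on the very example you listed. These suffixes also form an infinite family, not finitely many short ones. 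No argument can prove the statement as written. The paper's one-line proof has the same defect: Corollary~\ref{cor:Positive} is only established for the full words of Proposition~\ref{lem:perf4H}, and Lemma~\ref{lem:bDefH5} itself notes that $\ov{x}^n\ov{y}s\Tt_0^H$ is not positive.

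What can be proved is the statement with the suffixes $\ov{x}^k\ov{y}s$ ($k\ge 0$) removed from $\mathcal{W}_H$. The remaining intermediate triples are positive: the $s^i\Tt_0^H$, the $\ov{x}^k s^{3j}\Tt_0^H=S^j\ov{x}^k\Tt_0^H$, the $\ov{x}s^{3j+1}\Tt_0^H$ with $j\ge 1$, and the triples obtained after the first $y$ has been applied. These are the cases in which $\al$-ordering is invoked later, e.g.\ in Lemma~\ref{lem:bDefH2}; Lemma~\ref{lem:bDefH5} treats $\ov{x}^k\ov{y}s$ directly.

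On the $P$ side there is a milder problem. The empty suffix gives $\Tt_0^P$, and for $j=0$ the suffixes $\ov{x}^k$ give $\ov{x}^k\Tt_0^P$. In both, the left entry $(1,1,2)$ has $(e,f)=(1,0)$. That entry is not positive in the paper's sense, so dividing by $f_\mu f_\la$ is not allowed. Here the conclusion still holds if you read $\al_\la=+\infty$. Alternatively, state $\al$-ordering in the cross-multiplied form $\eps(e_\la f_\mu-e_\mu f_\la)=q_\rho>0$ and $\eps(e_\rho f_\mu-e_\mu f_\rho)=p_\la>0$. Lemma~\ref{lem:eFrelations} gives these identities for any recursive $P$-triple, and you then only need $f_\mu,f_\rho>0$.
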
 
\begin{proof}

    This follows from Corollary~\ref{cor:Positive} and Corollary~\ref{cor:ordering}. 
\end{proof}

We now begin by showing that for the words $w \in \mathcal{W}_H$, there is an interval of $b$-values such that $wQ_{H_b}^0$ is well-defined. This first two lemmas concern the classes $s^k\Tt_0^H.$
\begin{lemma} \label{lem:salphaOr}
    For triples of the form $s^k\Tt_0^H:=(\bm{x}_k,\bm{x}_{k+1},\bm{x}_{k+2}),$ let $\eps_k=1$ if $k$ is even and $\eps_k=-1$ if $k$ is odd. Then, for all $k$, letting $\al_k=m_k/d_k$, we have 
    \[\eps_k \al_{k+1} < \eps_k\tfrac{1}{3} < \eps_k\al_{k+2} \leq \eps_k \al_{k}   \]
\end{lemma}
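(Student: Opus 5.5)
The plan is to express everything in terms of the single sequence $\bm{x}_k$ and use the two tools already available: the sign relations of Lemma~\ref{lem:md13} and the determinant identities of Lemma~\ref{lem:mDrelations} and Corollary~\ref{cor:ordering}.

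Note first that $s^k\Tt_0^H=(\bm{x}_k,\bm{x}_{k+1},\bm{x}_{k+2})$ with $\bm{x}_{k+3}=S\bm{x}_k$. By Proposition~\ref{prop:genTrip}(ii), $s^k\Tt_0^H$ is sign-matching exactly when $k$ is even, since $\Tt_0^H$ is sign-matching. So $\eps_k$ is precisely the recursive-triple sign of $s^k\Tt_0^H$.

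\textbf{Step 1: signs of $t$.} From the initial values $t=2,-1,1$ and the rule $t_{k+3}=-t_k$, the $t$-sequence is $2,-1,1,-2,1,-1,2,\dots$. Hence $\operatorname{sign}(t_{k+1})=-\eps_k$ and $\operatorname{sign}(t_{k+2})=\eps_k$. I would prove this by induction in $k$ modulo $6$.

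\textbf{Step 2: positivity.} I would check that every $\bm{x}_k$ has $p_k\ge q_k>0$ and $d_k>0$, $m_k\ge 0$. The coordinates satisfy $(p_{k+3},q_{k+3})=(6p_k-q_k,p_k)$, and by Lemma~\ref{lem:symTriple} also $(d_{k+3},m_{k+3})=(3p_k-d_k,p_k-m_k)$. Induction from the base classes $(1;1),(1;0),(2;1)$ gives positivity, using that $p_k/q_k$ increases toward $3+2\sqrt2$ (Remark~\ref{rmk:Ordering}).

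\textbf{Step 3: comparison with $1/3$.} With Steps 1 and 2 in hand, Lemma~\ref{lem:md13} gives $\alpha>1/3$ if and only if $t>0$. Therefore $\eps_k\alpha_{k+1}<\eps_k/3<\eps_k\alpha_{k+2}$.

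\textbf{Step 4: the inequality $\eps_k\alpha_{k+2}\le\eps_k\alpha_k$.} Apply the third identity of Lemma~\ref{lem:mDrelations} to the triple $s^k\Tt_0^H$:
\[
m_{k+2}d_k-m_kd_{k+2}=\eps_k\, p_{\ov y\la}.
\]
Dividing by $d_kd_{k+2}>0$ gives $\eps_k(\alpha_{k+2}-\alpha_k)=p_{\ov y\la}/(d_kd_{k+2})$. So I need $p_{\ov y\la}\le 0$, or equivalently $q_{\ov x\rho}\ge 0$. Here $\bm{x}_{\ov x\rho}=\nu_\la\bm{x}_{k+2}-\bm{x}_{k+1}$ with $\nu_\la=\eps_k t_k$.

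I expect this sign check to be the main obstacle, because it requires computing $q_{\ov x\rho}=\eps_kt_kq_{k+2}-q_{k+1}$. By Step 1, $\eps_kt_k=|t_k|\in\{1,2\}$, since $t_k$ has sign $\eps_k$ (it equals $-t_{k+3}$, and the pattern repeats). Thus the claim reduces to $|t_k|q_{k+2}\ge q_{k+1}$, which holds because the $q_k$ are nondecreasing.

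This sign check also yields the equality case. Equality holds when $|t_k|=1$ and $q_{k+2}=q_{k+1}$, as happens at $k=0$, where $\alpha_2=\alpha_0=1$. That is why the statement uses $\le$. To confirm that $q$ is nondecreasing, I would use $q_{k+3}=p_k\ge q_k$ together with a short induction on the initial values $1,1,1,2,4,5,\dots$.
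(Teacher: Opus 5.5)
Your proof is correct and is essentially the paper's argument. Lemma~\ref{lem:md13} gives the two comparisons with $\tfrac13$, and Lemma~\ref{lem:mDrelations} together with Corollary~\ref{cor:ordering} reduces $\eps_k\al_{k+2}\le\eps_k\al_k$ to $q_{\ovx\rho}=\nu_\la q_{k+2}-q_{k+1}\ge 0$. That inequality follows from $\nu_\la>0$ and monotonicity of $q_k$; you obtain $\nu_\la=|t_k|$ from the signs of $t$, while the paper uses the ordering of the $p/q$.

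There are two small slips, neither affecting the main argument:
\begin{itemize}
\item Equality occurs at $k=1$, where $\bm{x}_{\ovx\rho}=(1,0,-3)$ and $\al_1=\al_3=0$. It does not occur at $k=0$: there $|t_0|=2$ and $\al_2=\tfrac12<\al_0=1$.
\item Since $q_{k+3}=p_k$, proving $q_k$ nondecreasing amounts to proving $p_k$ nondecreasing, which needs a joint induction. The relation $q_{k+3}\ge q_k$ alone is not enough.
\end{itemize}
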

\begin{proof}
    By Proposition~\ref{prop:genTrip}, we have that $s^k\Tt_0^H$ is sign-matching if $k$ is even and sign-alternating if $k$ is odd. Further, as $|t_k|$ stays constant under $s$ and changes sign after each $s$, by Lemma~\ref{lem:md13}, we have that \[\eps \al_{k+1} <\eps\tfrac{1}{3} < \eps\al_{k+2},\eps \al_{k}.\] It remains to check that $\eps\al_{k+2}\leq \eps \al_{k}$. As $(d_k;m_k)$ are positive for each $k,$ this holds if $q_{k,\ovx\rho} \geq 0$ where $\ov{x}s^k\Tt_0:=(\bm{x}_k,\bm{x}_{k+2},\bm{x}_{k,\ovx\rho})$ by Corollary~\ref{cor:ordering}. We show this is the case. 
    
    For $s^k\Tt_0$, we have the following properties: 
    \begin{itemlist}
        \item[{(i)}] $\{p_k\}_{k \geq 0}$ and $\{q_k\}_{k \geq 0}$ are nondecreasing sequences. 
        \item [{(ii)}] $p_k/q_k < p_{k+1}/q_{k+1} < p_{k+2}/q_{k+2}$
    \end{itemlist}
     Property (ii) implies that the recursion parameter, $\nu_k:=p_{k+2}q_{k+1}-p_{k+1}q_{k+2}$, for the $\ov{x}$-mutation is positive. Note that $\nu_k$ is either $1,2$, but we will not use this fact. Then, the positivity of $\nu_k$ along with property (i) implies that \[(p_{k,\ovx\rho},q_{k\ovx\rho})=(\nu_k p_{k+2}-p_{k+1},\nu_k q_{k+2}-q_{k+1})\] will both be nonnegative as desired.

\end{proof}

\begin{lemma} \label{lem:sMutDefinedH}
    For the triple $s^k\Tt_0^H$, let $(\al_k,\al_{k+1},\al_{k+2})$ correspond to the ratios $m/d$ of the degree coordinates. The mutation sequence $s^kQ_{H_b}^0$ is well-defined for $b$ if $b$ satisfies
    \[ (-1)^{k}\al_{k-1} < (-1)^{k} b < (-1)^{k}\al_{k} .\]
\end{lemma}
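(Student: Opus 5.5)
We argue by induction on $k$. At each step the next $s$-mutation is applied to the quadrilateral $s^{k-1}Q_{H_b}^0=Q_{H_b}(s^{k-1}\Tt_0^H)$, and Lemma~\ref{lem:definedH}~(i) decides whether that $s$-mutation is well-defined. Theorem~\ref{thm:ATFMut}, via Proposition~\ref{prop:ATFMut} and Lemma~\ref{lem:sLengths}, identifies each intermediate diagram $s^jQ_{H_b}^0$ with $Q_{H_b}(s^j\Tt_0^H)$, so the criteria can be stated entirely in terms of the triples.

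Index the triples as $s^{j}\Tt_0^H=(\bm{x}_{j-1},\bm{x}_j,\bm{x}_{j+1})$; this shifted indexing matches Example~\ref{eg:s^kforPandH}. The base case $k=0$ just says that $Q_{H_b}^0$ is well-defined for $b\in(0,1)$, with the convention $\al_{-1}=0$ and $\al_0=1$.

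For the inductive step, $s^{j}\Tt_0^H$ is sign-matching when $j$ is even and sign-alternating when $j$ is odd, by Proposition~\ref{prop:genTrip}. So $\eps=(-1)^j$, and the middle entry is $\bm{x}_j$. Lemma~\ref{lem:definedH}~(i) then says that $s\,s^jQ_{H_b}^0$ is well-defined exactly when $(-1)^j\al_j<(-1)^jb$. This requires two hypotheses: $d_j>0$, which holds because all classes $s^k\Tt_0^H$ are positive, and $q_{y\mu}>0$ for $s^j\Tt_0^H$. For the latter, Lemma~\ref{lem:identS} gives $\bm{x}_{y\mu}=S\bm{x}_{j-1}+\eps t_j\bm{x}_{j+1}$, so $q_{y\mu}=p_{j-1}+\eps t_jq_{j+1}$. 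Since $\eps t_j>0$ (the signs of the $t$'s alternate together with $\eps$, as in the proof of Lemma~\ref{lem:salphaOr}), this is positive.

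Rewriting with $j=k-1$, the condition becomes $(-1)^{k}b<(-1)^{k}\al_{k-1}$, which is the right-hand inequality of the statement for $k-1$ shifted appropriately. Consequently $s^kQ_{H_b}^0$ is well-defined exactly when all the conditions $(-1)^{j}\al_j<(-1)^jb$ hold for $0\le j\le k-1$, together with the initial range $0<b<1$.

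It remains to show that this intersection of half-lines is exactly the interval with endpoints $\al_{k-1}$ and $\al_k$. The conditions alternate: for even $j$ they give lower bounds $b>\al_j$, and for odd $j$ they give upper bounds $b<\al_j$. By Lemma~\ref{lem:salphaOr}, the even-indexed $\al$'s lie above $1/3$ and decrease, while the odd-indexed ones lie below $1/3$ and increase. Hence the binding constraints are the last two, $j=k-1$ and $j=k-2$, and the resulting open interval has endpoints $\al_{k-1}$ and $\al_{k-2}$ in this indexing. Translating back to the statement's indexing, the interval lies between $\al_{k-1}$ and $\al_k$, with orientation fixed by the sign $(-1)^k$.

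I expect the main obstacle to be this bookkeeping rather than any new idea. The statement's indices $(\al_k,\al_{k+1},\al_{k+2})$ and the example's indexing differ by a shift, and we must check that the orientation $(-1)^k$ matches. We also need the non-strict case $\al_{k+2}=\al_k$ allowed in Lemma~\ref{lem:salphaOr} not to collapse the interval; this should follow from the strict inequality between consecutive $\al$'s across $1/3$.

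A second point to verify is that $Q_{H_b}(s^j\Tt_0^H)$ stays well-defined along the way, meaning $|OX|,|OY|>0$ and $V$ lies in the open first quadrant. After a well-defined geometric mutation this is automatic, since the mutation of an ATBD is again an ATBD. So only the side-selection criterion of Lemma~\ref{lem:definedH}~(i) needs to be checked at each step.
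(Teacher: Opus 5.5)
Your overall route is exactly the paper's argument. You induct on $k$, apply Lemma~\ref{lem:definedH}~(i) to $s^{k-1}Q_{H_b}^0$ at each step, and then intersect the resulting half-lines using the interleaved monotone sequences of Lemma~\ref{lem:salphaOr}, so that only the last two constraints bind.

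\textbf{The check of $q_{y\mu}>0$ fails.} Your claim $\eps t_j>0$ has the sign backwards. For every $s^j\Tt_0^H$ one has $\eps t_\la,\eps t_\rho>0$ but $\eps t_\mu<0$: the vector $(\eps t_\la,\eps t_\mu,\eps t_\rho)$ cycles through $(2,-1,1)$, $(1,-1,2)$, $(1,-2,1)$. For example, $\Tt_0^H$ is sign-matching with $t_\mu=-1$. Your formula from Lemma~\ref{lem:identS} therefore reads $q_{y\mu}=p_\la-|t_\mu|\,q_\rho$, which is not visibly positive. In fact it vanishes for $j=0$ and $j=2$, where in both cases $\bm{x}_{y\mu}=(1,0,-3)$. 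At those two steps Lemma~\ref{lem:definedH}~(i) cannot be invoked as stated.

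The repair is as follows.
\begin{itemize}
\item Write $q_{y\mu}=\nu_\rho q_\mu-q_\la$ with $\nu_\rho=p_\mu q_\la-p_\la q_\mu\ge 1$ (since $p/q$ strictly increases) and $q_\la\le q_\mu$. This gives only $q_{y\mu}\ge 0$, which is also all that the paper's one-line justification yields.
\item In the equality case, the denominator in the formula for $c$ in the proof of Lemma~\ref{lem:definedH}~(i) is $\det(\vn_Y,\ovr{XV})=q_{y\mu}=0$. So $\vn_Y$ is parallel to $\ovr{XV}$ and never meets the line through $X$ and $V$.
\item The ray therefore leaves the quadrilateral through the open edge $OX$, and $s$ is well-defined for every $b$ at which the previous diagram is. This is consistent with the criterion: at those steps $\eps=1$ and $\al_\mu=0$, so $\eps\al_\mu<\eps b$ just says $b>0$.
\end{itemize}

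\textbf{The parity bookkeeping is inconsistent.} In your shifted indexing $s^j\Tt_0^H=(\bm{x}_{j-1},\bm{x}_j,\bm{x}_{j+1})$, the even-indexed entries have $t<0$. Hence their $\al$ lies below $1/3$ and is nondecreasing, while the odd-indexed $\al$'s lie above $1/3$ and decrease. You state the reverse, which is correct only in the indexing of Lemma~\ref{lem:salphaOr}. Read literally together with your (correct) ``even $j$ gives lower bounds $b>\al_j$,'' it would make the intersection empty. Likewise, your base-case convention should be $\al_{-1}=1$, $\al_0=0$ in your own indexing. With these fixed, the binding constraints are indeed the last two, and your final interval between $\al_{k-1}$ and $\al_k$ (statement's indexing) is correct.
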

\begin{proof}
 Note that by Lemma ~\ref{lem:salphaOr}, we have that 
	\begin{equation}\label{eq:alt}
	    \eps_k\al_{k+1} <\eps_k \tfrac13 < \eps_k \al_{k+2} \leq \eps_k\al_{k}
	\end{equation}  
    where $\eps_{k}=1$ if $s^{k}\Tt_0^H$ is sign-matching and $-1$ otherwise. By Proposition~\ref{prop:genTrip}, we have that $s$-mutations alternate between sign-matching and sign-alternating where $\Tt_0^H$ is sign-matching. Hence, $\eps_{k}=(-1)^{k}.$

    By Lemma~\ref{lem:definedH}~(i), assuming that $s^{k-1}Q_{H_b}^0=(\bm{x}_{k-1},\bm{x}_k,\bm{x}_{k+1})$ is well-defined, then $s^kQ_{H_b}^0$ is well-defined if  
    \[ \eps_{k-1} \al_{k} <\eps_{k-1} b.\] Note, for $ys^k\Tt_0^H$, we have $q_{y\mu}$ is positive as $q_k\leq q_{k+1}$ and $p_k/q_k<p_{k+1}/q_{k+1}$. 
    
 Combining this with \eqref{eq:alt}, we get that $s^kQ_{H_b}^0$ is well-defined if 
  \[ (-1)^{k}\al_{k-1} <  (-1)^{k} b < (-1)^{k}\al_{k} .\]
\end{proof}

The sequences considered in the next lemma corresponds to the classes $w\mathcal{B}_{2n}$ as seen in Proposition~\ref{lem:perf4H}. 
\begin{lemma} \label{lem:bDefH1}
Let $w=w'y\ov{x}^k$ where $w'$ is a word containing only $x,y$, and $w\Tt_0^H=:(\bm{x}_\la,\bm{x}_\mu,\bm{x}_\rho).$ For $b\in (\frac{k-1}{k},m_\la/d_\la)$, the mutation sequence $wQ_{H_b}^0$ is defined. 
\end{lemma}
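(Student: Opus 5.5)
The plan is to build the word from right to left and check well-definedness at each step with Lemma~\ref{lem:definedH}. We use Proposition~\ref{prop:ATFMut} throughout to identify each intermediate diagram with $Q_{H_b}(\cdot)$ of the corresponding triple. The argument has three stages.

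\emph{Stage 1: the $\ov{x}^k$ part.} Recall from the proof of Proposition~\ref{lem:perf4H} that
\[
\ov{x}^j\Tt_0^H=\bigl((1,1,2),(2j+2,1,2j-1),(2j+4,1,2j+1)\bigr).
\]
By Lemma~\ref{lem:definedH}~(iv), $\ov{x}$ is well-defined on $\ov{x}^{j}Q_{H_b}^0$ once $q_{\ovy\la}<0$. Alternatively, one can compute directly where $\vn_V$ meets the boundary, since all vertices here have $q=1$ and everything is explicit. I expect this condition to be equivalent to a linear inequality in $b$ of the form $b>\frac{j}{j+1}$, coming from the lengths $|OX|=\frac{m_\rho'b-d_\rho'}{q_\rho}$ and $|OY|=1-b$. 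With $j=k-1$ this gives the lower endpoint $\frac{k-1}{k}$. Concretely, I would write down $Q_{H_b}(\ov{x}^j\Tt_0^H)$ with its side lengths from Definition~\ref{def:DecQuad} and Lemma~\ref{lem:XVVY}. I would then check positivity of all lengths and that $\vn_V$ hits $OX$ exactly when $b>\frac{j}{j+1}$. These thresholds increase in $j$, so the single condition $b>\frac{k-1}{k}$ covers all $j<k$.

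\emph{Stage 2: the first $y$.} We have $y\ov{x}^k\Tt_0^H=\mathcal{B}_{2k-4}$. Lemma~\ref{lem:definedH}~(i) says $y$ is well-defined on $\ov{x}^kQ_{H_b}^0$ precisely when $b<m_\mu/d_\mu$ for the middle entry $(2k+2,1,2k-1)$, provided $q_{y\mu}>0$. The triple is sign-matching, and $q_{y\mu}=t_\rho q_\mu-q_\la=2k$. We then need to compare $m_\mu/d_\mu$ with the final $m_\la/d_\la$.

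\emph{Stage 3: the $x,y$-word $w'$.} Here the key tool is Corollary~\ref{cor:triples} together with the $\al$-ordering of Corollary~\ref{cor:alOrdered}. Along $w'$ every triple is sign-matching with positive $t$ and has $p/q>6$ (Lemma~\ref{lem:pOverq6}~(ii)). By Lemma~\ref{lem:definedH}~(ii), every $x$-mutation is therefore automatically well-defined. For each $y$-mutation, Lemma~\ref{lem:definedH}~(i) requires $b<\al_\mu$ of the current triple, and $q_{y\mu}>0$ holds by the monotonicity in Corollary~\ref{cor:triples}.

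So I must show that the minimum of all the $\al_\mu$ values encountered along the word is at least $\al_\la$ of the final triple, and that this is bounded below by $\frac{k-1}{k}$ so that the interval is nonempty. The $\al$-ordering shows that any new middle entry has $\al$ smaller than both neighbours. I will argue by induction that the left entry $\bm{x}_\la$ of the final triple is the entry with the smallest $\al$ among all middle entries previously used in a $y$-step. This works because a $y$-step moves the old middle to the left slot, while an $x$-step keeps the left slot and inserts a middle with $\al$ below the current ones. The case analysis needs care, since an $x$-step inserts a new middle whose $\al$ may fall below the current left entry's. In that case, however, that entry later becomes the left entry only through a subsequent $y$-step, which is what makes the induction go through. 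The endpoint $m_\la/d_\la$ in the statement reflects exactly this.

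The main obstacle is this bookkeeping in Stage 3 of which $\al$ governs, together with verifying $m_\la/d_\la>\frac{k-1}{k}$. For the latter I would use that all entries of $w\Tt_0^H$ have $p/q\in(2k,2k+2)$, and that Lemma~\ref{lem:md13} and the explicit seed values $\al(\mathcal{B}_{2k-4})$ bound $\al$ below by $\frac{k-1}{k}$ via the recursion.
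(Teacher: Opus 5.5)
Your Stages 2 and 3 follow the paper's route: the $x$-steps are well-defined because $p_\rho/q_\rho>6$ gives $\vn_X$ positive slope, and the $y$-steps are handled by $\al$-ordering together with Lemma~\ref{lem:definedH}(i). The bookkeeping is simpler than you fear. Between two consecutive $y$-steps only $x$-steps occur, and these fix the left slot. So the middle entry used at one $y$-step is the left entry at the next $y$-step, and $\al$-ordering makes the $\al$-values of successive $y$-step middles strictly decrease. The last of them is the final $\al_\la$. This gives both $b<\al_\mu$ at every $y$-step and $\al_\la\le k/(k+1)$.

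In Stage 1, Lemma~\ref{lem:definedH}(iv) is of no use. For $\ov{x}^j\Tt_0^H$ one has $q_{\ovy\la}=2j\ge 0$, and this quantity does not depend on $b$ at all. Your direct computation does work: $V=(j-(j-1)b,\,1-b)$ and $\vn_V=(-2j,-1)$, so $\vn_V$ meets $OX$ iff $(j+1)b>j$. This replaces the paper's appeal to \cite[Proposition 3.9]{M1}.

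The genuine gap is the step you flag yourself: nonemptiness, i.e.\ $\al_\la>\frac{k-1}{k}$. Without it the lemma is only vacuously true, and Theorem~\ref{thm:HTrip} needs an actual interval. The paper says explicitly that $\al$-ordering does not suffice here and cites \cite[Lemma 5.4]{M1}. The ingredients you name do not supply it:
\begin{itemize}
\item Lemma~\ref{lem:md13} only gives $\al>1/3$.
\item Seed values of $\al$ cannot be propagated as lower bounds, because every mutation produces an entry with smaller $\al$ than its neighbours.
\item The $p/q$-window for entries of $w'\mathcal{B}_{2k-4}$ is $[2k+2,2k+4]$, not $(2k,2k+2)$. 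Even the correct window, fed through $p/q<\acc_H(\al)$ (Lemma~\ref{lem:pqAlOrder}), fails for $k=2$, since $\acc_H(1/2)\approx 6.17>6$.
\end{itemize}

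A short fix within your framework is to use the functional $L(\bm{x}):=km-(k-1)d$. It is linear in $(p,q,t)$, so it obeys the same recursion as the entries.
\begin{itemize}
\item On $\mathcal{B}_{2k-4}=y\ov{x}^k\Tt_0^H$ the entries have $(d;m)=(k+1;k)$, $(2k^2+3k;\,2k^2+k-1)$, $(k+2;k+1)$. Hence $L$ takes the values $1$, $2k$, $2$.
\item Suppose $0<L(\bm{x}_\la),L(\bm{x}_\rho)\le L(\bm{x}_\mu)$. The new middle after an $x$- or $y$-step has $L=tL(\bm{x}_\mu)-L(\cdot)\ge (t-1)L(\bm{x}_\mu)\ge L(\bm{x}_\mu)$.
\item This uses that the recursion parameter $t\in\{t_\la,t_\rho\}$ is at least $2k-1\ge 3$.
\end{itemize}
So the property propagates to every triple. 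Hence $L(\bm{x}_\la)>0$, i.e.\ $m_\la/d_\la>\frac{k-1}{k}$ (using $d_\la>0$), for every word $w'$.
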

\begin{proof}
    By \cite[Proposition 3.9]{M1}, we have that 
    $y\ov{x}^kQ_{H_b}^0$ is defined if and only if $\frac{k-1}{k} < b < \frac{k}{k+1}$. To show that $y\ov{x}^kQ_{H_b}^0$ is defined for the claimed $b$-interval, it suffices to show that $\frac{k-1}{k}<\al_\la<\frac{k}{k+1}$. 
    
    We first check the upper bound. We can compute by a simple induction argument that the right entry of $y\ov{x}^k\Tt_0^H$ is $(2k+2,1,2k-1)$ with solutions $(d;m)=(1+k;k).$ By Corollary~\ref{cor:alOrdered}, the triples we obtain from doing $x,y$-mutation to $y\ov{x}^k\Tt_0^H$ will all be $\al$-ordered and sign matching. Hence, the $\al$-value for the middle entry will form a strictly decreasing sequence, and we have the $\al_\la<\frac{k}{1+k}.$ 

    To see why $\frac{k-1}{k}<\al_\la$, it does not suffice to make an $\al$-ordering argument. Instead, this follows from \cite[Lemma 5.4(i,ii)]{M1}. Hence, we conclude that $y\ov{x}^kQ_{H_b}^0$ is defined for the claimed $b$-interval. 

    The argument about $\al$-ordering also explains why each $y$-mutation is well-defined by  Lemma~\ref{lem:definedH}~(i). Lastly, each $x$ mutation is well-defined as by Lemma~\ref{lem:pOverq6}, for all $i,$ as the triple $\Tt_i$ has $p_{\bullet,i}/q_{\bullet,i}>6$ for $\bullet=\la,\mu,\rho,$ this implies that the nodal ray $\vn_X$ has positive slope, so each $x$-mutation is well-defined. See the criteria in Lemma~\ref{lem:definedH}~(ii). 
 
\end{proof}

\begin{rmk}
    Note that in Lemma~\ref{lem:bDefH1}, we have that $wQ^0_{H_b}$ is well-defined for $b \in (1/3,m_\la/d_\la).$ Note that $wQ^0_{H_b}$ is not well-defined for $m_\la/d_\la$ as then $|XV|=0$. This implies that the nodal ray at some point intersected the vertex. 
\end{rmk}

The sequence considered in the next lemma corresponds to the classes $wS^j(\mathcal{B}_{2n})$ as seen in Proposition~\ref{lem:perf4H}. 
\begin{lemma} \label{lem:bDefH2}
Let $w=w'y\ov{x}^ks^{3j}$ where $j \geq 1,$ $w'$ is a word containing only $x,y$, and $w\Tt_0=:(\bm{x}_\la,\bm{x}_\mu,\bm{x}_\rho).$ Then, for \[ (-1)^j \tfrac13 < (-1)^jb<(-1)^jm_\la/d_\la,\]
 $wQ_{H_b}^0$ is defined. 
\end{lemma}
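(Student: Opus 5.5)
The plan is to split $w = w' y\ov{x}^k s^{3j}$ into three stages and check that each stage is geometrically well-defined on the stated $b$-interval. Write $s^{3j}$ (the initial stage), then $y\ov{x}^k$, then $w'$. Throughout, Theorem~\ref{thm:ATFMut} and Proposition~\ref{prop:ATFMut} let us identify every intermediate diagram with $Q_{H_b}$ of the corresponding triple. Well-definedness at each step can therefore be read off from Lemma~\ref{lem:definedH}.

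\textbf{Stage 1: the $s^{3j}$ part.} By Lemma~\ref{lem:sMutDefinedH}, $s^{3j}Q_{H_b}^0$ is well-defined when $(-1)^{3j}\al_{3j-1}<(-1)^{3j}b<(-1)^{3j}\al_{3j}$. Because $(-1)^{3j}=(-1)^j$, this interval contains $1/3$ as an endpoint side. Lemma~\ref{lem:salphaOr} shows that $\al_{3j-1}$ and $\al_{3j}$ lie on opposite sides of $1/3$, with $(-1)^j\al_{3j-1}<(-1)^j\tfrac13$. So it suffices to show
\[(-1)^j m_\la/d_\la \le (-1)^j\al_{3j}.\]

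\textbf{Stage 2: the $y\ov{x}^k$ part.} Here I use the commutation in Lemma~\ref{lem:commuteS} to write $\ov{x}^i s^{3j}\Tt_0^H = S^j\ov{x}^i\Tt_0^H$. The middle and right entries are $S^j$ applied to $(2i+2,1,2i-1)$ and $(2i+4,1,2i+1)$; the left entry is $S^j(1,1,2)=\bm{x}_{3j}$.

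For each $\ov{x}$ step, apply Lemma~\ref{lem:definedH}~(iv). This requires $q_{\ovy\la}<0$ for the triple $S^j\ov{x}^{i}\Tt_0^H$. Since $S$ acts linearly and commutes with the mutations, $q_{\ovy\la}$ is the $q$-entry of $S^j$ applied to the $\ovy$-left entry of $\ov{x}^i\Tt_0^H$. That left entry can be computed explicitly: it is $\nu_\rho(1,1,2)-(2i+2,1,2i-1)$ with $\nu_\rho=1$, which gives $(-2i-1,0,\dots)$. Applying $S^j$ preserves the sign pattern needed, using the explicit form of $S^j$ on vectors with $q=0$, $p<0$. I expect this to be a short computation, but I would double-check it case by case.

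For the $y$ step, apply Lemma~\ref{lem:definedH}~(i) with $\eps=(-1)^j$, since $S$ flips the sign type exactly $j$ times by Proposition~\ref{prop:genTrip}. This requires $(-1)^j b<(-1)^j\al$, where $\al$ is the ratio for $S^j(2k+2,1,2k-1)$. The positivity of $q_{y\mu}$ follows from Corollary~\ref{cor:triples}.

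\textbf{Stage 3: the $w'$ part, and assembling the interval.} For the remaining $x,y$ steps, $y$-mutations are handled exactly as in Lemma~\ref{lem:bDefH1}. Corollary~\ref{cor:alOrdered} gives $\al$-ordering, so the middle $\al$-values are strictly monotone in the $\eps$-sense, and the binding constraint is the final $\al_\la$. The $x$-mutations cannot use the slope criterion, because Lemma~\ref{lem:pOverq6}(v) gives $p/q<6$. Instead I would use Lemma~\ref{lem:definedH}~(iii) with $\eps=(-1)^j$. Its hypotheses are:
\begin{itemize}
\item $S^{-1}\bm{x}_\mu$ lies in the triple family $w''\mathcal{B}_{2n}$, so it is positive;
\item $\eps t_\mu>0$, which holds by Corollary~\ref{cor:signOfT};
\item $q_{x\mu}>0$, which holds by Corollary~\ref{cor:triples}.
\end{itemize}
Under these hypotheses the condition is exactly $(-1)^jb>(-1)^j\tfrac13$.

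All the $\al$-constraints from Stages 1–3 reduce to the single bound $(-1)^jb<(-1)^j m_\la/d_\la$, by $\al$-ordering. This works because the final $\al_\la$ is $\eps$-smaller than every earlier middle $\al$, and also smaller than $\al_{3j}$ via the monotonicity chain. Together with $(-1)^jb>(-1)^j\tfrac13$, this gives the claim.

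\textbf{Main obstacle.} The hard step is proving the lower bound $(-1)^j\tfrac13<(-1)^j m_\la/d_\la$, i.e. that the interval is nonempty, and comparing $\al_\la$ with $\al_{3j}$. As in Lemma~\ref{lem:bDefH1}, where this needed \cite[Lemma 5.4]{M1}, $\al$-ordering alone is not enough. I would try to deduce it from Lemma~\ref{lem:md13} applied to $\bm{x}_\la$, since $\operatorname{sign}(t_\la)=(-1)^j$ by Corollary~\ref{cor:signOfT}. For the comparison with $\al_{3j}$, I would use Corollary~\ref{cor:ordering} applied to the triple $S^j\ov{x}^k\Tt_0^H$.
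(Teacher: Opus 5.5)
Your proposal is correct and is essentially the paper's proof: your three stages correspond to its Claims 1--3 and use the same ingredients (Lemmas~\ref{lem:sMutDefinedH}, \ref{lem:salphaOr}, \ref{lem:md13}, $\al$-ordering of the positive triples, and parts (i), (iii), (iv) of Lemma~\ref{lem:definedH}). The only differences are one cosmetic choice and two small slips, none of which affects the argument: for the $\ov{x}$-steps you verify $q_{\ovy\la}<0$, whereas the paper checks the equivalent condition that the new right entry has $p/q<6$ via $S(p/q)=6-q/p$; for $\ov{x}^i\Tt_0^H$ one has $\nu_\rho=t_\rho=2i+1$, so the $\ovy$-left entry is $(-1,2i,2i+3)$ rather than $(-2i-1,0,\dots)$, but the conclusion survives because $S(-1,2i)=(-2i-6,-1)$ and $S$ preserves $p\le q<0$; and for $j\ge 2$ the $S^{-1}$-shifted triple in Stage 3 lies in $w''S^{j-1}\mathcal{B}_{2n}$ rather than $w''\mathcal{B}_{2n}$, which is still a positive triple.
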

\begin{proof}

    For ease of notation, we begin by assuming that $3j$ is even. A very similar argument holds for the case where $3j$ is odd. This assumption implies by Proposition~\ref{prop:genTrip} that $s^{3j}\Tt_0$ is a sign-matching recursive triple. 

    {\bf Claim 1:} $s^{3j}Q_{H_b}^0$ is defined for the $b$-interval. 
    Let $s^{3j}\Tt_0^H:=(\bm{x}_{3j},\bm{x}_{3j+1},\bm{x}_{3j+2})$ with corresponding ratios $m_j/d_j=\al_j.$ By Lemma~\ref{lem:sMutDefinedH}, we have that $s^{3j}Q_{H_b}^0$ is well-defined if $   \al_{3j-1} < b < \al_{3j}.$ Note, by Lemma~\ref{lem:salphaOr} that $\al_{3j-1}<\tfrac13<\al_{3j}$. Hence, $s^{3j}Q_{H_b}^0$ will be defined for the $b$-interval assuming that \begin{equation} \label{eq:sDefined}
        \tfrac13<\al_\la<\al_{3j}.
    \end{equation}
 
	Let us denote the ratios corresponding to the triple $\ov{x}^ks^{3j}\Tt_0^H$ as 
	$(\al_{3j},\al_{j,k},\al_{j,k+1})$
	where $\al_{3j+1}=\al_{j,-1}$ and $\al_{3j+2}=\al_{j,0}$. By Corollary~\ref{cor:alOrdered}, we have that $\ov{x}^ks^{3j}\Tt_0$ is $\al$-ordered for each $k$-value. Hence, we get the sequence:  
	\[ 1/3 < \al_{3j+2}=\al_{j,0} < \al_{j,1} < \hdots <\al_{j,k} < \al_{j,k+1} < \al_{3j}.\]
  
	Then, we take a $y$-mutation the triple of the $m/d$ ratios is  $(\al_{j,k},\al_{y,j,k},\al_{j,k+1})$,
	which remains $\al$-ordered implying that 
	\[ \al_{y,j,k}< \al_{j,k} <\al_{j,k+1} < \al_{3j}.\]
    Note that by Corollary~\ref{cor:triples}, the $t$-value for each entry of $y\ov{x}^ks^{3j}\Tt_0$ is positive as we are assuming $j$ is even, so $1/3<\al_{y,j,k}$ by Lemma~\ref{lem:md13}. 
    For the $x,y$-mutations in $w'$, the triples remain $\al$-ordering with positive $t$-values, so the left entry will be
    \[ 1/3<\al_\la \leq \al_{j,k}<\al_{3j}\]
    as desired in \eqref{eq:sDefined} for Claim 1.

    {\bf Claim 2:} $\ov{x}^ks^{3j}Q_{H_b}^0$ is defined for the claimed $b$-interval.

    Recall we are assuming that $j>0.$ In this case, we have that \[\ov{x}^ks^{3j}\Tt_0^H=S^j\left((1,1,2),(2+k,1,k-1),(4+k,1,1+k)\right),\] 
    so each entry in the triple has $p/q \leq 6$ as $S(p/q)=6-q/p.$ Hence, by Lemma~\ref{lem:definedH}, for any $k$, $\ov{x}^ks^{3j}Q_{H_b}^0$ is defined for any $b$ such that $s^{3j}Q_{H_b}^0$ is well-defined. This proves Claim 2 when $j>0.$

    {\bf Claim 3:} $w'y\ov{x}^ks^{3j}Q_{H_b}^0$ is defined for the claimed $b$-interval.

    The $y$-mutations being well-defined follows due to the $\al$-ordering of the triples and Lemma~\ref{lem:definedH}~(i).

     We now consider each $x$-mutation in $w'.$ We check that in this case the assumptions of Lemma~\ref{lem:definedH}~(iii) are satisfied.  If $j> 0,$ then $S^{-1}(w_i\hdots w_1 y \ov{x}^ks^{3j}\Tt_0^H)=w_i\hdots w_1 y \ov{x}^ks^{3(j-1)}\Tt_0^H \in \mathcal{W}_H$, so is positive by Corollary~\ref{cor:Positive}. By Corollary~\ref{cor:triples}~(iv), each $t$-value of $w_i\hdots w_1 y \ov{x}^ks^{3j}\Tt_0^H$ is positive and $\eps=1$ as the triples are sign-matching. Hence, the assumptions of Lemma~\ref{lem:definedH}~(iii) are satisfied. This implies that each $x$-mutation in $w'$ will be well-defined assuming that $b>1/3,$ which does hold for the claimed $b$-interval.  
\end{proof} 

The sequence considered in the next lemma corresponds to the classes $wS^jR(\mathcal{B}_{2n})$ for $j \geq 1$ as seen in Proposition~\ref{lem:perf4H}. 

\begin{lemma} \label{lem:bDefH3}
Let $w=w'yx^ny\ov{x}s^{3j+1}$ where $w'$ is a word containing only $x,y$. Let $w\Tt_0=(\bm{x}_\la,\bm{x}_\mu,\bm{x}_\rho)$ and $\eps=(-1)^{j+1}.$ Then, for
\[\eps\tfrac13<\eps b< \eps\tfrac{m_\la}{d_\la},\]
 $wQ_{H_b}^0$ is defined. 
\end{lemma}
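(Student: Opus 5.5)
We follow the three-claim structure of Lemma~\ref{lem:bDefH2}, tracking the sign $\eps=(-1)^{j+1}$. By Proposition~\ref{prop:genTrip}, $s^{3j+1}\Tt_0^H$ is sign-matching exactly when $3j+1$ is even, i.e.\ when $j$ is odd. So $\eps$ is the sign of $s^{3j+1}\Tt_0^H$. This sign is preserved by every subsequent $x,y,\ov{x}$-mutation, and by Corollary~\ref{cor:signOfT} all $t$-values of the later triples have sign $\eps$. For concreteness we treat $j$ odd ($\eps=1$); the case $j$ even is symmetric, with inequalities reversed.

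\textbf{Step 1: the $s$-part.} By Lemma~\ref{lem:sMutDefinedH}, $s^{3j+1}Q_{H_b}^0$ is well-defined for $b$ strictly between $\al_{3j}$ and $\al_{3j+1}$. By Lemma~\ref{lem:salphaOr}, $1/3$ lies strictly between these two values, with $\al_{3j+1}$ on the $\eps$-side. It therefore suffices to show $\eps\frac13<\eps\al_\la<\eps\al_{3j+1}$. The lower bound follows from Lemma~\ref{lem:md13}, since $\eps t_\la>0$ and the entries are positive by Corollary~\ref{cor:Positive}. For the upper bound we track $\al$-values through $y\ov{x}$ and then $x^n y$ and $w'$, using the $\al$-ordering of each intermediate triple (Corollary~\ref{cor:alOrdered}). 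The left entry of $y\ov{x}s^{3j+1}\Tt_0^H$ is $\bm{x}_{3j+3}$, whose $\al$ lies between $1/3$ and $\al_{3j+1}$ by Lemma~\ref{lem:salphaOr}. The middle entries produced afterwards are strictly $\eps$-smaller than their neighbours, so $\eps\al_\la\le\eps\al_{3j+3}<\eps\al_{3j+1}$. The identity $yx^ny\ov{x}s^{3j+1}\Tt_0^H=S^jR(\mathcal B_{2n})$ from Proposition~\ref{lem:perf4H} lets us verify this chain concretely for $j=1$.

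\textbf{Step 2: the mutations $\ov{x}$, $y$, $x^n$, $y$.} For $\ov{x}$ we apply Lemma~\ref{lem:definedH}~(iv). We compute $\ov{y}s^{3j+1}\Tt_0^H$ and check that $q_{\ovy\la}<0$. This reduces, via the formula $S^j$ applied to explicit small triples (e.g.\ $s^4\Tt_0^H=((11,2,1),(23,4,-1),(29,5,2))$ for $j=1$), to a sign check that is stable under $S$. The two $y$-mutations and the $y$'s in $w'$ follow from Lemma~\ref{lem:definedH}~(i) together with $\al$-ordering. That lemma requires $\eps b<\eps\al_\mu$ at each step, and this follows from $\eps b<\eps\al_\la\le\eps\al_\mu'$ for the relevant middle entries, given the monotone chain of Step 1. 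We must also check $q_{y\mu}>0$, which holds by Corollary~\ref{cor:triples}.

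\textbf{Step 3: the $x$-mutations.} This is the main obstacle. The entries now have $3+2\sqrt2\le p/q<6$ by Lemma~\ref{lem:pOverq6}~(iv), so the nodal ray $\vn_X$ has negative slope and Lemma~\ref{lem:definedH}~(ii) does not apply. Instead we use Lemma~\ref{lem:definedH}~(iii), which requires $S^{-1}\bm{x}_\mu$ to have $d>0$, $m\ge0$, together with $\eps t_\mu>0$ and $q_{x\mu}>0$. For $j\ge1$ we have $S^{-1}$ of the current triple equal to the analogous triple with $s^{3j-2}$ in place of $s^{3j+1}$. This triple lies in $\mathcal W_H$, or for $j=1$ equals the corresponding $wR(\mathcal B_{2n})$ word, so it is positive by Corollary~\ref{cor:Positive}. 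The conditions $\eps t_\mu>0$ and $q_{x\mu}>0$ come from Corollaries~\ref{cor:signOfT} and~\ref{cor:triples}. Lemma~\ref{lem:definedH}~(iii) then gives well-definedness whenever $\eps b>\eps\frac13$, which holds on the claimed interval. The delicate point is the base case $j=1$, which pulls back to $R(\mathcal B_{2n})$ rather than to a shifted triple. There we would verify positivity directly from the explicit formula for $R(\mathcal B_{2n})$ in the proof of Proposition~\ref{lem:perf4H}; for $n\ge1$ this works, and $n=0$ is handled by noting that the $x$-mutations then act on triples whose entry $(1,0,-3)$ has been replaced.
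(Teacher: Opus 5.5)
Your skeleton matches the paper's proof. Claim~1 goes through Lemma~\ref{lem:sMutDefinedH} and $\al$-ordering, Claim~2 through Lemma~\ref{lem:definedH}~(iv), Claim~3 through Lemma~\ref{lem:definedH}~(i), and Claim~4 through Lemma~\ref{lem:definedH}~(iii) after pulling back by $S^{-1}$. For $j\ge 2$ your Step~3 is exactly the paper's argument.

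The gap is the case $j=1$ of Step~3, which you flag as delicate but then treat incorrectly. The $x$-mutations inside the block $x^n$ act on $x^ry\ov{x}s^4\Tt_0^H$ for $0\le r\le n-1$. Their $S^{-1}$-pullbacks are $x^ry\ov{x}s\Tt_0^H$, not $R(\mathcal{B}_{2n})$-words. For $r=0$ this is
\[
y\ov{x}s\Tt_0^H=\bigl((5,1,-2),(13,2,-5),(1,0,-3)\bigr),
\]
which is not positive. So ``it is positive by Corollary~\ref{cor:Positive}'' (or ``verify positivity directly'') fails for the first $x$ of the block whenever $n\ge1$. Similarly, for $n=0$ the first $x$ in $w'$ acts on a triple whose pullback is $y^kR(\mathcal{B}_0)$. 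That triple still has $(1,0,-3)$ as its right entry, and the entry is removed only by this $x$-mutation. So your claim that it ``has been replaced'' is false exactly where you need it.

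The missing observation, which the paper uses, is that Lemma~\ref{lem:definedH}~(iii) only involves $S^{-1}\bm{x}_\mu$, not the whole pulled-back triple. Moreover, $(1,0,-3)$ never sits in the middle slot: a $y$-mutation fixes the right entry and an $x$-mutation discards it. You then check the middle entries by hand:
\begin{itemize}
\item inside the block, $S^{-1}\bm{x}_\mu=(13+12r,2+2r,-(2r+5))$ with $(d;m)=(5(r+1);r)$;
\item along $y^kR(\mathcal{B}_0)$, the middle entries have $d>0$ and $m=0$.
\end{itemize}
So $d>0$ and $m\ge 0$ throughout, and (iii) applies for $b>1/3$. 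After the first $x$ in the $n=0$ tail, Lemma~\ref{lem:R'andR} gives $xy^kR(\mathcal{B}_0)=y\ov{x}^{k+2}\ov{y}s\Tt_0^H\in\mathcal{T}_H$. For $r\ge 1$, and after the block when $n\ge 1$, your positivity route does work, because $x^ry\ov{x}s\Tt_0^H=x^{r-1}y\ov{x}\ov{y}s\Tt_0^H$ and $uR(\mathcal{B}_{2n})=uyx^{n-1}y\ov{x}\ov{y}s\Tt_0^H$ lie in $\mathcal{T}_H$. Only the $r=0$ step and the start of the $n=0$ tail genuinely need the middle-entry observation.

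There is also a smaller slip in Step~2, which the paper's Claim~2 shares. For $j=1$ the left entry of $\ov{y}s^4\Tt_0^H$ is $(-1,0,3)$, so $q_{\ovy\la}=0$, not $<0$. Equivalently $\bm{x}_{\ovx\rho}=(6,1,3)$ and $p_{\ovx\rho}-6q_{\ovx\rho}=0$, so Lemma~\ref{lem:definedH}~(iv) does not literally apply. The mutation $\ov{x}$ is still well-defined: $\vn_V=(0,-1)$ is vertical and cannot reach $OY$, so it meets $OX$. For $j\ge 2$ the inequality is strict.
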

\begin{proof}
Note that the value of $\eps=1$ when $w\Tt_0$ is sign-matching and $-1$ otherwise by Proposition~\ref{prop:genTrip}. 
Denote $s^{3j+1}\Tt_0^H$ by $(\bm{x}_{3j+1},\bm{x}_{3j+2},\bm{x}_{3j+3})$ with corresponding ratios $m/d$ given by $(\al_{3j+1},\al_{3j+2},\al_{3j+3})$.

{\bf Claim 1:} $s^{3j+1}Q_{H_b}^0$ is defined for the claimed $b$-interval. 
By Lemma~\ref{lem:sMutDefinedH}, we have that $s^{3j+1}Q_{H_b}^0$ is well-defined if  
    \[ \eps \al_{3j} < \eps b < \eps \al_{3j+1}.\]

This follows similarly to Claim 1 in Lemma~\ref{lem:bDefH2} where we use the $\al$-ordering of the triples. In this case, we get that
\[ \eps 1/3<\eps \al_\la <\eps \al_{3j+1}.\]

        {\bf Claim 2: $\ov{x}s^{3j+1}Q_{H_b}^0$ is well-defined for the claimed $b$-interval}
        To check that we can perform $\ov{x}$ to $s^{3j+1}Q_{H_b}^0,$ we use the fact that $\ov{x}s^{3j+1}Q_{H_b}^0$ has $p_{\ovx\rho}-6q_{\ovx\rho}<0$, see Proof of Proposition~\ref{lem:perf4H}. 

        {\bf Claim 3: Each $y$-mutation is well-defined for the claimed $b$-interval.}
        By Lemma~\ref{lem:definedH}~(i), this again follows by $\al$-ordering. Note that the last $y$-mutation we perform will move $\bm{x}_\la$ to the left entry. By Lemma~\ref{lem:definedH}~(i) for this to be well-defined we need $\eps m_\la/d_\la>\eps b.$ Previous $y$-mutations will have the middle entry smaller than $\eps m_\la/d_\la.$

        {\bf Claim 4: Each $x$-mutation is well-defined for the claimed $b$-interval.}

The claimed interval implies that $\eps\frac13<\eps b.$
Suppose first that \(j\ge 2\). Before any \(x\)-mutation occurring after the
block \(yx^ny\ov{x}s^{3j+1}\), the \(S^{-1}\)-shift of the relevant middle
entry is obtained by replacing \(s^{3j+1}\) with \(s^{3j-2}=s^{3(j-1)+1}\).
Since \(j-1\ge1\), the resulting triple lies in the family already contained
in \(\mathcal W_H\). Hence it is positive by Corollary~\ref{cor:Positive}.
Moreover, by Corollary~\ref{cor:triples}, all \(t\)-coordinates have sign
\(\eps\). Therefore the hypotheses of Lemma~\ref{lem:definedH}~(iii)
are satisfied, and the inequality $\eps\frac13<\eps b$
implies that each such \(x\)-mutation is well-defined.

It remains to handle the exceptional case \(j=1\). Here the above argument
does not apply, because applying \(S^{-1}\) replaces \(s^4\) by \(s\), and
the triple \(y\ov{x}s\Tt_0^H\) contains the non-positive entry $(1,0,-3)$. However, Lemma~\ref{lem:definedH}~(iii) only uses \(S^{-1}\) of the middle
entry.

For the \(r\)-th \(x\)-mutation in the block \(x^ny\ov{x}s^4\), where
\(0\le r\le n-1\), the \(S^{-1}\)-shift of the middle entry is
\[
       (d;m;p,q;t)= (5(r+1);r;13+12r,\;2+2r,\;-(2r+5)).
\]
Thus \(d>0\), \(m\ge0\), and
\[
        \frac{m}{d}=\frac{r}{5(r+1)}<\frac13.
\]
Since in the case \(j=1\) we have \(\eps=1\) and the claimed interval
gives \(b>1/3\), Lemma~\ref{lem:definedH}~(iii) implies that these
\(x\)-mutations are well-defined.

After the block \(yx^ny\ov{x}s^4\), we have the middle entry of $S^{-1}\big(yx^ny\ov{x}s^4\Tt_0^H\big)$ is $(24n^2+62n+34,4n^2+10n+5,-4n^2-16n-13)$

The middle entry has $(d;m)=(10n^2+25n+13;\, n(2n+3)),$
so it is positive for \(n>0\), and nonnegative for \(n=0\). The same
direct check, together with the ordering of the \(p/q\)-coordinates, shows
that the hypotheses of Lemma~\ref{lem:definedH}~(iii) continue to hold for
the subsequent \(x\)-mutations appearing in the word \(w'\).
	 
\end{proof}

The sequence considered in the next lemma corresponds to the classes $wR(\mathcal{B}_{2n})$ for $n \geq 1$ as seen in Proposition~\ref{lem:perf4H}. 
\begin{lemma} \label{lem:bDefH4}
Let $w=w'yx^{n-1}y\ov{x}\ov{y}s$ where $w'$ is a word containing only $x,y$. Let $w\Tt_0=(\bm{x}_\la,\bm{x}_\mu,\bm{x}_\rho).$ Then, for $b \in (m_\la/d_\la,1/3)$, $wQ_{H_b}^0$ is defined. 
\end{lemma}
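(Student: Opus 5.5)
The argument proceeds letter by letter along $w=w'yx^{n-1}y\ov{x}\ov{y}s$, reading from the right, in the same way as Lemmas~\ref{lem:bDefH2} and~\ref{lem:bDefH3}. The proof has four steps.

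\textbf{Step 1: the initial block $\ov{y}s$.} By Lemma~\ref{lem:sMutDefinedH} with $k=1$, the diagram $sQ_{H_b}^0$ is well-defined exactly when $\al_1<b<\al_0$. Here $\al_0=1$, coming from $(1,1,2)$, and $\al_1=0$, coming from $(2,1,-1)$ with $(d;m)=(1;0)$. So $s$ is defined for all $b\in(0,1/3)$.

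Next compute $s\Tt_0^H=((2,1,-1),(4,1,1),(5,1,-2))$. This triple is sign-alternating, and $\ov{y}$ uses the parameter $\nu_\rho=p_\mu q_\la-p_\la q_\mu=2$, giving
\[
\ov{y}s\Tt_0^H=((0,1,-3),(2,1,-1),(5,1,-2)).
\]
Its left entry has $p_{\ov y\la}=0$, so Lemma~\ref{lem:definedH}(v) does not apply verbatim. I therefore plan to check directly that $\vn_V$ of $sQ_{H_b}^0$ points toward $OY$ for every $b\in(0,1/3)$. Its direction is $(-q_{\ov y\la},p_{\ov y\la})=(-1,0)$, which is horizontal, and $V$ lies in the open first quadrant. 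Hence the ray meets $OY$ in its interior for every such $b$.

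\textbf{Step 2: the mutations $\ov{x}$ and $y$.} Apply $\ov{x}$ and verify the condition of Lemma~\ref{lem:definedH}(iv) numerically, namely $p_{\ov x\rho}-6q_{\ov x\rho}<0$, by computing the new right entry. Then apply $y$. This mutation is well-defined on the side where $\eps b<\eps\al_\mu$, by Lemma~\ref{lem:definedH}(i), since the triple is sign-alternating with $\eps=-1$. This requires $b>m_\mu/d_\mu$, and one checks that $m_\mu/d_\mu$ is at most the eventual $\al_\la$.

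\textbf{Step 3: the remaining $y$-mutations.} From this point on, Proposition~\ref{lem:perf4H} and Corollary~\ref{cor:triples} show that all triples $yx^{r}y\ov{x}\ov{y}s\Tt_0^H$ and their $x,y$-descendants are positive, sign-alternating, have all $t<0$, and are $\al$-ordered by Corollary~\ref{cor:alOrdered}. With $\eps=-1$, $\al$-ordering says the middle entry has the largest $\al$. Each subsequent $y$-mutation needs $b>\al_\mu$. Since middle $\al$'s grow along the word, and the last $y$ moves $\bm{x}_\la$ into the left slot, all these conditions reduce to $b>m_\la/d_\la$. Together with Lemma~\ref{lem:md13} (negative $t$ gives $\al<1/3$), this explains the interval $(m_\la/d_\la,1/3)$.

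\textbf{Step 4: the $x$-mutations (the main obstacle).} The $x$-mutations are the hard part. By Lemma~\ref{lem:pOverq6}(iii), the entries of $wR(\mathcal B_{2n})$ satisfy $6<p/q\le 7$. So for an $x$-mutation whose current right entry already has $p_\rho/q_\rho>6$, Lemma~\ref{lem:definedH}(ii) makes it automatic. I expect this to cover every $x$ in $w'$ and in the block $x^{n-1}$, once the right entries have been replaced by entries of $R(\mathcal B_{2n})$.

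The first $x$ right after $y\ov{x}\ov{y}s$ may have a right entry with $p_\rho/q_\rho\le 6$. For these finitely many patterns, parametrized by $r$, I would fall back on Lemma~\ref{lem:definedH}(iii). This means computing $S^{-1}$ of the middle entry explicitly as a function of $r$, exactly as in the $j=1$ case of Lemma~\ref{lem:bDefH3}, and checking that $d>0$, $m\ge 0$, $\eps t_\mu>0$ and $q_{x\mu}>0$. With $\eps=-1$, the lemma then requires $b<1/3$, which holds on the interval.

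Finally, positivity of $|OY|$, $|OX|$ and interiority of $V$ follow from Proposition~\ref{prop:ATFMut}, since each step is a well-defined mutation.
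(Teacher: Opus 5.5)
Your overall plan is the same as the paper's: handle the initial block directly, use the ratio test $p_\rho/q_\rho>6$ for the $x$'s, and use $\al$-ordering for the $y$'s. However, Step 2 fails as written. You have $\ov{y}s\Tt_0^H=((0,1,-3),(2,1,-1),(5,1,-2))$ with $\nu_\la=3$. The new right entry is therefore $\bm{x}_{\ovx\rho}=3(5,1,-2)-(2,1,-1)=(13,2,-5)$, so $p_{\ovx\rho}-6q_{\ovx\rho}=1>0$; equivalently, $q_{\ovy\la}=1>0$. The hypothesis of Lemma~\ref{lem:definedH}(iv) is false, and no $b$-independent criterion can work at this step. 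Explicitly, $Q_{H_b}(\ov{y}s\Tt_0^H)$ has vertices $O=(0,0)$, $X=(1-b,0)$, $V=(1-b,1)$, $Y=(0,b)$ and nodal ray $\vn_V=(-1,-2)$. This ray meets the $x$-axis at $(\tfrac12-b,0)$. So the $\ov{x}$-mutation is defined exactly when $b<\tfrac12$; for $b>\tfrac12$ the ray hits $OY$ and only $\ov{y}$ is available. You need this explicit $b$-dependent computation, which is what the paper does when it checks directly that $y\ov{x}\ov{y}s$ is defined for $0<b<\tfrac12$. Since $(m_\la/d_\la,\tfrac13)\subset(0,\tfrac12)$, your conclusion survives once this step is replaced.

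Your worry in Step 4 is unfounded. The right entry before the first $x$ is $(13,2,-5)$, with $p/q=\tfrac{13}{2}$. More generally, $x^{k-1}y\ov{x}\ov{y}s\Tt_0^H$ has right entry $(1+12k,2k,-(3+2k))$, so $p_\rho/q_\rho=6+\tfrac{1}{2k}>6$ throughout the block, and Lemma~\ref{lem:definedH}(ii) applies to every $x$ there. For the $x$'s in $w'$, Lemma~\ref{lem:pOverq6}(iii) gives $p/q>6$ for all entries of $wR(\mathcal{B}_{2n})$ with $n\ge1$. So you never need Lemma~\ref{lem:definedH}(iii). The endpoint $\tfrac13$ then serves only as a convenient upper bound inside $(0,\tfrac12)$, and it exceeds $m_\la/d_\la$ by Lemma~\ref{lem:md13} because all $t$-values are negative.
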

\begin{proof}
	We can check by direct computation that $y\ov{x}\ov{y}s$ is defined for $0< b<1/2$. Now, we take $x^{n-1},$ by Proposition~\ref{lem:perf4H}, we have that 
    \[ x^{n-1}y\ov{x}\ov{y}s\Tt_0^H=\left((5,1,-2),(13+12n,2+2n,-(5+2n)),(1+12n,2n,-(3+2n))\right).\]
    In particular, the last entry has $p/q>6$ for all $n.$ Hence, for any $b$-values as $\vn_X$ has positive slope the $x$-mutation will be well-defined on the ATBD. The rest of the argument follows similarly to Lemma~\ref{lem:bDefH3}.

\end{proof}

 The sequence considered in the next lemma corresponds to the classes $wR(\mathcal{B}_{0})$ as seen in Proposition~\ref{lem:perf4H} and Corollary~\ref{cor:rTrip}. 
\begin{lemma} \label{lem:bDefH5}
    Let $w=w'y\ov{x}^{n+2}\ov{y}s$ where $w'$ is a word containing only $x,y.$ Let $w\Tt_0=(\bm{x}_\la,\bm{x}_{\mu},\bm{x}_\rho).$ Then, for $b \in (m_\la/d_\la,1/3)$ $wQ_{H_b}^0$ is defined. 
\end{lemma}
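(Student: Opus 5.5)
The approach mirrors Lemma~\ref{lem:bDefH4} and Lemma~\ref{lem:bDefH3}: first show that the initial block $\ov{y}s$ is well-defined on an explicit $b$-interval containing $(m_\la/d_\la,1/3)$; then handle the $\ov{x}^{n+2}$ block; then the final $y$; then the word $w'$.

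\textbf{Step 1: the block $\ov{y}s$.} I will compute $s\Tt_0^H=((2,1,-1),(4,1,1),(5,1,-2))$ and $\ov{y}s\Tt_0^H$ directly.

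\begin{itemlist}
\item By Lemma~\ref{lem:sMutDefinedH} with $k=1$, the quadrilateral $sQ_{H_b}^0$ is defined for $0<b<1/2$, since $m/d$ for $(2,1,-1)$ is $0$ and for $(4,1,1)$ is $1/2$.
\item The $\ov{y}$-mutation is then well-defined by Lemma~\ref{lem:definedH}~(v), which only requires checking the sign $q_{\ovx\rho}<0$ for $s\Tt_0^H$.
\item As recorded in the proof of Lemma~\ref{lem:bDefH4}, $\ov{y}sQ_{H_b}^0$ is defined for $0<b<1/2$.
\end{itemlist}

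\textbf{Step 2: the block $\ov{x}^{n+2}$.} By Lemma~\ref{lem:R'andR}, $\ov{x}^2\ov{y}s\Tt_0^H=R'(\mathcal{B}_0)$. Each further $\ov{x}$ keeps the left entry $(0,1,-3)$ fixed and performs a recursion by $3$, so I can write the triples $\ov{x}^{k}\ov{y}s\Tt_0^H$ in closed form. For each $\ov{x}$, Lemma~\ref{lem:definedH}~(iv) applies once I check $p_{\ovx\rho}-6q_{\ovx\rho}<0$. This holds because the entries obtained have $6<p/q\le 7$ by Lemma~\ref{lem:pOverq6}~(iii) (entries of $y^nR(\mathcal{B}_0)$); equivalently, the ratio $p/q$ of the right entry lies above $6$, so $\vn_V$ points toward $\ovr{OX}$. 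The first two $\ov{x}$'s, from $\ov{y}s\Tt_0^H$, I will check by direct computation. Hence $\ov{x}^{n+2}\ov{y}sQ_{H_b}^0$ is defined for all $b\in(0,1/2)$.

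\textbf{Step 3: the final $y$ and the word $w'$.} The triple $\ov{x}^{n+2}\ov{y}s\Tt_0^H$ is sign-alternating, so $\eps=-1$. It is positive and $\al$-ordered by Corollary~\ref{cor:alOrdered}, with all $t<0$ by Corollary~\ref{cor:triples}.

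\begin{itemlist}
\item \emph{The $y$-mutations.} By Lemma~\ref{lem:definedH}~(i) with $\eps=-1$, a $y$-mutation is well-defined exactly when $b>m_\mu/d_\mu$. The $\al$-ordering makes the middle $\al$ extremal at each stage, so $-\al_\mu$ increases through the $x,y$-mutations. Hence the most restrictive condition is the one for the entry that ends up on the left, namely $b>m_\la/d_\la$. Lemma~\ref{lem:md13} with $t<0$ gives $m_\la/d_\la<1/3$, so the interval $(m_\la/d_\la,1/3)$ is nonempty and contained in $(0,1/2)$.
\item \emph{The $x$-mutations.} The right entries in the $y^nR(\mathcal{B}_0)$ family satisfy $p/q>6$ (Lemma~\ref{lem:pOverq6}~(iii)), so $\vn_X$ has positive slope. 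Lemma~\ref{lem:definedH}~(ii) then makes every $x$ well-defined for all $b$. If some right entry equals $(0,1,-3)$ or has $p/q\le 6$, I fall back on Lemma~\ref{lem:definedH}~(iii) with $\eps=-1$. That requires $b<1/3$, which holds on the claimed interval, after checking positivity of $S^{-1}$ of the middle entry directly, as in the $j=1$ case of Lemma~\ref{lem:bDefH3}.
\end{itemlist}

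\textbf{Main obstacle.} The main obstacle is the bookkeeping in Step 3: confirming that the binding constraint among all $y$-mutations is exactly $b>m_\la/d_\la$, and that no $x$-mutation in $w'$ introduces a new bound. I would handle the first point through the monotonicity of middle $\al$-values under $\al$-ordering, as in Claim 3 of Lemma~\ref{lem:bDefH3}. For the second, I would use Lemma~\ref{lem:definedH}~(ii) wherever possible, since $p/q>6$ holds throughout this family.
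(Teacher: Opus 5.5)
\textbf{Step 2 has a genuine gap.} You invoke Lemma~\ref{lem:definedH}~(iv), which requires $p_{\ovx\rho}-6q_{\ovx\rho}<0$, i.e.\ that the \emph{new} right entry has $p/q<6$. But the entries produced by the $\ov{x}$-block, $(13,2,-5),(34,5,-13),(89,13,-34),\dots$, all have $p/q>6$, so your inequality points the wrong way.

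Write $\ov{x}^k\ov{y}s\Tt_0^H=((0,1,-3),\bm{x}_{k-1},\bm{x}_k)$. Then $\bm{x}_{\ovy\la}=(-p_{k-1},\,p_{k-1}-q_{k-1},\,-3p_{k-1}-t_{k-1})$, so $\vn_V=(-q_{\ovy\la},p_{\ovy\la})$ points strictly into the third quadrant. Hence neither (iv) nor (v) applies, and whether $\vn_V$ meets $\ovr{OX}$ depends on $b$.

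Your conclusion that $\ov{x}^{n+2}\ov{y}sQ^0_{H_b}$ is defined on all of $(0,1/2)$ is therefore false. Already $\ov{x}^2\ov{y}sQ^0_{H_b}$ needs $b<2/5$. In $\ov{x}\ov{y}sQ^0_{H_b}$ the vertex is $V=(2-b,2)$ with nodal ray $(-4,-5)$. This ray meets the $x$-axis at $2/5-b$, while $X=(1/2-b,0)$.

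\textbf{The missing computation is exactly the content of the paper's proof.} Every right entry $\bm{x}_k$ has $m_k=0$. The $d_k$ are the odd-index Fibonacci numbers $1,2,5,13,34,\dots$ (starting at $k=-1$), and $q_k=d_{k-1}$. So Definition~\ref{def:DecQuad} gives
$|OX|=(m_k'b-d_k')/q_k=3-b-d_k/d_{k-1}=d_{k-2}/d_{k-1}-b$.
Thus, for $k\ge1$, $\ov{x}^{k}\ov{y}sQ^0_{H_b}$ is defined exactly for $0<b<d_{k-2}/d_{k-1}$. The bounds $1/2,\,2/5,\,5/13,\dots$ decrease to $(3-\sqrt5)/2>1/3$, so the whole $\ov{x}^{n+2}$-block is defined on $(0,1/3)$, which is all the lemma needs. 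With that substitution, your Step 3 ($y$-mutations via $\al$-ordering, $x$-mutations via $p_\rho/q_\rho>6$ and Lemma~\ref{lem:definedH}~(ii)) matches the paper.

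\textbf{Two smaller slips.}
\begin{itemize}
\item In Step 1, $\ov{x}s\Tt_0^H$ has right entry $(1,0,-3)$, so $q_{\ovx\rho}=0$ and the strict criterion in (v) fails. The horizontal ray $\vn_V=(-1,0)$ still hits $\ovr{OY}$, so this step needs the direct check you also mention.
\item $\ov{x}^{n+2}\ov{y}s\Tt_0^H$ is not a positive triple: its left entry $(0,1,-3)$ has $d=0$. So Corollary~\ref{cor:alOrdered} does not apply to it. For the first $y$ you only need $b>m_\mu/d_\mu=0$, which holds.
\end{itemize}
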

\begin{proof}
    We can check directly that $\ov{x}^{n}\ov{y}sQ_{H_b}^0$ is well-defined for $0<b<d_{n-2}/d_{n-1}$
    where $\ov{x}^{k}\ov{y}s\Tt_0^H=((0,1,-3),(p_{k-1},q_{k-1},t_{k-1}),(p_k,q_k,t_k))$ with corresponding $(d_k;0).$ Note, we can easily verify from the base case that $d_k$ is the odd-index Fibonacci numbers, i.e. $1,2,5,13,34,\hdots$. Hence, $d_{k-1}/d_k >1/3$ for all $k.$ While $\ov{x}^{n}\ov{y}s\Tt_0^H$ is not a positive triple, after performing $y\ov{x}^{n+2}\ov{y}s,$ all other triples are positive by the computation in Lemma~\ref{lem:R'andR} and Lemma~\ref{lem:pOverq6}. Hence, each triple is $\al$-ordered by Corollary~\ref{cor:ordering}.

    For each later $x$-mutation, we have that $\vn_X$ will always have positive slope with the third element having $p/q>6$ by Lemma~\ref{lem:R'andR} and Lemma~\ref{lem:pOverq6}. For each later $y$-mutation the argument is similar as before due to $\al$-ordering. 

\end{proof}
This completes the cases for $w \in \mathcal{W}_H.$ We now move on to the case of $w \in \mathcal{W}_P.$

Recall, the sequence of mutation $\{r_i\}$ from \eqref{eq:SPsequence}, which we use to obtain the triples $S^j\Tt_0^P.$

The sequence considered in the next two lemma corresponds to the classes $S^j\Tt_0^P$ for as seen in Proposition~\ref{lem:perf4P}. 
 
\begin{lemma} \label{lem:alPSordering}
  For triples of the form $r_i\Tt_0^P$ defined above, letting $\al_i=e_i/f_i$,
for $i \geq 1,$ we have
\[ \al_2 < \al_{3i+1} <\al_{3i+4} <\al_0=\al_{3i}=1< \al_{3i+5}<\al_{3i+2}<\al_1.\]
Further, for all $i\geq 0,$ we have that $1/\al_{3i+1}=\al_{3i+2}.$

\end{lemma}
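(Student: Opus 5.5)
The plan is to compute the sequence $\bm{x}_i$ explicitly through the three-periodic structure, and then compare ratios using Lemma~\ref{lem:eFrelations} and Corollary~\ref{cor:ordering}.

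First, I will compute the base data directly. From $\Tt_0^P=((1,1,2),(3,1,0),(5,1,2))$ and the $s$-mutations we get
\[
\bm{x}_{-1}=(1,1,2),\quad \bm{x}_0=(3,1,0),\quad \bm{x}_1=(5,1,2),\quad \bm{x}_2=(5,1,-2),\quad \bm{x}_3=(17,3,0),\quad \bm{x}_4=(29,5,-2),
\]
with $(e,f)$ equal to $(1,0),(1,1),(2,1),(1,2),(4,4),(7,8)$ respectively. This gives $\al_0=\al_3=1$, $\al_1=2$, $\al_2=1/2$ and $\al_4=7/8$. Next I use the proof of Lemma~\ref{lem:commuteS}: there $\Tt(j+1)=I_tS\Tt(j)$, with $t_\mu=0$ and $t_\rho=-2$ in $\Tt(j)=r_{3j}\Tt_0^P$. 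Consequently $\bm{x}_{3i+3}$ always has $t=0$, so $e=f$ and $\al_{3i}=1$. Moreover, by \eqref{eq:SPsequence} the entries $\bm{x}_{3i+1}$ and $\bm{x}_{3i+2}$ share their $(p,q)$ and have opposite $t=\mp 2$. Since $I_t$ swaps $e$ and $f$, this gives $\al_{3i+2}=1/\al_{3i+1}$, which is the second claim. For $i=0$ it is the check $\al_1=2=1/\al_2$ above; in general it follows by induction through $I_tS$, using Lemma~\ref{lem:symTriple}(ii).

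It therefore remains to show that $\al_{3i+1}<1$ is strictly increasing in $i$ and stays above $\al_2$. The reflected chain on the other side of $1$ then follows from $\al_{3i+2}=1/\al_{3i+1}$, and $\al_2=1/\al_1$ gives the outer bounds. By Lemma~\ref{lem:symTriple}(ii), $I_tS$ sends $(e,f)\mapsto(2p-f,2p-e)$. Because $\bm{x}_{3i+4}=I_tS\bm{x}_{3i+1}$ (up to the indexing in Example~\ref{eg:s^kforPandH2}), the sequence $(e_{3i+1},f_{3i+1},p_{3i+1})$ satisfies a linear recursion. Two routes are available from here.

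\emph{Direct route.} Write $(e_k,f_k)$ in closed form. Since $t=-2$ throughout, \eqref{eq:dmefIntro} gives $e=(p+q-2)/4$ and $f=(p+q+2)/4$. Hence $\al_{3i+1}=(p+q-2)/(p+q+2)$, which is strictly increasing in $p+q$ and is less than $1$. Since $p_k,q_k$ strictly increase under $S$ (we have $6p-q>p$ whenever $p/q>1/5$), the strict increase follows at once. The comparison with $\al_2=1/2$ amounts to $p+q>6$, which holds for $i\ge1$ because $\bm{x}_4$ already has $p+q=34$.

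\emph{Alternative route.} Apply Corollary~\ref{cor:ordering} to the positive triples $r_{3k+1}\Tt_0^P=(\bm{x}_{3k},\bm{x}_{3k+2},\bm{x}_{3k+1})$.

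I expect the main obstacle to be sign and indexing bookkeeping: confirming that the $t$-coordinates of $\bm{x}_{3i+1}$ really are all $-2$ (rather than alternating) under the modified words $r_i$. I will verify this from the invariants (i)--(iii) in the proof of Lemma~\ref{lem:commuteS}, together with the formula $r_{3k+1}\Tt_0^P=(\bm{x}_{3k},\bm{x}_{3k+2},\bm{x}_{3k+1})$. If it works out, the monotonicity reduces to the elementary observation above.
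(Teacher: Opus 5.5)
Your direct route is correct, and it differs from the paper's argument. The paper only tracks the sign pattern of the $t$-coordinates of $r_{3k+1}\Tt_0^P,r_{3k+2}\Tt_0^P,r_{3k+3}\Tt_0^P$, namely $(0,2,-2),(2,-2,0),(-2,0,-2)$, and then invokes Lemma~\ref{lem:md13}. That places every $\al_i$ on the correct side of $1$. But the sign pattern alone does not give the strict increase $\al_{3i+1}<\al_{3i+4}$, the bound $\al_2<\al_{3i+1}$, or the reciprocal identity.

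You instead extract three facts from the invariants in the proof of Lemma~\ref{lem:commuteS}:
\begin{itemize}
\item $\bm{x}_{3i+2}=I_t\bm{x}_{3i+1}$, so $\al_{3i+2}=1/\al_{3i+1}$ because $I_t$ swaps $e$ and $f$.
\item For $i\ge1$, $\bm{x}_{3i+1}$ is the right entry of $r_{3i}\Tt_0^P$ and so has $t=-2$; hence $\al_{3i+1}=(p+q-2)/(p+q+2)$.
\item Consecutive such entries are related by $I_tS$, under which $p+q\mapsto 7p-q$ strictly increases.
\end{itemize}
This proves the full chain quantitatively. The outer bound $\al_2<\al_{3i+1}$ reduces to $p+q>6$, and the other side of $1$ follows by taking reciprocals. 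So your argument is more complete than the paper's sign-only argument.

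A few slips to fix:
\begin{itemize}
\item $\bm{x}_3=(17,3,0)$ has $(e,f)=(5,5)$, and $\bm{x}_4=(29,5,-2)$ has $(e,f)=(8,9)$. So $\al_4=8/9$, which matches your own formula at $p+q=34$.
\item The sign assignment ``$t=\mp2$'' is reversed at $i=0$: $\bm{x}_1=(5,1,2)$ and $\bm{x}_2=(5,1,-2)$. This is exactly why $\al_1>1>\al_2$ while $\al_{3i+1}<1$ for $i\ge1$. You handle $i=0$ separately, so it is harmless.
\item $q$ increases under $S$ only when $p>q$. All you actually need, though, is $7p-q>p+q$, i.e.\ $3p>q$.
\end{itemize}

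Your ``alternative route'' would not work as stated. Corollary~\ref{cor:ordering} applied to $r_{3k+1}\Tt_0^P=(\bm{x}_{3k},\bm{x}_{3k+2},\bm{x}_{3k+1})$ only compares $\al_{3k},\al_{3k+1},\al_{3k+2}$. To get $\al_{3k+1}<\al_{3k+4}$ that way, you would need the triple $r_{3k+3}\Tt_0^P=(\bm{x}_{3k+1},\bm{x}_{3k+3},\bm{x}_{3k+4})$, together with positivity of $(p_{\ovx\rho},q_{\ovx\rho})$. For example, for $k=1$ this is $(239,41)$.
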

\begin{proof}
    Here, a triple of integers $(t_\la,t_\mu,t_\rho)$ will represent the $t$-coordinate of a triple $\Tt=(\bm{x}_\la,\bm{x}_\mu,\bm{x}_\rho).$
    For $\Tt_0^P,$ the $t$-values are $(2,0,2).$ As taking $s$-mutations switches the sign of the $t$-variables, we have that $r_1\Tt_0^P$, $r_2\Tt_0^P,$ and $r_3\Tt_0^P$ will have the following $t$-values $(0,2,-2),(2,-2,0),(-2,0,-2).$ As these are all positive triples, i.e. they have positive $(e,f;p,q)$-coordinates, we have the Lemma~\ref{lem:md13} implies the statement for $\al_0,\al_1,\al_2$ as for $P$-triples
    \[ e/f>1 \iff t>0, \quad e/f=1 \iff t=0, \quad e/f<1 \iff t<0.\]

   It is immediate by induction that for $i \geq 1,$ the $t$-values of $r_{3k+1}\Tt_0^P,r_{3k+2}\Tt_0^P,r_{3k+3}\Tt_0^P$ are in order $(0,2,-2),(2,-2,0),(-2,0,-2).$ Hence, the ordering follows by Lemma~\ref{lem:md13}.

\end{proof}

\begin{lemma} \label{lem:alPSordering2}
For $i \leq 3,$ if $1 < b < 2$, the mutation $r_iQ_{P_b}^0$ is well-defined. For $k \geq 1,$ the mutation sequences
$r_{3k+1}Q_{P_b}^0,r_{3k+2}Q_{P_b}^0,r_{3k+3}Q_{P_b}^0$ are well-defined if $1 < b < \al_{3k+2}.$

\end{lemma}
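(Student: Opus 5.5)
We argue by induction along the words, checking at each step which side the relevant nodal ray hits. The main tools are Lemma~\ref{lem:definedP}~(i) for the $s$- and $y$-steps, Theorem~\ref{thm:ATFMut} to identify each intermediate diagram with $Q_{P_b}(r\Tt_0^P)$, and the $\al$-ordering of Lemma~\ref{lem:alPSordering}.

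For $i\le 3$ we compute directly. We have $\Tt_0^P=((1,1,2),(3,1,0),(5,1,2))$, and $Q_{P_b}^0$ is well-defined for all $b>0$. The $s$-step on $Q_{P_b}^0$ is governed by $\al_0=e_0/f_0=1$. The sign is $\eps=1$, and $q_{y\mu}=\eps t_\rho q_\mu-q_\la=1>0$. So $sQ_{P_b}^0$ is well-defined exactly when $b>1$; this matches the picture, where $\vn_Y=(1,-1)$ hits $OX$ iff $b>1$.

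Next, $s\Tt_0^P$ is sign-alternating with middle entry $(5,1,2)$, so $\al_1=2$. Lemma~\ref{lem:definedP}~(i) with $\eps=-1$ then requires $-2<-b$, i.e.\ $b<2$, once we check that $q_{y\mu}>0$ for $s\Tt_0^P$. The third step uses $s^2\Tt_0^P$, which is sign-matching with middle entry $(5,1,-2)$, so $\al_2=1/2$. The condition is $b>1/2$, which is automatic. In each case $q_{y\mu}$ is computed directly from the explicit triples, and I expect it to be positive. Intersecting the three conditions gives $1<b<2$ for $r_1,r_2,r_3$.

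For $k\ge1$ we induct on $k$, following the words $r_{3k+1}=y\,r_{3k}$, $r_{3k+2}=s\,r_{3k+1}$, $r_{3k+3}=s\,r_{3k+2}$. The base case uses $r_3$ on $1<b<2$, and the statement also covers $r_{3k}$. By Lemma~\ref{lem:alPSordering}, the $t$-values of $r_{3k}\Tt_0^P$ are $(-2,0,-2)$, so this triple is sign-alternating with middle ratio $\al_{3k}=1$. Lemma~\ref{lem:definedP}~(i) with $\eps=-1$ makes the $y$-step well-defined iff $-1>-b$, i.e.\ $b>1$, provided $q_{y\mu}>0$.

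The next step is an $s$-step on $r_{3k+1}\Tt_0^P=(\bm{x}_{3k},\bm{x}_{3k+2},\bm{x}_{3k+1})$. This triple has $t$-values $(0,2,-2)$, is still sign-alternating, and has middle ratio $\al_{3k+2}>1$. The condition $\eps\al_{3k+2}<\eps b$ gives $b<\al_{3k+2}$. The final $s$-step acts on $r_{3k+2}\Tt_0^P$, which is sign-matching with middle ratio $\al_{3k+1}=1/\al_{3k+2}<1$, so it needs $b>\al_{3k+1}$, which is automatic for $b>1$.

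Lemma~\ref{lem:alPSordering} gives $\al_{3k+5}<\al_{3k+2}$, so the intervals $(1,\al_{3k+2})$ are nested and decreasing. Hence the hypothesis for $r_{3k}$ on the smaller interval follows from the previous stage, and the induction closes.

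The main obstacle is verifying the hypotheses $q_{y\mu}>0$ and $f_\mu>0$ at every stage, uniformly in $k$. Positivity of $f$ follows from Corollary~\ref{cor:Positive}, since the $r_i\Tt_0^P$ lie in $\mathcal{T}_P$. For $q_{y\mu}$, I would use Lemma~\ref{lem:commuteS}: $r_{3k}\Tt_0^P=I_t^{k-1}S^k\Tt_0^P$, and $S$ acts on the $(p,q)$-coordinates independently of $t$. So the $(p,q)$-data of each intermediate triple equals that of $s^i\Tt_0^P$. Since $q_{y\mu}=q_{\ov{x}\rho}$-type quantities are linear in $(p,q)$ with recursion parameter $\pm t$, I would show they are positive by the monotonicity of $q_k$ and the increasing ratios $p_k/q_k$, as in Lemma~\ref{lem:salphaOr}. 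If the sign convention in $q_{y\mu}=\eps t_\rho q_\mu-q_\la$ causes trouble when $t_\rho=0$, I would instead argue geometrically: the nodal ray hits $XV$ or $OX$ according to the sign of $|X_yV_y|$ computed in Lemma~\ref{lem:yLengths}.

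Finally, well-definedness of $Q_{P_b}(r_i\Tt_0^P)$ itself, meaning $V$ lies in the open quadrant and the lengths $|OX|,|OY|$ are positive, follows from Theorem~\ref{thm:ATFMut} once each mutation step is legitimate.
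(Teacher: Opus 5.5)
\textbf{Gap: the last $s$-step of each block.} Your route is the paper's: check $i\le 3$ directly, then use Lemma~\ref{lem:definedP}~(i) with the ordering of Lemma~\ref{lem:alPSordering}. Several of your steps are correct: the $y$-step condition $b>1$, the first $s$-step condition $b<\al_{3k+2}$, and the nesting of intervals. The problem is the final $s$-step of each block. It acts on $s^2\Tt_0^P=((5,1,2),(5,1,-2),(17,3,0))$, respectively on $r_{3k+2}\Tt_0^P$, whose $t$-values are $(2,-2,0)$. Since $t_\rho=0$ and $\eps=1$, you get $q_{y\mu}=\eps t_\rho q_\mu-q_\la=-q_\la<0$. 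For example, $q_{y\mu}=-1$ for $s^2\Tt_0^P$ and $q_{y\mu}=-5$ for $r_5\Tt_0^P=((29,5,2),(29,5,-2),(99,17,0))$. So the uniform positivity of $q_{y\mu}$ you planned to prove is false, and your deductions ``needs $b>1/2$'' and ``needs $b>\al_{3k+1}$'' have no basis.

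Your fallbacks fail as well. The reversed clause of Lemma~\ref{lem:definedP}~(i) would say this $s$-step is defined only for $b<\al_{3k+1}<1$. The quantity $(e_\mu-f_\mu b)/(q_\rho q_{y\mu})$ from Lemma~\ref{lem:yLengths} is positive for $b>1$, which read naively points to a $y$-mutation. Both contradict the statement. The reason is that this quantity locates where the full line through $Y$ in direction $\vn_Y$ meets the line $XV$. When $q_{y\mu}<0$, that intersection lies on the backward ray.

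\textbf{The missing idea.} The denominator identity in the proof of Lemma~\ref{lem:yLengths} gives $q_{y\mu}=p_\la(1+p_\rho q_\rho-6q_\rho^2)+q_\la q_\rho^2=\det(\vn_Y,\ovr{XV})$. If this is negative, $\vn_Y$ points to the interior side of the line $XV$, because $OXVY$ is counterclockwise. So the forward ray from $Y$ moves away from that line and never meets it. The quadrilateral is convex and $\vn_Y$ has positive first coordinate, so the ray must exit through the interior of $OX$. Equivalently, $\nu_\rho=\eps t_\rho=0$ forces $(p_\mu,q_\mu)=(p_\la,q_\la)$, hence $\vn_V=\vn_Y$.

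Therefore this $s$-step is well-defined for every $b$ at which the preceding diagram is, and it imposes no constraint. Concretely, in $s^2Q_{P_b}^0$ one has $Y=(0,2+b)$, $V=(b-1,6-3b)$ and $X=(\tfrac{b+1}{3},0)$. The ray $Y+k(1,-5)$ meets $OX$ at $(\tfrac{2+b}{5},0)$ for all $b\in(1,2)$. With this replacement for the third step in each block, your argument gives exactly the intervals $(1,2)$ and $(1,\al_{3k+2})$.
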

\begin{proof}
The claim for $i \leq 3$ can be checked directly. 

For the other claim, note that by Proposition~\ref{prop:genTrip}, we have that if $i \geq 4$, $r_i\Tt_0^P$ is sign-alternating if $i \equiv 1,0 \mod{3}$ and sign-matching otherwise. Hence, the claim follows by the ordering of the $\al_i$ given in Lemma~\ref{lem:alPSordering} and Lemma~\ref{lem:definedP}~(i), which states when a $y$-mutation or $s$-mutation is well-defined.

\end{proof}

 The sequence considered in the next two lemma corresponds to the classes $w\mathcal{B}_{2n+1}$ for as seen in Proposition~\ref{lem:perf4P}. 
\begin{lemma} \label{lem:bDefP1}
Let $w=w'y\ov{x}^k$ where $w'$ is a word containing only $x,y$. Let $w\Tt_0^P=(\bm{x}_\la,\bm{x}_\mu,\bm{x}_\rho).$ Then, for $b \in (k,e_\la/f_\la)$, $wQ_{P_b}^0$ is well-defined. 
\end{lemma}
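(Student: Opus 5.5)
The plan follows the proof of Lemma~\ref{lem:bDefH1}, with the $H$-specific inputs replaced by their $P$ analogues. There are three steps, in this order: first the block $\ov{x}^k$, then the single $y$, then the word $w'$.

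\textbf{The block $\ov{x}^k$.} By induction I would compute the triples
\[
\ov{x}^{j}\Tt_0^P=\big((1,1,2),(2j+3,1,2j),(2j+5,1,2j+2)\big).
\]
Each $\ov{x}$ keeps the left entry $(1,1,2)$ fixed and is a recursion by $t_\la=2$. The right entry has $(e,f)=(j+1,j)$, so its ratio is $\al=(j+1)/j$.

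I would then check directly, using the explicit side lengths \eqref{eq:QTP}, that each $\ov{x}$ is geometrically well-defined. Concretely, I would verify the criterion of Lemma~\ref{lem:definedP}~(iv) for the relevant range of $b$, or else directly check that $\vn_V$ hits $OX$. This should hold for all $b>1$ (or at least for $b>k$).

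\textbf{The single $y$.} Next I apply $y$ to $\ov{x}^k\Tt_0^P$. By Lemma~\ref{lem:definedP}~(i) with $\eps=1$, this $y$ is well-defined exactly when $b<e_\mu/f_\mu$, where $\bm{x}_\mu$ is the middle entry of $\ov{x}^k\Tt_0^P$. That entry is $(2k+3,1,2k)$, which gives $(e,f)=(k+1,k)$ up to the index shift in the formula above.

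This is where the lower endpoint comes from, and it mirrors \cite[Proposition 3.9]{M1}. I expect $y\ov{x}^kQ_{P_b}^0$ to be well-defined for $b$ in an interval of the form $(k,\,\cdot\,)$. The lower bound $b>k$ should come from the requirement that $|OX|=(-f'_\rho b-e'_\rho)/q_\rho$ and the remaining side lengths stay positive, and that $V$ lies in the open first quadrant. I would verify this by explicit computation with the triple.

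\textbf{The word $w'$.} Using Corollary~\ref{cor:alOrdered}, all subsequent triples are positive, sign-matching and $\al$-ordered. Two consequences follow.

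\begin{itemlist}
\item Each later $y$-mutation is well-defined once $b<\al_\mu$ at that stage. Along the word these middle ratios are bounded below by the final $\al_\la=e_\la/f_\la$, so the condition $b<e_\la/f_\la$ suffices.
\item Each $x$-mutation is well-defined by Lemma~\ref{lem:definedP}~(ii), because by Lemma~\ref{lem:pOverq6}~(ii) every entry of $w\mathcal{B}_{2n+1}$ has $p/q\ge 6$, with strict inequality for the right entry. Hence $\vn_X$ has positive slope.
\end{itemlist}

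\textbf{Main obstacle.} The hard part is showing that the interval $(k,e_\la/f_\la)$ is nonempty and sits inside the range where $y\ov{x}^kQ_{P_b}^0$ is defined. This requires the inequality $k<e_\la/f_\la$, and $\al$-ordering does not give it; the same issue arose in Lemma~\ref{lem:bDefH1}, which needed \cite[Lemma 5.4]{M1}. My first attempt would be to prove $e/f>k$ for every entry of $w\mathcal{B}_{2k-3}$ (with the index shift) directly. The idea is to write $e-kf$ as a quantity that satisfies the same linear recursion as $(p,q,t)$ under $x,y$ (Corollary~\ref{cor:recurExtend}), check that it is positive on the seed triple, and show that positivity propagates in the way Lemma~\ref{lem:recurDif} does.
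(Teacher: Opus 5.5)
Your outline follows the paper's proof. The $\ov x^k$ block gives $b>k$, the single $y$ is governed by Lemma~\ref{lem:definedP}~(i), later $y$'s are handled by $\al$-ordering, and the $x$'s by the positive slope of $\vn_X$. However, the number that fixes the upper endpoint is miscomputed, and with your value the argument breaks. For $P$ we have $e=\tfrac14(p+q+t)$ and $f=\tfrac14(p+q-t)$. So the middle entry $(2k+3,1,2k)$ of $\ov x^k\Tt_0^P$ has $(e,f)=(k+1,1)$, and the right entry $(2j+5,1,2j+2)$ of $\ov x^j\Tt_0^P$ has $(e,f)=(j+2,1)$. Your $(k+1,k)$ is the $H$-value $(d;m)$ of the middle entry of $\ov x^k\Tt_0^H$.

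With your value, the single $y$ would need $b<(k+1)/k\le 3/2$, which is incompatible with $b>k\ge 2$. The plan would then produce an empty interval, contradicting your own target $e_\la/f_\la>k$. With the correct value, the single $y$ is defined exactly for $k<b<k+1$. You then still need $e_\la/f_\la\le k+1$, so that $b<e_\la/f_\la$ implies $b<k+1$. The paper gets this from the computation that $y\ov x^k\Tt_0^P=\mathcal B_{2k-3}$ has $(e,f)$-values $(k+1,1)$, $(2k^2+4k+1,2k+2)$, $(k+2,1)$. These are $\al$-values $k+1$, $k+1-\tfrac{1}{2k+2}$, $k+2$. By $\al$-ordering, every newly created entry has $\al<k+1$. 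Your remark that the middle ratios along the word dominate the final $\al_\la$ gives the same conclusion once the numbers are right.

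Two smaller points. First, Lemma~\ref{lem:definedP}~(iv) does not apply to the $\ov x$ block. For $\ov x^j\Tt_0^P$ the new right entry is $(2j+7,1,2j+4)$, so $p_{\ov x\rho}-6q_{\ov x\rho}=2j+1>0$ and $\vn_V=-(2j+1,1)$. Whether this ray hits $OX$ depends on $b$. With $X=(b-j,0)$ and $V=(b+j,1)$, it meets the $x$-axis at $b-j-1$. So the $(j+1)$-st $\ov x$ is defined iff $b>j+1$; this direct check is where $b>k$ comes from, and the block is not defined for all $b>1$.

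Second, on what you call the main obstacle: the paper's proof only establishes $e_\la/f_\la\le k+1$ and never argues $k<e_\la/f_\la$. The lemma as stated is vacuous if the interval is empty, so here you go beyond the paper. Your idea works, but positivity alone does not propagate through $g_{\mathrm{new}}=\nu g_\mu-g_{\mathrm{old}}$. Propagate instead the domination $0<g_\la,g_\rho<g_\mu$ for $g:=e-kf$, exactly as in the proof of Corollary~\ref{cor:triples}, using that the recursion parameters are at least $2$. On $\mathcal B_{2k-3}$ the seed values are $g=(1,2k+1,2)$, so every entry of $w'\mathcal B_{2k-3}$ has $e/f>k$.
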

\begin{proof}
    A straightforward proof by induction shows that $\ov{x}^kQ_{P_b}^0$ is well-defined for $b>k.$ Note that $\ov{x}^k\Tt_0^P=\left((1,1,2),(2k+3,1,2k),(2k+5,1,2k+2)\right)$
    with corresponding $(e,f)$-values $((1,0),(k+1,1),(k+2,1)).$ As $\ov{x}^k\Tt_0^P$ is sign-matching, by Lemma~\ref{lem:definedP}~(i),  $y\ov{x}^kQ_{P_b}^0$ is well-defined for $k<b<k+1,$ so we must check that $k<e_\la/f_\la<k+1.$

    We compute that $y\ov{x}^k\Tt_0^P$ has $(e,f)$-values 
    \[ \left((k+1,1),(2k^2+4k+1,2k+2),(k+2,1)\right).\]
    This triple is $\al$-ordered, and we have
    \[ \frac{2k^2+4k+1}{2k+2}<k+1<k+2.\]
    As we apply the mutations in $w$, the triples remain $\al$-ordered, so for each new $\al$ appearing, we have that  $\al< k+1<k+2$ as desired implying that $y\ov{x}^kQ_{P_b}^0$ is well-defined for the claimed $b$-interval. Hence, $e_\la/f_\la \leq k+1.$

    Since the triples are $\al$-ordered by Lemma~\ref{lem:definedP}~(i), each $y$-mutation in $w$ is well-defined. To see that each $x$-mutation, we claim that $\vn_X$ always has positive slope for each mutation after $y\ov{x}^{k+2}.$ To see this note that \[y\ov{x}^{k+2}\Tt_0^P=((2n+7,1,4+2n),(4n^2+26n+41,2n+5,4n^2+20n+22),(2n+9,1,2n+6))\]
    and by Corollary~\ref{cor:triples}~(iii) we have that the $p/q$-coordinates will continue to satisfy $2n+7 \leq p/q \leq 2n+9.$ Hence, the slope of $\vn_X$ will be positive. 
\end{proof}

 The sequence considered in the next lemma corresponds to the classes $I_t^{j-1}S^j(\mathcal{B}_{2n+1})$ for $n \geq 0$ as seen in Proposition~\ref{lem:perf4P}. 
\begin{lemma} \label{lem:poly1Q}
    Assume that $i\geq 1.$
    We have $wy\ov{x}^kr_{3i}Q_{P_b}^0$ where $w$ is a word of $x,y$ is well-defined for $b \in (1,f_\la/e_\la)$ where $wy\ov{x}^kr_{3i}\Tt_{P}^0:=(\bm{x}_\la,\bm{x}_\mu,\bm{x}_\rho).$
\end{lemma}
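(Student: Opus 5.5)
The plan mirrors the proofs of Lemma~\ref{lem:bDefH2} and Lemma~\ref{lem:bDefP1}, proceeding in three stages: first $r_{3i}$, then $y\ov{x}^k$, then the $x,y$-mutations in $w$.

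\textbf{Stage 1.} By Lemma~\ref{lem:alPSordering2}, $r_{3i}Q_{P_b}^0$ is well-defined for $1<b<\al_{3i-1}$, where $\al_{3i-1}=e_{3i-1}/f_{3i-1}>1$ by Lemma~\ref{lem:alPSordering}. (For $i=1$ the range is $1<b<2$.) So it suffices to show
\[1<f_\la/e_\la<\al_{3i-1}.\]
By Lemma~\ref{lem:commuteS}, $r_{3i}\Tt_0^P=I_t^{i-1}S^i\Tt_0^P$, and $\ov{x}^k r_{3i}\Tt_0^P=I_t^{i-1}S^i\ov{x}^k\Tt_0^P$. Corollary~\ref{cor:signOfT} shows that all $t$-values are negative after $r_{3i}$ when $i\ge1$, so the triples $wy\ov{x}^kr_{3i}\Tt_0^P$ are sign-alternating ($\eps=-1$). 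By Lemma~\ref{lem:md13}, every entry then has $e/f<1$, i.e.\ $f/e>1$. This gives the lower bound $f_\la/e_\la>1$.

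For the upper bound, use $\al$-ordering (Corollary~\ref{cor:alOrdered}). With $\eps=-1$, the middle $\al$-value is the largest in each triple. The entries of $\ov{x}^kr_{3i}\Tt_0^P$ therefore form a chain of $\al$-values, increasing in $k$. The new entries produced by $y$ and by $w$ always lie between $\al$-values of earlier entries. Hence every $\al_\la$ appearing satisfies $\al_{3i}=1>\al_\la$, i.e.\ $f_\la/e_\la>1$, and $e_\la/f_\la$ is bounded below by the left entry of $r_{3i}\Tt_0^P$. The main point to check is that this left entry, $\bm{x}_{3i-2}$ (or its $I_t$-image), has $f/e=\al_{3i-1}$. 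This would follow from the identity $1/\al_{3i+1}=\al_{3i+2}$ in Lemma~\ref{lem:alPSordering}, which gives $f_\la/e_\la<\al_{3i-1}$.

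\textbf{Stage 2.} The $\ov{x}$-mutations should be checked with Lemma~\ref{lem:definedP}~(iv). Since $\ov{x}^k\Tt_0^P$ has $p/q<6$ in every entry and $S$ acts by $p/q\mapsto 6-q/p$, we get $p_{\ov{x}\rho}-6q_{\ov{x}\rho}<0$. This parallels Claim~2 of Lemma~\ref{lem:bDefH2}, so each $\ov{x}$ is well-defined for every $b$ for which $r_{3i}Q_{P_b}^0$ is. The $y$-mutation after $\ov{x}^k$, and every $y$-mutation in $w$, is handled by Lemma~\ref{lem:definedP}~(i) with $\eps=-1$: a $y$ is allowed when $-e_\mu/f_\mu>-b$, i.e.\ $b>e_\mu/f_\mu$. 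This holds because $e_\mu/f_\mu<1<b$. Here $q_{y\mu}>0$ follows from Corollary~\ref{cor:triples}. The last $y$ that brings $\bm{x}_\la$ to the left forces the endpoint. I expect to need the inequality in the form $b<f_\la/e_\la$, so I will need to recheck the orientation via Lemma~\ref{lem:eFrelations}.

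\textbf{Stage 3 (main obstacle).} The $x$-mutations in $w$ are the hardest part, since $p/q<6$ here and Lemma~\ref{lem:definedP}~(ii) is unavailable. I would use Lemma~\ref{lem:definedP}~(iii). Its hypotheses are that $S^{-1}\bm{x}_\mu$ is positive and that $\eps t_\mu>0$, and its conclusion is that $x$ is well-defined when $-b>-1$. That conclusion requires $b<1$, which contradicts our interval. So either the inequality orientation for $P$ differs, or the $I_t$ twist reverses it: $S^{-1}$ of the middle entry, after undoing $I_t$, equals the corresponding entry for $r_{3(i-1)}$, which is positive by Corollary~\ref{cor:Positive}. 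I would first try recomputing $m'_\mu,d'_\mu$ for $P$ (namely $e'_\mu=-f$, $f'_\mu=-e$ under $I_tS$) in the proof of Lemma~\ref{lem:definedP}~(iii), expecting the condition to become $b>f/e$ or $b>1$. For $i=1$, where $S^{-1}$ lands on $\ov{x}^k\Tt_0^P$ data that is not positive, I would treat the case directly, as in the $j=1$ case of Lemma~\ref{lem:bDefH3}.
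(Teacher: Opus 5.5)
Your Stages~1 and~2 follow the paper's Claims~1--3. Containment of $(1,f_\la/e_\la)$ in the range of $r_{3i}$ comes from $1/\al_{3i+1}=\al_{3i+2}$. The $\ov{x}$'s are handled by Lemma~\ref{lem:definedP}~(iv), since $S$ forces $p/q<6$. The $y$'s are automatic because $e_\mu/f_\mu<1<b$.

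The genuine gap is Stage~3. You give no argument for the $x$-mutations in $w$, and the outcome you anticipate is wrong. Lemma~\ref{lem:definedP}~(iii) is not misoriented; it is simply too weak here.

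By Lemma~\ref{lem:xLengths}, $xQ$ is well-defined exactly when $\eps(e'_\mu-f'_\mu b)>0$. Write $S^{-1}\bm{x}_\mu=(q_\mu,6q_\mu-p_\mu,-t_\mu)$ with solutions $(E,F)$. Then Lemma~\ref{lem:symTriple}~(ii) gives $e'_\mu=-E$ and $f'_\mu=-F$. So for $\eps=-1$ the criterion is $b<E/F$, where $E/F>1$ because $-t_\mu>0$. Your own $I_tS$ bookkeeping ($e'_\mu=-f$, $f'_\mu=-e$) likewise gives $b<f/e$, not $b>f/e$.

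Thus each $x$ imposes an upper bound on $b$. For example, take $y\ov{x}^2r_3\Tt_0^P=S\mathcal{B}_1=((41,7,-4),(241,41,-22),(53,9,-6))$. The ray $\vn_X=(-1,9)$ from $X=((b+4)/9,0)$ meets $\ovr{VY}$ exactly when $b<17/6$, while the claimed endpoint is $f_\la/e_\la=13/11$.

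The missing step is to show that at every $x$ in $w$ one has $E/F\ge f_\la/e_\la$ for the left entry of the final triple. When $\eps=-1$, $y$ replaces the left entry by the middle one, which has larger $\al$. So $f_\la/e_\la$ only decreases along $w$, and it would suffice to prove $Ee_\la\ge Ff_\la$ for the current left entry. Since $8(Ee_\la-Ff_\la)=(7q_\mu-p_\mu)t_\la-(p_\la+q_\la)t_\mu$ and all $t<0$, this reads
\[
(p_\la+q_\la)\,|t_\mu|\ \ge\ (7q_\mu-p_\mu)\,|t_\la|,
\]
and this inequality needs a real argument.

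Your distrust of (iii) is justified. The paper's Claim~4 cites it after noting only that the $S^{-1}$-shifted entries have $e/f>1$. That observation does not by itself reach up to the endpoint $f_\la/e_\la$.

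\emph{Smaller points.}
\begin{itemize}
\item For $i=1$ there is no positivity problem. The $x$-steps occur after $y\ov{x}^k$, and the $S^{-1}$-shifts of the middle entries are entries of $u\,y\ov{x}^k\Tt_0^P=u\mathcal{B}_{2k-3}$ (for $k\ge2$, as in Proposition~\ref{lem:perf4P}). These are positive; the only non-positive entry $(1,1,2)$ of $\ov{x}^k\Tt_0^P$ is removed by the $y$.
\item The endpoint $f_\la/e_\la$ is not forced by the last $y$: for $\eps=-1$ every $y$ only needs $b>e_\mu/f_\mu$. The endpoint enters through Stage~1, and it must also be compared with the $x$-constraints above.
\item In Stage~1 the $\al$-values along $\ov{x}^k$ decrease, not increase. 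Entries created by $x$ or $y$ have larger $\al$ than both neighbours rather than intermediate values. The lower bound on $\al_\la$ by the left entry of $r_{3i}\Tt_0^P$ still holds, because $\al_\la$ is nondecreasing along the word.
\item $\ov{x}^k\Tt_0^P$ does not have all $p/q<6$; its right entry is $(2k+5,1,2k+2)$. It is $S^i$ with $i\ge1$ that forces $p/q<6$.
\end{itemize}
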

\begin{proof}

    First note, as $i \geq 1$, the triple $wy\ov{x}^kr_{3i}$ is sign-alternating for all $i$, so by Corollary~\ref{cor:triples}~(iv) and Lemma~\ref{lem:md13}, we have that $t_\la$ is negative implying that $e_\la/f_\la=\al_\la<1.$ Here, note the notation convention implies the endpoint of the claimed interval is $1/\al_\la=f_\la/e_\la$.  

    \textbf{Claim 1:} $r_{3i}Q_{P_b}^0$ is well-defined. 
    We have that by Lemma~\ref{lem:alPSordering2}, $r_{3i}Q_{P_b}^0$ is well-defined if $1<b<\al_{3i-1}$ where $r_{3i}\Tt_0^P=(\bm{x}_{3i+1},\bm{x}_{3i},\bm{x}_{3i+4})$ where:
  
    \[ 1/\al_{3i-1}=\al_{3i-2}<\al_{3i+1}<\al_{3i+4}<\al_{3i}=1.\]
    Hence, to show Claim 1, it suffices to show that $\al_{3i-2}<\al_\la<1$ as this implies that $1<1/\al_\la=f_\la/e_\la<\al_{3i-1}.$
Let \[\ov{x}^kr_{3i}\Tt_0^P:=(\bm{x}_{3i+1},\bm{x}_{3i,k-1},\bm{x}_{3i,k})\]
where $\bm{x}_{3i,-1}=\bm{x}_{3i}$ and $\bm{x}_{3i,0}=\bm{x}_{3i+4}.$
By Corollary~\ref{cor:alOrdered}, these triples are $\al$-ordered, so we have $\{\bm{x}_{3i,k}\}_k$ is a decreasing sequence and for all $k >1$, $\al_{3i,k} \in [\al_{3i+1},\al_{3i+4}].$
Taking a $y$-mutation, we get 
\[y\ov{x}^kr_{3i}\Tt_0^P:=(\bm{x}_{3i,k-1},\bm{x}_{y,3i,k-1},\bm{x}_{3i,k})\] is $\al$-ordered, so $\al_{3i+1}<\al_{3i,k}<\al_{y,3i,k-1}.$
When we keep taking $x,y$-mutations in $w,$ the triples will remain $\al$-ordered, so we conclude that $\al_{3i-2}<\al_{3i+1}\leq \al_\la$ as desired. 

    \textbf{Claim 2:} $\ov{x}^kr_{3i}Q_{P_b}^0$ is well-defined. Here, we show that the conditions of Lemma~\ref{lem:definedP}~(iv) hold. It suffices to show that for the triples $\ov{x}^kr_{3i}\Tt_0^P$ each entry has $p/q<6.$ This follows by Proposition~\ref{lem:perf4P} as on the $(p,q)$-coordinates $\ov{x}^kr_{3i}\Tt_0^P=S^j(\ov{x}^k\Tt_0^P)$ and $S(p/q)=6-q/p.$ 

   \textbf{Claim 3:} Each $y$-mutation after performing $r_{3i}$ in $wy\ov{x}^kr_{3i}Q_{P_b}^0$ is well-defined. Note in this case by Lemma~\ref{lem:definedP}~(i) we need $\al_\la<b,$ but as $\al_\la<1$ this always holds.

   \textbf{Claim 4:} Each $x$-mutation in $wy\ov{x}^kr_{3i}Q_{P_b}^0$ is well-defined. 
    For each $x$-mutation, as we assumed that $i \geq 1,$ we have that \[S^{-1}\ov{x}^kr_{3i}\Tt_0^P= \ov{x}^k(s^2y)^i\Tt_0^P.\]
    Note, for some fixed $i$, that $\ov{x}^k(s^2y)^i\Tt_0^P$ is a sign-matching triple for all $k$ and the $t$-values for these triples will be positive, so we have that $e/f>1.$ Hence, they are well-defined by Lemma~\ref{lem:definedP}~(iii). 
    \end{proof}

     The sequence considered in the next lemma corresponds to the classes $R(\mathcal{B}_{2n+1})$ for $n \geq 0$ as seen in Proposition~\ref{lem:perf4P}.
\begin{lemma} \label{lem:bDefP3}
    We have that $wyx^ny\ov{x}Q_{P_b}^0$ where $w$ is a word of $x,y$ is well-defined for $b \in (1,e_\la/f_\la)$ where  $wyx^ny\ov{x}\Tt_0^P:=(\bm{x}_\la,\bm{x}_\mu,\bm{x}_\rho).$
\end{lemma}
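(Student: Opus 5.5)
The plan is to follow the template of Lemma~\ref{lem:bDefH3} and Lemma~\ref{lem:bDefP1}. I would treat the initial block $y\ov{x}$ by direct computation, handle the block $x^n$ and the following $y$ through the criteria of Lemma~\ref{lem:definedP}, and then control the remaining word $w$ by $\al$-ordering.

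\textbf{Step 1: the block $y\ov{x}Q_{P_b}^0$.} Since $\ov{x}Q_{P_b}^0$ is well-defined for $b>1$ (the $k=1$ case of the induction in Lemma~\ref{lem:bDefP1}), I would start from
\[\ov{x}\Tt_0^P=((1,1,2),(5,1,2),(7,1,4)),\]
whose $(e,f)$-values are $((1,0),(2,1),(3,1))$. Lemma~\ref{lem:definedP}~(i) then shows that $y\ov{x}Q_{P_b}^0$ is well-defined for $1<b<2$. The next step is to compute $y\ov{x}\Tt_0^P$ explicitly. Its middle entry has $(e,f)=(7,4)$, from $(p,q,t)=(19,3,8)$, and its outer entries have $\al=2$ and $\al=3$. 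It therefore suffices to show $1<e_\la/f_\la<2$ for the final triple. The lower bound comes from $t_\la>0$ via Corollary~\ref{cor:signOfT} / Corollary~\ref{cor:triples}~(iv) together with Lemma~\ref{lem:md13}. For the upper bound, every subsequent triple is positive (Corollary~\ref{cor:Positive}) and hence $\al$-ordered (Corollary~\ref{cor:alOrdered}). Because the triples are sign-matching, each new middle entry has $\al$ smaller than the outer entries, so every new $\al$ is at most $\max(7/4,\dots)<2$.

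\textbf{Step 2: the $y$-mutations.} As in Claim~3 of Lemma~\ref{lem:bDefH3}, each $y$-mutation requires $\eps\al_\mu>\eps b$ with $\eps=1$. By $\al$-ordering, the middle entries before each $y$ have $\al$ at least that of the final left entry $\al_\la$, because the final left entry is created by the last $y$. So $b<e_\la/f_\la$ suffices. I also need $q_{y\mu}>0$, which follows from the positivity and the ordering in Corollary~\ref{cor:triples}~(i),(iii).

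\textbf{Step 3: the $x$-mutations — main obstacle.} I would try Lemma~\ref{lem:definedP}~(ii) first: if the right entry has $p_\rho/q_\rho>6$, then $\vn_X$ has positive slope. For $y\ov{x}\Tt_0^P$ the right entry is $(7,1,4)$, so this works. During $x^n$ the right entry becomes the previous middle entry, which has the form $(19+18r,3+3r,\dots)$ with ratio above $6$. After that, Lemma~\ref{lem:pOverq6}~(iii) gives $6<p/q\le 7$ for every entry of $wR(\mathcal{B}_{2n+1})$.

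If the $p/q>6$ bound fails somewhere, the fallback is Lemma~\ref{lem:definedP}~(iii). This requires checking that $S^{-1}$ of the middle entry has positive $(e,f)$ and that $t_\mu>0$; then $b>1$ suffices. Checking these hypotheses uniformly in $n$ by explicit formulas, as in the $j=1$ case of Lemma~\ref{lem:bDefH3}, is where I expect the real work to lie. Putting the three steps together gives well-definedness on the claimed interval $(1,e_\la/f_\la)$.
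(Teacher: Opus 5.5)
Your proposal is correct and follows essentially the paper's argument. You get $y\ov{x}$ well-defined for $1<b<2$ (cleanly, via Lemma~\ref{lem:definedP}~(i)), you handle every $x$-mutation by $p_\rho/q_\rho>6$ (during the $x^n$ block and then via Lemma~\ref{lem:pOverq6}~(iii), so the Lemma~\ref{lem:definedP}~(iii) fallback is never needed), and you handle the $y$-mutations by $\al$-ordering, where your explicit bound $e_\la/f_\la\le 7/4<2$ spells out a step the paper leaves implicit. There are two arithmetic slips, neither affecting the argument: the middle entry of $y\ov{x}\Tt_0^P$ is $(19,3,6)$, not $(19,3,8)$; and since the $x$-recursion parameter is $t_\la=2$, the middle entries during $x^n$ are $(19+12r,3+2r,6+2r)$, which have $p/q=6+\tfrac{1}{2r+3}>6$.
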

\begin{proof}
    First note that $e_\la/f_\la>1$ as the triple is sign-matching, so $t>0$, which by Lemma~\ref{lem:md13} implies $e_\la/f_\la>1.$ We can check directly that $y\ov{x}$ is well-defined if $1<b<2.$ The sequence of $x$-mutations will be well-defined as $\vn_X$ has a positive slope as the continued fractions of $p/q$ of the triples $x^ny\ov{x}\Tt_0^P$ are
    \[ \left([5],[6,2n+3],[6,2n+1]\right).\] 
    As usual, the remaining $y$-mutations will be defined as each triple is $\al$-ordered. The remaining $x$-mutations in the triple will be well-defined as the slope of $\vn_X$ will continue to be positive as $yx^ny\ov{x}\Tt_0^P=R(\bB_{2n+1})$, which has $p/q > 6$ for the middle entry, see Lemma~\ref{lem:pOverq6}. 
\end{proof}

   The sequence considered in the next lemma corresponds to the classes $I_t^{j-1}S^jR(\mathcal{B}_{2n+1})$ for $n \geq 0$ as seen in Proposition~\ref{lem:perf4P}.
    \begin{lemma} \label{lem:bDefP4}
        For $j \geq 1$, we have that $wyx^ny\ov{x}r_{3j}Q_{P_b}^0$ where $w$ is a word of $x,y$ is well-defined for $b \in (1,f_\la/e_\la)$ where  $wyx^ny\ov{x}r_{3j}\Tt_0^P:=(\bm{x}_\la,\bm{x}_\mu,\bm{x}_\rho).$
    \end{lemma}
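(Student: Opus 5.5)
The plan is to combine the $r_{3j}$-argument of Lemma~\ref{lem:poly1Q} with the $x$-mutation argument of Lemma~\ref{lem:bDefP3}. We check that each successive mutation in $wyx^ny\ov{x}r_{3j}$ is geometrically defined on the interval $(1,f_\la/e_\la)$. First we record signs. Since $j\ge 1$, Corollary~\ref{cor:signOfT} shows every triple obtained after $r_{3j}$ has negative $t$-values. Proposition~\ref{prop:genTrip} shows these triples are sign-alternating, so $\eps=-1$. Lemma~\ref{lem:md13} then gives $\al_\la=e_\la/f_\la<1$, so the claimed interval $(1,1/\al_\la)$ is nonempty. In the reversed sign convention, all later criteria of Lemma~\ref{lem:definedP} read $-b<-\al$, that is, $b>\al$ or $1/b<1/\al$.

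\textbf{Step 1: $r_{3j}Q_{P_b}^0$ is defined.} By Lemma~\ref{lem:alPSordering2}, it suffices to show $1/\al_\la<\al_{3j-1}$, which is equivalent to $\al_{3j-2}<\al_\la<1$ by Lemma~\ref{lem:alPSordering}. Corollary~\ref{cor:alOrdered} shows that every triple in the chain $\ov{x}r_{3j}\Tt_0^P$, $y\ov{x}r_{3j}\Tt_0^P$, $x^ky\ov{x}r_{3j}\Tt_0^P,\dots$ is $\al$-ordered. With $\eps=-1$, each new middle $\al$ is therefore larger than its neighbours. Hence every entry appearing afterward has $\al\ge\min(\al_{3j+1},\al_{3j+4})>\al_{3j-2}$, and also $\al<1$ because $t<0$. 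This gives the bound for $\al_\la$.

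\textbf{Step 2: the $\ov{x}$ and the first $y$.} On $(p,q)$-coordinates, $\ov{x}r_{3j}\Tt_0^P=S^j\ov{x}\Tt_0^P$ by Lemma~\ref{lem:commuteS}. Since $S(p/q)=6-q/p$, every entry has $p/q<6$, so $p_{\ov{x}\rho}-6q_{\ov{x}\rho}<0$, and Lemma~\ref{lem:definedP}~(iv) applies. Each $y$-mutation requires $-\al_\mu>-b$, i.e.\ $b>\al_\mu$, which holds because $\al_\mu<1<b$. We also confirm $q_{y\mu}>0$ from the growth of the $q$-coordinates (Corollary~\ref{cor:triples}). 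The last $y$ carries $\bm{x}_\la$ into position, and is likewise fine.

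\textbf{Step 3: the $x$-mutations (main obstacle).} Unlike the unshifted case in Lemma~\ref{lem:bDefP3}, the entries now have $p/q<6$, so $\vn_X$ has negative slope. We must therefore use Lemma~\ref{lem:definedP}~(iii), which requires $S^{-1}$ of the current middle entry. By Lemma~\ref{lem:commuteS}, $S^{-1}$ of the triple before each $x$ equals, up to $I_t$, the corresponding triple built on $r_{3(j-1)}$, or on $\Tt_0^P$ when $j=1$. These lie in the family of Proposition~\ref{lem:perf4P} and are positive by Corollary~\ref{cor:Positive}.

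The $S^{-1}$-shifted middle entry has coordinates $(p,q,-t_\mu)$ with $-t_\mu>0$, so its $(e,f)$ are positive and $\eps t_\mu=-t_\mu>0$. Lemma~\ref{lem:definedP}~(iii) then requires $-b>-1$, which is the wrong direction. So we must redo that criterion rather than quote it. The $x$-condition is $\eps(e'_\mu-f'_\mu b)>0$ with $e'_\mu=-e$ and $f'_\mu=-f$, which gives $-(-e+fb)>0$, i.e.\ $b<e/f$. I expect the sign bookkeeping here, and the $j=1$ edge case, to be the delicate part. The $j=1$ case is handled as in Lemma~\ref{lem:bDefH3}, by an explicit computation of the $x^n$ block.

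With the correct accounting of the $I_t$-flip, the $S^{-1}$-shifted entry has $e/f$ equal to the reciprocal of an $\al$ arising in the chain. By the $\al$-ordering of Step 1, this reciprocal is at least $1/\al_\la$. Since $b<f_\la/e_\la$, the condition $b<e/f$ holds, and each $x$-mutation is defined. The endpoint is attained only by the final $y$, where $|XV|=0$ at $b=f_\la/e_\la$ after the orientation flip.
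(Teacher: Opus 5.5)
\textbf{Where your proposal stands.} Your Steps 1 and 2 match the paper's argument: Claim~1 of Lemma~\ref{lem:poly1Q} for $r_{3j}$, $p/q<6$ with Lemma~\ref{lem:definedP}~(iv) for $\ov{x}$, and $\al_\mu<1<b$ for the $y$'s. You also correctly notice that Lemma~\ref{lem:definedP}~(iii), on which the paper's ``similarly to Claim 4'' relies, only gives $b<1$ when $\eps=-1$. Your rederived criterion is right too. An $x$-mutation at a triple with middle entry $\bm{x}_\mu$ is defined iff $b<E/F$, where $(E,F)$ are the $(e,f)$-coordinates of $S^{-1}\bm{x}_\mu$. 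For instance, at $y\ov{x}r_3\Tt_0^P=((29,5,-2),(111,19,-6),(41,7,-4))$ one has $S^{-1}\bm{x}_\mu=(19,3,6)$ and $(E,F)=(7,4)$, so the condition is $b<7/4$. The gap is the last step, which is the real content of the lemma: you must prove $E/F\ge f_\la/e_\la$ for every $x$ in the word.

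\textbf{Why the last step fails, and what would fix it.} Your justification does not hold.
\begin{itemize}
\item When $j=1$ there is no $I_t$ at all, by Lemma~\ref{lem:commuteS}. $S^{-1}$ of the current triple is the corresponding triple over $\Tt_0^P$; here $(19,3,6)$ is the middle of $y\ov{x}\Tt_0^P$. Its entries have $e/f>1$ and are not reciprocals of $\al$'s from the chain built on $r_3$.
\item When $j\ge 2$, $S^{-1}$ of the current triple is $I_t$ of the corresponding triple over $r_{3(j-1)}$. So $E/F=1/\al'$ with $\al'$ from a \emph{different} chain.
\item The $\al$-ordering of Step~1 compares only entries of the $r_{3j}$-chain and only yields lower bounds such as $\al_\la>\al_{3j-2}$. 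It cannot give $\al'\le\al_\la$. The ranges even overlap: $31/34$ from the $r_3$-chain exceeds $8/9$, the left $\al$ of $r_6\Tt_0^P$.
\end{itemize}
A genuine inequality is needed. For example, Lemma~\ref{lem:symTriple} gives $(e_\mu,f_\mu)=(2q_\mu-E,2q_\mu-F)$. Hence $E(2q_\mu-E)-F(2q_\mu-F)=(E-F)(p_\mu-3q_\mu)/2>0$, i.e.\ $E/F>1/\al_\mu$. With $\eps=-1$ the middle $\al$'s increase along the word, and the final $\bm{x}_\la$ is the middle just before the last $y$. So this bound settles every $x$ that is followed by a later $y$. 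It does not settle the trailing $x$'s after the last $y$ of $w$, because there $\al_\mu>\al_\la$. Along that trailing block $E_k/F_k$ decreases, so you must bound its limit below by $f_\la/e_\la$.

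\textbf{A smaller error.} Your closing remark is incorrect: $|XV|=\eps(e_\la-f_\la b)/(q_\rho q_\mu)$ vanishes at $b=e_\la/f_\la<1$, not at $f_\la/e_\la$. Nothing needs to degenerate at $f_\la/e_\la$, since the lemma only asserts well-definedness on the interval.
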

    \begin{proof}

The argument that $r_{3j}Q_{P_b}^0$ is defined for the $b$-interval follows closely to Claim 1 in Lemma~\ref{lem:poly1Q}. To see that $\ov{x}r_{3j}Q_{P_b}^0$ is defined note that the continued fractions of the $p/q$-coordinates of $r_{3j}\T_0^P$ are given by 
\[\left([5,\{1,4\}^j],[5,\{1,4\}^j,1,2],[5,\{1,4\}^{j+1}]\right).\] This can be checked by induction. Then, we have that $p_{\ovx\rho}/q_{\ovx\rho}$ will have continued fraction $[5,\{1,4\}^j,1,6]$ as the recursion parameter is $2$. Hence, by Lemma~\ref{lem:definedP}~(iv), $\ov{x}r_{3j}Q_{P_b}^0$ is well-defined. 

The claim that the remaining $y$-mutations and $x$-mutations are well-defined follows similarly to Claim 3 and Claim 4 of Lemma~\ref{lem:poly1Q}. 
    \end{proof}

\section{Connection to Embeddings}\label{ss:embed4}
In this section, we use the idea of a visible obstruction from McDuff-Siegel to prove Theorem~\ref{thm:ATFvis} and Corollary~\ref{cor:ATFvis}.

\subsection{Embeddings and ATBDs}
We begin by giving some background information to motivate why the ATBDs give us embedding results. 

Let $OXVY$ be an almost toric base diagram for $M=H_b,P_b$ where $O$ is the standard Delzant corner at the origin. Then, by \cite{AADT,CV} for all $0<\eps<1$, there exists an embedding of 
\[ (1-\eps)E(|OX|,|OY|) \sembeds M.\]
This embedding corresponds to an upper bound for one particular point of the function $c_{M}(z).$ For $\bullet=P,H$, we often let $c_{\bullet,b}(z)$ denote the function for $H,P$ accordingly. If this embedding corresponds to a point on the volume obstruction, we can conclude that it is sharp. Before moving on to visible obstructions, we will first prove Proposition~\ref{prop:fullFilling}. First, we give an example of this in the case $q=1$ and then give the general proof.
\begin{example} \label{rmk:fullFilling}
    This case was pointed out by Dusa McDuff. Note that in the case where $(p,q)=(2n,1)$, this full filling can be seen immediately from the toric picture using the Traynor trick \cite{T}. In particular, if $q=1,$ Proposition~\ref{prop:fullFilling} is 
\[ E(1,p-1) \sembeds dH_{m/d}.\]
In this case, \eqref{eq:dmefIntro} give $p=2d$ and $m=d-1.$ The vertices of the moment polygon of $dH_{m/d}$
are $(0,0),(d,0),(d-1,1),(0,1)$ using that $d-m=1.$ We can clearly cut this into $2d-1=p-1$ standard triangles of size $1.$ In the case where $q \neq 1,$ the claim is not immediate from the toric pictures. 
\end{example}

\begin{proof}[Proof of Proposition \ref{prop:fullFilling}]
We first note that every perfect class $(p,q)$ in $\T(H)$ appears as the left entry of some triple $\Tt=((p_\la,q_\la,t_\la),(p_\mu,q_\mu,t_\mu),(p_\rho,q_\rho,t_\rho)) \in \T(H)$. From Theorem~\ref{thm:HTrip}, there is some word $w$ such that \[wQ_{H_b}^0=Q_{H_b}(w\Tt_0^H)=Q_{H_b}(\Tt):=OXVY\] is defined for some interval of $b$-values denoted $(b_1,b_2)$ where $m_\la/d_\la$ is either $b_1,b_2.$ Hence, for all $b \in (b_1,b_2),$
for $0 < \eps <1,$ there is an embedding 
\[ (1-\eps)(E(|OX|_b,|OY|_b)) \sembeds H_b\]
where the formulas for $|OX|_b,|OY|_b$ are given in \eqref{eq:QTH} where here we are emphasizing that $|OX|_b,|OY|_b$ are functions of $b$. As the ellipsoid embeddings are continuous in $b,$ taking the limit as $b \to m_\la/d_\la$ implies that there is an embedding
\[ (1-\eps)(E(|OX|_{m_\la/d_\la},|OY|_{m_\la/d_\la})) \sembeds H_{m_\la/d_\la}.\]

Note that when $b=m_\la/d_\la,$ $|XV|_{m_\la/d_\la}=0,$ which implies that the above embedding is a full filling. 

Using the expression for $|OX|_{m_\la/d_\la},|OY|_{m_\la/d_\la},$ we obtain 
\[ |OY|_{m_\la/d_\la}=\frac{p_\la q_\la-1}{d_\la q_\la}\]
where we used that $d_\la^2-m_\la^2=p_\la q_\la-1.$

For $|OX|_{m_\la/d_\la}$, we get
\begin{equation*}
    |OX| = \frac{m_\rho' \left(\frac{m_\lambda}{d_\lambda}\right) - d_\rho'}{q_\rho} = \frac{m_\rho' m_\lambda - d_\rho' d_\lambda}{d_\lambda q_\rho}.
\end{equation*}
Expanding the numerator gives
\begin{align*}
    m_\rho' m_\lambda - d_\rho' d_\lambda &= (m_\rho - q_\rho)m_\lambda - (d_\rho - 3q_\rho)d_\lambda \\
    &= -(d_\lambda d_\rho - m_\lambda m_\rho) + q_\rho(3d_\lambda - m_\lambda).
\end{align*}
Translating into $(p,q,t)$-coordinates, for the $q_\rho$ coefficient, we have $3d_\lambda - m_\lambda = p_\lambda + q_\lambda.$
For the cross term, expanding the product gives
\begin{align*}
    d_\lambda d_\rho - m_\lambda m_\rho = \frac{1}{8} \left( (p_\lambda+q_\lambda)(p_\rho+q_\rho) - t_\lambda t_\rho \right).
\end{align*}
Substituting these back into the numerator expression, we obtain
\begin{align*}
    m_\rho' m_\lambda - d_\rho' d_\lambda
    &= \frac{1}{8} \left( t_\lambda t_\rho - (p_\lambda+q_\lambda)(p_\rho - 7q_\rho) \right).
\end{align*}
Because $\bm{x}_\lambda$ and $\bm{x}_\rho$ are $(p_\rho q_\lambda - p_\lambda q_\rho)$-compatible, by Lemma~\ref{lem:7cond}, we simplify to 
\begin{align*}
    m_\rho' m_\lambda - d_\rho' d_\lambda &= q_\la q_\rho.
\end{align*}
Finally, substituting this result back into the fractional expression for $|OX|$, we conclude
\begin{equation*}
    |OX| = \frac{q_\lambda q_\rho}{d_\lambda q_\rho} = \frac{q_\lambda}{d_\lambda}.
\end{equation*}
Hence, we obtain 
\[ (1-\eps)E\left( \frac{q_\la}{d_\la},\frac{p_\la q_\la-1}{d_\la q_\la}\right) \sembeds H_{m_\la/d_\la} \iff (1-\eps)E\left(1,\frac{p_\la q_\la-1}{q_\la^2}\right) \sembeds \frac{d_\la}{q_\la}H_{m_\la/d_\la}\]
as desired. 
\end{proof}

The case that the embedding produced in Proposition~\ref{prop:fullFilling} is optimal is immediate as it agrees with the volume bound. Here, we consider cases where the embedding detected from the ATBD is not on the volume bound. From \eqref{eq:funSup}, we have that if $\bm{x}=(p,q,t)$ is a quasi-perfect class for $M$ with integral solution, then  $\bm{x}$ determines a piecewise linear function $\mu_{\bm{x},M}(z)$ which is a lower bound for $c_{M}(z).$

In certain cases, these lower bounds are exactly the reciprocal of the affine lengths in Definition~\ref{def:DecQuad}. We first consider formulas for the $\mu_{\bm{x},M}(z)$:

\begin{lemma}\label{lem:obsFunction}
    \begin{itemlist}
        \item[{(i)}]\cite[Lemma 16]{ICERM} Let $\bm{x}=(p,q,t) \in \X_H$ be a positive class with integral solutions $(d;m)$. Then, there exist real numbers $z_1<p/q<z_2$ such that 
        \[ c_{H_b}(z) \geq \mu_{\bm{x},H_b}(z):=\begin{cases}
            \frac{qz}{d-mb} & \text{if $z_1<z<p/q$} \\
            \frac{p}{d-mb} & \text{if $p/q \leq z < z_2$}
        \end{cases}\]
        \item[{(ii)}] Let $\bm{x}=(p,q,t) \in \X_P$ be a positive class with integral solutions $(e,f)$. Then, there exist real numbers $z_1<p/q<z_2$ such that 
        \[ c_{P_b}(z) \geq \mu_{\bm{x},P_b}(z):= \begin{cases}
            \frac{qz}{e+fb} & \text{if $z_1<z<p/q$} \\
            \frac{p}{e+fb} & \text{if $p/q \leq z < z_2$}
        \end{cases}\]
    \end{itemlist}
\end{lemma}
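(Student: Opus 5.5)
The plan is to reduce part (ii) to the same mechanism that gives part (i), which is \cite[Lemma 16]{ICERM}. The only inputs that change are the symplectic area of the base class and the fact that $\bm{x}$ lies in $\X_P$ rather than $\X_H$. I will first fix the standard obstruction function of a class. For $\bE=(e,f;W(p,q))$, the area of $eS_1+fS_2$ in $P_b=S^2_1\times S^2_b$ with the paper's normalization is $e+fb$. The obstruction is then
\[ \mu_{\bE,P_b}(z)=\frac{\langle W(p,q),w(z)\rangle}{e+fb}, \]
where $w(z)$ is the weight expansion of $z$. For an exceptional class, $c_{P_b}(z)\ge \mu_{\bE,P_b}(z)$ holds for every $z$; this is the content of \eqref{eq:funSup}. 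Lemma~\ref{lem:AreClasses} shows that $\bE$ is quasi-perfect, i.e.\ $2(e+f)=p+q$ and $2ef=pq-1$. So the lemma splits into a purely combinatorial statement about the numerator $\langle W(p,q),w(z)\rangle$, and a justification that the inequality $c\ge\mu$ holds for this class.

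For the combinatorial part, I will use the standard weight-expansion fact from \cite{ball}, as used in \cite{ICERM} and \cite[Section 2]{MM}. Write $p/q=[a_0;a_1,\dots,a_n]$. For $z$ in a small interval around $p/q$, the expansion $w(z)$ begins with all but the last block of $W(p,q)$. On the left of $p/q$ the remaining terms sum in such a way that $\langle W(p,q),w(z)\rangle=qz$, and on the right they give $\langle W(p,q),w(z)\rangle=p$. At $z=p/q$ both expressions equal $p$, using $\sum m_i^2=pq$ together with the self-similarity of $W(p,q)$. This step is independent of the ambient manifold, so I will quote it verbatim from the proof of \cite[Lemma 16]{ICERM}. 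Dividing by $e+fb$ then gives exactly the two-piece formula in (ii), with the same $z_1<p/q<z_2$.

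The main obstacle is the inequality $c_{P_b}\ge\mu_{\bE,P_b}$ itself. Equation \eqref{eq:funSup} only guarantees it for genuinely exceptional classes, whereas the hypothesis only says that $(p,q,t)$ is a positive element of $\X_P$. My first attempt will be to invoke the standard criterion, as in \cite{ICERM}: a quasi-exceptional class whose obstruction exceeds the volume curve $V_{P_b}$ at some point is represented by an embedded symplectic sphere. This follows by positivity of intersections against the exceptional class that realizes the obstruction, and is exactly how part (i) is handled. To apply it, I will check that $\mu_{\bE,P_b}(p/q)=p/(e+fb)$ exceeds $V_{P_b}(p/q)=\sqrt{p/(2bq)}$ for at least one admissible $b$. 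Equivalently, I need $2bpq>(e+fb)^2$, and substituting $pq=2ef+1$ reduces this to
\[ (e-fb)^2<2b, \]
which holds for $b$ near $e/f$. Since exceptionality of $\tilde{\bE}$ does not depend on $b$, the class is exceptional for all $b$, and the bound follows. If this route proves awkward, the fallback is to restrict to the classes that actually occur in the paper. Corollary~\ref{cor:isPerfect} together with Theorem~\ref{thm:PTrip} already shows those classes are perfect, so \eqref{eq:funSup} applies to them directly.
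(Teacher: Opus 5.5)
The local part of your argument is fine and is all the paper itself rests on. The paper gives no proof of this lemma beyond citing \cite[Lemma 16]{ICERM} for (i). Part (ii) is the same weight-expansion computation, with the area $d-mb$ replaced by $e+fb$.

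The gap is in your main route to $c_{P_b}\ge\mu_{\bE,P_b}$. By \eqref{eq:funSup}, lower bounds come only from exceptional classes, so you need $\tilde{\bE}=(e,f;W(p,q))$ to be exceptional. The criterion you invoke to get this from obstructiveness is not a result you can quote, and your justification for it is circular:
\begin{itemlist}
\item Positivity of intersections requires $J$-holomorphic representatives of both classes, and that is exactly what is unknown for $\tilde{\bE}$.
\item An exceptional class ``realizing the obstruction'' at $p/q$ is only guaranteed once you know $c_{P_b}(p/q)\ge\mu_{\bE,P_b}(p/q)>V_{P_b}(p/q)$, which is the conclusion you want.
\end{itemlist}

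Notice also how much the criterion would prove. Your inequality $(e-fb)^2<2b$ holds near $b=e/f$ for every positive element of $\X_P$. The analogous check for $H$ at $b=m/d$ reduces to $pq>pq-1$. So the criterion would make every positive element of $\X_P$ and $\X_H$ perfect. The paper has no such statement. It obtains perfectness of the classes in $\T(P)$ only as a new consequence of realizing them in ATBDs (Theorem~\ref{thm:PTrip}) combined with McDuff--Siegel's unicuspidal curves (Corollary~\ref{cor:isPerfect}). Likewise, \cite{MMW} needed substantial embedding arguments to show the classes in $\T(H)$ are perfect.

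So your fallback is not a fallback: it is the proof. The inequality $c\ge\mu$ genuinely requires $\bE$ to be perfect. Under that hypothesis, \eqref{eq:funSup} together with the weight-expansion computation gives both (i) and (ii), with the same $z_1<p/q<z_2$. This covers every place the paper uses the lemma (Remark~\ref{rmk:obsAndDiagrams}, Example~\ref{ex:visStair}). There the classes lie in triples realized by well-defined ATBDs, so they are perfect by Corollary~\ref{cor:isPerfect} or \cite{MMW}. There is no circularity, because the proofs of Theorems~\ref{thm:HTrip} and~\ref{thm:PTrip} do not use this lemma.

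You should therefore state perfectness as a hypothesis, rather than claiming the result for an arbitrary positive element of $\X_P$. The lemma's phrase ``positive class'' should be read with perfectness implicit.
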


\begin{rmk} \label{rmk:obsAndDiagrams}
    Note that for $Q_{\bullet,b}(\Tt)$ where $\Tt=(\bm{x}_\la,\bm{x}_\mu,\bm{x}_\rho),$ we have that for some interval of $z$-values, the affine length of $OY$ satisfies: 
    \[ \mu_{\bm{x}_\la,\bullet}(z)=\frac{z}{|OY|}.\]
    Further, note the slope of the nodal ray $\vn_Y$ is $p_\la/q_\la$ where this is the $z$-value of the nonsmooth point in $\mu_{\bm{x}_\la,\bullet}(z).$
    To see the relationship between $|OX|$ and the obstruction functions, we use Lemma~\ref{lem:symTriple}. As $S\bm{x}_\rho=(6p_\rho-q_\rho,p_\rho,-t_\rho),$ by Lemma~\ref{lem:symTriple} and the definition of $m_\rho',d_\rho',e_\rho',f_\rho'$ we have that for some $z_2$, for $p/q \leq z \leq z_2,$ the reciprocal of the affine length of $OX$ satisfies:
    \[ \mu_{S\bm{x}_\rho,\bullet}(z)=\frac{1}{|OX|}.\] 
\end{rmk}

ATBDs can not only produce embeddings, but also give obstructions called {\bf visible obstructions} by McDuff and Siegel. A restatement of \cite[Corollary 6.1.4]{McSi} in the terminology of this paper:  
\begin{lemma}\cite[Corollary 6.1.4]{McSi} \label{lem:visObs}
Let $Q:=OXVY$ denote an ATBD for either $P_b,H_b$ with $\bullet=P,H$. Assume that $Q=Q_{\bullet, b}(\Tt)$ where $\Tt=(\bm{x}_\la,\bm{x}_\mu,\bm{x}_\rho).$ If $sQ$ is well-defined, then \[c_{\bullet,b}(z)=\begin{cases}
    \frac{z}{|OY|} & \text{for all $z \in \left[\frac{|OY|}{|OX|},\frac{p_\la}{q_\la}\right]$ if $|OY|>|OX|$} \\
    \frac{1}{|OY|} & \text{for all $z \in \left[\frac{q_\la}{p_\la},\frac{|OX|}{|OY|}\right]$ if $|OY|<|OX|$}
\end{cases}\]
If $\ov{s}Q$ is well-defined, then \[c_{\bullet,b}(z)=\begin{cases}
    \frac{z}{|OX|} & \text{for all $z \in \left[\frac{|OX|}{|OY|},\frac{6q_\rho-p_\rho}{q_\rho}\right]$ if $|OY|>|OX|$} \\
    \frac{1}{|OX|} & \text{for all $z \in \left[\frac{q_\rho}{6q_\rho-p_\rho},\frac{|OY|}{|OX|}\right]$ if $|OX|<|OY|$}
\end{cases}\]

\end{lemma}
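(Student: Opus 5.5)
We reduce the statement to \cite[Corollary 6.1.4]{McSi} by translating their hypotheses into the triple language, and we recover the two-sided bound directly where possible. We treat the case that $sQ$ is well-defined and $|OY|>|OX|$ in detail. The other case for $sQ$ follows by exchanging the roles of the two axes. The $\ov{s}$ statements follow by the symmetry exchanging $X$ and $Y$: under it the nodal ray $\vn_X=(p_\rho-6q_\rho,q_\rho)$ plays the role of $\vn_Y$, and its slope ratio is $(6q_\rho-p_\rho)/q_\rho$. We write $\bullet=H,P$ throughout.

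\emph{Upper bound.} By \eqref{eq:embedATF}, $Q$ gives embeddings $(1-\eps)E(|OX|,|OY|)\sembeds M$ for every $0<\eps<1$. Rescaling and letting $\eps\to 0$ yields $c_{\bullet,b}(z_0)\le 1/|OX|=z_0/|OY|$ at $z_0:=|OY|/|OX|$. Next we use the elementary scaling property $c_M(\lambda z)\le \lambda c_M(z)$ for $\lambda\ge 1$, which holds because $E(1,\lambda z)\subset \lambda E(1,z)$. It gives $c_{\bullet,b}(z)\le z/|OY|$ for all $z\ge z_0$, and in particular on $[z_0,p_\la/q_\la]$. As a consistency check at the right endpoint, Lemma~\ref{lem:sLengths} gives $|OX_s|=|OY|q_\la/p_\la$ and $|OY_s|=|OY|+|VY|\ge |OY|$. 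So $sQ$ already exhibits $E(|OY|q_\la/p_\la,|OY|)$, which recovers $c(p_\la/q_\la)\le p_\la/(q_\la|OY|)$.

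\emph{Lower bound.} By Corollary~\ref{cor:isPerfect}, or by positivity of the relevant triples, $\bm{x}_\la$ is a perfect class. By Lemma~\ref{lem:obsFunction} and Remark~\ref{rmk:obsAndDiagrams}, its obstruction function satisfies $\mu_{\bm{x}_\la,\bullet}(z)=q_\la z/(d_\la-m_\la b)=z/|OY|$ for $z\in(z_1,p_\la/q_\la]$ (with $e_\la+f_\la b$ in place of $d_\la-m_\la b$ for $P$). Hence $c_{\bullet,b}(z)\ge z/|OY|$ there. This matches the upper bound on $[z_0,p_\la/q_\la]$, provided $z_1\le z_0$.

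\emph{Main obstacle.} The crux is the inequality $z_1\le |OY|/|OX|$: the linear piece of $\mu_{\bm{x}_\la}$ must persist down to the point where the visible embedding takes over. This is exactly where the hypothesis that $sQ$ is well-defined enters, meaning that $\vn_Y$ meets $OX$ rather than $XV$. We would first try to identify $z_1$ as the ratio $|OY|/|OX_{cr}|$, where $X_{cr}$ is the point where $\vn_Y$ meets the line $OX$. The well-definedness of $sQ$ says $X_{cr}\in OX$, so $|OX_{cr}|\le|OX|$. Showing this geometrically is precisely the content of McDuff--Siegel's visible obstruction argument, so here we invoke \cite[Corollary 6.1.4]{McSi} directly. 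The remaining work is to check that their hypotheses are met: their ``visible'' cusp curve corresponds to the vertex $Y$ of type $\frac{1}{q_\la^2}(1,p_\la q_\la-1)$ via Definition~\ref{eq:tStandard}, and their ellipsoid is $E(|OX|,|OY|)$ as computed in Definition~\ref{def:DecQuad}.
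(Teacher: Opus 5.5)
Your proposal ends up resting on the same black box as the paper, \cite[Corollary 6.1.4]{McSi}, so as a reduction it is not wrong. However, the paragraph on the ``main obstacle'' is mistaken, and once it is corrected your own two bounds already prove the lemma.

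\emph{The heuristic is backwards.} The ray $\vn_Y=(q_\la,-p_\la)$ from $(0,|OY|)$ meets the $x$-axis at distance $|OX_{cr}|=|OY|q_\la/p_\la$. So $|OY|/|OX_{cr}|=p_\la/q_\la$ is the \emph{right} endpoint of the interval, not the left end $z_1$ of the linear piece of $\mu_{\bm{x}_\la,\bullet}$. Likewise, $|OX_{cr}|\le|OX|$ gives $z_0=|OY|/|OX|\le p_\la/q_\la$, which only says the interval is nonempty. That is the real and only role of the hypothesis that $sQ$ is well-defined.

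\emph{The inequality $z_1\le z_0$ is never needed.} The property $c(\lambda z)\le\lambda c(z)$ for $\lambda\ge1$ that you proved says that $c_{\bullet,b}(z)/z$ is nonincreasing. It yields the lower bound just as well as the upper one, using only the value of the obstruction at the cusp point, $\mu_{\bm{x}_\la,\bullet}(p_\la/q_\la)=p_\la/(q_\la|OY|)$. For $z\in[z_0,p_\la/q_\la]$ one gets
\[
\frac{z}{|OY|}=\frac{q_\la z}{p_\la}\,\mu_{\bm{x}_\la,\bullet}\Bigl(\frac{p_\la}{q_\la}\Bigr)\le\frac{q_\la z}{p_\la}\,c_{\bullet,b}\Bigl(\frac{p_\la}{q_\la}\Bigr)\le c_{\bullet,b}(z)\le\frac{z}{z_0}\,c_{\bullet,b}(z_0)\le\frac{z}{|OY|}.
\]
The case $|OY|<|OX|$ is also not obtained by exchanging the axes, since the relevant vertex is still $Y$. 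It follows from the same two ingredients, now combined with monotonicity of $c_{\bullet,b}$ in $z$.

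\emph{Comparison with the paper.} The paper reads off from \cite[Corollary 6.1.4]{McSi} a whole family of optimal embeddings $E(|OY|,z')$, $z'\in[|OX_s|,|OX|]$, and simply rescales. All of the content therefore sits inside McDuff--Siegel's visible-obstruction theorem. Your corrected route is more elementary and shows exactly where each hypothesis enters:
\begin{itemize}
\item the visible embedding \eqref{eq:embedATF} gives the upper bound;
\item the class at $Y$ gives the lower bound at the single point $p_\la/q_\la$;
\item scaling of $c_{\bullet,b}$ propagates both bounds across the interval;
\item the $s$-hypothesis only guarantees $z_0\le p_\la/q_\la$.
\end{itemize}
The dependence on McDuff--Siegel is then reduced to the perfectness of $\bm{x}_\la$ (Corollary~\ref{cor:isPerfect}). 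Your alternative justification ``by positivity of the relevant triples'' should be dropped. Positivity gives only a quasi-perfect class (Lemma~\ref{lem:AreClasses}), whereas the supremum in \eqref{eq:funSup} runs over exceptional classes.
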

\begin{proof}
    Assume that $sQ$ is well-defined. Let $X_s$ denote the point on $OX$ such that $\vn_Y$ extends to intersect $OX.$ Note that by definition, we have that $|OY|/|OX_s|=p_\la/q_\la.$

    By \cite[Corollary 6.1.4]{McSi}, for $|OX_s| \leq z' \leq |OX|,$ there is an embedding \[E\left(\frac{|OY|}{\la},\frac{z'}{\la}\right) \sembeds H_b\] for all $\la>1$ and no values of $\la$ such that $\la<1$. If $|OX_s|>|OY|,$ this implies that $c_{\bullet,b}(z)=\frac{1}{|OY|}$ for  $q_\la/p_\la=|OX_s|/|OY| \leq z \leq |OX|/|OY|$.

    Now, in the case where $|OY|>|OX|,$ we have $E(z,|OY|) \sembeds H_b$ is optimal. Setting $x=|OY|/z,$ we have that $c_{\bullet,b}(z)=z/|OY|$ for $|OY|/|OX| \leq z \leq |OY|/|OX_s|.$ The case where $\ov{s}Q$ is well-defined is similar. 
\end{proof}

We can use Lemma~\ref{lem:visObs} to give criteria on mutations which imply that $H_b$ or $P_b$ has a staircase. 
\begin{cor} \label{cor:visStair}
Fix \(\bullet\in\{H,P\}\) and \(b\). Let \(\{w_k\}_{k\ge0}\) be an infinite
sequence of words such that
\[
        w_kQ_{\bullet,b}^0:=OX_kV_kY_k
\]
is well-defined for every \(k\). If \(sw_kQ_{\bullet,b}^0\) is well-defined for every \(k\), and the affine
lengths \(|OY_k|\) are pairwise distinct, then \(c_{\bullet,b}\) has an infinite staircase. Similarly, if \(\ov{s}w_kQ_{\bullet,b}^0\) is
well-defined for every \(k\), and the affine lengths \(|OX_k|\) are pairwise
distinct, then \(c_{\bullet,b}\) has an infinite staircase.
\end{cor}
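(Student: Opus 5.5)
The plan is to feed each quadrilateral $w_kQ_{\bullet,b}^0$ into Lemma~\ref{lem:visObs}. Each one then yields an interval of $z$-values on which $c_{\bullet,b}$ is known exactly and is a ``corner piece''. I would then show that distinct affine lengths force distinct nonsmooth points. I treat the $s$ case; the $\ov{s}$ case is identical with $|OX_k|$ in place of $|OY_k|$ and the second half of Lemma~\ref{lem:visObs}.

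\emph{Step 1: a kink for each $k$.} Fix $k$ and write $w_kQ_{\bullet,b}^0=Q_{\bullet,b}(w_k\Tt_0^\bullet)$, using Theorem~\ref{thm:ATFMut}. Let $(p_k,q_k)$ be the left entry of $w_k\Tt_0^\bullet$. Since $sw_kQ_{\bullet,b}^0$ is well-defined, the nodal ray $\vn_{Y_k}$ meets $OX_k$ at a point $X_{s,k}$ with $|OY_k|/|OX_{s,k}|=p_k/q_k$ and $|OX_{s,k}|<|OX_k|$.

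The proof of Lemma~\ref{lem:visObs} shows that on the interval $[|OX_{s,k}|,|OX_k|]$ the ellipsoid $E(|OY_k|,z')$ embeds optimally. In normalized form this gives $c_{\bullet,b}(z)=z/|OY_k|$ to one side of $z=p_k/q_k$ and $c_{\bullet,b}(z)=1/|OY_k|$ on the other side. The exact side depends on whether $|OY_k|\gtrless|OX_k|$. Combining this with the lower bound $\mu_{\bm{x}_\la,\bullet}$ of Lemma~\ref{lem:obsFunction} and Remark~\ref{rmk:obsAndDiagrams}, which equals $z/|OY_k|$ just left of $p_k/q_k$ and the constant $p_k/(q_k|OY_k|)$ just right of it, I expect $c_{\bullet,b}$ to coincide with $\mu_{\bm{x}_\la,\bullet}$ on a neighborhood of $p_k/q_k$. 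This would give a genuine nonsmooth point there, where the slope jumps from $1/|OY_k|$ to $0$.

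The main obstacle is this step: ensuring that the interval on which Lemma~\ref{lem:visObs} pins down $c$ contains $p_k/q_k$ in its interior, or at least touches it from the correct side. A one-sided statement only shows $c$ is linear up to $p_k/q_k$. To get the kink I would use that $c_{\bullet,b}\ge\mu_{\bm{x}_\la,\bullet}$ and that $c$ is nondecreasing with $c(z)/z$ nonincreasing, which is the standard scaling property. These two facts squeeze $c$ to be $z/|OY_k|$ on one side and constant on the other side of $p_k/q_k$.

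\emph{Step 2: distinctness.} The nonsmooth point produced from index $k$ lies on the ray $c=z/|OY_k|$, and there $c$ has slope $1/|OY_k|$. If two indices $k\ne k'$ gave the same nonsmooth point, the linear pieces $z/|OY_k|$ and $z/|OY_{k'}|$ would both agree with $c$ on an interval ending at that point. That forces $|OY_k|=|OY_{k'}|$, contradicting pairwise distinctness. Hence there are infinitely many distinct nonsmooth points, and $c_{\bullet,b}$ has an infinite staircase.
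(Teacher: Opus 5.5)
Your plan has the same structure as the paper's proof. You apply Lemma~\ref{lem:visObs} to each $w_kQ^0_{\bullet,b}$ to get a linear piece of $c_{\bullet,b}$ determined by $|OY_k|$, then use pairwise distinctness of the lengths to separate the resulting corners. The paper simply asserts that each visible-obstruction interval has an endpoint where the slope of $c_{\bullet,b}$ changes. You correctly identify this as the crux, but the squeeze you propose for it does not work.

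\emph{Why the squeeze fails.} Take the case $|OY_k|>|OX_k|$. There Lemma~\ref{lem:visObs} gives $c_{\bullet,b}(z)=z/|OY_k|$ on $[|OY_k|/|OX_k|,\,p_k/q_k]$. To the right of $p_k/q_k$, the facts you invoke give only two bounds. The lower bound comes from $\mu_{\bm{x}_\la,\bullet}$ (equivalently, monotonicity), and the upper bound from $c(z)/z$ being nonincreasing:
\[
\frac{p_k}{q_k|OY_k|}\;\le\; c_{\bullet,b}(z)\;\le\;\frac{z}{|OY_k|}.
\]
Both bounds meet at $z=p_k/q_k$. A function that keeps following the ray $z/|OY_k|$ past $p_k/q_k$ satisfies every constraint you list, so nothing forces the slope to drop and no corner is established. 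Relatedly, the two formulas in Lemma~\ref{lem:visObs} are alternative cases, each one-sided, not the two sides of one corner. The constant that matches the ray at $p_k/q_k$ would be $p_k/(q_k|OY_k|)$, not $1/|OY_k|$.

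\emph{A fix within the paper's toolkit.} Also use the quadrilateral $sw_kQ^0_{\bullet,b}=OX_{s,k}V_{s,k}Y_{s,k}$.
\begin{itemize}
\item Its nodal ray at $X_{s,k}$ is $-\vn_{Y_k}$. This ray runs back through the old vertex $Y_k=(0,|OY_k|)$, which is an interior point of $OY_{s,k}$ because $|OY_{s,k}|=|OY_k|+|V_kY_k|$. Hence $\ov{s}$ is well-defined on $sw_kQ^0_{\bullet,b}$.
\item Its right entry is $S\bm{x}_\la$, so the quantity $q_\rho/(6q_\rho-p_\rho)$ becomes $p_k/q_k$.
\item In this case $|OX_{s,k}|<|OX_k|<|OY_k|<|OY_{s,k}|$.
\end{itemize}
So the $\ov{s}$-part of Lemma~\ref{lem:visObs} gives $c_{\bullet,b}\equiv 1/|OX_{s,k}|=p_k/(q_k|OY_k|)$ on the nondegenerate interval $[p_k/q_k,\,|OY_{s,k}|/|OX_{s,k}|]$. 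Combined with the ray on the left, this is a genuine corner at $p_k/q_k$, where the slope drops from $1/|OY_k|$ to $0$.

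With that, your Step~2 goes through for these corners. One caveat: if some indices fall under the constant alternative of Lemma~\ref{lem:visObs}, one corner could arise from two indices. One index would supply the ray on one side and the other a constant on the other side, with $z_0/|OY_k|=1/|OY_{k'}|$. So argue that each corner comes from at most two indices rather than exactly one; this still gives infinitely many corners.
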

\begin{proof}
    For the case when $\ov{s}$ is well-defined, by Lemma~\ref{lem:visObs}, for each $k,$ the function $c_{\bullet,b}(z)$ 
    is equal to $\frac{z}{|OX_k|}$ or $\frac{1}{|OX_k|}$ on some interval. 
    For each \(k\), the visible-obstruction interval has an endpoint at which the slope of \(c_{\bullet,b}\) changes, and the value of this linear piece is determined by \(|OX_k|\). Since the \(|OX_k|\) are pairwise distinct, these visible pieces cannot all give the same nonsmooth point. A similar proof holds for the case where $s$ is well-defined. 
\end{proof}

We aim to use Corollary~\ref{cor:visStair} to prove Theorem~\ref{thm:ATFvis} and Corollary~\ref{cor:ATFvis}. By \cite[Theorem 1.13]{AADT} if $M$ has an infinite staircase, then it must accumulate to the solution to the quadratic equation in \eqref{eq:accPt} and further, the function is unobstructed at the accumulation point. We first show that under certain conditions if the limit of an infinite sequence of ATF mutations is a triangle, then this corresponds to being unobstructed at the accumulation point. In these cases, the ATBD can detect the accumulation point. 

\begin{lemma} \label{lem:fullFillingAccPt}
   Let $w_k=x^kw'$ (resp. $y^kw'$) where $w'$ is some word in $x,y,s,\ov{x},\ov{y},\ov{s}.$  Assume that $w_kQ_{\bullet,b}^{0}:=OX_kV_kY_k$ is well-defined for each $k$ for either $\bullet=P,H$. If the $\lim_{k \to \infty} w_kQ_{\bullet,b}^{0}$ is such that the limit of the affine length of $V_kY_k$ (resp. $X_kV_k$) goes to zero and 
   the limit of the slope of $X_kV_k$ (resp. $V_kY_k$) is irrational, then $c_{\bullet,b}(z_\infty)$ is unobstructed. 
\end{lemma}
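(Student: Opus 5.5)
We treat the case $w_k=y^kw'$; the case $x^kw'$ is symmetric with the roles of $X$ and $Y$ exchanged. The idea is to pass to the limit of the quadrilaterals and show that the limiting shape is a triangle, which gives a full filling of an ellipsoid whose eccentricity must be $z_\infty$.

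\textbf{Step 1: the limiting triangle and its ellipsoids.} Under $y$-mutations the vertex $X$ and the edge $OX$ stay fixed, so $|OX_k|=|OX|$ is constant. The heights $|OY_k|=|OY_{k-1}|+|V_{k-1}Y_{k-1}|$ increase monotonically. They are bounded, because the affine area of $w_kQ^0_{\bullet,b}$ equals half of $\mathrm{Vol}(M)$. Hence $|OY_k|\to L_Y$. By \eqref{eq:embedATF}, for every $k$ and every $\eps$,
\[
(1-\eps)E(|OX|,|OY_k|)\sembeds M .
\]
Since the set of embeddable ellipsoids is closed under limits of this form (scale slightly), we get $(1-\eps)E(|OX|,L_Y)\sembeds M$ for all $\eps$.

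\textbf{Step 2: area computation.} Since $|X_kV_k|\to0$, the quadrilateral degenerates to the triangle $O\,X\,Y_\infty$. Its affine area is $\tfrac12|OX|L_Y$, because the edges $OX$ and $OY$ are the standard basis directions. The affine area is preserved by mutation and equals $\tfrac12\mathrm{Vol}(M)$ in the paper's normalization. Concretely,
\[
2\,\mathrm{Area}(OX_kV_kY_k)=|OX|\,|OY_k| - (\text{area of the triangle cut off by } V_k).
\]
This last term is bounded by a multiple of $|X_kV_k|$ times the bounded diameter, so it tends to $0$. Thus $|OX|L_Y=\mathrm{Vol}(M)$. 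Setting $z=L_Y/|OX|$ (or its reciprocal, if it is less than $1$), the embedding from Step 1 gives $c_M(z)=V_M(z)$, a full filling.

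\textbf{Step 3: identifying $z$ with $z_\infty$.} This is the main obstacle. I would use the affine perimeter. Mutations preserve the sum of the affine lengths of the edges, corrected by the nodal contributions; more precisely, I would use the fact (as in \cite{AADT}, via the $c_1$ computation of Remark~\ref{rmk:mono}) that for these ATBDs the boundary affine length $|OX|+|XV|+|VY|+|YO|$ equals $\mathrm{Per}(M)$ in the appropriate sense. The hypothesis that the slope of $V_kY_k$ has an irrational limit is what should make $|V_kY_k|$ converge, after rescaling by the primitive lattice length, to an edge of the triangle with irrational direction.

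The triangle $O X Y_\infty$ then has hypotenuse of irrational slope $-L_Y/|OX|$. Its ``affine length'' has to be interpreted as the limit of $|V_kY_k|$, and this limit equals $\mathrm{Per}(M)-|OX|-L_Y$ because $|XV_k|\to0$. Combining $|OX|L_Y=\mathrm{Vol}$ with the perimeter relation, I expect to obtain, by an argument parallel to the proof of \cite[Theorem 1.13]{AADT} (where the accumulation equation \eqref{eq:accPt} arises from exactly this pair of relations on a limiting triangle), that $z$ satisfies $z+1/z=\mathrm{Per}^2/\mathrm{Vol}-2$.

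I would first try this step concretely using Lemma~\ref{lem:yLengths}. There $|OY_k|=(d_{\mu,k}-m_{\mu,k}b)/q_{\mu,k}$, and the entries grow by a linear recursion with parameter $\eps t_\rho$ fixed. The ratios $p_{\mu,k}/q_{\mu,k}$ therefore converge to the irrational root of $x^2-\nu x+1$-type recursions. Writing $L_Y$ in terms of these limits and using $\bm{x}^TA\bm{x}=8$ (i.e.\ $d^2-m^2=pq-1$, $3d-m=p+q$), I would verify directly that $L_Y/|OX|$ solves \eqref{eq:accPt}. The irrationality hypothesis excludes the degenerate case where the recursion is periodic or the limit is rational.

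Since $z=z_\infty$ and the embedding is a full filling, $c_{\bullet,b}(z_\infty)=V_{\bullet,b}(z_\infty)$, i.e.\ the function is unobstructed there.
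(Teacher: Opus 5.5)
Your Steps 1 and 2 are sound and match the paper: the limit of the visible embeddings $(1-\eps)E(|OX|,|OY_k|)\sembeds M$ gives the limiting embedding, and $|OX|\,L_Y=\mathrm{Vol}(M)$ follows because $|X_kV_k|\to 0$ while the direction $\ovr{XV_k}$ stays fixed. The gap is in Step 3, and it comes from misreading what the irrationality hypothesis does. You let the edge $V_kY_k$ keep a positive affine length $h:=\lim|V_kY_k|=\mathrm{Per}(M)-|OX|-L_Y$ in the limit, and then hope to recover \eqref{eq:accPt} from the perimeter and area relations. But if $h>0$, those relations read $|OX|+L_Y=\mathrm{Per}(M)-h$ and $|OX|\,L_Y=\mathrm{Vol}(M)$. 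They then give $z+1/z=(\mathrm{Per}-h)^2/\mathrm{Vol}-2$, which is not the accumulation-point equation. So the argument cannot close as outlined. The fallback computation via Lemma~\ref{lem:yLengths} is only announced, not carried out.

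The missing idea is that the irrational limiting slope forces $h=0$. The slopes of $V_kY_k$ are rationals converging to an irrational number, so their reduced denominators tend to infinity. Equivalently, the Euclidean norms of the primitive vectors $\ovr{V_kY_k}$ tend to infinity. Meanwhile the Euclidean length of the segment $V_kY_k$ stays bounded, because all vertices lie in a bounded region (which you essentially observed in Step 1). Hence the affine length $|V_kY_k|\to 0$.

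Mutations preserve the affine perimeter, so $|OX|+|XV_k|+|V_kY_k|+|OY_k|=\mathrm{Per}(M)$ for every $k$. Now both non-axis terms vanish in the limit, giving $|OX|+L_Y=\mathrm{Per}(M)$. Combined with $|OX|\,L_Y=\mathrm{Vol}(M)$, this yields $z+1/z=(|OX|^2+L_Y^2)/(|OX|\,L_Y)=\mathrm{Per}^2/\mathrm{Vol}-2$ for $z=L_Y/|OX|$ or its reciprocal. So the full filling from Steps 1--2 occurs at $z_\infty$.

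This is exactly the paper's proof. You need neither a parallel to the proof of \cite[Theorem 1.13]{AADT} nor the explicit recursion.
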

\begin{proof}
    Consider the case where $w_k=y^kw'.$ Let $\ell_k$ denote the slope of the edge $V_kY_k$, which if $w_k\Tt_0^\bullet=(\bm{x}_{k},\bm{x}_{k+1},\bm{x}_{\rho})$ is 
    \[\ell_k=p_{k}/q_{k}-1/q_{k}^2.\] 
    Since the slopes of \(V_kY_k\) converge to an irrational number, the
primitive direction vectors of \(V_kY_k\) have norm tending to infinity.
The Euclidean lengths of \(V_kY_k\) remain bounded because the vertices
remain in a bounded region. Hence the affine lengths of \(V_kY_k\) tend to
zero.
 Further, by assumption, the affine length of $X_kV_k$ is going to zero. Let $|OY_\infty|:=\lim_{k \to \infty} |OY_k|$ and note that $|OX|$ is constant at each stage of mutation. As mutations preserve the volume and perimeter, due to the above observations, we have that
    \begin{equation} \label{eq:perVolQ}
         |OY_\infty|+|OX|=Per(Q) \quad \text{and} \quad |OY_\infty|\cdot |OX|=Vol(Q).
    \end{equation}
    By \eqref{eq:accPt}, we have $z_\infty$ satisfies
    \[ \frac{1}{z_\infty}+z_\infty=\frac{Per^2}{Vol}-2.\]
    Hence, by \eqref{eq:perVolQ}, we can see that $\frac{|OY_\infty|}{|OX|}$ and $\frac{|OX|}{|OY_\infty|}$ are the solutions to the accumulation point quadratic equation. From the limit of ATBD, we obtain the embedding 
    \[ (1-\eps)E(|OX|,|OY_\infty|) \sembeds M_b\] for $M=P,H$ corresponding to a full filling at the accumulation point. 
    The case of $w_k=x^kw'$ is identical. 
\end{proof}

\begin{rmk} \rm 
 
    Note that the condition in Lemma~\ref{lem:fullFillingAccPt} about the limit of the slope of either $X_kV_k$ or $V_kY_k$ being irrational will hold if $t_\rho,t_\la \geq 3$ by \cite[Remark 2.1.3]{MM}. 
\end{rmk}

\subsection{Correctly Ordered Triples}
In order to prove Theorem~\ref{thm:ATFvis}, we use Corollary~\ref{cor:visStair}. This corollary concerns when $s$ or $\ov{s}$ is well-defined. The assumptions in Theorem~\ref{thm:ATFvis} about correctly ordered triples ensure that we can guarantee that $s$ or $\ov{s}$ is well-defined for a sequence of mutations.

\begin{definition} \label{def:correctlyOrdered}
    We say a recursive triple $\Tt=(\bm{x}_\la,\bm{x}_\mu,\bm{x}_\rho)$ where $\eps=1$ if $\Tt$ is sign-matching and $\eps=-1$ otherwise is {\bf correctly ordered} if the following ordering conditions hold: 
    \begin{itemlist}
        \item[{i)}]  $3+2\sqrt{2}<p_\la/q_\la<p_\mu/q_\mu<p_\rho/q_\rho$, 
        \item[{ii)}] $(0,0,2)< (p_\la,q_\la,\eps t_\la),(p_\rho,q_\rho,\eps t_\rho) \leq (p_\mu,q_\mu,\eps t_\mu)$.
    \end{itemlist}
    Here inequalities between triples are interpreted coordinatewise.
\end{definition}

Under the assumption that a triple is correctly ordered, we now deduce various consequences about sequences of mutations of that triple. 
\begin{lemma} \label{lem:ordered}
    Let $\Tt=(\bm{x}_\la,\bm{x}_\mu,\bm{x}_\rho)$ be a recursive triple such that $\eps t_\la,\eps t_\rho>0$ and $p_\bullet,q_\bullet>0$ where $\eps=1$ if $\Tt$ is sign-matching and $-1$ otherwise. Then, the triple is ordered, i.e. we have that
    \[ p_\la/q_\la < p_\mu/q_\mu <p_\rho/q_\rho.\]
    \end{lemma}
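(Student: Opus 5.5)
The plan is to use the determinant identities built into the definition of a recursive triple (Definition~\ref{def:genTrip}). Conditions (i) and (ii) there give
\[
p_\mu q_\la - p_\la q_\mu = \eps t_\rho, \qquad p_\rho q_\mu - p_\mu q_\rho = \eps t_\la .
\]
By hypothesis $\eps t_\la>0$ and $\eps t_\rho>0$. Hence both determinants are strictly positive.

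Since all $q_\bullet>0$, we can divide by positive products. Dividing the first identity by $q_\la q_\mu>0$ gives
\[
\frac{p_\mu}{q_\mu}-\frac{p_\la}{q_\la}=\frac{\eps t_\rho}{q_\la q_\mu}>0 .
\]
Dividing the second by $q_\mu q_\rho>0$ gives
\[
\frac{p_\rho}{q_\rho}-\frac{p_\mu}{q_\mu}=\frac{\eps t_\la}{q_\mu q_\rho}>0 .
\]
Chaining the two inequalities yields $p_\la/q_\la<p_\mu/q_\mu<p_\rho/q_\rho$, as required.

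The only point needing care is keeping track of which $t$ controls which pair; a sign slip there would reverse an inequality. The labeling above, where $\eps t_\rho$ governs the pair $(\la,\mu)$ and $\eps t_\la$ governs the pair $(\mu,\rho)$, is exactly as stated in Definition~\ref{def:genTrip}. The argument uses only the positivity of the $q_\bullet$. The positivity of the $p_\bullet$ is not needed, nor is condition (iii) of Definition~\ref{def:genTrip}. The same computation, combined with the identity $p_{x\mu}q_\la-p_\la q_{x\mu}=\eps t_\mu$, was already used in the proof of Corollary~\ref{cor:triples}.
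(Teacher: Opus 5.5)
Your proposal is correct and is essentially the paper's own proof: it likewise takes the two determinant identities $\eps t_\rho=p_\mu q_\la-p_\la q_\mu$ and $\eps t_\la=p_\rho q_\mu-p_\mu q_\rho$ from the recursive-triple definition and divides by the positive products of the $q_\bullet$. You are also right that only $q_\bullet>0$ is used, not $p_\bullet>0$.
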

\begin{proof}
    Since \(q_\lambda,q_\mu,q_\rho>0\), the identities of a recursive triple
\[
\eps t_\lambda=p_\rho q_\mu-p_\mu q_\rho,\qquad
\eps t_\rho=p_\mu q_\lambda-p_\lambda q_\mu
\]
imply
\[
\frac{p_\mu}{q_\mu}<\frac{p_\rho}{q_\rho},
\qquad
\frac{p_\lambda}{q_\lambda}<\frac{p_\mu}{q_\mu}.
\]
\end{proof}

\begin{lemma}\label{lem:correctlyOrdered}
    If $\Tt=(\bm{x}_\la,\bm{x}_\mu,\bm{x}_\rho)$ is correctly ordered, then $w\Tt$ is correctly ordered where $w$ is any word in $x,y.$
\end{lemma}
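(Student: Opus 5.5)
By induction on the length of $w$, it suffices to show that if $\Tt$ is correctly ordered, then $x\Tt$ and $y\Tt$ are correctly ordered. By Proposition~\ref{prop:genTrip}(i), $x\Tt$ and $y\Tt$ are recursive triples with the same value of $\eps$, so the sign convention in Definition~\ref{def:correctlyOrdered} is unchanged. I treat $y\Tt=(\bm{x}_\mu,\bm{x}_{y\mu},\bm{x}_\rho)$, where $\bm{x}_{y\mu}=\eps t_\rho\bm{x}_\mu-\bm{x}_\la$. The case of $x\Tt=(\bm{x}_\la,\bm{x}_{x\mu},\bm{x}_\mu)$, where $\bm{x}_{x\mu}=\eps t_\la\bm{x}_\mu-\bm{x}_\rho$, is symmetric, with the roles of $\la$ and $\rho$ exchanged.

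\textbf{Condition (ii).} Set $\nu:=\eps t_\rho$. Correct ordering gives $\nu> 2$; since $\nu$ is an integer, $\nu\geq 3$. Correct ordering also gives $0<\bm{x}'_\la\leq \bm{x}'_\mu$ coordinatewise, where $\bm{x}'_\bullet:=(p_\bullet,q_\bullet,\eps t_\bullet)$. The recursion acts linearly on these modified vectors as well, because $\eps t_{y\mu}=\nu\,\eps t_\mu-\eps t_\la$. Hence
\[
\bm{x}'_{y\mu}-\bm{x}'_\mu=(\nu-1)\bm{x}'_\mu-\bm{x}'_\la\geq (\nu-2)\bm{x}'_\mu>0 .
\]
This shows that the new middle entry dominates the old middle entry $\bm{x}'_\mu$. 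It also dominates $\bm{x}'_\rho$, since $\bm{x}'_\rho\leq\bm{x}'_\mu$. The new outer entries $\bm{x}_\mu$ and $\bm{x}_\rho$ satisfy $(0,0,2)<\bm{x}'$ because they did so before. So (ii) holds for $y\Tt$.

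\textbf{Condition (i).} By Lemma~\ref{lem:ordered}, applied to $y\Tt$, we get $p_\mu/q_\mu<p_{y\mu}/q_{y\mu}<p_\rho/q_\rho$. The hypotheses of that lemma hold: the outer $t$-values $\eps t_\mu$ and $\eps t_\rho$ are positive by (ii), and all $p,q$ are positive by (ii). Since $p_\mu/q_\mu>p_\la/q_\la>3+2\sqrt{2}$, condition (i) follows.

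\textbf{The $x$-mutation.} Here $\nu=\eps t_\la\geq 3$, and the same argument gives $\bm{x}'_{x\mu}\geq\bm{x}'_\mu$. Lemma~\ref{lem:ordered} then gives $p_\la/q_\la<p_{x\mu}/q_{x\mu}<p_\mu/q_\mu$, and the lower bound $3+2\sqrt{2}$ is inherited from $p_\la/q_\la$. The only care needed is in the strictness of the inequalities and the integrality step $\eps t\geq 3$; I do not expect a genuine obstacle. This closes the induction.
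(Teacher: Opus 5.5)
Your proof is correct and is essentially the paper's argument: you reduce to a single $x$- or $y$-mutation, get condition (ii) from $(\nu-1)\bm{x}_\mu-\bm{x}_\la\geq(\nu-2)\bm{x}_\mu>0$ (with $\nu=\eps t_\rho$ for $y$; for $x$, $\nu=\eps t_\la$ and $\bm{x}_\rho$ replaces $\bm{x}_\la$), and get condition (i) from Lemma~\ref{lem:ordered}. Working with the sign-adjusted vectors $(p,q,\eps t)$ and checking that lemma's hypotheses on the new triple only makes explicit what the paper leaves implicit, and the integrality step $\nu\geq 3$ is unnecessary, since $\nu>2$ already gives $\nu-2>0$.
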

\begin{proof}
    It is sufficient to check for when $w=x,y.$ Assume $w=x.$ Then, we have $x\Tt=(\bm{x}_\la,\bm{x}_{x\mu},\bm{x}_\mu)$ where
    \[ \bm{x}_{x\mu}=\eps t_\la \bm{x}_\mu-\bm{x}_\rho.\] 
    We first show condition (ii) of being correctly ordered holds.
    Since \(\eps t_\lambda>2\) and \(\bm{x}_\rho\leq \bm{x}_\mu\) coordinate wise,
\[
\bm{x}_{x\mu}-\bm{x}_\mu
=
(\eps t_\lambda-1)\bm{x}_\mu-\bm{x}_\rho
\geq
(\eps t_\lambda-2)\bm{x}_\mu>0.
\]
    
    Condition (i) holds for $x\Tt$ by Lemma~\ref{lem:ordered} as $\Tt$ is correctly ordered. A similar proof holds when $w=y.$ 
\end{proof}

In this section, $\acc_\bullet(b)$ is defined via the solution $z_\infty \geq 1$ given in \eqref{eq:accPt} for $\bullet=P,H$. Hence, $\acc_H(b)$ is the solution $z_\infty \geq 1$ to 
\[
        z+\frac{1}{z}
        =
        \frac{(3-b)^2}{1-b^2}-2
\]
and
$\acc_P(b)$ is the solution $z_\infty \geq 1$ to 
\[  z+\frac{1}{z}=
        \frac{(2+2b)^2}{2b}-2.\]
We outline some properties of this function now. 
\begin{lemma}\label{lem:accMonotone}
The function $\acc_H$ is strictly decreasing on $(0,1/3)$ and strictly increasing on $(1/3,1)$. The function $\acc_P$ is strictly decreasing on $(0,1)$ and strictly increasing on $(1,\infty)$. 
\end{lemma}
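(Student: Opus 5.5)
The plan is to solve the quadratic explicitly and then track the monotonicity of its right-hand side. For $z\ge 1$, the map $g(z)=z+1/z$ is strictly increasing (since $g'(z)=1-1/z^2>0$ for $z>1$). So the solution $z_\infty\ge1$ of $g(z)=F(b)$ exists exactly when $F(b)\ge2$, and then $z_\infty=g^{-1}(F(b))$, where $g^{-1}\colon[2,\infty)\to[1,\infty)$ is strictly increasing. Hence $\acc_\bullet(b)$ has the same monotonicity as the right-hand side $F_\bullet(b)$. This reduces the lemma to a one-variable calculus statement about $F_H(b)=\frac{(3-b)^2}{1-b^2}-2$ on $(0,1)$ and $F_P(b)=\frac{(2+2b)^2}{2b}-2$ on $(0,\infty)$.

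For $P$, simplify $F_P(b)=\frac{2(1+b)^2}{b}-2=2b+\frac{2}{b}+2$. Its derivative is $2-2/b^2$, which is negative on $(0,1)$ and positive on $(1,\infty)$. The minimum value $F_P(1)=6\ge2$, so $z_\infty$ is always defined, and the claimed monotonicity of $\acc_P$ follows. For $H$, differentiate $h(b)=\frac{(3-b)^2}{1-b^2}$:
\[
h'(b)=\frac{-2(3-b)(1-b^2)+2b(3-b)^2}{(1-b^2)^2}=\frac{2(3-b)\bigl(-(1-b^2)+b(3-b)\bigr)}{(1-b^2)^2}=\frac{2(3-b)(3b-1)}{(1-b^2)^2}.
\]
On $(0,1)$ we have $3-b>0$, so the sign of $h'$ is the sign of $3b-1$: negative on $(0,1/3)$ and positive on $(1/3,1)$. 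At the minimum, $h(1/3)=\frac{64/9}{8/9}=8$, so $F_H\ge 6>2$ and $z_\infty$ is defined throughout. The monotonicity of $\acc_H$ then follows.

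There is no real obstacle. The only point needing care is checking that $F_\bullet(b)\ge2$, so that the root $z_\infty\ge1$ exists and is unique (indeed $F_\bullet>2$, giving $z_\infty>1$). Given that, strict monotonicity of $F_\bullet$ transfers to $\acc_\bullet$ via the strictly increasing inverse $g^{-1}$. Explicitly, $\acc(b)=\tfrac12\bigl(F+\sqrt{F^2-4}\bigr)$, which is visibly increasing in $F$ for $F\ge2$.
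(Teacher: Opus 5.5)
Your proof is correct and takes the same approach as the paper: since $z\mapsto z+1/z$ is strictly increasing for $z>1$, $\acc_\bullet$ inherits the monotonicity of the right-hand side, and the derivative has the sign of $3b-1$ for $H$ (resp.\ of $b-1$ for $P$). You also check explicitly that the right-hand side is at least $6>2$, so that the root $z_\infty\ge 1$ exists; the paper leaves this implicit.
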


\begin{proof}
The function
\[
        z\mapsto z+\frac{1}{z}
\]
is strictly increasing for $z>1$. Therefore the monotonicity of
$\acc_H$ is the same as the monotonicity of $ f(b):=\frac{(3-b)^2}{1-b^2}-2.$ The sign of $f'(b)$ is the same as the sign of $3b-1$ implying the result. A similar result follows for $P.$
\end{proof}

\begin{lemma} \label{lem:pqAlOrder}
Let $(d;m;p,q)$ or $(e,f;p,q)$ be a quasi-perfect class for $H$ or $P$ such that $p/q>1$ where $\al=m/d,e/f$ and $\bullet=H,P$. 
Then, \[p/q<\acc_\bullet(\al).\]
\end{lemma}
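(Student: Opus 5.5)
The plan is to reduce the inequality to a comparison of the right-hand sides of the accumulation-point equation \eqref{eq:accPt}, and then to evaluate $\mathrm{Per}^2/\mathrm{Vol}$ at $b=\al$ using the quasi-perfect identities from the proof of Lemma~\ref{lem:AreClasses}. As in the proof of Lemma~\ref{lem:accMonotone}, the map $z\mapsto z+1/z$ is strictly increasing on $[1,\infty)$. Also $\acc_\bullet(\al)\geq 1$ is the root of $z+1/z=\mathrm{Per}^2/\mathrm{Vol}-2$, and $p/q>1$. So it suffices to prove
\[
\frac{p}{q}+\frac{q}{p} \;<\; \frac{\mathrm{Per}(M_\al)^2}{\mathrm{Vol}(M_\al)}-2,
\]
or equivalently
\[
\frac{(p+q)^2}{pq}<\frac{\mathrm{Per}(M_\al)^2}{\mathrm{Vol}(M_\al)}.
\]

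For $H$, I would substitute $\mathrm{Per}(H_\al)=3-m/d$ and $\mathrm{Vol}(H_\al)=1-m^2/d^2$. Clearing the factors of $d$ gives
\[
\frac{\mathrm{Per}^2}{\mathrm{Vol}}=\frac{(3d-m)^2}{d^2-m^2}.
\]
By the identities in the proof of Lemma~\ref{lem:AreClasses}, we have $3d-m=p+q$ and $d^2-m^2=pq-1$, so this equals $(p+q)^2/(pq-1)$.

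For $P$, I would substitute $\mathrm{Per}(P_\al)=2+2e/f$ and $\mathrm{Vol}(P_\al)=2e/f$. This gives
\[
\frac{\mathrm{Per}^2}{\mathrm{Vol}}=\frac{4(e+f)^2}{2ef}.
\]
Using $2(e+f)=p+q$ and $2ef=pq-1$, this is again $(p+q)^2/(pq-1)$.

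In both cases the desired inequality becomes
\[
\frac{(p+q)^2}{pq}<\frac{(p+q)^2}{pq-1},
\]
which holds because $pq>1$. Indeed, $q\ge 1$ and $p/q>1$ force $p\ge 2$, so $pq-1>0$.

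The only delicate point is making sure $\al$ lies in the parameter range where these formulas and $\acc_\bullet$ make sense, so that the denominators are positive. For $H$ this needs $0\le m<d$, which follows from $d^2-m^2=pq-1>0$ together with positivity of $d$. For $P$ it needs $e,f>0$, which follows from $2ef=pq-1>0$ and $e+f>0$. I would state these checks briefly; the remaining argument is the direct algebra above.
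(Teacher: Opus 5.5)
Your proposal is correct and is essentially the paper's proof. The paper also computes $z_0+1/z_0$ at $z_0=\acc_\bullet(\al)$ as $(p^2+q^2+2)/(pq-1)$ via the quasi-perfect identities (this is your $(p+q)^2/(pq-1)$ minus $2$), compares it with $p/q+q/p$, and concludes by the monotonicity of $z\mapsto z+1/z$ for $z>1$. One small correction to your domain remark: $d^2>m^2$ and $d>0$ give $|m|<d$, not $m\ge 0$, but only $d^2-m^2=pq-1>0$ is used, so nothing is affected.
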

\begin{proof}
    Let $z_0=\acc_\bullet(\al).$ Then, by \eqref{eq:accPt}, we have that
    \begin{align*}
         z_0+\frac{1}{z_0}&=\left(\frac{(3-m/d)^2}{1-(m/d)^2}-2\right) \\
         &=\frac{p^2+q^2+2}{pq-1}
    \end{align*}
    As the function $z \to z+1/z$ increases for $z>1$ and the inequality
    \[ \frac{p}{q}+\frac{q}{p}<\frac{p^2+q^2+2}{pq-1}\] holds,
    the result follows. 
\end{proof}

\begin{lemma} \label{lem:pqAlOrder2}
    Let $\Tt=(\bm{x}_\la,\bm{x}_\mu,\bm{x}_\rho)$ be a correctly ordered triple for $\bullet=H,P$. Then, 
    \[ p_\la/q_\la<p_\mu/q_\mu<\acc_\bullet(\al_\mu)<p_\rho/q_\rho<\acc_\bullet(\al_\rho)\]
\end{lemma}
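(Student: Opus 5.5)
The chain $p_\la/q_\la<p_\mu/q_\mu<\acc_\bullet(\al_\mu)<p_\rho/q_\rho<\acc_\bullet(\al_\rho)$ splits into four inequalities, and the plan is to establish them one at a time.

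The first inequality, $p_\la/q_\la<p_\mu/q_\mu$, is condition (i) of Definition~\ref{def:correctlyOrdered} (equivalently Lemma~\ref{lem:ordered}).

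The second and fourth inequalities, $p_\mu/q_\mu<\acc_\bullet(\al_\mu)$ and $p_\rho/q_\rho<\acc_\bullet(\al_\rho)$, both come from Lemma~\ref{lem:pqAlOrder} applied to the classes $\bm{x}_\mu$ and $\bm{x}_\rho$. Correct ordering gives $p/q>3+2\sqrt2>1$ for these entries. We also need $(p_\bullet,q_\bullet,t_\bullet)\in\X_\bullet$ to give quasi-perfect classes with positive $(d;m)$ or $(e,f)$. This follows from Lemma~\ref{lem:AreClasses} once positivity is checked. Positivity in turn should follow from $p,q>0$ and $|t|\ge 2$ together with $p/q>3+2\sqrt2$, since then $p+q>3|t|$, so that $m>0$ and $f>0$.

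The third inequality, $\acc_\bullet(\al_\mu)<p_\rho/q_\rho$, is the main obstacle. My approach is to use the explicit formula from the proof of Lemma~\ref{lem:pqAlOrder}: for a quasi-perfect class,
\[ z_0+\frac{1}{z_0}=\frac{p^2+q^2+2}{pq-1}, \qquad z_0=\acc_\bullet(\al).\]
This formula is valid in both the $H$ and $P$ cases, since it only uses $3d-m=p+q$ and $d^2-m^2=pq-1$ (respectively $2(e+f)=p+q$ and $2ef=pq-1$). Because $z\mapsto z+1/z$ is increasing on $z>1$, the desired inequality reduces to
\[ \frac{p_\mu^2+q_\mu^2+2}{p_\mu q_\mu-1}<\frac{p_\rho}{q_\rho}+\frac{q_\rho}{p_\rho}.\]
Write $z=p_\mu/q_\mu$ and $z'=p_\rho/q_\rho$. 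The left side equals
\[ z+\frac1z+\frac{(z+1/z)+2/(p_\mu q_\mu)}{p_\mu q_\mu-1},\]
which is approximately $z+1/z+(z+1/z+2)/(p_\mu q_\mu)$. On the right, since $z'>z>1$,
\[ z'+\frac1{z'}-z-\frac1z=(z'-z)\left(1-\frac1{zz'}\right).\]
The recursive-triple identity gives $z'-z=\eps t_\la/(q_\mu q_\rho)$, and this is at least $2/(q_\mu q_\rho)$ by correct ordering. Also $1-1/(zz')>1-1/(3+2\sqrt2)^2$. It therefore suffices to show
\[ \frac{2(1-1/(zz'))}{q_\mu q_\rho}>\frac{z+1/z+2/(p_\mu q_\mu)}{p_\mu q_\mu-1}.\]
Multiply through by $q_\mu$ and use $q_\rho\le q_\mu$ (correct ordering). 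The left side is then at least $2(1-1/(zz'))/q_\mu$, while the right side is roughly $(z+1/z)q_\mu/(p_\mu q_\mu)\approx 1/q_\mu$ plus lower-order terms. Since $2(1-1/(zz'))>1$ comfortably, the inequality should hold after a careful bound on the error terms. The term $q_\mu/(p_\mu q_\mu-1)$ should be handled with $p_\mu q_\mu\ge 6$, which holds because $p_\mu/q_\mu>5.8$.

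I expect this estimate to be the delicate step, and small cases such as $q_\mu=1$ may need a direct check. If the crude bound fails, the fallback is to use $\eps t_\la\ge3$ where available, or the exact identity $t_\la t_\rho q_\mu=\eps(t_\la q_\la+t_\mu q_\mu+t_\rho q_\rho)$ from Lemma~\ref{lem:relations}, to sharpen $z'-z$.
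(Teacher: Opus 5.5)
Your handling of the first, second and fourth inequalities matches the paper's: correct ordering, then Lemma~\ref{lem:pqAlOrder}. The routes differ on the key inequality $\acc_\bullet(\al_\mu)<p_\rho/q_\rho$. The paper cites \cite[Lemma 2.1.10]{MMW} for $H$, and says the $P$ case follows by the same proof because $\acc_\bullet(\al)$ satisfies the same equation $z+1/z=(p^2+q^2+2)/(pq-1)$. You instead prove it directly from the recursive-triple identity $p_\rho q_\mu-p_\mu q_\rho=\eps t_\la$. Your route is self-contained and treats $H$, $P$ and both signs of $\eps$ in one computation. It also shows exactly which parts of correct ordering are used: $\eps t_\la\ge 2$, $q_\rho\le q_\mu$, and $p_\mu/q_\mu>3+2\sqrt2$. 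The paper's route is shorter, but it relies on a lemma formulated for the $H$-generating triples of \cite{MMW} carrying over to the present recursive triples and to $P$.

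Two things need fixing. First, the excess term is
\[ \frac{p_\mu^2+q_\mu^2+2}{p_\mu q_\mu-1}-\Bigl(z+\frac1z\Bigr)=\frac{z+1/z+2}{p_\mu q_\mu-1}=\frac{(z+1)^2}{z(zq_\mu^2-1)}, \]
not $\bigl(z+1/z+2/(p_\mu q_\mu)\bigr)/(p_\mu q_\mu-1)$. So after multiplying by $q_\mu$, your right side is $(1+1/z)^2/q_\mu$ to leading order, not $1/q_\mu$. The estimate still closes, with no separate small cases. Using $q_\rho\le q_\mu$ and multiplying by $q_\mu^2$, it suffices that
\[ \eps t_\la\Bigl(1-\frac{1}{zz'}\Bigr)>\frac{(z+1)^2q_\mu^2}{z(zq_\mu^2-1)}. \]
The right side is at most $(z+1)^2/(z(z-1))$. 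This is decreasing in $z$ and equals $4\sqrt2-4\approx 1.66$ at $z=3+2\sqrt2$. The left side exceeds $2\bigl(1-(3-2\sqrt2)^2\bigr)=24\sqrt2-32\approx 1.94$. So $\eps t_\la\ge 2$ suffices, and $q_\mu=1$ needs no direct check.

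Second, drop the positivity detour, because the claim $p+q>3|t|$ is false in general. The correctly ordered triple $R(\mathcal{B}_2)=((25,4,-7),(120,19,-33),(13,2,-5))$ has $\bm{x}_\rho$ with $p+q=3|t|$ and $m=0$. Positivity is also not needed. Both the formula for $\acc_\bullet$ and the inequality in Lemma~\ref{lem:pqAlOrder} use only $3d-m=p+q$ and $d^2-m^2=pq-1$ (resp.\ $2(e+f)=p+q$ and $2ef=pq-1$), which hold for every element of $\X$, together with $p/q>1$.
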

\begin{proof}
    The ordering of $p_\la/q_\la,p_\mu/q_\mu,p_\rho/q_\rho$ is by assumption as the triple is ordered. The inequalities about $p_\bullet/q_\bullet<\acc_\bullet(\al_\bullet)$ is precisely Lemma~\ref{lem:pqAlOrder}. The inequality about $\acc_\bullet(\al_\mu)<p_\rho/q_\rho$ follows from \cite[Lemma 2.1.10]{MMW} for the case $(d;m;p,q)$. 
    The inequality for a class $(e,f;p;q)$ follows precisely the same proof because for the quasi-perfect class we also have if $w=\acc(e/f)$, then $w$ satisfies $w+\frac{1}{w}=\frac{p^2+q^2+2}{pq-1}.$
\end{proof}

The next lemma is a key result as in Theorem~\ref{thm:ATFvis}, we are considering a sequence of mutations given by $y^k\Tt.$ 

\begin{lemma} \label{lem:moreAlOrder}
Let $\Tt$ denote a correctly ordered recursive triple where $\eps=1$ if $\Tt$ is sign-matching and $\eps=-1$ otherwise. 
Further, let $y^k\Tt:=(\bm{x}_{k-1},\bm{x}_k,\bm{x})$ and $xy^k\Tt:=(\bm{x}_{k-1},\bm{x}_{xk},\bm{x}_k)$. Then, we have that 
   \[ \eps\al_{xk} <\eps\al_i\]
   for all $i \geq k.$
\end{lemma}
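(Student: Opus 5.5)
The plan is to use $\al$-ordering along the $y^k$-chain together with Corollary~\ref{cor:ordering} for the $x$-mutated triple.

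First, by Lemma~\ref{lem:correctlyOrdered}, every $y^k\Tt$ and $xy^k\Tt$ is correctly ordered, hence positive: entries have positive $p,q$ and $\eps t>0$. Proposition~\ref{prop:genTrip} says they all have the same sign $\eps$. By Corollary~\ref{cor:ordering}, each of these triples is $\al$-ordered.

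Applied to $y^j\Tt=(\bm{x}_{j-1},\bm{x}_j,\bm{x})$, $\al$-ordering gives $\eps\al_{j-1}>\eps\al_j$ for all $j$. So $\{\eps\al_i\}$ is strictly decreasing in $i$. Applied to $xy^k\Tt=(\bm{x}_{k-1},\bm{x}_{xk},\bm{x}_k)$, it gives $\eps\al_{xk}<\eps\al_k$ and $\eps\al_{xk}<\eps\al_{k-1}$. Since $\eps\al_i$ decreases, this only handles $i=k$, and the real content is showing $\eps\al_{xk}<\eps\al_\infty:=\lim_i\eps\al_i$.

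For the tail I would compare $\bm{x}_{xk}$ directly with each $\bm{x}_i$, $i>k$, through the determinant identities of Lemma~\ref{lem:mDrelations} (resp.\ Lemma~\ref{lem:eFrelations} for $P$). Those identities give
\[
\eps(m_id_{xk}-m_{xk}d_i)=(\text{a }p\text{- or }q\text{-coordinate of some auxiliary class}),
\]
for a pair of entries lying in a common recursive triple.

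The pair $\bm{x}_{xk},\bm{x}_i$ are not in a common triple obtained from $\Tt$ by $y$'s alone. However, $y^{i-k}$ applied to $xy^k\Tt$ keeps $\bm{x}_{xk}$ fixed on the left until it is pushed out. Instead, consider the triple $(\bm{x}_{xk},\bm{x}_k,\bm{x})$-type sequence: starting from $xy^k\Tt$ and applying $y$ repeatedly yields a sequence $\bm{z}_j$ with $\bm{z}_0=\bm{x}_{xk}$ and $\bm{z}_1=\bm{x}_k$. This sequence satisfies the recursion with parameter $\eps t_\rho$ of $xy^k\Tt$, namely $t_k$. That is not the same parameter as the $\bm{x}_i$ recursion, which uses $t$ of $\bm{x}$.

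So the cleaner route is by linearity. The $\bm{x}_i$ for $i\ge k-1$ satisfy $\bm{x}_{i+1}=\eps t\,\bm{x}_i-\bm{x}_{i-1}$, where $t$ is the $t$-coordinate of $\bm{x}$, and Corollary~\ref{cor:recurExtend} says $(d_i,m_i)$ obey the same recursion. Hence
\[
D_i:=\eps(m_id_{xk}-m_{xk}d_i)
\]
satisfies $D_{i+1}=\eps t D_i-D_{i-1}$. Lemma~\ref{lem:mDrelations} applied to $xy^k\Tt$ gives $D_{k-1}=q_{k}>0$ and $D_k=p_{k-1}>0$, up to the identification of entries.

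A recursion with $\eps t\ge 3$ and starting values satisfying $D_k\ge D_{k-1}>0$ stays positive and increasing. Then $D_i>0$ together with $d_i,d_{xk}>0$ gives $\eps\al_{xk}<\eps\al_i$ for all $i\ge k$.

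The main obstacle is exactly this step: securing $D_k\ge D_{k-1}$, i.e.\ $p_{k-1}\ge q_k$ with the correct identification, and the bound $\eps t\geq 3$ (or $2$ with strict growth). I would try to get both from the correct-ordering hypothesis (ii) and condition (i) of Definition~\ref{def:correctlyOrdered}, namely $p/q>3+2\sqrt2>5$, which gives $p_{k-1}>5q_{k-1}$. I would then compare $q_k$ with $q_{k-1}$ via Corollary~\ref{cor:triples}-type growth estimates.
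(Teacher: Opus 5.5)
Your set-up is correct, and it is a genuinely different route from the paper's. With $\nu:=\eps t_\rho$, the quantity $D_i=\eps(m_id_{xk}-m_{xk}d_i)$ satisfies $D_{i+1}=\nu D_i-D_{i-1}$, and Lemma~\ref{lem:mDrelations} applied to $xy^k\Tt$ gives $D_{k-1}=q_k$ and $D_k=p_{k-1}$.

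The gap is the step you flag, and it cannot be closed as planned, because $D_k\ge D_{k-1}$, i.e.\ $p_{k-1}\ge q_k$, is false in general. Take the correctly ordered sign-matching triple $\Tt=x\mathcal{B}_0=((6,1,3),(79,11,34),(29,4,13))$. Then $\nu=13$, $\bm{x}_{-1}=(6,1,3)$, $\bm{x}_0=(79,11,34)$ and $\bm{x}_1=(1021,142,439)$. Here $p_{-1}=6<11=q_0$ and $p_0=79<142=q_1$. Asymptotically $p_{k-1}/q_k\to(\lim_i p_i/q_i)/\lambda\approx 7.19/12.92<1$, where $\lambda$ is the dominant root of $z^2-\nu z+1$. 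The growth factor $q_k/q_{k-1}\approx\lambda$ is set by the recursion parameter $\nu$, which can be much larger than the bounded ratio $p_{k-1}/q_{k-1}$ (here it lies in $(6,8)$). So no estimate of the form ``$p_{k-1}>5q_{k-1}$ plus growth of $q$'' can yield $p_{k-1}\ge q_k$.

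Since $\nu\ge3$, what you actually need is weaker. Given $D_k>0$, the tail $(D_i)_{i\ge k}$ stays positive if and only if $D_{k+1}>\lambda^{-1}D_k$, i.e.\ $q_k<\lambda p_{k-1}$. A workable sufficient condition is $D_{k+1}\ge D_k$, i.e.\ $(\nu-1)p_{k-1}\ge q_k$. For $k\ge1$ this follows from two facts:
\begin{itemize}
\item $q_k=\nu q_{k-1}-q_{k-2}<\nu q_{k-1}$, because $\bm{x}_{k-2}$ is an entry of the correctly ordered triple $y^{k-1}\Tt$ and so $q_{k-2}>0$;
\item $p_{k-1}>(3+2\sqrt2)\,q_{k-1}$.
\end{itemize}
Monotonicity then propagates via $D_{i+1}-D_i=(\nu-2)D_i+(D_i-D_{i-1})$. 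For $k=0$ this shortcut fails, because $\bm{x}_{-2}=\bm{x}_{\ov{y}\la}$ can have negative $q$; for $\Tt=xS\mathcal{B}_0$ one gets $\bm{x}_{-2}=(-8,-1,-5)$. That case therefore needs a separate argument.

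The paper avoids these integer estimates entirely. It chains
$p_{xk}/q_{xk}<\acc_\bullet(\al_{xk})<p_k/q_k\le p_i/q_i<\acc_\bullet(\al_i)$
using Lemmas~\ref{lem:pqAlOrder} and~\ref{lem:pqAlOrder2}. It then turns $\acc_\bullet(\al_{xk})<\acc_\bullet(\al_i)$ into $\eps\al_{xk}<\eps\al_i$, since all these $\al$'s lie on the monotone branch of $\acc_\bullet$ selected by the sign of $\eps t$ (Lemmas~\ref{lem:md13} and~\ref{lem:accMonotone}).
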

\begin{proof}
By Lemma~\ref{lem:correctlyOrdered}, the triples $y^k\Tt$ and $xy^k\Tt$ are correctly ordered for all $k$, so we obtain the ordering
\[ p_{k-1}/q_{k-1}<p_{xk}/q_{xk}<p_k/q_k<p/q.\]
Additionally, by Lemma~\ref{lem:pqAlOrder2}, we have that
    \[ \frac{p_{k-1}}{q_{k-1}} < \frac{p_{xk}}{q_{xk}}<\acc_\bullet(\al_{xk}) <\frac{p_k}{q_k} <\acc_\bullet(\al_k) <p/q.\]

We obtain for all $i \geq k,$ we have that 
\[ p_k/q_k \leq p_i/q_i < \acc_\bullet(\al_i)\]
implying that $\acc_\bullet(\al_{x_k}) < p_k/q_k<\acc_\bullet(\al_i)$ for all $i \geq k.$
For \(H\), Lemma~\ref{lem:md13} says that the sign of
\(\alpha-1/3\) is the sign of \(t\). Since the triples are correctly
ordered, \(\eps t>0\), so all relevant \(\alpha\)-values lie on the
increasing branch of \(\acc_H\) when \(\eps=1\), and on the decreasing
branch when \(\eps=-1\). For \(P\), Lemma~\ref{lem:md13} says that the sign of \(\alpha-1\), where
\(\alpha=e/f\), is the sign of \(t\). Thus the same argument applies using
the monotonicity of \(\acc_P\) on \((0,1)\) and \((1,\infty)\).
Therefore, by Lemma~\ref{lem:accMonotone}, we obtain
\[
\eps\alpha_{xk}<\eps\alpha_i.
\]

\end{proof}

\begin{lemma}\label{lem:affineDegenerationLimitH}
 For $\bullet=H,P$, assume that
\[
        Q_k:=Q_{\bullet,b}(y^k\Tt)=OXV_kY_k
\]
is well-defined for all $k$, and that the affine length of the edge
$XV_k$ tends to zero. Then, in the case of $H_b$, we must have $b=\lim_{k \to \infty} \frac{m_k}{d_k},$ and in the case of $P_b,$ we must have $b=\lim_{k \to \infty} \frac{e_k}{f_k}$. 
\end{lemma}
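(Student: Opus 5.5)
The plan is to use the explicit formula for $|XV_k|$ from Lemma~\ref{lem:XVVY}, which Lemma~\ref{lem:yLengths} propagates to $y^k\Tt$. Write $y^k\Tt=(\bm{x}_{k-1},\bm{x}_k,\bm{x}_\rho)$. The right entry $\bm{x}_\rho$ stays fixed under $y$-mutations, and the left entry is $\bm{x}_{k-1}$. Then
\[
|XV_k|=\eps\frac{m_{k-1}-d_{k-1}b}{q_\rho q_k}\quad(\bullet=H),\qquad |XV_k|=\eps\frac{e_{k-1}-f_{k-1}b}{q_\rho q_k}\quad(\bullet=P).
\]
In the $H$ case we rewrite this as
\[
|XV_k|=\eps\,\frac{d_{k-1}}{q_\rho q_k}\left(\frac{m_{k-1}}{d_{k-1}}-b\right),
\]
and similarly for $P$. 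The goal is to show that the prefactor $d_{k-1}/q_k$ (resp. $f_{k-1}/q_k$) stays bounded below by a positive constant. Once that holds, $|XV_k|\to0$ forces $\al_{k-1}\to b$.

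First, since all $Q_k$ are well-defined, $q_k>0$ for every $k$. The sequence $\bm{x}_k$ satisfies the linear recursion $\bm{x}_{k+1}=\eps t_\rho\bm{x}_k-\bm{x}_{k-1}$ with fixed parameter $\nu=\eps t_\rho$. Corollary~\ref{cor:recurExtend} shows that $(d_k,m_k)$ (resp. $(e_k,f_k)$) obey the same recursion.

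Next we establish $|\nu|\ge 3$. If $|\nu|\le 2$, the sequences grow at most linearly or are periodic. In that case one checks directly that $q_k$ cannot stay positive with $|XV_k|\to0$ while $\al_k$ has a limit, so the statement is vacuous or trivial; I would treat this degenerate case separately.

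For $|\nu|\ge3$, each coordinate behaves like $A\sigma^k+B\sigma^{-k}$, where $\sigma>1$ is the root of $\sigma+\sigma^{-1}=|\nu|$, up to a sign $(\mathrm{sgn}\,\nu)^k$. Consequently $d_{k-1}/q_k\to A_d/(\sigma A_q)$ and $m_{k-1}/d_{k-1}\to A_m/A_d$. The limit $\lim\al_k$ therefore exists, and it suffices to show $A_d\ne0$ (resp. $A_f\neq 0$).

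To show $A_d\neq0$, we use the quadratic constraint coming from $\bm{x}_k\in\X$. Writing $\bm{x}_k=\bm{a}\sigma^k+\bm{b}\sigma^{-k}$, the condition $\bm{x}_k^TA\bm{x}_k=8$ forces $\bm{a}^TA\bm{a}=0$. Translating via \eqref{eq:dmefIntro}, this says $d$-leading and $m$-leading coefficients satisfy $A_d^2-A_m^2=A_pA_q$, which follows from $d^2-m^2=pq-1$ (resp. $2A_eA_f=A_pA_q$). Since $q_k>0$ is unbounded, we have $A_q>0$. Moreover $A_p\ne0$, because the slopes $p_k/q_k$ converge to an irrational limit by Lemma~\ref{lem:recurDif}-type arguments. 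Hence $A_pA_q\neq0$, and so $A_d\neq0$ (resp. $A_f\neq0$).

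It then follows that $|XV_k|\sim C(\al_{k-1}-b)$ with $C\ne0$. The hypothesis $|XV_k|\to0$ therefore gives $b=\lim\al_k$.

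The main obstacle I expect is the rigorous control of these leading coefficients, specifically showing $A_p\neq0$ and handling the sign $\eps$ and the possibility of negative entries. If the asymptotic approach is messy, my fallback is an alternative route. Lemma~\ref{lem:mDrelations} gives $m_{k}d_{k-1}-m_{k-1}d_k=\pm q_\rho$, which is constant in $k$. Hence $|\al_k-\al_{k-1}|=q_\rho/(d_kd_{k-1})$ is summable once $d_k$ grows geometrically, so $\al_k$ is Cauchy and its limit is controlled directly.
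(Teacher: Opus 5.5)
Your skeleton matches the paper's: read off $|XV_k|=\eps(m_{k-1}-d_{k-1}b)/(q_\rho q_k)$ from Lemma~\ref{lem:XVVY}, then show that $q_k/d_{k-1}$ stays bounded. The paper only asserts this bound, using positivity of the entries and $\nu>2$ from the correctly ordered setting where the lemma is applied. The gap is in your derivation of $A_d\neq0$ for $H$. From $A_d^2-A_m^2=A_pA_q$ and $A_pA_q\neq0$ you only get $A_d^2\neq A_m^2$. If $A_pA_q<0$, then $A_d=0$ with $A_m^2=-A_pA_q$ is perfectly consistent, and the algebra does not exclude it. Putting $A_t=-3(A_p+A_q)$ (i.e.\ $A_d=0$) into $\bm{a}^TA\bm{a}=0$ gives $A_p^2+3A_pA_q+A_q^2=0$, so $A_p/A_q=(-3\pm\sqrt5)/2$. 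That is a negative irrational limiting slope, which your irrationality argument does nothing to rule out. Moreover, $\X_H$ does contain vectors with $d=0$, for instance $(-1,3,-6)$ and $(-3,8,-15)$. Negative $p$ is allowed in well-defined diagrams (Example~\ref{ex:unusualTrip}), so the sign $A_p>0$ you actually need is not free. For $P$ your step is fine, since $2A_eA_f=A_pA_q\neq0$ does force $A_f\neq0$.

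The cleanest repair is to use the hypothesis before proving $A_d\neq 0$. Your asymptotics give $|XV_k|\to\eps(A_m-bA_d)/(\sigma q_\rho A_q)$, so $|XV_k|\to0$ forces $A_m=bA_d$. If $A_d=0$, then $A_m=0$, hence $A_pA_q=0$ and $A_p=0$. Integrality then makes $p_k=B_p\sigma^{-k}$ eventually zero, hence $p_k\equiv0$, contradicting $p_kq_{k-1}-p_{k-1}q_k=\nu\neq0$. So irrationality is not needed, and Lemma~\ref{lem:recurDif} only gives monotonicity anyway. Alternatively, assume $d_k>0$ as the paper does; the same integrality argument then gives $A_d>0$.

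There are also two smaller issues.
\begin{enumerate}
\item Your treatment of small $|\nu|$ is not right as stated. Every $\nu\le1$ is indeed excluded by positivity of the $q_k$; this includes $\nu\le-3$, where the factor $(-1)^k$ you glossed over already prevents all $q_k$ from being positive. But for $\nu=2$ the entries are arithmetic progressions, and you have not shown that case is vacuous. It needs the analogous leading-coefficient argument with $\bm{x}_k=\bm{a}+k\bm{b}$.
\item Your fallback via Lemma~\ref{lem:mDrelations} only shows that $\al_k$ converges. Tying the limit to $b$ still requires the lower bound on $d_{k-1}/q_k$, which is exactly the point at issue.
\end{enumerate}
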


\begin{proof}
Let $y^k\Tt:=(\bm{x}_{k-1},\bm{x}_k,\bm{x}_\rho).$ In the case of $H_b$, by Lemma~\ref{lem:XVVY}, 
there is an $\eps\in\{1,-1\}$ such that
\[
        |XV_k|
        =
        \eps\frac{m_{k-1}-d_{k-1}b}{q_\rho q_k}.
\]
Equivalently,
\[
        \frac{m_{k-1}}{d_{k-1}}-b
        =
        \eps\,|XV_k|\frac{q_\rho q_k}{d_{k-1}}.
\]
It remains to note that the factor $q_k/d_{k-1}$ is bounded. Here, $q_k,d_k$ have the same recursion parameter. Since the entries are positive and the recursion parameter is \(>2\), both
sequences grow like the dominant root of the recursion, so the ratio
\(q_k/d_{k-1}\) is bounded. The case of $P_b$ is identical as
\[ |XV_k|=\eps \frac{e_{k-1}-f_{k-1}b}{q_\rho q_k}.\]
\end{proof}
\begin{lemma}\label{lem:yTailXVisible}
Let $\Tt$ be a correctly ordered recursive triple. 
Assume that
\[
        Q_k:=Q_{\bullet,b}(y^k\Tt)=OXV_kY_k
\]
is well-defined for all $k$ and $Q_k$ affinely degenerates to a triangle for $\bullet=H,P$. In the case of $H_b$, we assume $b \neq 1/3,$ and in the case of $P_b$ we assume $b \neq 1.$ Then $xQ_k$ is well-defined for all sufficiently large $k$.

\end{lemma}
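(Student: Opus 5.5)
The plan is to reduce well-definedness of the $x$-mutation on $Q_k$ to a sign condition on the affine length it produces, and then control that sign with the $\alpha$-ordering from Lemma~\ref{lem:moreAlOrder} and the limit in Lemma~\ref{lem:affineDegenerationLimitH}. As in the proof of Lemma~\ref{lem:definedH}~(iii), the nodal ray $\vn_{X}$ of $Q_k$ meets $V_kY_k$, rather than $OY$, exactly when the quantity
\[ |V_xY_x| = \eps\frac{m'_{k}-d'_{k}b}{q_{k-1}q_{xk}} \]
from Lemma~\ref{lem:xLengths} is positive. Here $y^k\Tt=(\bm{x}_{k-1},\bm{x}_k,\bm{x})$, $xy^k\Tt=(\bm{x}_{k-1},\bm{x}_{xk},\bm{x}_k)$, and for $P$ the analogous expression uses $e',f'$. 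By Lemma~\ref{lem:correctlyOrdered} all these triples are correctly ordered. In particular $q_{k-1},q_{xk}>0$, and $\eps$ is constant in $k$ by Proposition~\ref{prop:genTrip}. So it suffices to show that $\eps(m'_k-d'_kb)>0$ for large $k$.

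To rewrite this quantity, I will use the identity $\bm{x}_{x\mu}=S^{-1}\bm{x}_\rho+\eps t_\mu\bm{x}_\lambda$ from Lemma~\ref{lem:identS}, or more directly Lemma~\ref{lem:symTriple}. Setting $S^{-1}\bm{x}_k=(p,q,-t_k)$ with coordinates $(d;m)$, we get $m'_k=-m$ and $d'_k=-d$. The condition then becomes $\eps b>\eps\, m/d$.

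Note that $S^{-1}\bm{x}_k$ need not be a positive class, so I will not pass through $S^{-1}$. Instead I will express the relevant ratio via the $\alpha$-value of $\bm{x}_{xk}$. Using the determinant identities of Lemma~\ref{lem:mDrelations} (resp. Lemma~\ref{lem:eFrelations}) applied to $xy^k\Tt$, I expect the sign of $m'_k-d'_kb$ to agree, up to a fixed sign, with that of $\alpha_{xk}-b$, or at least to be squeezed between $\alpha_{xk}$ and $\alpha_k$. I would first check this on $y^k\Tt_0$-type examples to fix the sign conventions.

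The limit step then runs as follows. By Lemma~\ref{lem:affineDegenerationLimitH}, $b=\lim_k\alpha_k$. By Lemma~\ref{lem:moreAlOrder}, $\eps\alpha_{xk}<\eps\alpha_i$ for all $i\ge k$, so letting $i\to\infty$ gives $\eps\alpha_{xk}\le\eps b$. I must upgrade this to a strict inequality. That is where the hypothesis $b\neq1/3$ (resp. $b\ne1$) enters: since $\eps t_k>0$, Lemma~\ref{lem:md13} places every $\alpha_i$ strictly on one branch of $\acc_\bullet$. Strict monotonicity from Lemma~\ref{lem:accMonotone}, combined with $\acc_\bullet(\alpha_{xk})<p_k/q_k<\lim_i p_i/q_i\le\acc_\bullet(b)$ from Lemma~\ref{lem:pqAlOrder2}, then yields $\eps\alpha_{xk}<\eps b$. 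This holds provided $b$ is not the branch point, where $\acc$ fails to be injective across the branches.

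The main obstacle is the middle step: identifying the sign of $m'_k-d'_kb$ precisely with a comparison between $b$ and $\alpha_{xk}$ (or an $\alpha$ controlled by it), including the orientation of $\eps$, since the naive route through $S^{-1}\bm{x}_k$ breaks positivity. If the exact identification fails, the fallback is asymptotic. I would show that $(m'_k-d'_kb)/d_k$ converges to a nonzero limit of the correct sign, using that $m'_k/d'_k\to b'$ with $b'\ne b$ on the appropriate side, which again follows from $b$ lying off the branch point. This gives the inequality for all sufficiently large $k$, which is exactly the form the statement needs.
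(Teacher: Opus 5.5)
Your opening reduction is sound. With $y^k\Tt=(\bm{x}_{k-1},\bm{x}_k,\bm{x}_\rho)$, the mutation $xQ_k$ is defined once $|V_xY_x|=\eps(m'_k-d'_kb)/(q_{k-1}q_{xk})>0$ (resp.\ with $e',f'$). Here $q_{k-1},q_{xk}>0$ by Lemma~\ref{lem:correctlyOrdered}.

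\textbf{The gap.} Everything after the reduction aims at the wrong inequality. Your limit step proves $\eps\al_{xk}<\eps b$. By Lemma~\ref{lem:definedH}~(i) applied to $xy^k\Tt=(\bm{x}_{k-1},\bm{x}_{xk},\bm{x}_k)$, that is the criterion for the $s$-mutation of $xQ_k$. The paper uses exactly this later, in the proof of Theorem~\ref{thm:ATFvis}, to get $sxQ_k$; it says nothing about $x$ itself. The critical $b$-value for $x$ is $m'_k/d'_k$ (resp.\ $e'_k/f'_k$), which has nothing to do with $\al_{xk}$. For the entry $(139,19,62)$ of Example~\ref{ex:visStair} one has $(d;m)=(67;43)$, so $m'/d'=24/10$, far outside $(1/3,1)$ where the $\al$'s lie. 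So the hoped-for sign identification with $\al_{xk}-b$, or squeezing between $\al_{xk}$ and $\al_k$, is false. The one inequality the lemma needs is never proved. Your asymptotic fallback also leaves the sign undetermined, because $d'_k=d_k-3q_k$ has no fixed sign: $d'=10$ above, but $d'=-14$ for the middle entry $(170,29,-13)$ of $S(\mathcal{B}_0)$.

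\textbf{How to close it.} The fallback can be finished with an exact identity and no asymptotics. From \eqref{eq:dmefIntro},
\begin{align*}
8(m'_k-d'_kb)&=(7q_k-p_k)(3b-1)+t_k(3-b),\\
4(e'_k-f'_kb)&=(7q_k-p_k)(b-1)+t_k(1+b).
\end{align*}
\begin{itemize}
\item If $p_\rho\ge 6q_\rho$, then $\vn_X=(p_\rho-6q_\rho,q_\rho)$ never reaches $OY_k$, so $x$ is defined.
\item Otherwise correct ordering gives $p_k/q_k<p_\rho/q_\rho<6$, hence $7q_k-p_k>0$.
\item Correct ordering also gives $\eps t_k>0$. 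By Lemma~\ref{lem:md13} this gives $\eps(\al_k-\tfrac13)>0$ (resp.\ $\eps(\al_k-1)>0$).
\item Since $\al_k\to b$ by Lemma~\ref{lem:affineDegenerationLimitH}, it follows that $\eps(3b-1)\ge 0$ (resp.\ $\eps(b-1)\ge 0$).
\end{itemize}
After multiplying by $\eps$, the first term is $\ge 0$ and the second is $>0$, so $xQ_k$ is defined for every $k$.

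\textbf{Comparison with the paper.} The paper argues geometrically. If $\ov{s}Q_k$ (or a vertex hit) occurred infinitely often, the fixed ray $\vn_X$ would meet $OY_\infty$ in the limiting triangle. That forces $p_\rho/q_\rho\le 6-A/B$, which contradicts $z_\infty\le p_\rho/q_\rho$ together with $z_\infty+1/z_\infty>6$. The last inequality is where $b\ne 1/3$ (resp.\ $b\ne1$) is used. The algebraic route above is shorter and does not need that hypothesis.
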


\begin{proof}
As $Q_k$ degenerates to a triangle and $X$ is constant, we must have $|XV_k| \to 0.$
Define
\[
        y^k\Tt=(\bm{x}_{k-1},\bm{x}_k,\bm{x}_\rho).
\]
Let
\[
        A:=|OX|,
        \qquad
        B:=\lim_{k\to\infty}|OY_k|.
\]
By the affine triangular degeneration, the limiting triangle has vertices
\[
        O=(0,0),\qquad X=(A,0),\qquad Y_\infty=(0,B).
\]
Let
\[
        z_\infty:=\acc_\bullet(b).
\]
Then, by the proof of Lemma~\ref{lem:fullFillingAccPt}, for the limiting triangle,
\[
        \left\{\frac{A}{B},\frac{B}{A}\right\}
        =
        \left\{z_\infty,\frac{1}{z_\infty}\right\}.
\]
For each $k$, the nodal ray at $X$ of $Q_k$ is constant and $\vec n_X=(p_\rho-6q_\rho,\ q_\rho).$ Note, if $\ov{s}Q_k$ is defined for just some $k,$ this immediately implies that $p_\rho-6q_\rho<0$. 

Suppose, for contradiction, that $\ov{s}Q_k$ is well-defined for infinitely
many $k$. Passing to an infinite subsequence if necessary, the nodal ray at
$X$ intersects the side $OY_k$ for all $k$ in this subsequence. Taking the
limit, the same ray must intersect the segment $OY_\infty$. Hence, we get
\begin{equation} \label{eq:bound6prho}
        A \frac{q_\rho}{6q_\rho-p_\rho}\leq B \iff  \frac{p_\rho}{q_\rho}
        \leq
        6-\frac{A}{B}.
\end{equation}

On the other hand, since the triples $y^k\Tt$ are correctly ordered,
Lemma~\ref{lem:pqAlOrder2} gives
\[
        \acc_\bullet(\alpha_k)<\frac{p_\rho}{q_\rho}
\]
for every $k$. Taking $k\to\infty$ and using that $\alpha_k\to b$ by Lemma~\ref{lem:affineDegenerationLimitH}, we obtain
\[
        z_\infty\leq \frac{p_\rho}{q_\rho}.
\]

We now compare the two bounds. In the case of $H_b$ we have $b \neq 1/3$ and in the case of $P_b$ we have $b \neq 1,$ hence in either case, the accumulation point
satisfies
\[
        z_\infty>3+2\sqrt{2} \iff  z_\infty+\frac{1}{z_\infty}>6
\]

Using this and the fact that $A/B$ is either $z_\infty$ or $1/z_\infty,$ we conclude that 

\[
        6-\frac{A}{B}<z_\infty \leq p_\rho/q_\rho,
\]
but this contradicts \eqref{eq:bound6prho}.

Therefore $\ov{s}Q_k$ can be well-defined for only
finitely many $k$. 

If the nodal ray at \(X\) hit the vertex \(Y_k\) for infinitely many \(k\),
then passing to a subsequence and taking the limit would give
\[
        \frac{p_\rho}{q_\rho}=6-\frac{A}{B}.
\]
But we have shown
\[
        6-\frac{A}{B}<z_\infty\leq \frac{p_\rho}{q_\rho},
\]
so this equality is impossible. Hence the vertex-hit case occurs only
finitely many times. Therefore $xQ_k$ is
well-defined for all sufficiently large $k$.
\end{proof}

\subsection{Proof of Theorem~\ref{thm:ATFvis} and Corollary~\ref{cor:ATFvis}}

Before proving Theorem~\ref{thm:ATFvis} and Corollary~\ref{cor:ATFvis}, we give an illustrative example. 
\begin{example} \label{ex:visStair}

In this example, we demonstrate the visible embeddings constructed in Theorem~\ref{thm:ATFvis} for one particular $b$-value such that $H_b$ has an infinite staircase. 
We consider the example of the increasing staircase given by $y^k\ov{x}^2Q_{H_b}^0:=OXV_kY_k$. Let 
\[ \Tt_k:=y^{k}\ov{x}^2\Tt_0^H:=(\bm{x}_k,\bm{x}_{k+1},(8,1,5))\]
where $\bm{x}_0=(1, 1, 2)$ and $\bm{x}_1=(6,1,3).$ It can be easily checked that these triples are correctly ordered for each $k$. Based on the work of Bertozzi et al. in \cite{ICERM}, letting $b:=\lim_{k \to \infty} m_k/d_k=\frac{11-\sqrt{21}}{10} \approx 0.6417$, the ellipsoid embedding function for $H_b$ has an increasing staircase. To prove this, they showed there is a nonsmooth point at each $p_k/q_k$ for $\bm{x}_k=(p_k,q_k,t_k).$ Note that $m_k/d_k$ is a decreasing sequence, so by Lemma~\ref{lem:bDefH1}, the ATF mutation sequence $y^k\ov{x}^2Q_{H_b}^0$ is well-defined for this $b$-value for all $k$.  As $|XV_k|=\frac{m_{k}-d_{k}b}{q_{k}}$, we have that for this $b$-value, $\lim_{k \to \infty} |XV_k|=0$. 

For each $k,$ the mutation $sxQ_{H_b}^0(\Tt_k)$ is well-defined, which by Corollary~\ref{cor:visStair} implies that there is an ATF-visible staircase for this $b$-value. Note that $xQ_{H_b}(\Tt_k)$ is well-defined as for this example $p_\rho/q_\rho=8>6,$ so $\vn_X$ has positive slope. Then, $sxQ_{H_b}(\Tt_k)$ is well-defined as letting $x\Tt_k=(\bm{x}_k,\bm{x}_{x,k+1},(8,1,5)),$ we have that $sxQ_{H_b}(\Tt_k)$ will be well-defined if $m_{x,k+1}/d_{x,k+1}<b$. 

We outline exactly what these embeddings are here. We define the triples: 
\begin{align*}
	\Tt_k&:=(\bm{x}_k,\bm{x}_{k+1},(8,1,5)) \\
	x\Tt_k&:=(\bm{x}_k,\bm{x}_{x,k+1},\bm{x}_{k+1}) \\
	sx\Tt_k&:=(\bm{x}_{x,k+1},\bm{x}_{k+1},(6p_k-q_k,p_k,-t_k)) 
\end{align*} 
Note that for the class $(6p_k-q_k,p_k,-t_k)$ with solutions $(d;m)$ has $m'=-m_k$ and $d'=-d_k$ by Lemma~\ref{lem:symTriple}. Hence, for 
$Q(sx\Tt_k)=O\tilde{X}\tilde{V}\tilde{Y},$ we have that 

\[|O\tilde{Y}|=\frac{d_{x,k+1}-m_{x,k+1}b}{q_{x,k+1}}=\frac{z}{\mu_{\bm{x}_{x,k+1},b}(z)} \quad \text{if $z \in (p_{x,k+1}/q_{x,k+1}-\eps,p_{x,k+1}/q_{x,k+1})$}\] 
and 
\[ |O\tilde{X}|=\frac{d_k-m_kb}{p_k}=\frac{1}{\mu_{\bm{x}_k,b}(z)} \quad \text{if $z \in (p_k/q_k,p_k/q_k+\eps)$} .\]

The embedding we obtain for $O\tilde{X}\tilde{V}\tilde{Y}$ is a nonsmooth corner between $p_k/q_k$ and $p_{x,k+1}/q_{x,k+1}$ given by the intersection of the obstructions from \[(p_k,q_k,t_k) \quad \text{and} \quad (p_{x,k+1},q_{x,k+1},t_{x,k+1}).\] This exemplifies that for $c_{H_b}(z),$ the function before the accumulation point is not completely determined by the obstructions from the classes $(p_k,q_k,t_k)$, as the obstructions from the $x$-mutations above also appear.

\end{example}
 
We can now prove Theorem~\ref{thm:ATFvis}
\begin{proof}[Proof of Theorem~\ref{thm:ATFvis}]
We prove the case $\bullet=H$ and $w_k=y^kw'$. The proof for $\bullet=P$
is identical after replacing $\alpha=m/d$ by the corresponding ratio for
$P$ and using the analogous well-definedness criterion. 

Let
\[
        \Tt:=w'\Tt_0^H.
\]
Write
\[
        y^k\Tt=(\bm{x}_{k-1},\bm{x}_k,\bm{x}_\rho) \quad \text{and} \quad  xy^k\Tt=(\bm{x}_{k-1},\bm{x}_{xk},\bm{x}_k).
\]
For each entry $\bm{x}_i$, write
\[
        \alpha_i=\frac{m_i}{d_i},
        \qquad
        \alpha_{xk}=\frac{m_{xk}}{d_{xk}}.
\]
Let $\eps=1$ in the sign-matching case and $\eps=-1$ in the
sign-alternating case.

By Theorem~\ref{thm:ATFMut}, we have
\[
        w_kQ_{H_b}^0
        =
        y^kw'Q_{H_b}^0
        =
        Q_{H_b}(y^k\Tt).
\]
Denote this quadrilateral by $Q_k:=OXV_kY_k.$ By Lemma~\ref{lem:affineDegenerationLimitH}, we have $\al_k \to b.$ We first show $b$ is irrational. This follows from \cite[Lemma 3.1.4]{MM} as $b=\lim_{k \to \infty}m_k/d_k$ and $m_k,d_k$ satisfy the same linear recursion with recursion parameter strictly larger than $2$ due to the correctly ordered assumption.

By Lemma~\ref{lem:yTailXVisible}, we have for sufficiently large $k,$ $xQ_k$ is well-defined. We show that for such $k,$ we have $sxQ_k$ is well-defined. By Lemma~\ref{lem:moreAlOrder}, applied to the correctly
ordered triple $\Tt$, we have
\[
        \eps\alpha_{xk}<\eps\alpha_i
\]
for all $i\geq k$. Taking $i\to\infty$ and using $\alpha_i\to b$, we get
\[
        \eps\alpha_{xk}\leq \eps b.
\]
Since $\alpha_{xk}$ is rational and $b$ is irrational, the inequality is
strict:
\[
        \eps\alpha_{xk}<\eps b.
\]
Applying Lemma~\ref{lem:definedH} to the triple
\[
        xy^k\Tt=(\bm{x}_{k-1},\bm{x}_{xk},\bm{x}_k),
\]
this inequality implies that
\[
        sxQ_k=sxy^kw'Q_{H_b}^0
\]
is well-defined.

Therefore, for sufficiently large $k$,
$sxQ_k$ is well-defined. For such $k,$ let \[\tilde{Q}_k:=O\tilde{X}_k\tilde{V}_k\tilde{Y}_k:=xQ_k\] and let \[\ell_k:=|O\tilde{Y}_k|=\frac{d_{k-1}-m_{k-1}b}{q_{k-1}}.\]
We claim the $\ell_k$ are pairwise distinct.
For contradiction, suppose \[
        \frac{d_i-m_i b}{q_i}
        =
        \frac{d_j-m_j b}{q_j}.
\]
Then
\[
        d_iq_j-d_jq_i
        =
        b(m_iq_j-m_jq_i).
\]
As $b$ is irrational, this implies $d_iq_j-d_jq_i=m_iq_j-m_jq_i=0,$ so $m_i/q_i=m_j/q_j$ and $d_i/q_i=d_j/q_j$. Using that $p=3d-m-q,$ this implies $p_i/q_i=p_j/q_j$. As the sequence $\{p_k/q_k\}$ is strictly increasing, we conclude the lengths $\ell_k$ are pairwise distinct.

Therefore, 
Corollary~\ref{cor:visStair} applies and gives infinitely many visible
nonsmooth points of $c_{H_b}$.
\end{proof}

If $c_{\bullet,b}(z)$ has infinitely many nonsmooth points below (resp. above) the accumulation point, we call this an {\bf increasing staircase} (resp. {\bf decreasing staircase}). We now restate Corollary~\ref{cor:ATFvis} more precisely. 

\begin{cor} \label{cor:ATFvis2}
For all $b$-values such that $c_{H_b}(z)$ has an increasing staircase and no decreasing staircase, the function has an ATF-visible staircase. 
\end{cor}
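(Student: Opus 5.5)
The plan is to reduce Corollary~\ref{cor:ATFvis2} to Theorem~\ref{thm:ATFvis} with $w_k=y^kw'$, using the classification of staircases for $H_b$ from \cite{ICERM,MM,MMW,MPW}.

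\textbf{Step 1: the $b$-values.} In \cite{MMW}, an increasing staircase for $H_b$ occurs exactly when $b=\lim_k m_k/d_k$, where $\bm{x}_k$ is the middle entry of $y^k\Tt$ and $\Tt$ is a triple in $\T(H)$ generated by $x,y$-mutations and the symmetries. When there is additionally no decreasing staircase, $b$ is not also a limit of an $x^k$-family, and these are the cases to treat. By Proposition~\ref{lem:perf4H}, such a $\Tt$ is $w'\Tt_0^H$ for some $w'$ whose suffix lies in $\mathcal{W}_H$. I would therefore arrange the generating family as $y^kw'\Tt_0^H$, so that all of its members lie in $\mathcal{T}_H$.

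\textbf{Step 2: well-definedness for this fixed $b$.} The lemmas of Section~\ref{ss:bValue} (Lemmas~\ref{lem:bDefH1}--\ref{lem:bDefH5}) show that $y^kw'Q_{H_b}^0$ is defined on an interval with endpoint $\eps m_\la/d_\la$, and the other endpoint ($1/3$ or $(k-1)/k$) is independent of $k$. By $\al$-ordering (Corollary~\ref{cor:alOrdered}), the values $\eps\al_{k}$ of the left entries decrease monotonically to $\eps b$. Hence $b$ lies strictly inside every interval; strictness holds because $b$ is irrational \cite[Lemma 3.1.4]{MM}. Consequently $w_kQ_{H_b}^0$ is well-defined for all $k$.

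\textbf{Step 3: hypotheses of Theorem~\ref{thm:ATFvis}.} Correct ordering of $y^kw'\Tt_0^H$ follows from Corollary~\ref{cor:triples}, which gives positivity, domination of the middle entry, $|t|\ge2$ with the sign of $\eps$, and ordered ratios. It also needs Lemma~\ref{lem:pOverq6} to get $p_\la/q_\la>3+2\sqrt2$, and Lemma~\ref{lem:correctlyOrdered} to propagate correct ordering along the $y^k$ tail. Affine degeneration follows from Lemma~\ref{lem:XVVY}: $|XV_k|=\pm(m_{k-1}-d_{k-1}b)/(q_\rho q_k)$, and $m_{k-1}/d_{k-1}\to b$ while $d_{k-1}/q_k$ stays bounded, as in Lemma~\ref{lem:affineDegenerationLimitH}. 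Theorem~\ref{thm:ATFvis} then yields the ATF-visible staircase. Infinitely many $b$ arise, for example from $y^k\ov{x}^{n+2}$ for each $n$ (Example~\ref{ex:visStair}).

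\textbf{Main obstacle.} The expected sticking points are the exceptional families where correct ordering fails: strict inequalities in Definition~\ref{def:correctlyOrdered}\,(ii) with $|t|=2$, the $R(\mathcal B_0)$ family, and the $S^i\Tt_0^H$ family. For these I would first try to shift $w'$ by a few extra $y$'s until the inequalities hold. This shift does not change the limit $b$.
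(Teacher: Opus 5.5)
\textbf{Verdict: correct for irrational $b$ (and essentially the paper's argument), but with a genuine gap: the case $b=1/3$ is not covered.}

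For irrational $b$ your route is the paper's. You realize the seed as $w'\Tt_0^H$, check that the $y^k$-tail is defined at $b=\lim m_k/d_k$, verify affine degeneration and correct ordering, and apply Theorem~\ref{thm:ATFvis}. Your Step~2, which places $b$ strictly inside every interval using the monotonicity of $\eps\al_k$ and the irrationality of $b$, is more explicit than the paper's one-line appeal to Theorem~\ref{thm:HTrip}.

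The gap is the rational value $b=1/3$, which the paper explicitly counts among the $b$-values in the statement. $H_{1/3}$ has an increasing staircase, but it is produced by the family $s^k\Tt_0^H$, not by a $y^k$-tail. Your Step~1 asserts that every such $b$ is a limit along a $y^k$-tail, and Step~2 uses that $b$ is irrational, so this case is simply not covered. Padding $w'$ with extra $y$'s cannot reach it, for three reasons:
\begin{itemize}
\item Theorem~\ref{thm:ATFvis} only ever produces irrational $b$. Its proof derives irrationality from correct ordering via \cite[Lemma 3.1.4]{MM}, and needs it both for the strict inequality $\eps\al_{xk}<\eps b$ and for the distinctness of the lengths $\ell_k$.
\item Lemma~\ref{lem:yTailXVisible} explicitly assumes $b\neq 1/3$.
\item Every entry of every $s^k\Tt_0^H$ has $|t|\le 2$, so those triples are never correctly ordered.
\end{itemize}

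The paper settles $b=1/3$ by citing \cite{AADT,McSi}. You could also close it inside the paper's framework using $w_k=s^k$ and Corollary~\ref{cor:visStair}:
\begin{itemize}
\item By Lemmas~\ref{lem:salphaOr} and~\ref{lem:sMutDefinedH}, $1/3$ lies strictly inside every interval on which $s^kQ_{H_b}^0$ is defined. Hence $s\,(s^kQ_{H_{1/3}}^0)$ is well-defined for every $k$.
\item At $b=1/3$, the identity $3d-m=p+q$ gives $|OY_k|=(p_\la+q_\la)/(3q_\la)$.
\item These lengths are pairwise distinct, because the ratios $p_\la/q_\la$ of the left entries strictly increase in $k$.
\end{itemize}

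Your other two worries are harmless. The classification the paper uses requires $i\ge 1$ when $n=0$ and $\delta=1$. So the bare $R(\mathcal B_0)$ family does not occur; there a $y$-tail would keep $q_\rho=0$ forever, so your fix would fail anyway. Also, all seeds $S^iR^\delta(\mathcal B_n)$ already have $|t|\ge 3$, and the paper simply restricts to $k\ge 1$.
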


To prove the corollary, we first give some background from \cite{MMW,MPW} about which $b$-values have an increasing staircase and no decreasing staircase. The special rational case is $b=1/3.$ This has an ATF-visible staircase from \cite{AADT,McSi}.

We now review the irrational case. Assume $\Tt$ is a triple of the form $wS^iR^\delta(\mathcal{B}_n)$, where $\delta\in\{0,1\}$, $n\geq 0$ is even, $w$ is a finite word in $x,y$, and $i\geq 0$ except in the case $n=0$ and $\delta=1$, where we require $i\geq 1$.
By \cite{MMW,MPW}, taking $y^k\Tt=(\bm{x}_{k-1},\bm{x}_k,\bm{x}_\rho)$ we have that the values $b=\lim_{k\to \infty}\frac{m_k}{d_k}$ are precisely the irrational $b$-values with an increasing staircase and no decreasing staircase. Additionally, by \cite[Corollary 4.2.6]{MM}, the sequence $\frac{m_{k}}{d_{k}}$ strictly decreases if $\Tt$ is sign-matching and strictly increases if $\Tt$ is sign-alternating. 

Before proving the corollary, we give an example of why the corollary is specifically stated for increasing staircases. 

\begin{example}
    In contrast to Example~\ref{ex:visStair}, we explain why Corollary~\ref{cor:ATFvis2} is specific to increasing staircases and does not work for decreasing staircases. 

 We consider the example $x^ky\ov{x}^2Q_{H_b}^0=OX_kV_kY.$ On the triples, we have
   \[ x^ky\ov{x}^2\Tt_0^H:=((6,1,3),(p_{k+1},q_{k+1},t_{k+1}),(p_{k},q_{k},t_{k}))\]
   where $(p_1,q_1,t_1)=(29,4,13)$ and $(p_0,q_0,t_0)=(8,1,5).$
   By Bertozzi et al. in \cite{ICERM}, each of these $p_k/q_k$ is a nonsmooth point in a decreasing staircase for $b:=\lim_{k \to \infty} m_k/d_k=\frac{21 + 3\sqrt{5}}{44}\approx 0.63.$
   
   Note, here neither the limit of $|X_kV_k|$ nor $|V_kY|$ is zero for $b:=\lim m_k/d_k$ because 
   \[ |V_kY|=\frac{m_k'-d_k'b}{q_k},\] so this will be zero for $b':=\lim_{k \to \infty} m_k'/d_k'>1$. As $b'>1$, we have that $x^ky\ov{x}^2Q_{b'}^H$ is not well-defined.

  We can also check that for $y\ov{x}^2Q_{H_b}^0$ to be well-defined this requires $1/2<b<2/3$. In this example we always have that $p_k/q_k>6$ implying that $x^k$ will be defined for all $k$ in $1/2<b<2/3.$ Thus, 
   $x^ky\ov{x}^2Q_{H_b}^0$ is well-defined precisely when $1/2<b<2/3.$ 

For any $w$ consisting of $x,y,$ we have that $xwx^ky\ov{x}^2Q_{H_b}^0$ is always well-defined as $wx^ky\ov{x}^2\Tt_0^H$ always has $p_\rho/q_\rho \geq 6,$ so $\vn_X$ will have positive slope. Hence, the mutation $\ov{s}wx^ky\ov{x}^2Q_{H_b}^0$ will never be well-defined when $w$ is a word of $x,y.$

Further, there is no $w$ in $x,y$ such that 
\[ swx^ky\ov{x}^2Q_{H_b}^0\] 
is well-defined. This is because assuming that  $wx^ky\ov{x}^2Q_{H_b}^0$ is well-defined, then $ywx^ky\ov{x}^2Q_{H_b}^0$ is well-defined if $b<m_\mu/d_\mu$ where $m_\mu/d_\mu$ is the middle entry of $wx^ky\ov{x}^2\Tt_0^H$. This will hold for all words $w$ in $x,y$ by a similar argument to that of Lemma~\ref{lem:moreAlOrder}. 

Hence, a similar method to the case of the corresponding increasing staircase to get the visible embeddings does not work for the decreasing case.

\end{example}

We now prove the corollary. 

\begin{proof}[Proof of Corollary \ref{cor:ATFvis2}]
    Let $\Tt$ denote a triple such that $y^k\Tt$ corresponds to an increasing staircase that is not a decreasing staircase as explained above. Write $y^k\Tt=(\bm{x}_{k-1},\bm{x}_k,\bm{x}_\rho)$. By Theorem~\ref{thm:HTrip}, there is a word $w'$ such that $w'\Tt_0^H=\Tt$. Choose $b=\lim_{k \to \infty} \frac{m_{k}}{d_{k}}.$ Further, by Theorem~\ref{thm:HTrip}, $y^kw'Q_{H_b}^0=:OX_kV_kY_k$ is well-defined for all $k.$ 
    By Theorem~\ref{thm:ATFMut}, we have
\[
y^kw'Q_{H_b}^0=Q_{H_b}(y^kw'\Tt_0^H).
\]
The affine length formula gives
\[
|X_kV_k|=\eps\frac{m_{k-1}-d_{k-1}b}{q_\rho q_k}.
\]
Since $b=\lim_{k\to\infty}m_k/d_k$ and the relevant ratios $q_k/d_{k-1}$ are bounded, it follows that $|X_kV_k| \to 0$. Thus the sequence $y^kw'Q_{H_b}^0$ degenerates affinely to a triangle.

    Further, the slopes $p_k/q_k$ accumulate to $\acc(b)$, and as $|t_\rho| \geq 3,$ this is irrational.  For $k \geq 1,$ the ordering condition \(3+2\sqrt2<p_\lambda/q_\lambda<p_\mu/q_\mu<p_\rho/q_\rho\) follows from Lemma~\ref{lem:pOverq6} and the ordering results in~\cite[Definition 2.1.6]{MMW}. The remaining coordinatewise inequalities in Definition~\ref{def:correctlyOrdered} are exactly the positivity and ordering requirements built into the recursive triples used in~\cite[Definition 2.1.6]{MMW}.

    Hence, by Theorem~\ref{thm:ATFvis}, we conclude that each of these $b$-values has an ATF-visible staircase.  
\end{proof}

\end{document}